\newtheorem{thm}{Theorem}
\newtheorem{lem}[thm]{Lemma}
\newtheorem{prop}[thm]{Proposition}
\newtheorem{rem}[thm]{Remark}
\newtheorem{cor}[thm]{Corollary}
\newtheorem{defi}[thm]{Definition}
\newtheorem{quest}[thm]{Question}
\begin{document}

\title{Traces on symmetrically normed operator ideals}

\dedicatory{We dedicate this paper to the memory of Nigel Kalton whose influence on us both has been profound. Without his collaboration this paper would never have been written.}

\author{F. Sukochev}
\address{School of Mathematics and Statistics, University of New South Wales, Sydney, 2052, Australia.} \email{f.sukochev@unsw.edu.au}

\author{D. Zanin}
\address{School of Mathematics and Statistics, University of New South Wales, Sydney, 2052, Australia.} \email{d.zanin@unsw.edu.au}
\date{}

\keywords{Symmetric functionals, singular traces}
\subjclass[2000]{47L20, 47B10, 46L52}

\begin{abstract} For every symmetrically normed ideal $\mathcal{E}$ of compact operators, we give a criterion for the existence of a continuous singular trace on $\mathcal{E}$. We also give a criterion for the existence of a continuous singular trace on $\mathcal{E}$ which respects Hardy-Littlewood majorization. We prove that the class of all continuous singular traces on $\mathcal{E}$ is strictly wider than the class of continuous singular traces which respect Hardy-Littlewood majorization. We establish a canonical bijection between the set of all traces on $\mathcal{E}$ and the set of all symmetric functionals on the corresponding sequence ideal. Similar results are also proved in the setting of semifinite von Neumann algebras.
\end{abstract}

\maketitle

\section{Introduction}\label{introd}

In his groundbreaking paper \cite{Dixmier}, J. Dixmier proved the existence of positive singular traces (that is, linear positive unitarily invariant functionals which vanish on all finite dimensional operators) on the algebra $B(H)$ of all bounded linear operators acting on infinite-dimensional separable Hilbert space $H.$ Namely, if $\psi:\mathbb{R}_+\to\mathbb{R}_+$ is a concave increasing function such that
\begin{equation}\label{simple psi cond}
\lim_{t\to\infty}\frac{\psi(2t)}{\psi(t)}=1,
\end{equation}
then there is a singular trace $\tau_{\omega},$ defined for every positive compact operator $A\in B(H)$ by setting
\begin{equation}\label{dixmier trace}
\tau_{\omega}(A)=\omega(\frac1{\psi(n)}\sum_{k=1}^ns_k(A)).
\end{equation}
Here, $\{s_k(A)\}_{k\in\mathbb{N}}$ is the sequence of singular values of the compact operator $A\in B(H)$ taken in the descending order and $\omega$ is an arbitrary dilation invariant generalised limit on the algebra $l_{\infty}$ of all bounded sequences. This trace is finite on $0\leq A\in B(H)$ if and only if $A$ belongs to the Marcinkiewicz ideal (see e.g. \cite{GohKr1},\cite{GohKr2},\cite{Pietsch})
$$\mathcal{M}_{\psi}:=\{A\in B(H):\ \sup_{n\in\mathbb{N}}\frac1{\psi(n)}\sum_{k=1}^ns_k(A)<\infty\}.$$
In \cite{KSS}, Dixmier's result was extended to an arbitrary Marcinkiewicz ideal $\mathcal{M}_{\psi}$ with the following condition on $\psi$
\begin{equation}\label{dpss psi cond}
\liminf_{t\to\infty}\frac{\psi(2t)}{\psi(t)}=1.
\end{equation}

All the traces defined above by formula \eqref{dixmier trace} vanish on the ideal $\mathcal{L}_1$ consisting of all compact operators $A\in B(H)$ such that $\sum_{k=1}^\infty s_k(A)<\infty$.

An ideal $\mathcal{E}$ of algebra $B(H)$ is said to be symmetrically normed if $\{s_k(B)\}_{k\in\mathbb{N}}\leq\{s_k(A)\}_{k\in\mathbb{N}}$ and $A\in\mathcal{E}$ implies that $\|B\|_{\mathcal{E}}\leq\|A\|_{\mathcal{E}}$ (see \cite{GohKr1}, \cite{GohKr2}, \cite{Simon}\footnote{ We have to caution the reader that in Theorem 1.16 of \cite{Simon} the assertion $(b)$ does not hold for the norm of an arbitrary symmetrically normed ideal $\mathcal {E}$ (see e.g. corresponding counterexamples in \cite[p. 83]{KS}).}, \cite{Schatten}, \cite{KScan}). Since the ideal $\mathcal{M}_{\psi}$ is just a special example of symmetrically normed operator ideal, the following question (suggested in \cite{KSS}, \cite{GI1}, \cite{GI2}, \cite{DPSSS}) arises naturally.

\begin{quest}\label{q1} Which symmetrically normed operator ideals admit a nontrivial singular trace\footnote{
In this paper, we exclusively deal with positive traces}?
\end{quest}

In analyzing Dixmier's proof of the linearity of $\tau_{\omega}$ given by \eqref{simple psi cond}, it was observed in \cite{KSS} (see also \cite{CaSuk}) that $\tau_{\omega}$ possesses the following fundamental property, namely if $0\leq A,B\in\mathcal{M}_{\psi}$ are such that
\begin{equation}\label{hl maj disk}
\sum_{k=1}^ns_k(B)\leq\sum_{k=1}^ns_k(A),\quad\forall n\in\mathbb{N},
\end{equation}
then $\tau_{\omega}(B)\leq\tau_{\omega}(A).$ Such a class of traces was termed \lq\lq fully symmetric\rq\rq in \cite{KScan}, \cite{SZ} (see also earlier papers \cite{DPSS},\cite{LSS}, where the term \lq\lq symmetric\rq\rq was used). It is natural to consider such traces only on fully symmetrically normed operator ideals $\mathcal{E}$ (that is, on symmetrically normed operator ideals $\mathcal{E}$ satisfying the condition: if $A,B$ satisfy \eqref{hl maj disk} and $A\in\mathcal{E},$ then $B\in\mathcal{E}$ and $\|B\|_{\mathcal{E}}\leq\|A\|_{\mathcal{E}}$). In fact, it was established in \cite{DPSS} that every Marcinkiewicz ideal $\mathcal{M}_{\psi}$ with $\psi$ satisfying the condition \eqref{dpss psi cond} possesses fully symmetric traces. Furthermore, in the recent paper \cite{KSS}, the following unexpected result was established. If $\psi$ satisfies the condition \eqref{dpss psi cond}, then every fully symmetric trace on $\mathcal{M}_{\psi}$ is a Dixmier trace $\tau_{\omega}$ for some $\omega.$

The following question ( also suggested in \cite{KSS}, \cite{DPSSS}, \cite{GI1}, \cite{GI2}) arises naturally.

\begin{quest}\label{q2} Which fully symmetrically normed operator ideals admit a nontrivial singular trace which is fully symmetric?
\end{quest}

In papers \cite{GI1},\cite{GI2} the following two problems (closely related to Question \ref{q1} and Question \ref{q2}) were also suggested.

\begin{quest}\label{q3} Which fully symmetrically normed operator ideals admit a trace which is not fully symmetric?
\end{quest}

Let us fix an orthonormal basis $\{e_n\}_{n\in\mathbb{N}}$ in $H.$ An operator $A\in B(H)$ is called diagonal if $(Ae_n,e_m)=0$ for every $n\neq m.$

\begin{quest}\label{q4} Let the mapping $\varphi:\mathcal{E}\to\mathbb{C}$ be unitarily invariant. Suppose that $\varphi$ is linear on the subset of all diagonal operators from $\mathcal{E}.$ Does it imply that $\varphi$ is a trace on $\mathcal{E}?$
\end{quest}

In some very special cases (for principal ideals contained in $\mathcal{L}_1,$ which are, strictly speaking, not symmetrically normed ideals), Question \ref{q3} was answered in the affirmative\footnote{We are grateful to the referee for this remark.} in \cite{Wodzicki}. In \cite{KScan}, question \ref{q3} was answered in the affirmative for the special case of Marcin\-kiewicz ideals under the assumption \eqref{simple psi cond}. It should be pointed out that the method used in \cite{KScan} cannot be extended to an arbitrary Marcinkiewicz ideal $\mathcal{M}_{\psi}$ and, furthermore, cannot be extended to a general symmetrically normed operator ideal. Question \ref{q4} was answered in \cite{KScan} in full generality using deep results from \cite{DK,DFWW} (see also \cite{DFWW_pre}).

The following theorem is the main result of this paper. It yields answers to Questions \ref{q1}--\ref{q3}. In the course of the proof of Theorem \ref{main theorem of the paper}, we also present a new (and very simple) proof answering Question \ref{q4}. Prior to stating Theorem \ref{main theorem of the paper}, we make a few preliminary observations, for which we are grateful to the referee.

Any trace $\varphi:\mathcal{E}\to\mathbb{C}$ obeys the condition
$$\frac1m\varphi(A^{\oplus m})=\varphi(A),\quad A\in\mathcal{E},m\geq1.$$
Here, the direct sum $A^{\oplus m}$ is formed with respect to some arbitrary Hilbert space isomorphism $H^{\oplus m}\simeq H.$ Thus, traces are closely related to the following convex (see Lemma \ref{pietsch funk} below) functional on $\mathcal{E}.$
$$\pi:A\to\lim_{m\to\infty}\frac1m\|A^{\oplus m}\|_{\mathcal{E}},\quad A\in\mathcal{E}.$$
The non-triviality of the functional $\pi:\mathcal{E}\to\mathbb{R}$ is an obvious necessary condition for the existence of a trace.

\begin{thm}\label{main theorem of the paper} Let $\mathcal{E}$ be a symmetrically normed operator ideal. Consider the following conditions.
\begin{enumerate}
\item There exist nontrivial singular traces on $\mathcal{E}.$
\item There exist nontrivial singular traces on $\mathcal{E},$ which are fully symmetric.
\item There exist nontrivial singular traces on $\mathcal{E},$ which are not fully symmetric.
\item $\mathcal{E}\neq\mathcal{L}_1$ and there exist an operator $A\in\mathcal{E}$ such that
\begin{equation}\label{tensor condit}
\lim_{m\to\infty}\frac1m\|A^{\oplus m}\|_{\mathcal{E}}>0.
\end{equation}
\end{enumerate}
\begin{enumerate}[(i)]
\item The conditions $(1)$ and $(4)$ are equivalent for every symmetrically normed operator ideal $\mathcal{E}.$
\item The conditions $(1),$ $(2)$ and $(4)$ are equivalent for every fully symmetrically normed operator ideal $\mathcal{E}.$
\item The conditions $(1)-(4)$ are equivalent for every fully symmetrically normed operator ideal $\mathcal{E}$ equipped with a Fatou norm.
\end{enumerate}
\end{thm}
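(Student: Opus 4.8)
The plan is to transfer the problem to the commutative ``core'' and then read everything off the functional $\pi$. First I would set up the Calkin correspondence between $\mathcal E$ and its symmetric sequence ideal $E$ (so $A\in\mathcal E\iff\{s_k(A)\}\in E$ and $\|A\|_{\mathcal E}=\|\{s_k(A)\}\|_E$) and prove the bijection announced in the abstract: restriction to diagonal operators carries a trace on $\mathcal E$ to a positive (hence automatically continuous) rearrangement invariant linear functional on $E$ --- a \emph{symmetric functional} --- and, via the Dykema--Figiel--Weiss--Wodzicki description of the commutator subspace $\mathcal Z(\mathcal E)$, every symmetric functional on $E$ is the restriction of a unique trace. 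This circle of ideas also yields the promised short answer to Question~\ref{q4}: if $\varphi\colon\mathcal E\to\mathbb C$ is positive, unitarily invariant and additive on diagonal operators, then its values on diagonals, seen as a function of the eigenvalue sequence, are linear (diagonal operators add coordinatewise) and rearrangement invariant (unitary invariance), hence a symmetric functional, and unitary invariance together with the spectral theorem identify $\varphi$ with the associated trace. Under the correspondence, singular traces match symmetric functionals annihilating $c_{00}$, and fully symmetric traces match those monotone for Hardy--Littlewood majorization $\prec\prec$. Moreover $\pi$ transfers to $\pi_E(x)=\lim_m\tfrac1m\|D_mx\|_E$, $D_m$ the $m$--fold dilation; the limit exists since $m\mapsto\|D_mx\|_E$ is subadditive, and $\pi_E$ is rearrangement invariant, convex (Lemma~\ref{pietsch funk}), satisfies $\pi_E(D_mx)=m\pi_E(x)$ and $\pi_E\le\|\cdot\|_E$, and --- from the identity $\|\chi_n\|_E\|\chi_n\|_{E^{\times}}=n$ and the Fatou property of the K\"othe dual --- $\mathcal E\neq\mathcal L_1\iff\|\chi_n\|_E=o(n)\iff\pi$ vanishes on all finite rank operators. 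Thus condition $(4)$ says exactly that $\mathcal E\neq\mathcal L_1$ and $\pi\not\equiv0$.

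The easy implications come next: $(2)\Rightarrow(1)$ and $(3)\Rightarrow(1)$ are trivial, and for $(1)\Rightarrow(4)$ one notes that a positive trace $\varphi$ is $\|\cdot\|_{\mathcal E}$--continuous (the positive cone is closed and generating, the norm is monotone), so $\varphi(A)=\tfrac1m\varphi(A^{\oplus m})\le\tfrac{\|\varphi\|}m\|A^{\oplus m}\|_{\mathcal E}\to\|\varphi\|\,\pi(A)$, forcing $\pi\not\equiv0$, while a nontrivial singular trace cannot be a scalar multiple of the canonical trace, so $\mathcal E\neq\mathcal L_1$ (on $\mathcal L_1$ all traces are such multiples).

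For $(4)\Rightarrow(1)$ and $(4)\Rightarrow(2)$ the task, after the reduction, is: given $x_0\in E_+$ with $\pi_E(x_0)>0$, construct a symmetric functional on $E$ that is nonzero at $x_0$ and annihilates $c_{00}$. Two facts drive the construction. First, $\pi$ annihilates $\mathcal Z(\mathcal E)$ (using the DFWW criterion that a positive $A\in\mathcal E$ lies in $\mathcal Z(\mathcal E)$ precisely when the Ces\`aro transform $\bigl(\tfrac1n\sum_{k\le n}s_k(A)\bigr)_n$ belongs to $E$, together with an estimate showing $\tfrac1m\|D_m\mu(A)\|_E\to0$ in that case), so $\pi$ descends to a nonzero continuous seminorm on the quotient $\mathcal E/\overline{\mathcal Z(\mathcal E)+\mathcal F}$. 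Second, $\pi_E$ is a rearrangement invariant, dilation homogeneous sublinear functional, so --- exactly as the Marcinkiewicz norm gives rise to Dixmier traces --- a Hahn--Banach argument carried out compatibly with the rearrangement and dilation structure produces a nonzero positive symmetric functional dominated by $\pi_E$ and vanishing on $c_{00}$, which is the sought trace. When $\mathcal E$ is fully symmetrically normed, $B\prec\prec A\Rightarrow B^{\oplus m}\prec\prec A^{\oplus m}\Rightarrow\pi(B)\le\pi(A)$ (dilations preserve $\prec\prec$), so $\pi_E$ is $\prec\prec$--monotone and the construction may be arranged to yield a $\prec\prec$--monotone functional, i.e.\ a fully symmetric trace; this gives (ii), and with $(1)\Rightarrow(4)$, (i).

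It remains to prove $(4)\Rightarrow(3)$ for a fully symmetrically normed ideal with a Fatou norm, i.e.\ to produce a singular trace that is \emph{not} fully symmetric; this is the genuinely hard step (Question~\ref{q3}, settled previously only for Marcinkiewicz ideals under \eqref{simple psi cond}, by a method that does not generalise). Starting from $x_0\in E_+$ with $\pi_E(x_0)>0$ I would manufacture a positive decreasing $y\prec\prec x_0$ whose partial sums $\Sigma_n(y)=\sum_{k\le n}y_k$ agree with $\Sigma_n(x_0)$ only along a lacunary set of indices and are strictly smaller off it, so that $y\prec\prec x_0$ while $y$ ``catches up'' to $x_0$ only along that set. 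Choosing the generalised limit defining a trace $\psi$ to live on that set makes $\psi(y)$ essentially $\|\psi\|\,\pi(x_0)$, whereas any fully symmetric trace $\tau$ must satisfy $\tau(y)\le\tau(x_0)$ and, by $\prec\prec$--monotonicity, lie strictly below $\|\psi\|\,\pi$ at $x_0$; arranging in addition $\psi(y)>\psi(x_0)$ exhibits $\psi$ as not fully symmetric. The Fatou property is used precisely to guarantee that the operators produced by this limiting construction actually lie in $\mathcal E$ with the norm bounds that make $\psi$ a bona fide positive continuous trace. Together with $(3)\Rightarrow(1)$ and $(1)\Leftrightarrow(4)$ this closes the chain $(1)\Leftrightarrow(2)\Leftrightarrow(3)\Leftrightarrow(4)$, proving (iii). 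The principal obstacle is this last construction, which genuinely separates the class of all traces from the fully symmetric ones; a secondary delicate point is showing that $\pi$ annihilates the whole commutator subspace and that the Hahn--Banach extension really corresponds to a trace.
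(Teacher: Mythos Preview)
Your overall architecture---reduce to the sequence ideal, handle the easy implications directly, and use a Hahn--Banach argument against a dilation-homogeneous sublinear functional---matches the paper. Two points of genuine divergence deserve comment.

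First, for the lifting (and Question~\ref{q4}) you invoke the DFWW commutator description. That is exactly the route of \cite{KScan} that the present paper is written to \emph{avoid}: Theorem~\ref{lifting theorem} here is proved in three lines from the uniform submajorization $\lhd$ and Lemma~\ref{fi ks major}, with no appeal to \cite{DFWW} or \cite{DK}. Relatedly, you propose to run Hahn--Banach against $\pi$ itself on the quotient by $\overline{\mathcal Z(\mathcal E)+\mathcal F}$. The paper observes explicitly that $\pi(-x)=\pi(x)$, so $\pi$ is not monotone and a linear functional dominated by $\pi$ need not be positive; it therefore introduces the asymmetric functional $p(x)=\limsup_m\|(M_mx)_+\|_E$ (Lemma~\ref{required p}), shows $p$ vanishes on $Z_E$ via the Figiel--Kalton theorem, and only \emph{a posteriori} identifies $p=\pi$ on the positive cone (Lemma~\ref{pietsch request1}). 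For the fully symmetric case a further refinement $q(x)=\inf\{p(z):Cx\le Cz\}$ is needed (Lemma~\ref{required q}). Your sketch ``Hahn--Banach compatibly with the rearrangement and dilation structure'' hides precisely this difficulty.

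Second, and more seriously, your plan for $(4)\Rightarrow(3)$ does not yet contain a proof. The idea of building $y\prec\prec x_0$ that ``catches up only along a lacunary set'' and choosing a generalised limit supported there is essentially the Marcinkiewicz-specific mechanism from \cite{KScan}, which the introduction here states \emph{cannot} be extended to a general symmetrically normed ideal. The paper's argument (Section~\ref{ne vpol sim}) is of a different nature: assuming every symmetric functional is fully symmetric, it shows via the characterization Lemma~\ref{characterization lemma} and the expectation operators $\mathbf E(\cdot\,|\,\mathcal B_{\kappa,\theta})$, $\mathbf E(\cdot\,|\,\mathcal A_m)$ that $m^{-1}\sigma_m\mathbf E(x|\mathcal A_m)\to 0$ (Proposition~\ref{pbm convergence}, Lemma~\ref{pam convergence}), and then combines this with the majorization Lemmas~\ref{prec estimate}--\ref{prec estimate integrable} to force $m^{-1}\|\sigma_m\mu(x)\|_E\to 0$, contradicting~(4). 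The Fatou property enters not to keep constructed operators inside $\mathcal E$, but to pass from finitely supported $\kappa^{m,r}$ to $\kappa^m$ via a.e.\ convergence in Proposition~\ref{pbm convergence}. As written, your paragraph for this step is a restatement of the goal rather than a strategy; you should either supply the averaging-operator machinery or a genuinely different construction that works beyond Marcinkiewicz ideals.
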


Recall that the norm on a symmetrically normed operator ideal $\mathcal{E}$ is called a Fatou norm if the unit ball of $\mathcal{E}$ is closed with respect to strong (or, equivalently, weak) operator convergence. Observe that classical ideals (such as Schatten-von Neumann ideals $\mathcal{L}_p,$ Marcinkiewicz, Orlicz and Lorentz ideals \cite{GohKr1}, \cite{GohKr2}, \cite{Simon}) have a Fatou norm. In fact, in some standard references on the subject (e.g. Simon's book \cite{Simon}), the requirement that symmetrically normed operator ideal has a Fatou norm appears to be a part of the definition. Similarly, in the book \cite{LT2}, devoted to the study of symmetric\footnote{termed there \lq\lq rearrangement invariant\rq\rq.} function spaces (which are a commutative counterpart of symmetrically normed operator ideals), an assumption that the norm is a Fatou norm is incorporated into the definition \cite[p. 118]{LT2}.

The proof of Theorem \ref{main theorem of the paper} is given in Section \ref{prmain}. In fact, in this paper we will prove a more general result for symmetric spaces associated with semifinite von Neumann algebras. The precise statements are given in Section \ref{sushestvovanie1} (see Theorems \ref{sim exists}, \ref{sim sing exists inf}, \ref{sim sing exists fin}), Section \ref{sushestvovanie2} (see Theorems \ref{fs exists}, \ref{fs sing exists inf}, \ref{fs sing exists fin}) and Section \ref{ne vpol sim} (see Theorems \ref{nonintegrable theorem}, \ref{integrable theorem}). The appendix contains the proof of important technical results for which we were unable to find a suitable reference. We also present a new and short proof of the Figiel-Kalton theorem from \cite{FK}.

Finally, we say a few words about our proof and its relation to the previous results in the literature. Our strategy is based on the approach from recent papers \cite{SZ} and \cite{KSZ}, where condition \eqref{tensor condit} was connected to the geometry of $\mathcal{E}$ (see also \cite{BM}). The condition \eqref{tensor condit} is easy to verify in concrete situations. For example, the following corollary of Theorem \ref{main theorem of the paper} strengthens the main result of \cite{KScan} and complements earlier results of J. Varga \cite{Varga}.
\begin{cor}\label{marcin cor} Every Marcinkiewicz ideal $\mathcal{M}_{\psi}$ with $\psi$ satisfying the condition \eqref{dpss psi cond} admits a trace which is not fully symmetric.
\end{cor}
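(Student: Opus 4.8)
The plan is to deduce Corollary~\ref{marcin cor} directly from part~(iii) of Theorem~\ref{main theorem of the paper}, applied to the fully symmetrically normed ideal $\mathcal{M}_\psi$. First I would recall that every Marcinkiewicz ideal $\mathcal{M}_\psi$ carries a Fatou norm (the supremum over $n$ of the averaged partial sums of singular values is obviously lower semicontinuous with respect to strong operator convergence), and that it is fully symmetrically normed by construction, so that the hypotheses of part~(iii) are satisfied. Thus it suffices to verify that condition~$(4)$ of Theorem~\ref{main theorem of the paper} holds for $\mathcal{M}_\psi$ under the assumption \eqref{dpss psi cond}; the equivalence $(4)\Rightarrow(3)$ then yields a trace on $\mathcal{M}_\psi$ which is not fully symmetric.

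To check condition~$(4)$ I would first observe that $\mathcal{M}_\psi\neq\mathcal{L}_1$: since $\psi$ is concave and increasing with $\liminf_{t\to\infty}\psi(2t)/\psi(t)=1$, the function $\psi$ grows slower than linearly, hence $\psi(n)/n\to 0$, and therefore $\mathcal{L}_1\subsetneq\mathcal{M}_\psi$ (for instance the operator with singular values $s_k=(\psi(k)-\psi(k-1))$ lies in $\mathcal{M}_\psi\setminus\mathcal{L}_1$). For the tensor estimate \eqref{tensor condit}, I would pick a concrete $A$, say $A$ diagonal with singular value sequence $s_k(A)=\psi(k)-\psi(k-1)$ (decreasing by concavity of $\psi$), so that $\sum_{k=1}^n s_k(A)=\psi(n)-\psi(0)\asymp\psi(n)$ and hence $\|A\|_{\mathcal{M}_\psi}<\infty$. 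The point is then to estimate $\|A^{\oplus m}\|_{\mathcal{M}_\psi}$ from below: the singular value sequence of $A^{\oplus m}$ is the decreasing rearrangement of $m$ copies of $\{s_k(A)\}$, whose $n$-th partial sum, for $n=m\cdot j$, equals $m\sum_{k=1}^j s_k(A)=m(\psi(j)-\psi(0))$, so that
\begin{equation*}
\frac1m\|A^{\oplus m}\|_{\mathcal{M}_\psi}\geq\frac1m\cdot\frac{m(\psi(j)-\psi(0))}{\psi(mj)}=\frac{\psi(j)-\psi(0)}{\psi(mj)}.
\end{equation*}
Here condition \eqref{dpss psi cond} enters decisively: by choosing a sequence $j=j_\ell\to\infty$ along which $\psi(2j_\ell)/\psi(j_\ell)\to1$, and iterating, one controls $\psi(mj)/\psi(j)$ and shows the right-hand side stays bounded away from $0$ as $j\to\infty$ (for fixed $m$), and then that $\inf_m \lim_j$ of this expression is positive; taking the limit in $m$ gives \eqref{tensor condit}.

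The main obstacle I anticipate is precisely this last quantitative step: showing that the limit in \eqref{tensor condit} is strictly positive rather than merely that $\pi(A)$ is finite. The subtlety is that $\liminf_{t\to\infty}\psi(2t)/\psi(t)=1$ is a \emph{liminf} condition, so $\psi(mj)/\psi(j)$ need not stay bounded for \emph{all} large $j$; one must instead exploit the $\liminf$ to produce, for each $m$, a subsequence of $j$'s along which the averaged tensor norm is bounded below by a constant independent of $m$, and then argue that $\lim_{m\to\infty}\frac1m\|A^{\oplus m}\|_{\mathcal{M}_\psi}$ — which exists by the convexity/subadditivity noted before the statement of Theorem~\ref{main theorem of the paper} (Lemma~\ref{pietsch funk}) and is in fact $\sup_m\frac1m\|A^{\oplus m}\|_{\mathcal{M}_\psi}$ — inherits this lower bound. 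Once \eqref{tensor condit} is secured, the rest is an immediate invocation of Theorem~\ref{main theorem of the paper}(iii), which converts the failure of $\pi$ to be trivial into the existence of a non-fully-symmetric trace on $\mathcal{M}_\psi$.
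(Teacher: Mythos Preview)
Your plan is the same as the paper's: verify condition~(4) of Theorem~\ref{main theorem of the paper} for $\mathcal{M}_\psi$ and invoke part~(iii). The paper does not carry out the verification itself but simply cites \cite[Proposition~2.3]{AstSuk} for the equivalence of~(4) with \eqref{dpss psi cond}; you are effectively reproving that proposition by hand, which is fine.

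Two small corrections. First, by the subadditivity established in Lemma~\ref{pietsch funk} (Fekete), the limit $\lim_{m}\tfrac{1}{m}\|A^{\oplus m}\|_{\mathcal{E}}$ equals the \emph{infimum}, not the supremum; fortunately your argument only needs a uniform lower bound over all $m$, so this slip is harmless. Second, the ``iterating'' step you flag as the main obstacle has a clean resolution you did not mention: concavity of $\psi$ gives
\[
\psi(mt)-\psi(t)=\sum_{k=2}^{m}\bigl(\psi(kt)-\psi((k-1)t)\bigr)\le (m-1)\bigl(\psi(2t)-\psi(t)\bigr),
\]
so along any sequence $t_\ell\to\infty$ with $\psi(2t_\ell)/\psi(t_\ell)\to 1$ one gets $\psi(mt_\ell)/\psi(t_\ell)\to 1$ for every fixed $m$. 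With your choice of $A$ this yields $\tfrac{1}{m}\|A^{\oplus m}\|_{\mathcal{M}_\psi}\ge\sup_j\dfrac{\psi(j)-\psi(0)}{\psi(mj)}=1$ for every $m$, hence $\pi(A)\ge 1>0$, and condition~(4) follows without any iteration.
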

Indeed, it is proved in \cite[Proposition 2.3]{AstSuk} that the condition $(4)$ of Theorem \ref{main theorem of the paper} is equivalent to the condition \eqref{dpss psi cond} for the Marcinkiewicz ideal $\mathcal{M}_{\psi}.$ Some examples of symmetrically normed operator ideals, which are not Marcinkiewicz ideals, possessing symmetric traces were presented in \cite{DPSSS}. These results are also an immediate corollary of Theorem \ref{main theorem of the paper}.

For completeness, we note that the assertion $(ii)$ in Theorem \ref{main theorem of the paper} holds for a wider class of relatively fully symmetrically normed operator ideals. The latter class is defined as follows: if $A,B\in\mathcal{E}$ are such that \eqref{hl maj disk} holds, then $\|B\|_{\mathcal{E}}\leq\|A\|_{\mathcal{E}}.$ It coincides with the class of all symmetrically normed subspaces of a fully symmetric operator ideal (see \cite{KS})

\section{Definitions and preliminaries}

The theory of singular traces on symmetric operator ideals rests on some classical analysis which we now review for completeness.

As usual, $L_{\infty}(0,\infty)$ is the set of all bounded Lebesgue measurable functions on the semi-axis equipped with the uniform norm. Given a function $x\in L_{\infty}(0,\infty),$  one defines its decreasing rearrangement $t\to \mu(t,x)$ by the formula (see e.g. \cite{KPS})
$$\mu(t,x)=\inf\{s\geq0:\ m(\{x>s\})\leq t\}.$$
Let $H$ be a Hilbert space and let $B(H)$ be the algebra of all bounded operators on $H$ equipped with the uniform norm.

Let $\mathcal{M}\subset B(H)$ be a semi-finite von Neumann algebra equipped with a fixed faithful and normal semi-finite trace $\tau.$ $\mathcal{M}$ is said to be atomic (see \cite[Definition 5.9]{Takesaki}) if every nonzero projection in $\mathcal{M}$ contains a nonzero minimal projection. $\mathcal{M}$ is said to be atomless if there is no minimal projections in $\mathcal{M}.$

For every $A\in\mathcal{M},$ the generalised singular value function $t\to\mu(t,A)$ is defined by the formula (see e.g. \cite{FackKosaki})
$$\mu(t,A)=\inf\{\|Ap\|:\ \tau(1-p)\leq t\}.$$
If, in particular, $\mathcal{M}=B(H),$ then $\mu(A)$ is a step function and, therefore, can be identified with the sequence of singular numbers of the operators $A$ (the singular values are the eigenvalues of the operator $|A|=(A^*A)^{1/2}$ arranged with multiplicity in decreasing order).

Equivalently, $\mu(A)$ can be defined in terms of the distribution function $d_A$ of $A.$ That is, setting
$$d_A(s)=\tau(E^{|A|}(s,\infty)),\quad s\geq0,$$
we obtain
$$\mu(t,A)=\inf\{s\geq0:\ d_A(s)\leq t\},\quad t>0.$$
Here, $E^{|A|}$ denotes the spectral measure of the operator $|A|.$

Using the Jordan decomposition, every operator $A\in B(H)$ can be uniquely written as
$$A=(\Re A)_+-(\Re A)_-+i(\Im A)_+-i(\Im A)_-.$$
Here, $\Re A:=1/2(A+A^*)$ (respectively, $\Im A:=1/2i(A-A^*)$) for any operator $A\in B(H)$ and $B_+=BE^B(0,\infty)$ ( respectively, $B_−=-BE^B(-\infty,0)$) for any self-adjoint operator $B\in B(H).$ Recall that $\Re A,\Im A\in\mathcal{M}$ for every $A\in\mathcal{M}$ and $B_+,B_-\in\mathcal{M}$ for every self-adjoint $B\in\mathcal{M}.$

Further, we need to recall the important notion of Hardy--Littlewood majorization. Let $A,B\in(L_1+L_{\infty})(\mathcal{M}).$ The operator $B$ is said to be majorized by $A$ and written $B\prec\prec A$ if and only if
$$\int_0^t\mu(s,B)ds\leq\int_0^t\mu(s,A)ds,\quad t\geq0.$$
We have (see \cite{FackKosaki})
$$A+B\prec\prec\mu(A)+\mu(B)\prec\prec 2\sigma_{1/2}\mu(A+B)$$
for every positive operators $A,B\in(L_1+L_{\infty})(\mathcal{M}).$

If $s>0,$ the dilation operator $\sigma_s$ is defined by setting
$$(\sigma_s(x))(t)=x(\frac{t}{s}),\quad t>0$$
in the case of the semi-axis. In the case of the interval $(0,1),$ the operator $\sigma_s$ is defined by
$$(\sigma_sx)(t)=
\begin{cases}
x(t/s),& t\leq\min\{1,s\}\\
0,& s<t\leq1.
\end{cases}$$
Similarly, in the sequence case, we define an operator $\sigma_n$ by setting
$$\sigma_n(a_1,a_2,\cdots)=(\underbrace{a_1,\cdots,a_1}_{\mbox{$n$ times}},\underbrace{a_2,\cdots,a_2}_{\mbox{$n$ times}},\cdots)$$
and an operator $\sigma_{1/2}$ by setting
$$\sigma_{1/2}:(a_1,a_2,a_3,a_4,\cdots)\to(\frac{a_1+a_2}2,\frac{a_3+a_4}2,\cdots).$$

\begin{defi} The Banach space $E(\mathcal{M},\tau)\subset(L_1+L_{\infty})(\mathcal{M})$ is said to be a symmetric operator space if the following conditions hold.
\begin{enumerate}
\item Given $A\in E(\mathcal{M},\tau)$ and $B\in(L_1+L_{\infty})(\mathcal{M})$ with $\mu(B)=\mu(A),$ we have $B\in E(\mathcal{M},\tau)$ and $\|B\|_E=\|A\|_E.$
\item Given $0\leq A\in E(\mathcal{M},\tau)$ and $0\leq B\in(L_1+L_{\infty})(\mathcal{M})$ with $B\leq A,$ we have $B\in E(\mathcal{M},\tau)$ and $\|B\|_E\leq\|A\|_E.$
\end{enumerate}
\end{defi}

The space $E(\mathcal{M},\tau)$ is called fully symmetric if for every $A\in E(\mathcal{M},\tau)$ and every $B\in(L_1+L_{\infty})(\mathcal{M})$ with $B\prec\prec A,$ we have $B\in E(\mathcal{M},\tau)$ and $\|B\|_E\leq\|A\|_E.$

The norm on a symmetric space $E(\mathcal{M},\tau)$ is a Fatou norm if the unit ball of $E(\mathcal{M},\tau)$ is closed with respect to strong (or, equivalently, weak) operator convergence. Every symmetric space equipped with a Fatou norm is necessarily fully symmetric.

A linear functional $\varphi:E(\mathcal{M},\tau)\to\mathbb{C}$ is said to be symmetric if $\varphi(B)=\varphi(A)$ for every positive $A,B\in E(\mathcal{M},\tau)$ such that $\mu(B)=\mu(A).$ A linear functional $\varphi:E(\mathcal{M},\tau)\to\mathbb{C}$ is said to be fully symmetric if $\varphi(B)\leq\varphi(A)$ for every positive $A,B\in E(\mathcal{M},\tau)$ such that $B\prec\prec A.$ Every fully symmetric functional is symmetric and bounded. The converse fails \cite{KScan}.

A functional $\varphi:E(\mathcal{M},\tau)\to\mathbb{C}$ is called singular if $\varphi=0$ on $(L_1\cap L_{\infty})(\mathcal{M}).$ If $E(\mathcal{M},\tau)\not\subset L_1(\mathcal{M}),$ then every symmetric functional is singular.

If $E=E(0,\infty)$ and if $\varphi:E\to\mathbb{R}$ is a symmetric functional, then $s\varphi(x)=\varphi(\sigma_sx)$ for every $x\in E.$ If $E=E(0,1)$ and if $\varphi:E\to\mathbb{R}$ is a singular symmetric functional, then $s\varphi(x)=\varphi(\sigma_sx)$ for every $x=\mu(x)\in E.$

Let $E$ be a fully symmetric Banach space either on the interval $(0,1)$ or on the semi-axis. We need the notion of an expectation operator (see \cite{BM}).

Let $\mathcal{A}=\{A_k\}$ be a (finite or infinite) sequence of disjoint sets of finite measure and denote by $\mathfrak{A}$ the collection of all such sequences. Denote by $A_{\infty}$ the complement of $\cup_kA_k.$

The expectation operator $\mathbf{E}(\cdot|\mathcal{A}):L_1+L_{\infty}\to L_1+L_{\infty}$ is defined by setting
$$\mathbf{E}(x|\mathcal{A})=\sum_k\frac1{m(A_k)}(\int_{A_k}x(s)ds)\chi_{A_k}.$$
Note that we do not require $A_{\infty}$ to have finite measure.

Every expectation operator is a contraction both in $L_1$ and $L_{\infty}.$ Therefore,
$$\mathbf{E}(x|\mathcal{A})\prec\prec x,\quad x\in L_1+L_{\infty}.$$
It follows that $\mathbf{E}(\cdot|\mathcal{A})$ is also contraction in $E.$

It will be convenient to introduce the following notation. If $\mathcal{A}$ is a discrete subset of the semi-axis (i.e. a subset without limit points inside $(0,\infty)$), then the elements of $\mathcal{A}\cup\{0\}$ partition the semi-axis. This partition consists of a (finite or infinite) sequence of sets of finite measure. We identify this partition with the set $\mathcal{A}.$ Elements of $\mathcal{A}$ will be called nodes of the partition $\mathcal{A}.$ The corresponding averaging operator will be denoted by $\mathbf{E}(\cdot|\mathcal{A}).$

Let $E$ be a symmetric Banach space either on the interval $(0,1)$ or on the semi-axis. Define the sets
$$\mathcal{D}_E={\rm Lin}(\{x\in E:\ x=\mu(x)\})=\{\mu(a)-\mu(b),\ a,b\in E\},$$
$$Z_E={\rm Lin}(\{x_1-x_2:\ 0\leq x_1,x_2\in E,\ \mu(x_1)=\mu(x_2)\}).$$
Let $C$ be a Hardy operator defined by setting
$$(Cx)(t)=\frac1t\int_0^tx(s)ds.$$

The following theorem was proved in \cite{FK}. For convenience of the reader, we give a new and simple proof in the appendix.

\begin{thm}\label{fk theorem} Let $E$ be a symmetric space on the semi-axis and let $x\in\mathcal{D}_E.$ We have $x\in Z_E$ if and only if $Cx\in E.$ A similar assertion is also valid for the interval $(0,1)$ provided that $\int_0^1x(s)ds=0.$
\end{thm}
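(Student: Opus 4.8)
The plan is to treat the two implications separately, and within each to reduce to a normalized situation by scaling. Throughout write $x = \mu(a) - \mu(b)$ with $a,b \in E$; without loss of generality (replacing $a,b$ by $|a|,|b|$) we may take $a,b \geq 0$, so $x$ is the difference of two decreasing nonnegative functions in $E$.

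\emph{The easy direction: $x \in Z_E \Rightarrow Cx \in E$.} Suppose $x = \sum_{j=1}^n (u_j - v_j)$ with $0 \leq u_j, v_j \in E$ and $\mu(u_j) = \mu(v_j)$. Applying the Hardy operator $C$ and using that $C$ commutes with integration against $t^{-1}$, one reduces to showing $C(u-v) \in E$ whenever $\mu(u) = \mu(v)$. Here I would invoke the classical fact that for a mean-zero type difference one has $u - v \prec\prec$ something controllable; more precisely, the point is that $\int_0^t \mu(s,u-v)\,ds$ is bounded by a quantity that, after the Hardy averaging, still lies in $E$ because $C$ maps the positive cone of $E$ into $L_1 + L_\infty$ and the full symmetry of $E$ absorbs the majorization $\mathbf{E}(x|\mathcal{A}) \prec\prec x$. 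Actually the cleanest route is: $C(u-v)(t) = \tfrac1t\int_0^t(u-v)$, and since $u,v$ have the same distribution, $\int_0^t(u-v)$ is bounded in absolute value by $\int_0^t \mu(s,u)\,ds - \int_0^\infty\min(\ldots)$... — rather than grind this out I would observe that $u - v = (u - u\wedge v) - (v - u\wedge v)$ exhibits $u-v$ as a difference of disjointly supported equimeasurable pieces, reducing to the case $u \perp v$, $\mu(u)=\mu(v)$, where $C(u-v)$ is directly estimated by $\sigma_{1/2}$-type dilations of $\mu(u)$, which lie in $E$ since $E$ is fully symmetric and $\mu(u) \in E$.

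\emph{The hard direction: $Cx \in E \Rightarrow x \in Z_E$.} This is the substantive content. Given $x = \mu(a) - \mu(b)$ decreasing-minus-decreasing with $Cx \in E$, I want to write $x$ as a finite (or norm-convergent, then one argues the span is already closed enough) sum of equimeasurable differences. The strategy, following the $\sigma$-invariance of symmetric functionals noted just before the theorem and the expectation-operator machinery: partition the semi-axis by a well-chosen discrete node set $\mathcal{A}$ adapted to the level sets of $\mu(a)$ and $\mu(b)$, so that on each block $A_k$ the averages of $\mu(a)$ and $\mu(b)$ can be matched up; then $x - \mathbf{E}(x|\mathcal{A})$ is, block by block, a mean-zero function supported on a finite-measure set, hence lies in $Z_E$ by an elementary rearrangement argument on each block (a mean-zero bounded function on a finite interval is a difference of two equimeasurable functions — this is the finite-dimensional Hardy--Littlewood input). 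The remaining piece $\mathbf{E}(x|\mathcal{A})$ is a step function whose "tail behaviour'' is governed by $Cx$; the hypothesis $Cx \in E$ is exactly what lets one express this step function as a telescoping sum of dilations $\sigma_s \mu(c) - \sigma_{s'}\mu(c)$ type terms (equimeasurable since $\sigma_s$ and $\sigma_{s'}$ of the same function... no — one uses that a decreasing step function with $C$ of it in $E$ is a limit of such differences), landing in $Z_E$. The role of the Hardy operator is precisely to quantify "how far from equimeasurable'' the decreasing majorant is.

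\emph{Main obstacle.} The genuine difficulty is the hard direction, and specifically controlling the step-function remainder $\mathbf{E}(x|\mathcal{A})$: one must choose the partition $\mathcal{A}$ so simultaneously (a) the per-block remainders are mean-zero and stay in $E$ with summable norms, and (b) the averaged part is decomposable into equimeasurable differences using only $Cx \in E$ and the full symmetry of $E$, with no hidden appeal to the norm being Fatou or to $E$ being separable. I expect the choice of $\mathcal{A}$ — dyadic on the scale where $\mu(a)$ and $\mu(b)$ interlace — together with a careful summation estimate via $\sum_k \tfrac{1}{m(A_k)}|\int_{A_k} x| \lesssim \|Cx\|_E$, to be the technical heart of the argument. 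The interval case $(0,1)$ with $\int_0^1 x = 0$ follows by the same scheme, the mean-zero condition replacing the control at $t \to \infty$ that $Cx \in E$ provides on the semi-axis.
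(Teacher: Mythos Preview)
Your easy direction contains a genuine error. The claim that $u - u\wedge v$ and $v - u\wedge v$ are equimeasurable whenever $\mu(u)=\mu(v)$ is false: take $u = 3\chi_{(0,1)} + 2\chi_{(1,2)} + \chi_{(2,3)}$ and $v = \chi_{(0,1)} + 3\chi_{(1,2)} + 2\chi_{(2,3)}$; then $\mu(u)=\mu(v)$, but $(u-v)_+ = 2\chi_{(0,1)}$ and $(u-v)_- = \chi_{(1,3)}$ have different distributions, so your reduction to the disjointly supported case collapses. You also invoke full symmetry of $E$, which is not assumed in the theorem. The paper's route is quite different and avoids both issues: writing $x = \sum_{k=1}^n(x_k - y_k)$ with $\mu(x_k)=\mu(y_k)$, one sets $z = x_+ + \sum_k y_k = x_- + \sum_k x_k$ and uses the two-sided estimate $\mu(A+B)\prec\prec \mu(A)+\mu(B)\prec\prec 2\sigma_{1/2}\mu(A+B)$ to obtain the \emph{pointwise} bound $|C(\mu(x_+)-\mu(x_-))| \leq n\,\mu(z)\in E$; a second short lemma shows $Cx - C(\mu(x_+)-\mu(x_-))\in E$ for every $x\in\mathcal{D}_E$, completing the implication with no appeal to majorization-monotonicity of the norm.

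Your outline of the hard direction is essentially the paper's: take the dyadic partition $\mathcal{A}=\{(2^n,2^{n+1})\}_{n\in\mathbb{Z}}$, set $x_1=\mathbf{E}(x|\mathcal{A})$, and treat $x-x_1$ and $x_1$ separately. For $x-x_1$ the paper invokes Kwapien's theorem (not Hardy--Littlewood) on each block to write a mean-zero bounded function as a difference of equimeasurable nonnegative functions with $L_\infty$ control; summing over blocks and bounding by $\sigma_2(\mu(a)+\mu(b))$ keeps the pieces in $E$. What you are missing is the treatment of the step function $x_1$: the paper defines $z\in E$ by $z(t)=(Cx_1)(2^{n+1})$ for $t\in(2^n,2^{n+1})$ (this is where $Cx\in E$ enters, via $Cx_1\in E$), verifies the algebraic identity $x_1 = 2z - \sigma_2 z$, and then notes that $2z-\sigma_2 z\in Z_E$ because $\sigma_2 z$ decomposes as a sum of two functions each equimeasurable with $z$. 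Your ``telescoping dilations'' remark gestures at this, but the identity $x_1=2z-\sigma_2 z$ is the crux and replaces the summation estimate you anticipate; no adaptation of the partition to $\mu(a),\mu(b)$ is needed.
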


The following uniform submajorization was introduced by Kalton and Sukochev in \cite{KS}.

Let $x,y\in L_1(0,1)$ (or $x,y\in (L_1+L_{\infty})(0,\infty)$). We say that $y\lhd x$ if there exists $m\in\mathbb{N}$ such that
\begin{equation}\label{ks ordering}
\int_{ma}^b\mu(s,y)ds\leq\int_a^b\mu(s,x)ds,\quad\forall ma\leq b.
\end{equation}
Let $x,y\in l_{\infty}.$ We say that $y\lhd x$ if there exists $m\in\mathbb{N}$ such that
\begin{equation}\label{ks ordering0}
\sum_{k=ma+1}^b\mu(k,y)\leq\sum_{k=a+1}^b\mu(k,x)\quad\forall ma+1\leq b.
\end{equation}

The following important theorem was proved in \cite{KS} (see Theorem 5.4 and Theorem 6.3 there).
\begin{thm}\label{ks majorization theorem} Let $x,y\in L_1(0,1)$ or $x,y\in (L_1+L_{\infty})(0,\infty)$ or $x,y\in l_{\infty}$ be such that $y\lhd x.$ For every $\varepsilon>0,$ the function $(1-\varepsilon)y$ belongs to a convex hull of the set $\{z:\ \mu(z)\leq\mu(x)\}.$
\end{thm}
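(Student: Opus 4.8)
The plan is to reduce everything to the single-interval case $x,y\in L_1(0,1)$ and then invoke a Hahn--Banach separation argument combined with a careful estimate showing that no symmetric functional can separate $(1-\varepsilon)y$ from the required convex hull. First I would pass to decreasing rearrangements: since $\mu(z)\leq\mu(x)$ is the defining condition of the target set, and $y\lhd x$ depends only on $\mu(x),\mu(y)$, we may assume $x=\mu(x)$, $y=\mu(y)$ at the outset. The three stated ambient spaces (the interval $(0,1)$, the semi-axis, and the sequence case $l_\infty$) are handled by the same mechanism; I would treat the semi-axis as the primary case and obtain $l_\infty$ by the standard identification of a decreasing sequence with a decreasing step function, and obtain $(0,1)$ by restriction and the convention that $\sigma_s$ on $(0,1)$ truncates. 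So the real content is: \emph{if $y\lhd x$ on $(0,\infty)$ with constant $m$, then for every $\varepsilon>0$ the function $(1-\varepsilon)y$ lies in $\overline{\mathrm{conv}}\{z:\mu(z)\le\mu(x)\}$, closure taken in a suitable topology (e.g. the $(L_1+L_\infty)$-norm, or $L_1$ locally).}

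The key step is the separation argument. Suppose $(1-\varepsilon)y\notin K:=\overline{\mathrm{conv}}\{z:\mu(z)\le\mu(x)\}$. Since $K$ is convex and closed and solid under the order of doubly-substochastic operators, a Hahn--Banach functional $\phi$ separating $(1-\varepsilon)y$ from $K$ can be chosen in the dual, and after symmetrizing (averaging $\phi$ over rearrangement-preserving maps, which preserves both $\sup_K\phi$ and, by $\mu(z)\le\mu(x)$ being rearrangement-invariant, the value of the supremum) we may assume $\phi$ is itself given by integration against a decreasing nonnegative function $f=\mu(f)$. Then $\sup_{z\in K}\phi(z)=\int_0^\infty \mu(s,x)f(s)\,ds=\int_0^\infty \mu(s,x)\mu(s,f)\,ds$ by the Hardy--Littlewood inequality, while $\phi((1-\varepsilon)y)\le(1-\varepsilon)\int_0^\infty\mu(s,y)\mu(s,f)\,ds$. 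So it suffices to show
\begin{equation*}
(1-\varepsilon)\int_0^\infty\mu(s,y)\mu(s,f)\,ds\le\int_0^\infty\mu(s,x)\mu(s,f)\,ds
\end{equation*}
for every decreasing $f\ge0$, and this is where the hypothesis $y\lhd x$ with its multiplicative shift $ma\le b$ enters: writing the left integral via $\int_0^\infty \mu(s,y)\,d\nu(s)$ where $\nu$ is the (decreasing) measure with density $f$, one integrates by parts / uses the layer-cake representation of $f$ as a superposition of indicators $\chi_{[0,b]}$ and then of the partial sums $\int_a^b$, and controls each block using \eqref{ks ordering}. The factor $1-\varepsilon$ is exactly what absorbs the harmless discrepancy on the initial segment $[0,ma]$ of length $ma$ where \eqref{ks ordering} gives no information: on that segment one bounds $\int_0^{ma}\mu(s,y)\mu(s,f)\,ds$ by $\mu(0,f)\int_0^{ma}\mu(s,y)\,ds$ and plays it off against a corresponding portion of the right-hand side, the point being that $f$ decreasing makes the tail contribution on the right dominate once the scale is large enough, and shrinking to $(1-\varepsilon)y$ kills the bounded leftover.

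The main obstacle I anticipate is making the separation/symmetrization step rigorous in the non-reflexive ambient space $(L_1+L_\infty)(0,\infty)$: its dual is complicated (it contains singular functionals), so one must either work in the local $L_1$ topology on bounded-measure pieces and exhaust, or restrict to the subspace where things are weak-$*$ closed, and then argue that a separating functional can be taken to be an honest integration against $L_1+L_\infty$'s predual-type element rather than a genuinely singular one — precisely because $K$ contains, for every $z$ with $\mu(z)\le\mu(x)$, all its "spread out" versions $\sigma_s z$, forcing the separating functional to be order-continuous. Once that reduction is in place, the inequality above is a robust one-variable estimate; the only care needed there is the bookkeeping of the shift $m$ in \eqref{ks ordering} versus the layer-cake decomposition of a general decreasing weight, and the verification that the $l_\infty$ version \eqref{ks ordering0} yields the summation analogue verbatim. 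I would present the semi-axis case in full and then remark that the interval case follows by noting $y\lhd x$ on $(0,1)$ already forces the relevant inequalities to hold with the truncating $\sigma_s$, and the sequence case by discretization.
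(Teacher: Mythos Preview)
The paper does not itself prove this theorem; it quotes it from \cite{KS} (Theorems~5.4 and~6.3 there), and the proof given there is \emph{constructive}: one exhibits the finite convex combination by hand, using the uniform parameter $m$ in \eqref{ks ordering} to control how many pieces are needed. Your separation approach has a genuine gap.

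The statement asserts that $(1-\varepsilon)y$ lies in the \emph{convex hull} of $K:=\{z:\mu(z)\le\mu(x)\}$, i.e.\ is a \emph{finite} convex combination $\sum_{k=1}^n\lambda_k z_k$. A Hahn--Banach separation can at best yield membership in the \emph{closed} convex hull, and you explicitly target $\overline{\mathrm{conv}}\,K$. This is not a cosmetic difference: the applications in the paper (Lemma~\ref{fi ks major}, Corollary~\ref{mn contraction corollary}, Lemma~\ref{pietsch funk}) use the finiteness of the combination precisely so that $\|y\|_E\le\|x\|_E$ follows for \emph{every} symmetric norm $\|\cdot\|_E$, not only for fully symmetric ones. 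More tellingly, your target inequality $(1-\varepsilon)\int\mu(y)f\le\int\mu(x)f$ for decreasing $f\ge0$ already follows from $y\prec\prec x$ alone: by the layer--cake formula $f$ is a superposition of $\chi_{[0,b]}$, and $\int_0^b\mu(y)\le\int_0^b\mu(x)$ is just \eqref{ks ordering} with $a=0$. Neither the parameter $m$ nor the factor $1-\varepsilon$ ever enters. So your argument, read literally, proves only that $y\prec\prec x$ implies $y\in\overline{\mathrm{conv}}\,K$, which is classical and strictly weaker than the theorem. Indeed, whenever $E$ is a symmetric but not fully symmetric space one can find $x\in E$ and $y\prec\prec x$ with $\|y\|_E>\|x\|_E$; for such $y$ one has $(1-\varepsilon)y\notin\mathrm{conv}\,K$ for small $\varepsilon$ (any element of $\mathrm{conv}\,K$ has $E$-norm at most $\|x\|_E$), even though $y\in\overline{\mathrm{conv}}\,K$. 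These $y$ necessarily fail $y\lhd x$, and it is exactly the uniform shift $m$ in $\lhd$ that the Kalton--Sukochev proof exploits to build the convex combination explicitly. The separation route cannot see this distinction.
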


This theorem led to the following fundamental result (see \cite{KS}).

\begin{thm} Let $E=E(0,1)$ (or $E=E(0,\infty)$ or $E=E(\mathbb{N})$) be a symmetric Banach space either on the interval $(0,1)$ or on the semi-axis or on $\mathbb{N}.$ It follows that the corresponding set $E(\mathcal{M},\tau)$ is a symmetric Banach space.
\end{thm}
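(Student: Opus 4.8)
The plan is to check, for $E(\mathcal M,\tau)=\{A\in(L_1+L_\infty)(\mathcal M):\mu(A)\in E\}$ equipped with $\|A\|_{E(\mathcal M,\tau)}=\|\mu(A)\|_E$, the properties required of a symmetric operator space, in the order: conditions $(1)$, $(2)$ of the definition; positive homogeneity and non-degeneracy of the norm; the triangle inequality; completeness. The first group is routine: $\mu(B)=\mu(A)$ gives $\|B\|_{E(\mathcal M,\tau)}=\|A\|_{E(\mathcal M,\tau)}$; $0\le B\le A$ gives $\mu(B)\le\mu(A)$, hence $B\in E(\mathcal M,\tau)$ and $\|B\|\le\|A\|$ by the defining properties of $E$; and $\mu(\lambda A)=|\lambda|\mu(A)$, $\mu(A)=0\iff A=0$ dispose of the rest. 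Thus the content is the triangle inequality — which also yields that $E(\mathcal M,\tau)$ is a linear subspace of $(L_1+L_\infty)(\mathcal M)$ — and completeness.

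For the triangle inequality the mechanism is the following. The ``naive'' estimate $\mu(A+B)\prec\prec\mu(A)+\mu(B)$ (subadditivity of the $K$-functional) would only give $\|\mu(A+B)\|_E\le\|\mu(A)+\mu(B)\|_E$ if $E$ were \emph{fully} symmetric, which is not assumed. To bypass this we upgrade $\prec\prec$ to the strictly stronger uniform submajorization $\lhd$, because then Theorem \ref{ks majorization theorem} applies: granting $\mu(A+B)\lhd\mu(A)+\mu(B)$, for each $\varepsilon>0$ the function $(1-\varepsilon)\mu(A+B)$ is a finite convex combination of functions $z$ with $\mu(z)\le\mu(A)+\mu(B)$, and every such $z$ satisfies $\|z\|_E\le\|\mu(A)+\mu(B)\|_E\le\|\mu(A)\|_E+\|\mu(B)\|_E$ by the defining properties of $E$; hence $(1-\varepsilon)\|\mu(A+B)\|_E\le\|\mu(A)\|_E+\|\mu(B)\|_E$, and letting $\varepsilon\downarrow0$ (the bound being uniform, and $E$ complete) yields $A+B\in E(\mathcal M,\tau)$ together with the triangle inequality.

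So the heart — and the step I expect to be the main obstacle — is the passage from $\prec\prec$ to $\lhd$, i.e.\ the proof that $\mu(A+B)\lhd\mu(A)+\mu(B)$; I would carry it out in three stages. (a) For $0\le A,B$: the two-sided estimate recalled in the excerpt, $\mu(A+B)\prec\prec\mu(A)+\mu(B)\prec\prec2\sigma_{1/2}\mu(A+B)$, gives $\int_0^t(\mu(s,A)+\mu(s,B))\,ds\le\int_0^{2t}\mu(s,A+B)\,ds$ for all $t$; combining this with $\mu(A+B)\prec\prec\mu(A)+\mu(B)$ one checks, for all $2a\le b$, that $\int_{2a}^b\mu(s,A+B)\,ds\le\int_a^b(\mu(s,A)+\mu(s,B))\,ds$ (split $\int_{2a}^b=\int_0^b-\int_0^{2a}$ and apply the two estimates), i.e.\ $\mu(A+B)\lhd\mu(A)+\mu(B)$ with $m=2$. (b) For $A,B$ self-adjoint: since $A+B\le A_++B_+$ and $-(A+B)\le A_-+B_-$, the monotonicity $X\le Y\Rightarrow\mu(t,X_+)\le\mu(t,Y_+)$ gives $\mu((A+B)_\pm)\le\mu(A_\pm+B_\pm)$; as $(A+B)_+$ and $(A+B)_-$ have orthogonal supports, $\mu(A+B)$ is pointwise dominated by the singular value function in $\mathcal M\oplus\mathcal M$ of the positive operator $(A_+\oplus A_-)+(B_+\oplus B_-)=(A_++B_+)\oplus(A_-+B_-)$, whose two summands have singular value functions $\mu(A)$ and $\mu(B)$; applying (a) in $\mathcal M\oplus\mathcal M$, and using that $\lhd$ is inherited by pointwise-smaller decreasing functions, gives $\mu(A+B)\lhd\mu(A)+\mu(B)$. (c) For general $A,B$: the self-adjoint dilation $\widetilde X=\bigl(\begin{smallmatrix}0&X\\ X^{*}&0\end{smallmatrix}\bigr)\in M_2(\mathcal M)$ has $\mu(\widetilde X)=\sigma_2\mu(X)$ and $\widetilde{A+B}=\widetilde A+\widetilde B$, so (b) gives $\sigma_2\mu(A+B)\lhd\sigma_2(\mu(A)+\mu(B))$; since $y\lhd x$ is equivalent to $\sigma_2 y\lhd\sigma_2 x$, this is exactly $\mu(A+B)\lhd\mu(A)+\mu(B)$. (There is no circularity here: (a) and (b) are statements about generalised singular value functions valid in an arbitrary semifinite von Neumann algebra, hence legitimately applicable in $\mathcal M\oplus\mathcal M$ and $M_2(\mathcal M)$.)

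Finally, completeness is handled by the standard argument: a $\|\cdot\|_{E(\mathcal M,\tau)}$-Cauchy sequence is Cauchy in $(L_1+L_\infty)(\mathcal M)$, hence convergent there and in measure; if $E$ has the Fatou property this upgrades to norm convergence in the usual measure-theoretic way, and in general one concludes by realizing $E(\mathcal M,\tau)$ as a closed subspace of $E^{\times\times}(\mathcal M,\tau)$, using that $E$ is closed in its Fatou bidual $E^{\times\times}$ and that $A\mapsto\mu(A)$ is $1$-Lipschitz from $E^{\times\times}(\mathcal M,\tau)$ to $E^{\times\times}$.
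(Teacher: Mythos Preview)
The paper does not supply a proof of this theorem; it is quoted from \cite{KS} and is presented as the fundamental consequence of Theorem~\ref{ks majorization theorem}. Your argument for the triangle inequality is precisely the mechanism the paper has in mind: the proof of Lemma~\ref{pietsch funk} invokes the very same two ingredients (Proposition~8.6 of \cite{KS} for $\mu(A+B)\lhd\mu(A)+\mu(B)$, then Theorem~\ref{ks majorization theorem} to pass to norms), and your step~(a) is the content of Lemma~\ref{mu sum estimate}. Your reduction (a)$\to$(b)$\to$(c) via orthogonal supports and the $2\times2$ self-adjoint dilation is a clean route to the general case that the paper simply outsources to \cite{KS}.

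The completeness argument, however, has a real gap. The assertion that $E$ is closed in its second K\"othe dual $E^{\times\times}$ is not justified and is not automatic: the embedding $E\hookrightarrow E^{\times\times}$ is norm-decreasing, but without equivalence of the two norms on $E$ (which is exactly the Fatou property you are trying to avoid assuming), completeness of $E$ in its own norm does not force it to be closed in the weaker $E^{\times\times}$-topology. Worse, even if you grant $A\in E(\mathcal{M},\tau)$, you still need $\|A_n-A\|_{E(\mathcal{M},\tau)}\to0$; passing from $\|A_n-A_m\|_{E(\mathcal{M},\tau)}<\varepsilon$ to $\|A_n-A\|_{E(\mathcal{M},\tau)}\le\varepsilon$ by letting $m\to\infty$ is precisely a Fatou-type lower semicontinuity, which is what is unavailable. (There is a further issue: $A_n$ Cauchy in $E(\mathcal{M},\tau)$ means $\|\mu(A_n-A_m)\|_E$ is small, not $\|\mu(A_n)-\mu(A_m)\|_E$, so your Lipschitz map does not obviously produce a Cauchy sequence in $E$ either.) The proof in \cite{KS} proceeds instead by a direct uniform-submajorization estimate: one shows that an absolutely summable series $\sum y_k$ converges by controlling $\mu(\sum_{k\ge n}y_k)$ through weighted dilates $\sigma_{2^k}\mu(y_k)$ and then applying Theorem~\ref{ks majorization theorem}. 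The appendix of the present paper carries out exactly this style of argument for the auxiliary space $F$ of Proposition~\ref{func-seq}, and that is the template you would need to replace your last paragraph.
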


Also, the uniform submajorization permits us to prove the convexity of the functional $\pi:\mathcal{E}\to\mathbb{R}$ defined in Section \ref{introd}.

\begin{lem}\label{pietsch funk} The functional $\pi:\mathcal{E}\to\mathbb{R}$ is convex on every symmetrically normed operator ideal $\mathcal{E}.$
\end{lem}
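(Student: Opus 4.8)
The plan is to establish convexity of $\pi$ by reducing the problem to Hardy--Littlewood majorization, where the convexity is automatic, and then upgrading from majorization to genuine comparison of $\mathcal{E}$-norms using the uniform submajorization machinery (Theorem \ref{ks majorization theorem}). First I would record that, by the definition of a symmetrically normed ideal, for $A,B\in\mathcal{E}$ we have $\|\lambda A+(1-\lambda)B\|_{\mathcal{E}}$ controlled by the singular value function $\mu(\lambda A+(1-\lambda)B)$, and that passing to direct sums is compatible with this: $\mu((\lambda A+(1-\lambda)B)^{\oplus m})$ is the decreasing rearrangement of the $m$-fold disjoint repetition of $\mu(\lambda A+(1-\lambda)B)$. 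The existence of the limit defining $\pi$ follows from subadditivity of $m\mapsto\|A^{\oplus m}\|_{\mathcal{E}}$ together with Fekete's lemma, so $\pi(A)=\inf_m\frac1m\|A^{\oplus m}\|_{\mathcal{E}}$; this already shows $\pi$ is finite-valued, positively homogeneous and symmetric.

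The heart of the argument is the inequality
$$\pi(\lambda A+(1-\lambda)B)\leq\lambda\pi(A)+(1-\lambda)\pi(B),\qquad \lambda\in[0,1].$$
The key observation is that for any $m$ the operator $(\lambda A+(1-\lambda)B)^{\oplus m}$ is, up to an arbitrarily small loss of mass, in the convex hull of operators whose singular value function is dominated by that of a single direct sum $A^{\oplus p}\oplus B^{\oplus q}$ with $p/m\to\lambda$, $q/m\to 1-\lambda$. Concretely, I would show $(\lambda A+(1-\lambda)B)^{\oplus m}\prec\prec \lambda A^{\oplus m}+ (1-\lambda)B^{\oplus m}$ and then use the relation
$$\lambda A^{\oplus m}+(1-\lambda)B^{\oplus m}\prec\prec \mu(\lambda A^{\oplus m})+\mu((1-\lambda)B^{\oplus m})$$
quoted in the preliminaries, combined with the fact (again from the preliminaries) that the right-hand side is in turn submajorized by a suitable dilation of $\mu(A^{\oplus m}\oplus B^{\oplus m})$-type objects. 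At this point, instead of plain majorization (which does not by itself control the $\mathcal{E}$-norm on a merely symmetrically normed ideal, only on a fully symmetric one), I would invoke Theorem \ref{ks majorization theorem}: the relevant operator, multiplied by $(1-\varepsilon)$, lies in the convex hull of $\{z:\mu(z)\le\mu(x)\}$ for an appropriate $x$ built from $A$ and $B$. Since the $\mathcal{E}$-norm is a norm and is monotone with respect to $\mu(\cdot)\le\mu(\cdot)$, taking convex combinations and then letting $\varepsilon\to0$ yields
$$(1-\varepsilon)\big\|(\lambda A+(1-\lambda)B)^{\oplus m}\big\|_{\mathcal{E}}\leq \big\|A^{\oplus p}\big\|_{\mathcal{E}}+\big\|B^{\oplus q}\big\|_{\mathcal{E}}$$
for suitable $p\le m$, $q\le m$ (one has to be slightly careful that $p,q$ may need to exceed $m$ by a bounded factor coming from the integer $m$ in the definition of $\lhd$, which is harmless in the limit). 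Dividing by $m$ and sending $m\to\infty$ along with $\varepsilon\to0$ gives the desired convexity.

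The main obstacle I anticipate is bookkeeping: matching up the block structure of $(\lambda A+(1-\lambda)B)^{\oplus m}$ with direct sums of the form $A^{\oplus p}\oplus B^{\oplus q}$ while keeping the ratios $p/m,q/m$ converging to $\lambda,1-\lambda$, and controlling the dilation factors and the auxiliary integer $m$ appearing in the definition of uniform submajorization \eqref{ks ordering0} so that they do not survive in the limit. A secondary point requiring care is that Theorem \ref{ks majorization theorem} introduces the factor $(1-\varepsilon)$, so one must argue that the final estimate is stable under $\varepsilon\to 0$; this is immediate because $\pi$ is positively homogeneous and the left-hand side depends continuously on the scaling. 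Once these reductions are set up, the convexity inequality, together with positive homogeneity, gives that $\pi$ is convex (indeed sublinear), completing the proof.
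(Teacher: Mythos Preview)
Your proposal has the right main ingredient---Theorem \ref{ks majorization theorem}---but the route is over-engineered and contains a genuine gap. You set up Hardy--Littlewood majorization $\prec\prec$ and then say you would ``invoke Theorem \ref{ks majorization theorem},'' but that theorem requires the \emph{uniform} submajorization $\lhd$ of \eqref{ks ordering}--\eqref{ks ordering0}, not $\prec\prec$. You never explain how to upgrade, and in general $y\prec\prec x$ does not imply $y\lhd x$; this is precisely the distinction that makes symmetrically normed ideals harder than fully symmetric ones. The detour through rational approximation $p/m\to\lambda$, $q/m\to 1-\lambda$ and block decompositions $A^{\oplus p}\oplus B^{\oplus q}$ is also unnecessary and is the source of the bookkeeping you anticipate.

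The paper proceeds far more directly. Since $\pi$ is positively homogeneous, it suffices to prove sublinearity $\pi(A+B)\leq\pi(A)+\pi(B)$. The key input (cited from \cite{KS}, Proposition 8.6; compare Lemma \ref{mu sum estimate}) is that $\mu(A+B)\lhd\mu(A)+\mu(B)$ already holds in the uniform sense, not merely as $\prec\prec$. Uniform submajorization is preserved by the dilation $\sigma_m$, so $\sigma_m\mu(A+B)\lhd\sigma_m(\mu(A)+\mu(B))$. Now Theorem \ref{ks majorization theorem} applies and, after letting $\varepsilon\to0$, gives $\|\sigma_m\mu(A+B)\|_E\leq\|\sigma_m(\mu(A)+\mu(B))\|_E\leq\|\sigma_m\mu(A)\|_E+\|\sigma_m\mu(B)\|_E$, the last step being just the triangle inequality in the sequence ideal $E$. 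Since $\|A^{\oplus m}\|_{\mathcal{E}}=\|\sigma_m\mu(A)\|_E$, divide by $m$ and let $m\to\infty$. No block matching, no rational $\lambda$, and no passage from $\prec\prec$ to $\lhd$ is required.
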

\begin{proof} Let $E$ be the corresponding symmetrically normed ideal of $l_{\infty}.$ For every $A,B\in\mathcal{E},$ it follows from Proposition 8.6 of \cite{KS} that $\mu(A+B)\lhd\mu(A)+\mu(B).$ Hence, $\sigma_m\mu(A+B)\lhd\sigma_m(\mu(A)+\mu(B)).$ By Theorem \ref{ks majorization theorem}, we have
$$\|\sigma_m\mu(A+B)\|_E\leq\|\sigma_m(\mu(A)+\mu(B))\|_E\leq\|\sigma_m\mu(A)\|_E+\|\sigma_m\mu(B)\|_E.$$
Note that $\|A^{\oplus m}\|_{\mathcal{E}}=\|\sigma_m\mu(A)\|_E.$ Dividing by $m$ and letting $m\to\infty,$ we obtain $$\pi(A+B)\leq\pi(A)+\pi(B).$$
\end{proof}

\section{Lifting of symmetric functionals}

In this section, we explain a canonical bijection between symmetric functionals and traces. In what follows, we require that a semifinite von Neumann algebra $\mathcal{M}$ be either atomless or atomic with traces of all atoms being $1.$

For an atomless von Neumann algebra $\mathcal{M},$ we have (see e.g. \cite{FackKosaki})
$$\int_0^t\mu(s,A)ds=\sup\{\tau(p|A|):\ p\in P(\mathcal{M}),\ \tau(p)=t\},\quad A\in\mathcal{M}.$$
For a atomic von Neumann algebra $\mathcal{M},$ we have (see e.g. \cite{FackKosaki})
$$\sum_{k=1}^m\mu(k,A)=\sup\{\tau(p|A|):\ p\in P(\mathcal{M}),\ \tau(p)=m\},\quad A\in\mathcal{M}.$$
In either case, this implies a remarkable inequality (see e.g. \cite{FackKosaki})
\begin{equation}\label{rear sum estimate}
\mu(A+B)\prec\prec\mu(A)+\mu(B)\prec\prec2\sigma_{1/2}\mu(A+B),\quad 0\leq A,B\in(L_1+L_{\infty})(\mathcal{M}).
\end{equation}

\begin{lem}\label{fi ks major} Let $E=E(0,1)$ (or $E=E(0,\infty)$ or $E=E(\mathbb{N})$) be a symmetric Banach space either on the interval $(0,1)$ or on the semi-axis or on $\mathbb{N}.$ If $x,y\in E_+$ are such that $y\lhd x,$ then $\varphi(y)\leq\varphi(x)$  for every positive symmetric functional $\varphi$ on $E.$
\end{lem}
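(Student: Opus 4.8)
The plan is to derive the desired inequality $\varphi(y)\le\varphi(x)$ directly from Theorem \ref{ks majorization theorem}. First I would record two elementary properties of a positive symmetric functional $\varphi$ on $E$. Since $\varphi$ is positive and linear it is monotone on the positive cone, and since it is symmetric it satisfies $\varphi(u)=\varphi(\mu(u))$ for every $0\le u\in E$ (the function $\mu(u)$ is itself an element of $E$, by axiom (1) of a symmetric space). Consequently, whenever $0\le u,v\in E$ and $\mu(u)\le\mu(v)$ pointwise, one has $\mu(v)\in E$ with $0\le\mu(u)\le\mu(v)$, hence $\mu(u)\in E$ by axiom (2), and therefore
$$\varphi(u)=\varphi(\mu(u))\le\varphi(\mu(v))=\varphi(v).$$
I would also use the standard inequality $|\varphi(z)|\le\varphi(|z|)$, valid for every $z\in E$ and every positive linear $\varphi$.

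The next step is to observe that every $z\in L_1+L_\infty$ with $\mu(z)\le\mu(x)$ automatically lies in $E$ (immediate from the symmetric space axioms: $\mu(x)\in E$ and $0\le\mu(z)\le\mu(x)$ give $\mu(z)\in E$, whence $|z|\in E$ and then $z\in E$), and that for every such $z$
$$\operatorname{Re}\varphi(z)\le|\varphi(z)|\le\varphi(|z|)\le\varphi(x),$$
the last inequality being the first step applied to $u=|z|$ and $v=x$ (here $x\ge 0$ and $\mu(|z|)=\mu(z)\le\mu(x)$). By linearity it follows that $\operatorname{Re}\varphi(w)\le\varphi(x)$ for every $w$ in the convex hull of the set $\{z:\mu(z)\le\mu(x)\}$.

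Finally, since $y\lhd x$, Theorem \ref{ks majorization theorem} puts $(1-\varepsilon)y$ into that convex hull for every $\varepsilon>0$. Because $y\ge 0$ and $\varphi\ge 0$, the number $\varphi(y)$ is real and nonnegative, so $(1-\varepsilon)\varphi(y)=\operatorname{Re}\varphi\bigl((1-\varepsilon)y\bigr)\le\varphi(x)$; letting $\varepsilon\downarrow0$ gives $\varphi(y)\le\varphi(x)$.

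I do not anticipate a serious obstacle, since all the work is done by Theorem \ref{ks majorization theorem}. The only points requiring a little care are that the elements $z$ supplied by that theorem need not be positive — which is why the detour through $|\varphi(z)|\le\varphi(|z|)$ is needed — and that one should check these $z$ belong to the domain $E$ of $\varphi$; if Theorem \ref{ks majorization theorem} is read as providing membership in the closed convex hull rather than the algebraic one, one would additionally use that the positive functional $\varphi$ is automatically bounded and pass to the limit.
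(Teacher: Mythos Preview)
Your argument is correct and is essentially the paper's own proof: apply Theorem \ref{ks majorization theorem} to write $(1-\varepsilon)y$ as a convex combination of elements $z_k$ with $\mu(z_k)\le\mu(x)$, use positivity and symmetry of $\varphi$ to get $\varphi(z_k)\le\varphi(|z_k|)=\varphi(\mu(z_k))\le\varphi(\mu(x))=\varphi(x)$, and let $\varepsilon\to0$. Your extra care in checking $z_k\in E$ and handling real parts is fine but not strictly needed here, since the $z_k$ are real-valued and Theorem \ref{ks majorization theorem} supplies an algebraic (finite) convex combination.
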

\begin{proof} Fix $\varepsilon>0.$ By Theorem \ref{ks majorization theorem}, there exist $z_k\in E,$ $1\leq k\leq n,$ and positive numbers $\lambda_k,$ $1\leq k\leq n,$ such that $\mu(z_k)\leq\mu(x)$ for every $1\leq k\leq n$ and
$$(1-\varepsilon)y=\sum_{k=1}^n\lambda_kz_k,\quad \sum_{k=1}^n\lambda_k=1.$$
Since $\varphi$ is positive and symmetric, it follows that
$$\varphi(z_k)\leq\varphi(|z_k|)=\varphi(\mu(z_k))\leq\varphi(\mu(x))=\varphi(x).$$
Therefore, $(1-\varepsilon)\varphi(y)\leq\varphi(x).$ Since $\varepsilon>0$ is arbitrarily small, the assertion follows.
\end{proof}

The following assertion is essentially known. However, we provide the full proof for readers convenience.

\begin{lem}\label{mu sum estimate} Let $\mathcal{M}$ be a semifinite atomless von Neumann algebra and let $A,B\in(L_1+L_{\infty})(\mathcal{M},\tau)$ be positive operators.
$$\int_{2a}^b\mu(s,A+B)ds\leq\int_a^b(\mu(s,A)+\mu(s,B))ds,\quad\forall 2a\leq b,$$
$$\int_{2a}^b(\mu(s,A)+\mu(s,B))ds\leq\int_{2a}^{2b}\mu(s,A+B)ds,\quad\forall 2a\leq b.$$
Similar assertion is valid for atomic von Neumann algebra $\mathcal{M}.$
\end{lem}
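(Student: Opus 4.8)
The plan is to reduce both inequalities to the variational formula
$\int_0^t\mu(s,X)\,ds=\sup\{\tau(pX):p\in P(\mathcal{M}),\ \tau(p)=t\}$
valid for positive $X\in(L_1+L_\infty)(\mathcal{M},\tau)$ in the atomless case, together with the translation identity
$\int_a^b\mu(s,X)\,ds=\int_0^b\mu(s,X)\,ds-\int_0^a\mu(s,X)\,ds$
and the elementary fact that $\mu(s,A+B)\le\mu(s/2,A)+\mu(s/2,B)$ (which is exactly the left half of \eqref{rear sum estimate} read off at the level of singular value functions). For the first inequality I would write, for fixed $2a\le b$,
$\int_{2a}^b\mu(s,A+B)\,ds=\int_0^b\mu(s,A+B)\,ds-\int_0^{2a}\mu(s,A+B)\,ds$,
bound the first term using $A+B\prec\prec\mu(A)+\mu(B)$ (so $\int_0^b\mu(s,A+B)\,ds\le\int_0^b(\mu(s,A)+\mu(s,B))\,ds$), and bound $\int_0^{2a}\mu(s,A+B)\,ds$ from below by $\int_0^a\mu(s,A)\,ds+\int_0^a\mu(s,B)\,ds$. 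This last bound is the crux: it follows from the variational formula by choosing, for each $\varepsilon>0$, projections $p,q$ with $\tau(p)=\tau(q)=a$ and $\tau(pA)\ge\int_0^a\mu(s,A)\,ds-\varepsilon$, $\tau(qB)\ge\int_0^a\mu(s,B)\,ds-\varepsilon$; atomlessness lets me replace $p\vee q$ by a projection $r\le p\vee q$ with $\tau(r)=2a$ (note $\tau(p\vee q)\le 2a$), and then $\int_0^{2a}\mu(s,A+B)\,ds\ge\tau(r(A+B))\ge\tau(p A)+\tau(q B)-(\text{correction})$. I will need to handle the overlap $p\wedge q$ and the possibility $\tau(p\vee q)<2a$ carefully; the clean way is to first enlarge $p$ and $q$ to genuinely disjoint projections of total trace $2a$ using atomlessness, at the cost of an arbitrarily small loss controlled by absolute continuity of $s\mapsto\mu(s,A)$ and $s\mapsto\mu(s,B)$ in $L_1+L_\infty$, then pass to the limit $\varepsilon\to0$.

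For the second inequality, $\int_{2a}^b(\mu(s,A)+\mu(s,B))\,ds\le\int_{2a}^{2b}\mu(s,A+B)\,ds$ for $2a\le b$, the plan is dual: write the left-hand side as $\int_0^b(\mu(s,A)+\mu(s,B))\,ds-\int_0^{2a}(\mu(s,A)+\mu(s,B))\,ds$, use the right half of \eqref{rear sum estimate}, namely $\mu(A)+\mu(B)\prec\prec 2\sigma_{1/2}\mu(A+B)$, to get $\int_0^b(\mu(s,A)+\mu(s,B))\,ds\le 2\int_0^b\mu(2s,A+B)\,ds\,\cdot$ — more precisely $\int_0^b(\mu(s,A)+\mu(s,B))\,ds\le\int_0^{2b}\mu(s,A+B)\,ds$ after the change of variable in $\sigma_{1/2}$ — and bound $\int_0^{2a}(\mu(s,A)+\mu(s,B))\,ds$ from below by $\int_0^{4a}\mu(s,A+B)\,ds\ge\int_0^{4a}\mu(s,A+B)\,ds$; in fact I only need $\int_0^{2a}(\mu(s,A)+\mu(s,B))\,ds\ge\int_0^{2a}\mu(s,A+B)\,ds$, which is immediate from $\mu(s,A+B)\le\mu(s,A)+\mu(s,B)$ coordinatewise? — that is false in general, so instead I will use $\int_0^{2a}\mu(s,A+B)\,ds\le\int_0^{2a}(\mu(s,A)+\mu(s,B))\,ds$ from $A+B\prec\prec\mu(A)+\mu(B)$ directly. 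Combining, $\int_{2a}^b(\mu(s,A)+\mu(s,B))\,ds\le\int_0^{2b}\mu(s,A+B)\,ds-\int_0^{2a}\mu(s,A+B)\,ds=\int_{2a}^{2b}\mu(s,A+B)\,ds$, as desired.

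The atomic case is handled by the completely parallel argument using the discrete variational formula $\sum_{k=1}^m\mu(k,X)=\sup\{\tau(pX):\tau(p)=m\}$ and the discrete versions of \eqref{rear sum estimate}; the role of ``enlarging projections within an atomless algebra'' is replaced by the trivial observation that any two projections of trace $a$ can be dominated by a projection of trace $2a$ since all atoms have trace $1$ and one simply takes the join and, if needed, adds further atoms, which only increases $\tau(r(A+B))$ since $A+B\ge0$.

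The main obstacle I anticipate is the lower bound $\int_0^{2a}\mu(s,A+B)\,ds\ge\int_0^a\mu(s,A)\,ds+\int_0^a\mu(s,B)\,ds$ in the first inequality: the naive choice of projections realizing the suprema for $A$ and for $B$ separately need not be disjoint, and forcing disjointness while keeping the total trace exactly $2a$ and losing only $O(\varepsilon)$ of the trace values requires the atomless splitting lemma plus the $L_1+L_\infty$ absolute-continuity estimate. Once that technical point is dispatched, everything else is bookkeeping with the two halves of \eqref{rear sum estimate} and the additivity of the integral over disjoint intervals.
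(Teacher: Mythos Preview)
Your treatment of the second inequality is exactly the paper's: apply the right half of \eqref{rear sum estimate} at $t=b$ to get $\int_0^b(\mu(s,A)+\mu(s,B))\,ds\le\int_0^{2b}\mu(s,A+B)\,ds$, apply the left half at $t=2a$ to get $\int_0^{2a}(\mu(s,A)+\mu(s,B))\,ds\ge\int_0^{2a}\mu(s,A+B)\,ds$, and subtract.

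For the first inequality you are overcomplicating the crux. The lower bound you flag as ``the main obstacle'',
\[
\int_0^{2a}\mu(s,A+B)\,ds\ \ge\ \int_0^{a}\mu(s,A)\,ds+\int_0^{a}\mu(s,B)\,ds,
\]
is nothing but the right half of \eqref{rear sum estimate} evaluated at $t=a$ (since $\int_0^a 2\sigma_{1/2}\mu(A+B)(s)\,ds=\int_0^{2a}\mu(s,A+B)\,ds$). The paper simply quotes this and subtracts from the left half at $t=b$; no projection argument is needed.

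If you nonetheless want a direct proof via the variational formula, your sketch has a sign slip and an unnecessary worry. You wrote $r\le p\vee q$; you need $r\ge p\vee q$ (enlarge, not shrink) with $\tau(r)=2a$, which atomlessness gives since $\tau(p\vee q)\le 2a$. Then positivity of $A,B$ and $r\ge p$, $r\ge q$ yield $\tau(r(A+B))=\tau(rA)+\tau(rB)\ge\tau(pA)+\tau(qB)$ directly, with no ``overlap correction'' and no appeal to absolute continuity of $s\mapsto\mu(s,\cdot)$. Letting $\varepsilon\to0$ finishes. The atomic case is the same with $r$ obtained by adding atoms to $p\vee q$.
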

\begin{proof} Applying inequality \eqref{rear sum estimate} to the operators $A,B,$ we obtain that
$$\int_0^b\mu(s,A+B)ds\leq\int_0^b(\mu(s,A)+\mu(s,B))ds$$
and
$$\int_0^{2a}\mu(s,A+B)ds\geq\int_0^a(\mu(s,A)+\mu(s,B))ds.$$
Subtracting this inequalities, we obtain
$$\int_{2a}^b\mu(s,A+B)ds\leq\int_a^b(\mu(s,A)+\mu(s,B))ds.$$
Proof of the second inequality is identical.
\end{proof}

The following theorem answers Question \ref{q4} in the affirmative, as also does \cite[Theorem 5.2]{KScan}. The proof below is very simple and based on a completely different approach.

\begin{thm}\label{lifting theorem} Let $E=E(0,1)$ (or $E=E(0,\infty)$ or $E=E(\mathbb{N})$) be a symmetric Banach space either on the interval $(0,1)$ or on the semi-axis or on $\mathbb{N}$ and let $E(\mathcal{M},\tau)$ be the corresponding symmetric Banach operator space.
\begin{enumerate}
\item\label{lifting func} If $\varphi$ is a positive symmetric functional on $E,$ then there exists a positive symmetric functional $\mathcal{L}(\varphi)$ on $E(\mathcal{M},\tau)$ such that $\varphi(x)=\mathcal{L}(\varphi)(A)$ for all positive $x\in E$ and $A\in E(\mathcal{M},\tau)$ such that $\mu(A)=\mu(x).$
\item\label{dropping func} If $\varphi$ is a positive symmetric functional on $E(\mathcal{M},\tau),$ then there exists a positive symmetric functional $\mathcal{L}^{-1}(\varphi)$ on $E$ such that $\varphi(A)=\mathcal{L}^{-1}(\varphi)(x)$ for all positive $x\in E$ and $A\in E(\mathcal{M},\tau)$ such that $\mu(A)=\mu(x).$
\end{enumerate}
\end{thm}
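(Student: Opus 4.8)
The plan is to construct $\mathcal{L}(\varphi)$ and $\mathcal{L}^{-1}(\varphi)$ explicitly and then verify that they are well-defined, positive, symmetric, and linear. The key observation is that a positive symmetric functional is determined by its values on the positive cone, and on the positive cone its value depends only on the singular value function $\mu(\cdot)$. So for part \eqref{lifting func} I would first define $\mathcal{L}(\varphi)$ on positive operators $A\in E(\mathcal{M},\tau)$ by setting $\mathcal{L}(\varphi)(A):=\varphi(\mu(A))$, where $\mu(A)$ is viewed as an element of $E$ (here I use that $E(0,\infty)$, $E(0,1)$, or $E(\mathbb{N})$ contains $\mu(A)$ by definition of the corresponding symmetric operator space). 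This is manifestly positive and, since $\mu(A)=\mu(B)$ forces equality, it is symmetric by construction. The content is in showing this extends to a \emph{linear} functional on all of $E(\mathcal{M},\tau)$.

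The main obstacle is additivity: one must show $\mathcal{L}(\varphi)(A+B)=\mathcal{L}(\varphi)(A)+\mathcal{L}(\varphi)(B)$ for positive $A,B$, i.e. $\varphi(\mu(A+B))=\varphi(\mu(A))+\varphi(\mu(B))$. This is exactly where Lemma \ref{mu sum estimate} (together with the definition of uniform submajorization $\lhd$) enters. From Lemma \ref{mu sum estimate}, the inequalities $\int_{2a}^b\mu(s,A+B)\,ds\leq\int_a^b(\mu(s,A)+\mu(s,B))\,ds$ and its companion give, after taking $m=2$ in \eqref{ks ordering}/\eqref{ks ordering0}, both $\mu(A+B)\lhd \mu(A)+\mu(B)$ and $\mu(A)+\mu(B)\lhd \sigma_2\mu(A+B)$ (the latter rearranged using $\sigma$). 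Then Lemma \ref{fi ks major} converts each $\lhd$ into an inequality of $\varphi$-values: $\varphi(\mu(A+B))\le \varphi(\mu(A)+\mu(B))$ and $\varphi(\mu(A))+\varphi(\mu(B))=\varphi(\mu(A)+\mu(B))\le\varphi(\sigma_2\mu(A+B))$. Since $\varphi$ is symmetric it satisfies the dilation relation $\varphi(\sigma_s x)=s\varphi(x)$ (stated in the preliminaries) — but a factor of $2$ would be fatal here, so instead one refines the second estimate: replacing $a$ by $a/2$ in the companion inequality of Lemma \ref{mu sum estimate} and using $\mu(A)+\mu(B)$ is already decreasing, one gets $\mu(A)+\mu(B)\lhd\mu(A+B)$ directly (with $m=1$, or absorbing the shift into the definition of $\lhd$ which permits any $m$), hence $\varphi(\mu(A))+\varphi(\mu(B))\le\varphi(\mu(A+B))$. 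Combining the two inequalities yields equality. Positive homogeneity $\mathcal{L}(\varphi)(\lambda A)=\lambda\mathcal{L}(\varphi)(A)$ for $\lambda\ge0$ is immediate from $\mu(\lambda A)=\lambda\mu(A)$. Once additivity and homogeneity hold on the positive cone, one extends to self-adjoint operators via $A=A_+-A_-$ and then to general $A\in E(\mathcal{M},\tau)$ via the Jordan decomposition $A=(\Re A)_+-(\Re A)_-+i(\Im A)_+-i(\Im A)_-$; standard arguments (as in the construction of traces from cone-additive functionals) show the resulting functional is well-defined and $\mathbb{C}$-linear, and positivity on the cone gives boundedness.

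For part \eqref{dropping func}, the construction goes the other way: given a positive symmetric functional $\varphi$ on $E(\mathcal{M},\tau)$, I would fix, for each $x\in E_+$, a positive operator $A_x\in E(\mathcal{M},\tau)$ with $\mu(A_x)=\mu(x)$ — this exists because $E(\mathcal{M},\tau)$ is the full symmetric operator space associated to $E$, and in the atomless case one can realize any decreasing function as a generalized singular value function, while in the atomic case with unit atoms one uses a diagonal operator. Then set $\mathcal{L}^{-1}(\varphi)(x):=\varphi(A_x)$. Well-definedness (independence of the choice of $A_x$) is exactly the symmetry of $\varphi$. Additivity on $E_+$ now follows because $\mu(A_x)+\mu(A_y)=\mu(x)+\mu(y)$ and, by the same Lemma \ref{mu sum estimate}/Lemma \ref{fi ks major} argument applied to $\varphi$ on $E(\mathcal{M},\tau)$, one shows $\varphi(A_{x+y})=\varphi(A_x+A_y)$ reduces to comparing $\mu(A_{x+y})=\mu(x+y)$ with $\mu(A_x)+\mu(A_y)=\mu(x)+\mu(y)$ via the two-sided $\lhd$ relations, giving $\varphi(A_{x+y})=\varphi(A_x)+\varphi(A_y)$; then extend by linearity to $\mathcal{D}_E$ and, using that symmetric functionals on $E$ are determined there, to all of $E$. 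Finally I would remark that $\mathcal{L}$ and $\mathcal{L}^{-1}$ are mutually inverse: applying $\mathcal{L}$ then $\mathcal{L}^{-1}$ takes $\varphi$ on $E$ to $x\mapsto\mathcal{L}(\varphi)(A_x)=\varphi(\mu(A_x))=\varphi(\mu(x))=\varphi(x)$ on $\mathcal{D}_E$, hence everywhere, and symmetrically in the other direction — establishing the claimed canonical bijection.
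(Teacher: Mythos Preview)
Your overall strategy is exactly the paper's: define $\mathcal{L}(\varphi)(A)=\varphi(\mu(A))$ on the positive cone, prove additivity via Lemma~\ref{mu sum estimate} and Lemma~\ref{fi ks major}, then extend by the Jordan decomposition. The gap is in the additivity step, where you misread the upper bound coming out of Lemma~\ref{mu sum estimate}. The second inequality there rewrites, after the substitution $s\mapsto 2s$, as
\[
\int_{2a}^{b}(\mu(s,A)+\mu(s,B))\,ds\le\int_{a}^{b}\bigl(2\sigma_{1/2}\mu(A+B)\bigr)(s)\,ds,
\]
so the correct relation is $\mu(A)+\mu(B)\lhd 2\sigma_{1/2}\mu(A+B)$, not $\lhd\sigma_2\mu(A+B)$. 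With this in hand the dilation identity you quoted, applied with $s=1/2$, gives
\[
\varphi\bigl(2\sigma_{1/2}\mu(A+B)\bigr)=2\cdot\tfrac12\,\varphi(\mu(A+B))=\varphi(\mu(A+B)),
\]
and the ``fatal factor of $2$'' never appears. This is precisely how the paper closes the chain of inequalities: $\varphi(\mu(A+B))\le\varphi(\mu(A)+\mu(B))\le\varphi(2\sigma_{1/2}\mu(A+B))=\varphi(\mu(A+B))$.

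Your proposed workaround, namely that one can get $\mu(A)+\mu(B)\lhd\mu(A+B)$ ``directly'', is false. Take $A,B$ positive with $\mu(A)=\mu(B)=\chi_{(0,1)}$ and $AB=0$; then $\mu(A)+\mu(B)=2\chi_{(0,1)}$ while $\mu(A+B)=\chi_{(0,2)}$, and for $a=0$, $b=1$ the inequality $\int_{ma}^{b}2\chi_{(0,1)}\le\int_{a}^{b}\chi_{(0,2)}$ reads $2\le 1$ for every $m$. So no such $m$ exists, and the relation $\lhd$ fails. Once you replace $\sigma_2$ by $2\sigma_{1/2}$ your argument becomes correct and coincides with the paper's; everything you wrote about part~\eqref{dropping func} and the mutual inverse property is fine.
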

\begin{proof} We will only prove \eqref{lifting func}. Proof of \eqref{dropping func} is identical.

Let $A,B\in E_+(\mathcal{M},\tau).$ It follows from Lemma \ref{mu sum estimate} that
$$\mu(A+B)\lhd\mu(A)+\mu(B)\lhd 2\sigma_{1/2}\mu(A+B).$$
It follows from Lemma \ref{fi ks major} that
$$\varphi(\mu(A+B))\leq\varphi(\mu(A)+\mu(B))\leq\varphi(2\sigma_{1/2}\mu(A+B))=\varphi(\mu(A+B)).$$
It follows that $\mathcal{L}(\varphi)$ is additive on $E_+(\mathcal{M},\tau).$ We than extend it to $E(\mathcal{M},\tau)$ by linearity.
\end{proof}

Theorem \ref{lifting theorem} provides a very natural bijection between the set of all symmetric functionals on $E$ and that on $E(\mathcal{M},\tau),$ observed first for the case of fully symmetric functionals in \cite{DPSS}. Next corollary follows immediately.

\begin{cor} Let $E$ and $E(\mathcal{M},\tau)$ be as in Theorem \ref{lifting theorem}. The functional $\varphi$ is fully symmetric on $E$ if and only if $\mathcal{L}(\varphi)$ is a fully symmetric functional on $E(\mathcal{M},\tau).$
\end{cor}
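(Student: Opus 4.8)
The plan is to deduce this corollary directly from Theorem~\ref{lifting theorem} together with the characterization of fully symmetric functionals. First I would unwind the definitions: recall that a positive symmetric functional $\varphi$ on $E$ (respectively on $E(\mathcal{M},\tau)$) is fully symmetric precisely when $\varphi(y)\leq\varphi(x)$ for all positive $y,x$ with $y\prec\prec x$. The bijection $\mathcal{L}$ from Theorem~\ref{lifting theorem} satisfies $\mathcal{L}(\varphi)(A)=\varphi(\mu(A))$ for positive $A$, so the whole question reduces to comparing Hardy--Littlewood majorization at the level of operators in $E(\mathcal{M},\tau)$ with majorization at the level of decreasing rearrangements in $E$.

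Next I would establish the easy direction. Suppose $\mathcal{L}(\varphi)$ is fully symmetric on $E(\mathcal{M},\tau)$. Given positive $x,y\in E$ with $y\prec\prec x$, choose positive operators $A,B\in E(\mathcal{M},\tau)$ with $\mu(A)=\mu(x)$, $\mu(B)=\mu(y)$ (possible since $E(\mathcal{M},\tau)$ is the symmetric space associated with $E$, and in the atomless case one may even take them to be commuting functions of a single element, in the atomic case diagonal operators). Then $B\prec\prec A$ since majorization depends only on the rearrangements, so $\varphi(y)=\mathcal{L}(\varphi)(B)\leq\mathcal{L}(\varphi)(A)=\varphi(x)$. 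Hence $\varphi$ is fully symmetric on $E$.

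For the converse, assume $\varphi$ is fully symmetric on $E$ and take positive $A,B\in E(\mathcal{M},\tau)$ with $B\prec\prec A$. Passing to rearrangements, $\mu(B)\prec\prec\mu(A)$ in $E$, so trivially $\mu(B)\lhd\mu(A)$ (take $m=1$ in the definition of uniform submajorization), and therefore by Lemma~\ref{fi ks major} we get $\varphi(\mu(B))\leq\varphi(\mu(A))$. Since $\mathcal{L}(\varphi)(A)=\varphi(\mu(A))$ and $\mathcal{L}(\varphi)(B)=\varphi(\mu(B))$ by the defining property of $\mathcal{L}$, this yields $\mathcal{L}(\varphi)(B)\leq\mathcal{L}(\varphi)(A)$, so $\mathcal{L}(\varphi)$ is fully symmetric on $E(\mathcal{M},\tau)$.

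The argument is essentially a bookkeeping exercise once Theorem~\ref{lifting theorem} is in hand; the only point requiring a little care is the reduction from $B\prec\prec A$ for operators to $\mu(B)\prec\prec\mu(A)$ for functions, which is immediate from the definition of $\prec\prec$, together with the observation that on positive operators $\mathcal{L}(\varphi)$ acts through the singular value function. I do not anticipate a genuine obstacle here; the substance of the result already resides in Lemma~\ref{fi ks major} and in the construction of $\mathcal{L}$.
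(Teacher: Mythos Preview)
Your overall approach is correct and matches the paper's treatment (the paper simply states that the corollary follows immediately from Theorem~\ref{lifting theorem} and gives no proof). However, there is one genuinely false step in your converse direction: Hardy--Littlewood majorization $\mu(B)\prec\prec\mu(A)$ does \emph{not} imply $\mu(B)\lhd\mu(A)$ with $m=1$. The relation $y\lhd x$ with $m=1$ would require $\int_a^b\mu(s,y)\,ds\leq\int_a^b\mu(s,x)\,ds$ for \emph{all} intervals $[a,b]$, whereas $y\prec\prec x$ only controls integrals over $[0,t]$. For instance, $y=\chi_{(0,1)}$ and $x=2\chi_{(0,1/2)}$ satisfy $y\prec\prec x$, yet $\int_{1/2}^1 y=\tfrac12>0=\int_{1/2}^1 x$.

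This detour through Lemma~\ref{fi ks major} is in any case unnecessary: you have already assumed that $\varphi$ is fully symmetric on $E$, so the inequality $\varphi(\mu(B))\leq\varphi(\mu(A))$ follows directly from $\mu(B)\prec\prec\mu(A)$ by the very definition of full symmetry. With this one-line correction your argument is complete and coincides with what the paper intends.
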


We also need a lifting between sequence and function spaces. The following space was introduced in \cite{KSZ}.

Let $\mathcal{A}=\{[n-1,n]\}_{n\in\mathbb{N}}$ be a partition of the semi-axis. Clearly, $\mathbf{E}(\cdot|\mathcal{A})$ maps $L_1+L_{\infty}$ into the set of step functions which can be identified with sequences.

\begin{prop}\label{func-seq} Let $E$ be a symmetric Banach sequence space and let $F$ be the linear space of all such  functions $x\in L_{\infty}$ for which $\mathbf{E}(\mu(x)|\mathcal{A})\in E.$ The space $F$ equipped with the norm
$$\|x\|_F=\|x\|_{\infty}+\|\mathbf{E}(\mu(x)|\mathcal{A})\|_E$$
is a symmetric Banach function space.
\end{prop}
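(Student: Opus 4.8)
The plan is to verify directly that the candidate norm $\|x\|_F=\|x\|_\infty+\|\mathbf{E}(\mu(x)|\mathcal{A})\|_E$ satisfies all the defining properties of a symmetric Banach function space. First I would check that $F$ is a linear space and $\|\cdot\|_F$ is a norm. The only nontrivial point here is the triangle inequality. For this, observe that $\mu(x+y)\prec\prec\mu(x)+\mu(y)$, and since $\mathbf{E}(\cdot|\mathcal{A})$ is a positive operator, $\mathbf{E}(\mu(x+y)|\mathcal{A})\leq\mathbf{E}(\mu(x)+\mu(y)|\mathcal{A})=\mathbf{E}(\mu(x)|\mathcal{A})+\mathbf{E}(\mu(y)|\mathcal{A})$ in the sense of majorization $\lhd$ (or even $\prec\prec$, using that $\mathbf{E}$ is a contraction in $L_1$ and $L_\infty$ so preserves Hardy--Littlewood majorization). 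Actually the cleanest route: $\mathbf{E}(\mu(x+y)|\mathcal{A})\prec\prec\mathbf{E}(\mu(x)|\mathcal{A})+\mathbf{E}(\mu(y)|\mathcal{A})$ because $\mathbf{E}$ preserves $\prec\prec$, and then $\|\cdot\|_E$ on the sequence space, being a fully symmetric (in particular monotone under $\prec\prec$ for positive sequences) or at least symmetric norm — here I need $\mathbf{E}(\mu(x)|\mathcal{A})$ decreasing, which it is — gives $\|\mathbf{E}(\mu(x+y)|\mathcal{A})\|_E\leq\|\mathbf{E}(\mu(x)|\mathcal{A})\|_E+\|\mathbf{E}(\mu(y)|\mathcal{A})\|_E$. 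Combined with the triangle inequality for $\|\cdot\|_\infty$, this gives subadditivity of $\|\cdot\|_F$; homogeneity and definiteness are immediate.

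Next I would verify the two symmetry axioms. Axiom (1): if $\mu(y)=\mu(x)$, then trivially $\|y\|_\infty=\|x\|_\infty$ and $\mathbf{E}(\mu(y)|\mathcal{A})=\mathbf{E}(\mu(x)|\mathcal{A})$, so $y\in F$ iff $x\in F$ and the norms agree. Axiom (2): if $0\leq y\leq x$ with $x\in F$, then $\mu(y)\leq\mu(x)$ pointwise, hence $\|y\|_\infty\leq\|x\|_\infty$ and $\mathbf{E}(\mu(y)|\mathcal{A})\leq\mathbf{E}(\mu(x)|\mathcal{A})$ pointwise; since $E$ is a symmetric Banach sequence space and both sequences are already decreasing, $\mathbf{E}(\mu(y)|\mathcal{A})\in E$ with $\|\mathbf{E}(\mu(y)|\mathcal{A})\|_E\leq\|\mathbf{E}(\mu(x)|\mathcal{A})\|_E$, giving $y\in F$ and $\|y\|_F\leq\|x\|_F$.

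The main obstacle will be completeness of $(F,\|\cdot\|_F)$. The plan is: given a Cauchy sequence $(x_n)$ in $F$, it is Cauchy in $L_\infty$, so converges uniformly to some $x\in L_\infty$; one must show $x\in F$ and $\|x_n-x\|_F\to0$. Uniform convergence gives $\mu(t,x_n)\to\mu(t,x)$ for a.e.\ $t$ (continuity of the singular value function under uniform convergence), hence $\mathbf{E}(\mu(x_n)|\mathcal{A})\to\mathbf{E}(\mu(x)|\mathcal{A})$ pointwise (coordinatewise), and likewise $\mathbf{E}(\mu(x_n-x_m)|\mathcal{A})$ controls things. The sequences $\mathbf{E}(\mu(x_n)|\mathcal{A})$ form a Cauchy sequence in $E$ — here one uses that $\mathbf{E}(\mu(x_n-x_m)|\mathcal{A})\geq|\mathbf{E}(\mu(x_n)|\mathcal{A})-\mathbf{E}(\mu(x_m)|\mathcal{A})|$ up to passing through majorization, so $\|\mathbf{E}(\mu(x_n)|\mathcal{A})-\mathbf{E}(\mu(x_m)|\mathcal{A})\|_E\leq C\|x_n-x_m\|_F$; completeness of $E$ yields a limit $z\in E$, and pointwise convergence identifies $z=\mathbf{E}(\mu(x)|\mathcal{A})$, so $x\in F$. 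Finally, applying the same estimate with $m\to\infty$ after fixing $n$, together with $\|x_n-x\|_\infty\to0$, gives $\|x_n-x\|_F\to0$. The delicate point throughout is controlling $\mathbf{E}(\mu(x_n-x_m)|\mathcal{A})$ versus the difference $\mathbf{E}(\mu(x_n)|\mathcal{A})-\mathbf{E}(\mu(x_m)|\mathcal{A})$; this is handled by the triangle-inequality estimate already established (which shows $\|\mathbf{E}(\mu(a)|\mathcal{A})\|_E\leq\|a\|_F$ and that the map $a\mapsto\mathbf{E}(\mu(a)|\mathcal{A})$ is $1$-Lipschitz from $F$ to $E$ in the relevant sense), so no new machinery is needed — only careful bookkeeping.
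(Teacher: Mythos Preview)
Your approach to the symmetry axioms is fine, but both the triangle inequality and the completeness argument contain a real gap: you repeatedly pass from a Hardy--Littlewood majorization $a\prec\prec b$ to $\|a\|_E\leq\|b\|_E$, and that implication is \emph{exactly} the definition of full symmetry, which is \emph{not} assumed of $E$. Saying ``at least symmetric norm --- here I need $\mathbf{E}(\mu(x)|\mathcal{A})$ decreasing'' does not help; monotonicity of a symmetric norm under $\prec\prec$ on decreasing sequences is precisely what can fail. The paper repairs the triangle inequality by proving the stronger uniform submajorization $\mathbf{E}(\mu(x+y)|\mathcal{A})\lhd \mathbf{E}(\mu(x)|\mathcal{A})+\mathbf{E}(\mu(y)|\mathcal{A})$ (Lemma~\ref{pave maj}) and then invoking Theorem~\ref{ks majorization theorem}, which does yield the norm inequality in an arbitrary symmetric space.

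The completeness argument is where the gap is fatal. Your ``$1$-Lipschitz'' claim $\|\mathbf{E}(\mu(x_n)|\mathcal{A})-\mathbf{E}(\mu(x_m)|\mathcal{A})\|_E\leq C\|x_n-x_m\|_F$ rests on the same $\prec\prec\Rightarrow\|\cdot\|_E$ step. And the fallback ``let $m\to\infty$ after fixing $n$'' needs lower semicontinuity of $\|\cdot\|_E$ under coordinatewise convergence, i.e.\ the Fatou property, which is again not assumed. The paper's proof is genuinely different and more delicate: it passes to a rapidly Cauchy subsequence with increments $y_k$ satisfying $\|y_k\|_F\leq 4^{-k}$, builds the dilated majorant $z_n=\sum_{k\geq n}\sigma_{2^k}\mu(y_k)$, checks directly that $\|z_n\|_F\to 0$, and then uses a tail estimate (Lemma~\ref{stupid estimate}) together with \cite[Lemma~8.5]{KS} to obtain $\sum_{k\geq n}y_k\lhd \frac{\lambda}{\lambda-1}z_n$. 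Theorem~\ref{ks majorization theorem} converts this $\lhd$ into a norm bound in any symmetric space, giving convergence of the telescoping series in $F$ without any full-symmetry or Fatou hypothesis. Your outline would work verbatim if $E$ were assumed fully symmetric with a Fatou norm; in the stated generality you need the $\lhd$-machinery.
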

The fact that the space $F$ is a Banach space is non-trivial. Proof of this fact was missing in both \cite{KS} and \cite{KSZ}. We include it in the appendix.

Below, we assume that $E$ is embedded into $F.$

\begin{thm}\label{second lifting theorem} Let $E=E(\mathbb{N})$ be a symmetric Banach sequence space and let $F$ be the corresponding function space.
\begin{enumerate}
\item\label{lifting func1} If $\varphi$ is a positive symmetric functional on $E,$ then there exists a positive symmetric functional $\mathcal{L}(\varphi)$ on $F$ such that $\varphi(\mathbf{E}(\mu(x)|\mathcal{A}))=\mathcal{L}(\varphi)(x)$ for all positive $x\in F.$
\item\label{dropping func1} If $\varphi$ is a positive symmetric functional on $F,$ then its restriction on $E$ is a positive symmetric functional. This restriction is an inverse operation for the $\mathcal{L}$ in \eqref{lifting func1}.
\end{enumerate}
\end{thm}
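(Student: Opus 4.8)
The plan is to imitate the proof of Theorem \ref{lifting theorem}, with the averaging map $x\mapsto\mathbf{E}(\mu(x)|\mathcal{A})$ playing the role that $A\mapsto\mu(A)$ plays there. For part \eqref{lifting func1} I would first define $\mathcal{L}(\varphi)$ on the positive cone $F_+$ by $\mathcal{L}(\varphi)(x):=\varphi(\mathbf{E}(\mu(x)|\mathcal{A}))$; this is legitimate since $\mathbf{E}(\cdot|\mathcal{A})$ is a contraction carrying the decreasing function $\mu(x)\in F$ to the decreasing sequence $\mathbf{E}(\mu(x)|\mathcal{A})\in E$, and it is clearly positively homogeneous. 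The crux is additivity on $F_+$. Given $x,y\in F_+$, put $u=\mathbf{E}(\mu(x+y)|\mathcal{A})$ and $v=\mathbf{E}(\mu(x)|\mathcal{A})+\mathbf{E}(\mu(y)|\mathcal{A})=\mathbf{E}(\mu(x)+\mu(y)|\mathcal{A})$. Applying the contraction $\mathbf{E}(\cdot|\mathcal{A})$ to the chain \eqref{rear sum estimate} (contractions preserve $\prec\prec$) and using the elementary identity $\mathbf{E}(\sigma_{1/2}f|\mathcal{A})=\sigma_{1/2}\mathbf{E}(f|\mathcal{A})$ (the left-hand $\sigma_{1/2}$ acting on functions, the right-hand one on sequences), valid because $\mathcal{A}$ consists of unit intervals, one obtains $u\prec\prec v\prec\prec 2\sigma_{1/2}u$. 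Exactly as in Lemma \ref{mu sum estimate}, subtraction of partial sums upgrades this chain of Hardy--Littlewood majorizations to $u\lhd v$ and $v\lhd 2\sigma_{1/2}u$, each with $m=2$; Lemma \ref{fi ks major} then gives $\varphi(u)\le\varphi(v)\le\varphi(2\sigma_{1/2}u)=\varphi(u)$, the last equality being the dilation identity for positive symmetric functionals on $E(\mathbb{N})$ already used in the proof of Theorem \ref{lifting theorem}. (Alternatively, one verifies that $u-v\in\mathcal{D}_E$ while $\frac1n\sum_{k=1}^n(v_k-u_k)\le u_{n+1}$, so the Hardy transform of $u-v$ lies in $E$ by solidity and hence $u-v\in Z_E$, whence $\varphi(u)=\varphi(v)$.) Thus $\mathcal{L}(\varphi)$ is additive on $F_+$; extending it over $F=F_+-F_+$ by linearity produces a positive --- hence bounded --- functional that is symmetric by construction and satisfies $\mathcal{L}(\varphi)(x)=\varphi(\mathbf{E}(\mu(x)|\mathcal{A}))$ for $x\in F_+$, as required.

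For part \eqref{dropping func1}, given a positive symmetric functional $\varphi$ on $F$, its restriction to $E$ (embedded into $F$ as step functions) is obviously positive and linear, and it is symmetric because two positive sequences with equal decreasing rearrangement give step functions with equal decreasing rearrangement. To see that restriction inverts $\mathcal{L}$ on both sides, observe first that for a positive symmetric functional $\psi$ on $E$ and $a\in E_+$ the function $\mu(a)$ is already constant on the blocks of $\mathcal{A}$, so $\mathbf{E}(\mu(a)|\mathcal{A})=\mu(a)$ and $\mathcal{L}(\psi)(a)=\psi(\mu(a))=\psi(a)$, i.e. $\mathcal{L}(\psi)|_E=\psi$. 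Conversely, for $\varphi$ positive symmetric on $F$ and $x\in F_+$ one has $\mathcal{L}(\varphi|_E)(x)=\varphi(\mathbf{E}(\mu(x)|\mathcal{A}))$, so it suffices to prove $\varphi(\mu(x)-\mathbf{E}(\mu(x)|\mathcal{A}))=0$. Here $\mu(x)-\mathbf{E}(\mu(x)|\mathcal{A})\in\mathcal{D}_F$, and a direct estimate shows that its Hardy transform $C(\mu(x)-\mathbf{E}(\mu(x)|\mathcal{A}))$ is a nonnegative function dominated by a step function whose successive block values form a sequence majorized by the jumps $\mu(n-1,x)-\mu(n,x)$, which sum to at most $\|x\|_\infty$; since $l_1\subset E\subset F$ and $F$ is solid, this Hardy transform belongs to $F$. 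By Theorem \ref{fk theorem} we conclude $\mu(x)-\mathbf{E}(\mu(x)|\mathcal{A})\in Z_F$, and $\varphi$, being symmetric, annihilates $Z_F$.

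The step I expect to be the main obstacle is the additivity in \eqref{lifting func1}: what \eqref{rear sum estimate} together with contractivity of $\mathbf{E}(\cdot|\mathcal{A})$ directly delivers is merely a Hardy--Littlewood majorization chain, whereas a symmetric functional that is not fully symmetric need not respect $\prec\prec$. Bridging this gap is exactly the function of the Kalton--Sukochev ordering $\lhd$ and Lemma \ref{fi ks major} (equivalently, of the Figiel--Kalton description of $Z_E$ in Theorem \ref{fk theorem}); the delicate point is arranging the constants so that a single $m=2$ works at both ends of the chain $u\prec\prec v\prec\prec 2\sigma_{1/2}u$. The remaining ingredients --- the identity $\mathbf{E}(\sigma_{1/2}f|\mathcal{A})=\sigma_{1/2}\mathbf{E}(f|\mathcal{A})$, the estimate on the Hardy transform of $\mu(x)-\mathbf{E}(\mu(x)|\mathcal{A})$, and the passage from $F_+$ to $F=F_+-F_+$ with automatic boundedness of positive functionals --- are routine.
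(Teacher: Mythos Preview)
Your argument for part \eqref{lifting func1} is essentially the paper's: the chain $u\lhd v\lhd 2\sigma_{1/2}u$ you derive is exactly Lemma \ref{pave maj} in the appendix, proved there by the same subtraction-of-partial-sums trick you describe. Two small caveats. First, the parenthetical ``contractions preserve $\prec\prec$'' is not true in general (a bi-contraction $T$ gives $Tf\prec\prec f$, not $Tf\prec\prec Tg$ from $f\prec\prec g$); what you actually use --- and what Lemma \ref{pave maj} uses --- is the special feature that $\int_0^n\mathbf{E}(f|\mathcal{A})=\int_0^n f$ at integers $n$, so the majorization is preserved at the nodes of $\mathcal{A}$. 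Second, the sequence-space identity $\varphi(2\sigma_{1/2}a)=\varphi(a)$ is not literally ``already used in the proof of Theorem \ref{lifting theorem}'' (there the function-space dilation identity was invoked); the paper supplies the two-line verification for sequences at the start of the present proof.

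For part \eqref{dropping func1} you take a different route than the paper. The paper simply observes that $\mu(x)-\mathbf{E}(\mu(x)|\mathcal{A})\in(L_1\cap L_\infty)(0,\infty)$ (the oscillation on $(n-1,n)$ is at most $\mu(n-1,x)-\mu(n,x)$, and these jumps telescope), and then splits: if $E\neq l_1$ then $F\not\subset L_1$, so every symmetric functional on $F$ is singular and kills $L_1\cap L_\infty$; if $E=l_1$ then $F=L_1\cap L_\infty$ and both sides reduce to the integral. Your approach via Theorem \ref{fk theorem} --- bounding $C(\mu(x)-\mathbf{E}(\mu(x)|\mathcal{A}))$ by the $l_1$ step function of jumps and concluding $\mu(x)-\mathbf{E}(\mu(x)|\mathcal{A})\in Z_F$ --- is also correct and has the virtue of avoiding the case distinction, at the cost of invoking the heavier Figiel--Kalton machinery where the paper gets by with a one-line observation.
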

\begin{proof} Let us prove \eqref{lifting func1} $$\varphi(\sigma_{1/2}a)=1/2\varphi(a_1,a_3,\cdots)+1/2\varphi(a_2,a_4,\cdots)=$$
$$=1/2\varphi(a_1,0,a_2,0,\cdots)+1/2\varphi(0,a_2,0,a_4,\cdots)=1/2\varphi(a)$$
for every $a\in E.$

Let $x,y\in F$ be positive. It follows from Lemma \ref{pave maj} that
$$\mathbf{E}(\mu(x+y)|\mathcal{A})\lhd\mathbf{E}(\mu(x)+\mu(y)|\mathcal{A})\lhd 2\sigma_{1/2}\mathbf{E}(\mu(x+y)|\mathcal{A}).$$
It follows from Lemma \ref{fi ks major} that
$$\varphi(\mathbf{E}(\mu(x+y)|\mathcal{A}))=\varphi(\mathbf{E}(\mu(x)+\mu(y)|\mathcal{A}))$$
and \eqref{lifting func1} follows.

The first assertion of \eqref{dropping func1} is trivial. Clearly, $\mu(x)-\mathbf{E}(\mu(x)|\mathcal{A})\in(L_1\cap L_{\infty})(0,\infty).$ If $E\neq l_1,$ then $\varphi(y)=0$ for every $y\in(L_1\cap L_{\infty})(0,\infty)$ and every symmetric functional $\varphi$ on $F.$ If $E=l_1,$ then $F=(L_1\cap L_{\infty})(0,\infty)$ and the only symmetric functional on both spaces is an integral. The second assertion of \eqref{dropping func1} follows.
\end{proof}

\section{Existence of symmetric functionals}\label{sushestvovanie1}

In this section, we present results concerning existence of symmetric functionals on symmetric function spaces. The main results of this section are Theorem \ref{sim exists}, Theorem \ref{sim sing exists inf} and Theorem \ref{sim sing exists fin}.

We need the following variation of the Hahn-Banach theorem.

\begin{lem}\label{positive hahn-banach} Let $E$ be a partially ordered linear space and let $p:E\to\mathbb{R}$ be convex and monotone functional. For every $x_0\in E,$ there exists a positive linear functional $\varphi:E\to\mathbb{R}$ such that $\varphi\leq p$ and $\varphi(x_0)=p(x_0).$
\end{lem}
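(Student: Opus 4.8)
The plan is to obtain the desired functional by applying the classical Hahn--Banach theorem on a suitable linear subspace and then checking that the extension it produces is automatically positive, thanks to the monotonicity of $p$.

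First I would set $F=\{\lambda x_0:\ \lambda\in\mathbb{R}\}$ and define $\varphi_0$ on $F$ by $\varphi_0(\lambda x_0)=\lambda p(x_0)$. I claim $\varphi_0\leq p$ on $F$. For $\lambda\geq 0$ this is the sublinearity of $p$ along the ray through $x_0$: writing $\lambda x_0=\lambda x_0+(1-\lambda)\cdot 0$ when $\lambda\le 1$ and using convexity together with $p(0)\le p(x_0)$ (which follows from monotonicity only if $0\le x_0$, so instead one argues more carefully, see below) gives $\varphi_0(\lambda x_0)\le p(\lambda x_0)$; for $\lambda<0$ one uses convexity of $p$ at $0=\tfrac{1}{1-\lambda}(\lambda x_0)+\tfrac{-\lambda}{1-\lambda}x_0$ to get $p(0)\le\tfrac{1}{1-\lambda}p(\lambda x_0)+\tfrac{-\lambda}{1-\lambda}p(x_0)$, hence $\lambda p(x_0)-p(0)\le p(\lambda x_0)-\lambda p(x_0)\cdot 0$; combined with $p(0)\ge 0$ this yields $\varphi_0(\lambda x_0)=\lambda p(x_0)\le p(\lambda x_0)$. (Here the inequality $p(0)\ge 0$ is not needed; what one actually needs is simply that $p$ is convex, so that $t\mapsto p(tx_0)$ is a convex function of the real variable $t$ whose value and whose subgradient at $t=1$ we are prescribing — any affine minorant of a convex function that agrees with it at a point lies below it everywhere.) Thus I would phrase the subspace step as: the function $g(t)=p(tx_0)$ is convex on $\mathbb{R}$, pick any $c$ in its subdifferential at $t=1$, and... actually the cleanest route is just to take $\varphi_0(\lambda x_0)=\lambda p(x_0)$ together with the elementary fact $p(\lambda x_0)\ge \lambda p(x_0)+(1-\lambda)p(0)$ and $p(0)\le p(x_0)$ if we are willing to normalize; to avoid these fiddly sign issues I would instead invoke the standard Hahn--Banach in the form: given convex $p$ and a point $x_0$, there is a linear $\varphi$ with $\varphi\le p$ and $\varphi(x_0)=p(x_0)$ — this is the separation of $(x_0,p(x_0))$ from the open epigraph, which is precisely the supporting-hyperplane theorem and requires only convexity of $p$.

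So the real content is step two: such a $\varphi$ is automatically positive. Suppose $x\ge 0$. I want $\varphi(x)\ge 0$. Consider $\varphi(x_0-tx)$ for $t>0$: we have $x_0-tx\le x_0$, so by monotonicity $p(x_0-tx)\le p(x_0)$, and since $\varphi\le p$ and $\varphi(x_0)=p(x_0)$ we get $\varphi(x_0)-t\varphi(x)=\varphi(x_0-tx)\le p(x_0-tx)\le p(x_0)=\varphi(x_0)$, hence $t\varphi(x)\ge 0$, i.e. $\varphi(x)\ge 0$. This is the key trick and it is short; I expect the only genuine obstacle to be bookkeeping around the subspace step — making sure the one-dimensional domination $\varphi_0\le p$ on $F$ is correctly established so that ordinary Hahn--Banach applies — and, depending on how $p$ is assumed to behave, confirming that $p$ is finite-valued (which it is here, since $\pi$ is a genuine $\mathbb{R}$-valued functional, so $p$ takes no infinite values and the epigraph has nonempty interior in the relevant sense).

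In summary: (i) restrict the problem to the line $F=\mathbb{R}x_0$, define $\varphi_0(\lambda x_0)=\lambda p(x_0)$, and verify $\varphi_0\le p$ on $F$ using convexity of $t\mapsto p(tx_0)$; (ii) apply the Hahn--Banach extension theorem to extend $\varphi_0$ to a linear $\varphi:E\to\mathbb{R}$ with $\varphi\le p$ and $\varphi(x_0)=p(x_0)$; (iii) deduce positivity of $\varphi$ from monotonicity of $p$ via the inequality $\varphi(x_0)-t\varphi(x)=\varphi(x_0-tx)\le p(x_0-tx)\le p(x_0)=\varphi(x_0)$ for $x\ge 0$ and $t>0$. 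The hard part, such as it is, is purely the sign analysis in step (i); steps (ii) and (iii) are immediate.
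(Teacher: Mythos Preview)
Your approach is the paper's: obtain $\varphi$ with $\varphi\le p$ and $\varphi(x_0)=p(x_0)$ from Hahn--Banach, then deduce positivity from monotonicity via $\varphi(x_0-z)\le p(x_0-z)\le p(x_0)=\varphi(x_0)$ for $z\ge 0$; your step~(iii) with $t=1$ is verbatim the paper's argument, and the paper likewise blackboxes the existence step as ``follows from the Hahn--Banach theorem''.

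One remark on your step~(i), since you explicitly worry about it: your final fallback claim that the supporting-hyperplane theorem produces a \emph{linear} minorant touching $p$ at $x_0$ from convexity alone is not right --- separation of $(x_0,p(x_0))$ from the open epigraph yields only an \emph{affine} minorant. In fact the lemma as literally stated fails: take $E=\mathbb{R}$ with its usual order and $p(\lambda)=\lambda+\max(\lambda,0)^2$, which is convex and monotone; at $x_0=1$ one has $p(1)=2$, and the only linear candidate $\varphi(\lambda)=2\lambda$ violates $\varphi\le p$ at $\lambda=1/2$. This is not a problem for the paper, because in every application (Lemmas~\ref{required p} and~\ref{required q}) the functional $p$ is positively homogeneous, hence sublinear, and then the one-dimensional domination $\lambda p(x_0)\le p(\lambda x_0)$ is immediate and the classical Hahn--Banach applies directly. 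So your instinct that the bookkeeping in (i) is the only delicate point is correct; the resolution is simply to note that $p$ is sublinear in practice.
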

\begin{proof} The existence of $\varphi$ follows from the Hahn-Banach theorem. We only have to prove that $\varphi\geq0.$ If $z\geq0,$ then $\varphi(x_0-z)\leq p(x_0-z).$ Therefore,
$$\varphi(z)\geq\varphi(x_0)-p(x_0-z)=p(x_0)-p(x_0-z)\geq0$$
due to the fact that $z\geq0$ and $p$ is monotone.
\end{proof}

Define operators $M_m:(L_1+L_{\infty})(0,\infty)\to(L_1+L_{\infty})(0,\infty)$ (or, $M_m:L_1(0,1)\to L_1(0,1)$) by setting
$$(M_mx)(t)=\frac1{t\log(m)}\int_{t/m}^tx(s),\quad m\geq2.$$

\begin{lem}\label{mn estimate} If $0\leq x\in L_1+L_{\infty}$ (or, $0\leq x\in L_1(0,1)$), then
$$\int_a^{b/m}x(s)ds\leq\int_a^b(M_mx)(s)ds\leq\int_{a/m}^bx(s)ds$$
provided that $ma\leq b.$ In particular, $m^{-1}\sigma_mx\lhd M_mx\lhd x$ provided that $x=\mu(x).$
\end{lem}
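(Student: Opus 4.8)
The plan is to unwind the definition of $M_m$ and reduce everything to two elementary estimates on Riemann integrals of the monotone function $x$. Write $f(t)=\int_0^t x(s)\,ds$, so that $(M_m x)(t) = \frac{1}{t\log m}\bigl(f(t)-f(t/m)\bigr)$. The integral $\int_a^b (M_m x)(s)\,ds$ is then $\frac{1}{\log m}\int_a^b \frac{f(s)-f(s/m)}{s}\,ds$, and the key observation is that $\int_a^b \frac{f(s)-f(s/m)}{s}\,ds$ can be rewritten by Fubini: since $f(s)-f(s/m) = \int_{s/m}^{s} x(u)\,du$, we get
\begin{equation*}
\int_a^b \frac{f(s)-f(s/m)}{s}\,ds = \int_a^b \frac{1}{s}\int_{s/m}^{s} x(u)\,du\,ds = \int x(u)\Bigl(\int_{\{s:\ a\le s\le b,\ s/m\le u\le s\}} \frac{ds}{s}\Bigr) du.
\end{equation*}
For fixed $u$, the inner region in $s$ is $\{s:\ \max(a,u)\le s\le \min(b,mu)\}$, so the inner integral is $\log\frac{\min(b,mu)}{\max(a,u)}$ when this is positive and $0$ otherwise. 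Thus $\log m \cdot \int_a^b (M_m x)(s)\,ds = \int_{a/m}^{b} x(u)\,\log^+\!\frac{\min(b,mu)}{\max(a,u)}\,du$, where the range of $u$ with nonzero contribution is exactly $u\in[a/m,b]$.

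From this representation both inequalities are immediate. For the upper bound, note $\log^+\frac{\min(b,mu)}{\max(a,u)} \le \log\frac{mu}{u} = \log m$ whenever $\max(a,u)=u$, i.e. $u\ge a$; and for $u<a$ we have $\log^+\frac{\min(b,mu)}{a}\le\log\frac{mu}{a}\le\log m$ as well since $u < a$ forces $mu$ could exceed... here one must be slightly careful, but in all cases the weight is at most $\log m$, giving $\int_a^b(M_mx)\,ds \le \int_{a/m}^b x(u)\,du$. For the lower bound, restrict the $u$-integral to $u\in[a,b/m]$ (nonempty precisely because $ma\le b$); on that range $\max(a,u)=u$ and $\min(b,mu)=mu$, so the weight equals exactly $\log m$, and since $x\ge0$ discarding the rest only decreases the integral: $\int_a^b(M_mx)\,ds \ge \int_a^{b/m} x(u)\,du$. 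The interval case $(0,1)$ is handled by the same computation, truncating all integrals at $1$.

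For the final sentence, I want to deduce $m^{-1}\sigma_m x \lhd M_m x \lhd x$ when $x=\mu(x)$. First, $M_m x$ is itself decreasing (it is an average of $x$ over a window whose endpoints are increasing, of a decreasing function — or simply differentiate the representation), so $\mu(M_m x)=M_m x$, and likewise $\mu(m^{-1}\sigma_m x)=m^{-1}\sigma_m x$ and $\mu(x)=x$; hence $\lhd$ can be checked directly on these functions. The relation $M_m x \lhd x$ follows from the upper estimate with the choice of shift parameter $m$ in the definition \eqref{ks ordering}: for $ma\le b$, $\int_{ma}^{b}(M_m x)(s)\,ds \le \int_{a}^{b} x(s)\,ds$ is exactly the upper bound applied with $a$ replaced by $ma$ (so $(ma)/m=a$). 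For $m^{-1}\sigma_m x\lhd M_m x$, I use the lower estimate together with the elementary fact that $\int_0^t (m^{-1}\sigma_m x)(s)\,ds = m^{-1}\int_0^{mt}\!x/m\cdot\ldots$ — more precisely $\int_0^{b}(m^{-1}\sigma_m x)(s)\,ds = \frac1m\cdot m\int_0^{b/m}x(s)\,ds = \int_0^{b/m}x(s)\,ds$ using $\int_0^{mt}\sigma_m x = m\int_0^t x$; so $\int_{ma}^{b}(m^{-1}\sigma_m x)(s)\,ds = \int_{a}^{b/m}x(s)\,ds \le \int_{ma}^{b}(M_m x)(s)\,ds$ by the lower bound, and the shift parameter $m$ again matches the definition \eqref{ks ordering}. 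The only genuinely fiddly point — the place to be careful — is bookkeeping the $\log^+$ weight and the exact range of integration in the Fubini swap, and checking monotonicity of $M_m x$; everything else is a one-line consequence.
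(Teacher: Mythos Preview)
Your approach is essentially identical to the paper's: both perform the Fubini swap to rewrite $\int_a^b(M_mx)(s)\,ds$ as $\frac{1}{\log m}\int_{a/m}^b x(u)\log\frac{\min(mu,b)}{\max(u,a)}\,du$, then observe that the logarithmic weight is always $\le\log m$ (upper bound) and equals $\log m$ on $[a,b/m]$ (lower bound). Your hesitation in the case $u<a$ is unnecessary: since $\min(b,mu)\le mu$ and $\max(a,u)\ge u$, the ratio is always $\le m$.

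Two small points. First, in your last displayed chain you wrote $\int_a^{b/m}x\le\int_{ma}^b(M_mx)$, but what the lower bound gives (and what the definition of $\lhd$ requires) is $\int_a^{b/m}x\le\int_a^b(M_mx)$; this is a slip of the pen. Second, when you derive $M_mx\lhd x$ by applying the upper bound with $a$ replaced by $ma$, note that the lemma's stated hypothesis would then read $m^2a\le b$, whereas $\lhd$ demands the inequality for all $ma\le b$. This is harmless because, as your own Fubini representation shows, the upper bound actually holds for every $a<b$ without any hypothesis; only the lower bound needs $ma\le b$ (so that $[a,b/m]$ is nonempty). The paper simply asserts the $\lhd$ relations without spelling this out.
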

\begin{proof} Clearly,
$$\int_a^b(M_mx)(s)ds=\frac1{\log(m)}\int_a^b\int_{t/m}^tx(s)ds\frac{dt}{t}=$$
$$=\frac1{\log(m)}\int_{a/m}^b\int_{\max\{a,s\}}^{\min\{ms,b\}}\frac{dt}{t}x(s)ds=\frac1{\log(m)}\int_{a/m}^bx(s)\log(\frac{\min\{ms,b\}}{\max\{a,s\}})ds.$$
The integrand does not exceed $x(s)\log(m)$ and the second inequality follows immediately. The integrand is positive and is equal to $x(s)\log(m)$ for $s\in(a,b/m).$ The first inequality follows.
\end{proof}

\begin{cor}\label{mn contraction corollary} If $E$ is a symmetric Banach function space either on the interval $(0,1)$ or on the semi-axis, then $M_m:E\to E$ is a contraction for $m\in\mathbb{N}.$
\end{cor}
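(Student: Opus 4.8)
The plan is to deduce the corollary directly from Lemma \ref{mn estimate} together with the definition of a symmetric Banach function space and the characterisation of the norm via Hardy--Littlewood majorization. First I would recall that for a symmetric Banach function space $E$, if $y \prec\prec z$ with $z \in E$, then in general one only knows $y \in E$ when $E$ is fully symmetric; however, the relation $\lhd$ is stronger, and by Theorem \ref{ks majorization theorem} (or more directly Lemma \ref{fi ks major} applied at the level of the norm), $y \lhd z$ with $z = \mu(z) \in E$ already forces $\|y\|_E \leq \|z\|_E$ for \emph{every} symmetric Banach space $E$. So the key input is that, for $0 \leq x \in E$, Lemma \ref{mn estimate} gives $M_m x \lhd x$ when $x = \mu(x)$.

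The steps, in order, are as follows. Fix $m \in \mathbb{N}$ with $m \geq 2$ (the case $m=1$ is vacuous since $M_1$ is not defined, or one simply notes $M_m$ for $m\ge 2$ suffices). Take $0 \leq x \in E$; since $E$ is symmetric, $\|x\|_E = \|\mu(x)\|_E$, and without loss of generality we may replace $x$ by $\mu(x)$, noting that $M_m$ commutes with the passage to decreasing rearrangement in the sense that $\mu(M_m x) = \mu(M_m \mu(x))$ — actually the cleaner route is to observe $|M_m x| \leq M_m |x|$ pointwise and then work with $\mu(x)$, using part (2) of the definition of a symmetric space to reduce to the positive decreasing case. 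Then Lemma \ref{mn estimate} yields $M_m \mu(x) \lhd \mu(x)$. Applying the norm estimate coming from uniform submajorization (Theorem \ref{ks majorization theorem}, exactly as used in the proof of Lemma \ref{pietsch funk}: for every $\varepsilon > 0$, $(1-\varepsilon) M_m\mu(x)$ lies in the convex hull of $\{z : \mu(z) \leq \mu(\mu(x))\}$, hence $(1-\varepsilon)\|M_m\mu(x)\|_E \leq \|\mu(x)\|_E$; let $\varepsilon \to 0$), we obtain $\|M_m x\|_E \leq \|M_m \mu(x)\|_E \leq \|\mu(x)\|_E = \|x\|_E$. For general (not necessarily positive) $x \in E$, decompose or simply use $|M_m x| \leq M_m|x|$ and monotonicity of the norm to conclude $\|M_m x\|_E \leq \|M_m |x|\|_E \leq \||x|\|_E = \|x\|_E$. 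This proves $M_m$ is a contraction. The interval case $(0,1)$ is handled identically, using the $(0,1)$-versions of Lemma \ref{mn estimate} and Theorem \ref{ks majorization theorem}.

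Linearity and boundedness of $M_m$ on $L_1+L_\infty$ (resp. $L_1(0,1)$) are clear from the integral formula, so $M_m$ does map $E$ into $E$ once the norm bound is established; there is no separate well-definedness issue.

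The main obstacle I anticipate is purely bookkeeping rather than conceptual: one must be careful that $\lhd$ and the convex-hull argument genuinely apply, i.e. that $M_m\mu(x) \lhd \mu(x)$ is the statement $\int_{ma}^b \mu(s, M_m\mu(x))\,ds \leq \int_a^b \mu(s,\mu(x))\,ds$ with the correct rearrangements, whereas Lemma \ref{mn estimate} literally gives $\int_a^b (M_m x)(s)\,ds \leq \int_{a/m}^b x(s)\,ds$ for $x = \mu(x)$. One needs to pass between the integral of $M_m x$ and the integral of its decreasing rearrangement; this is where a short lemma (or the standard fact that $\int_0^t \mu(s,y)\,ds = \sup\{\int_A |y| : m(A) = t\}$) is invoked to see that the pointwise/interval inequality upgrades to the majorization inequality in the $\mu$-language. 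Once that translation is in hand, the contraction property is immediate. I would therefore spend the bulk of the write-up making that translation precise and then invoke Theorem \ref{ks majorization theorem} verbatim as in Lemma \ref{pietsch funk}.
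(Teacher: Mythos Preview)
Your approach is essentially identical to the paper's: reduce to $x=\mu(x)$, invoke Lemma \ref{mn estimate} to get $M_mx\lhd x$, and then apply Theorem \ref{ks majorization theorem} exactly as in Lemma \ref{pietsch funk} to obtain $(1-\varepsilon)\|M_mx\|_E\leq\|x\|_E$ for every $\varepsilon>0$. The only remark worth making is that your closing ``obstacle'' is not really one: when $x=\mu(x)$ is decreasing, the substitution $s=tu$ gives $(M_mx)(t)=\tfrac{1}{\log m}\int_{1/m}^1 x(tu)\,du$, so $M_mx$ is itself decreasing and equals its own rearrangement, whence the integral inequalities of Lemma \ref{mn estimate} are already the $\mu$-inequalities defining $\lhd$ with no further translation needed.
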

\begin{proof} Let $x=\mu(x)\in E.$ It follows from Lemma \ref{mn estimate} that $M_mx\lhd x.$ It follows from theorem \ref{ks majorization theorem} that, for every $\varepsilon>0,$ the function $(1-\varepsilon)M_mx$ belongs to a convex hull of the set $\{z:\ \mu(z)\leq\mu(x)\}.$ Therefore, $M_mx\in E$ and $(1-\varepsilon)\|M_mx\|_E\leq\|x\|_E.$ Since $\varepsilon$ is arbitrarily small, the assertion follows.
\end{proof}

\begin{lem}\label{p extension} Let $E$ be a symmetric Banach space either on the interval $(0,1)$ or on the semi-axis. Let $p:\mathcal{D}_E\to\mathbb{R}$ be convex and monotone functional. If $p=0$ on $Z_E\cap\mathcal{D}_E,$ then $p$ extends to a convex monotone functional $p:E\to\mathbb{R}$ by setting
$$p(x)=p(\mu(x_+)-\mu(x_-)).$$
Also, $p(x)=0$ for every $x\in Z_E.$
\end{lem}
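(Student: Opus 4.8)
The plan is to verify three things: that the formula $p(x) = p(\mu(x_+)-\mu(x_-))$ is well-defined on $E$, that the extended functional is convex and monotone, and that it vanishes on $Z_E$. The first point is the crux. Given $x \in E$, the element $\mu(x_+)-\mu(x_-)$ lies in $\mathcal{D}_E$ by definition, so the right-hand side makes sense; the issue is only consistency when $x$ itself already lies in $\mathcal{D}_E$, i.e. when $x = \mu(a)-\mu(b)$ for some $a,b \in E$. In that case I must show $p(x) = p(\mu(x_+)-\mu(x_-))$. The natural route is to observe that $x - (\mu(x_+)-\mu(x_-))$ lies in $Z_E \cap \mathcal{D}_E$, on which $p$ vanishes by hypothesis, and then use the additivity-like behaviour of $p$ coming from convexity and monotonicity. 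More carefully: since $\mu(x_+)$ and $x_+$ are equimeasurable (and likewise $\mu(x_-)$ and $x_-$), both $x_+ - \mu(x_+)$ and $x_- - \mu(x_-)$ belong to $Z_E$; but to pass from "$p$ vanishes on $Z_E \cap \mathcal{D}_E$" to "$p(x) = p(\mu(x_+)-\mu(x_-))$" I need a subadditivity estimate in both directions, which convexity (hence subadditivity up to the scaling $p(2z) \le 2p(z)$, together with monotonicity bounding $p$ below) should supply.

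The second point, convexity and monotonicity of the extension, I would reduce to the corresponding properties on $\mathcal{D}_E$ using standard rearrangement inequalities. For monotonicity: if $x \le y$ in $E$, then $x_+ \le y_+$ and $x_- \ge y_-$, so $\mu(x_+) \le \mu(y_+)$ pointwise (monotonicity of the singular value function under the order on positive operators — this is condition (2) in the definition of a symmetric space combined with the fact that $\mu$ is order-preserving on positive elements) and $\mu(x_-) \ge \mu(y_-)$; hence $\mu(x_+)-\mu(x_-) \le \mu(y_+)-\mu(y_-)$ in $\mathcal{D}_E$, and monotonicity of $p$ on $\mathcal{D}_E$ gives $p(x) \le p(y)$. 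For convexity, given $x,y \in E$ and $\lambda \in (0,1)$, I would use that $\mu((\lambda x + (1-\lambda)y)_+) \prec\prec \lambda \mu(x_+) + (1-\lambda)\mu(y_+)$ — actually the cleaner statement is via the uniform submajorization $\lhd$ of Lemma~\ref{mu sum estimate} and Proposition~8.6 of \cite{KS}, exactly as used in the proof of Lemma~\ref{pietsch funk}. Since $p$ is assumed convex and monotone on $\mathcal{D}_E$ and vanishes on $Z_E \cap \mathcal{D}_E$, Lemma~\ref{fi ks major}-type reasoning (or rather its proof technique via Theorem~\ref{ks majorization theorem}: writing $(1-\varepsilon)$ times the smaller element as a convex combination of rearrangements of the larger) transfers the convexity estimate from $\mathcal{D}_E$ to $E$. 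One must be a little careful that Lemma~\ref{fi ks major} is stated for positive symmetric \emph{functionals}, not for a convex monotone $p$; but the same proof works verbatim, since all it uses is convexity (to handle the convex combination), monotonicity (to compare $z_k$ with $\mu(z_k)$), and invariance of $p$ under rearrangement on $\mathcal{D}_E$ — and the last of these follows from $p = 0$ on $Z_E \cap \mathcal{D}_E$ together with the two-sided subadditivity argument of the first paragraph.

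The third point is then almost immediate: if $x \in Z_E$, then $x = \sum_j (x_1^{(j)} - x_2^{(j)})$ with $\mu(x_1^{(j)}) = \mu(x_2^{(j)})$, and one checks that $\mu(x_+) - \mu(x_-)$, pushed through the rearrangement-invariance of $p$ on $\mathcal{D}_E$, gives $p(x) = 0$; alternatively, once well-definedness is established, $Z_E \cap \mathcal{D}_E$-vanishing of the original $p$ plus the rearrangement-invariance already proved yields $p(x) = p(0) = 0$ directly. I expect the main obstacle to be the first paragraph — pinning down precisely why $p(x) = p(\mu(x_+)-\mu(x_-))$ for $x \in \mathcal{D}_E$, since one only has a one-sided convexity inequality and must extract a genuine equality from it by playing the inequality off against monotonicity and the vanishing on $Z_E \cap \mathcal{D}_E$. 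Once that symmetry/invariance of $p$ on $\mathcal{D}_E$ is in hand, everything else is a routine transfer via the submajorization machinery already set up in the paper.
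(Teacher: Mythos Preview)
Your well-definedness argument matches the paper's: $x - (\mu(x_+)-\mu(x_-)) = (x_+ - \mu(x_+)) - (x_- - \mu(x_-)) \in Z_E \cap \mathcal{D}_E$, and since this is a subspace on which $p$ vanishes, subadditivity applied in both directions gives $p(x) = p(\mu(x_+)-\mu(x_-))$ for $x\in\mathcal{D}_E$. (Your worry about convexity versus subadditivity is fair; in the paper ``convex'' is used in the sense of sublinear, which is legitimate since the only $p$ to which the lemma is applied, built in Lemma~\ref{required p}, is manifestly positively homogeneous.) Your direct monotonicity argument via $x\le y \Rightarrow \mu(x_+)-\mu(x_-) \le \mu(y_+)-\mu(y_-)$ pointwise in $\mathcal{D}_E$ is also correct, and in fact cleaner than the paper's route through subadditivity.

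The gap is in the convexity step. The relation $\mu((x+y)_+) \lhd \mu(x_+)+\mu(y_+)$ controls only the positive part; for the negative part you likewise have $\mu((x+y)_-) \lhd \mu(x_-)+\mu(y_-)$, but when you subtract, this inequality points the \emph{wrong} way. Theorem~\ref{ks majorization theorem} and the convex-hull argument behind Lemma~\ref{fi ks major} are stated for positive elements under $\lhd$; there is no signed version that lets you compare $\mu((x+y)_+)-\mu((x+y)_-)$ with $\mu(x_+)-\mu(x_-)+\mu(y_+)-\mu(y_-)$ inside $\mathcal{D}_E$, so the transfer you sketch does not go through. The paper avoids submajorization here altogether and simply reuses the well-definedness trick: the element
\[
\bigl(\mu((x+y)_+)-\mu((x+y)_-)\bigr)-\bigl(\mu(x_+)-\mu(x_-)\bigr)-\bigl(\mu(y_+)-\mu(y_-)\bigr)
\]
lies in $Z_E\cap\mathcal{D}_E$ (replace each $\mu(z)$ by $z$ modulo $Z_E$ and the expression collapses to $(x+y)-x-y=0$). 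The translation-invariance already established then gives $p(x+y)=p\bigl(\mu(x_+)-\mu(x_-)+\mu(y_+)-\mu(y_-)\bigr)$, and subadditivity on $\mathcal{D}_E$ finishes. The same observation dispatches the final claim: for $x\in Z_E$ the element $\mu(x_+)-\mu(x_-)$ differs from $x$ by something in $Z_E$, hence itself lies in $Z_E\cap\mathcal{D}_E$, so $p(x)=0$ directly---no decomposition into generators is needed.
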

\begin{proof} If $x\in\mathcal{D}_E,$ then $x-\mu(x_+)+\mu(x_-)\in Z_E\cap\mathcal{D}_E.$ Therefore, $p(x-\mu(x_+)+\mu(x_-))=0$ and, due to the convexity of $p,$ $p(x)=p(\mu(x_+)-\mu(x_-)).$ This proves the correctness of the definition.

For $x,y\in E,$ we have
$$\mu((x+y)_+)-\mu((x+y)_-)-\mu(x_+)+\mu(x_-)-\mu(y_+)+\mu(y_-)\in Z_E\cap\mathcal{D}_E.$$
It follows that
$$p(\mu((x+y)_+)-\mu((x+y)_-)-\mu(x_+)+\mu(x_-)-\mu(y_+)+\mu(y_-))=0$$
and
$$p(x+y)=p(\mu((x+y)_+)-\mu((x+y)_-))=$$
$$=p(\mu(x_+)-\mu(x_-)+\mu(y_+)-\mu(y_-))\leq p(x)+p(y).$$

Since $p$ is monotone on $\mathcal{D}_E,$ then $p(y)\leq0$ for every $0\geq y\in\mathcal{D}_E.$ It follows that $p(y)=p(-\mu(y))\leq 0$ for $0\geq y\in E.$ Therefore, $p(x+y)\leq p(x)+p(y)\leq p(x)$ for every $0\geq y\in E.$ 
\end{proof}

\begin{lem}\label{required p} Let $E$ be a symmetric Banach space either on the interval $(0,1)$ or on the semi-axis. The functional
$$p:x\to\limsup_{m\to\infty}\|(M_mx)_+\|_E,\quad x\in\mathcal{D}_E$$
satisfies the assumptions of Lemma \ref{p extension}. Also, for every $x\in\mathcal{D}_E,$ we have $p(x)\leq\|x\|_E.$
\end{lem}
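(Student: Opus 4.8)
The plan is to check the four requirements of Lemma \ref{p extension} for $p$ --- that $p$ is real-valued, monotone and convex on $\mathcal{D}_E$, and that $p\equiv 0$ on $Z_E\cap\mathcal{D}_E$ --- with the estimate $p(x)\le\|x\|_E$ falling out of the first of these. Three of the four are routine and rest only on the facts that $M_m$ is a positive linear operator on $L_1+L_\infty$ (resp.\ on $L_1(0,1)$) and, by Corollary \ref{mn contraction corollary}, a contraction on $E$. The genuine content is the vanishing of $p$ on $Z_E\cap\mathcal{D}_E$, where the Figiel--Kalton theorem (Theorem \ref{fk theorem}) enters.

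For $x\in\mathcal{D}_E$ we have $0\le(M_mx)_+\le|M_mx|$ pointwise, hence $\|(M_mx)_+\|_E\le\|M_mx\|_E\le\|x\|_E$ by the symmetry of the norm and Corollary \ref{mn contraction corollary}; letting $m\to\infty$ yields $0\le p(x)\le\|x\|_E<\infty$, which is simultaneously real-valuedness and the final assertion of the lemma. If $x\le y$ in $\mathcal{D}_E$, then $M_mx\le M_my$ by positivity of $M_m$, so $(M_mx)_+\le(M_my)_+$ and $\|(M_mx)_+\|_E\le\|(M_my)_+\|_E$; passing to the $\limsup$ gives $p(x)\le p(y)$, so $p$ is monotone. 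For $x,y\in\mathcal{D}_E$ and $\lambda\in[0,1]$, linearity of $M_m$ and the pointwise inequality $(\lambda u+(1-\lambda)v)_+\le\lambda u_++(1-\lambda)v_+$ give
\begin{equation*}
\big(M_m(\lambda x+(1-\lambda)y)\big)_+\le\lambda(M_mx)_++(1-\lambda)(M_my)_+ ;
\end{equation*}
applying the norm (monotone and subadditive on the positive cone) and then $\limsup_{m\to\infty}$ produces $p(\lambda x+(1-\lambda)y)\le\lambda p(x)+(1-\lambda)p(y)$, so $p$ is convex.

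Now fix $x\in Z_E\cap\mathcal{D}_E$. On the semi-axis, Theorem \ref{fk theorem} gives $Cx\in E$ directly; on $(0,1)$ one first notes that any $x\in Z_E$ satisfies $\int_0^1 x=0$ (each generator $x_1-x_2$ of $Z_E$ has $\int_0^1(x_1-x_2)=\int_0^1\mu(x_1)-\int_0^1\mu(x_2)=0$, using $E\subset L_1(0,1)$), so Theorem \ref{fk theorem} again gives $Cx\in E$. Writing $F(t)=\int_0^t x(s)\,ds$ and using $(Cx)(t)=F(t)/t$, one computes for $m\ge2$
\begin{equation*}
(M_mx)(t)=\frac{1}{t\log m}\big(F(t)-F(t/m)\big)=\frac{1}{\log m}\Big((Cx)(t)-\tfrac1m(Cx)(t/m)\Big)=\frac{1}{\log m}\Big(Cx-\tfrac1m\,\sigma_m(Cx)\Big)(t).
\end{equation*}
Hence $\|M_mx\|_E\le\frac{1}{\log m}\big(\|Cx\|_E+\tfrac1m\|\sigma_m(Cx)\|_E\big)$, and since $\tfrac1m\|\sigma_m(Cx)\|_E\le\|Cx\|_E$ --- either by the dilation estimate $\|\sigma_m\|_{E\to E}\le m$ \cite{KPS}, or, within the present framework, from $\|\sigma_m(Cx)\|_E\le\|\sigma_m\mu(Cx)\|_E$ together with $m^{-1}\sigma_m\mu(Cx)\lhd M_m\mu(Cx)\lhd\mu(Cx)$ (Lemma \ref{mn estimate}), Theorem \ref{ks majorization theorem} and Corollary \ref{mn contraction corollary} --- we conclude $\|(M_mx)_+\|_E\le\|M_mx\|_E\le\tfrac{2}{\log m}\|Cx\|_E\to0$, i.e.\ $p(x)=0$.

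The crux --- the one step that requires an idea rather than a verification --- is the identity $M_m=\tfrac1{\log m}\big(C-\tfrac1m\sigma_m C\big)$ on $\mathcal{D}_E$ used in tandem with the Figiel--Kalton criterion $Cx\in E$: together they trade membership in $Z_E$ for the decay factor $1/\log m$. The remaining delicate point is purely technical, namely controlling $\tfrac1m\|\sigma_m(Cx)\|_E$ for an $E$ that is only assumed symmetric (not fully symmetric); this is supplied by the uniform submajorization of Lemma \ref{mn estimate} and Theorem \ref{ks majorization theorem}, with the customary care on $(0,1)$, where $\sigma_m$ truncates and one uses $\mu(\sigma_m(Cx))\le\sigma_m\mu(Cx)$.
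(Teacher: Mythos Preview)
Your proof is correct and follows essentially the same route as the paper: contractivity of $M_m$ (Corollary \ref{mn contraction corollary}) for the bound $p(x)\le\|x\|_E$, positivity and linearity of $M_m$ for monotonicity and convexity, and the Figiel--Kalton criterion combined with the identity $M_m=\tfrac1{\log m}\bigl(C-\tfrac1m\sigma_mC\bigr)$ together with $\|\sigma_m\|_{E\to E}\le m$ for the vanishing on $Z_E\cap\mathcal{D}_E$. Your write-up is in fact slightly more careful than the paper's, since you make explicit the verification $\int_0^1 x=0$ needed to invoke Theorem \ref{fk theorem} on $(0,1)$, and you offer an internal alternative (via Lemma \ref{mn estimate} and Theorem \ref{ks majorization theorem}) to the cited dilation bound from \cite{KPS}.
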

\begin{proof} It follows from Corollary \ref{mn contraction corollary} that
$$\|(M_mx)_+\|_E\leq\|M_mx\|_E\leq\|x\|_E,\quad x\in E.$$
It follows that
$$p(x)=\limsup_{m\to\infty}\|(M_mx)_+\|_E\leq\|x\|_E,\quad x\in\mathcal{D}_E.$$
Clearly, the mappings $x\to(M_mx)_+$ are convex and monotone. So are the mappings $x\to\|(M_mx)_+\|_E.$ Therefore, $p:\mathcal{D}_E\to\mathbb{R}$ is a convex and monotone functional.

If $x\in Z_E\cap\mathcal{D}_E,$ then by Theorem \ref{fk theorem} $|Cx|\in E.$ Therefore,
$$(M_mx)(t)\leq\frac1{\log(m)}(|\frac1t\int_0^{t/m}x(s)ds|+|\frac1t\int_0^tx(s)ds|)\leq$$
$$\leq\frac1{\log(m)}(\frac1m\sigma_m|Cx|+|Cx|)(t).$$
Since $\|\sigma_m\|_{E\to E}\leq m$ (see \cite[Theorem II.4.5]{KPS}), it follows that
$$\|(M_mx)_+\|_E\leq\frac2{\log(m)}\|Cx\|_E$$
and $p(x)=0.$ 
\end{proof}

\begin{thm}\label{sim exists} Let $E=E(0,\infty)$ be a symmetric Banach space on the semi-axis. For a given $0\leq x\in E,$ there exists a symmetric linear functional $\varphi:E\to\mathbb{R}$ such that
$$\varphi(x)=\lim_{m\to\infty}\frac1m\|\sigma_m(\mu(x))\|_E.$$
\end{thm}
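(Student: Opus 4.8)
\noindent\textit{Proof proposal.} The plan is to obtain $\varphi$ as a Hahn--Banach functional dominated by the functional $p$ of Lemma~\ref{required p}, chosen to be exact at $x$; the value $\varphi(x)$ will then be trapped between $\liminf_{m}\frac1m\|\sigma_m\mu(x)\|_E$ and $\limsup_{m}\frac1m\|\sigma_m\mu(x)\|_E$, which at once forces the limit to exist and to equal $\varphi(x)$.

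First I would take $p(y)=\limsup_{m\to\infty}\|(M_my)_+\|_E$ on $\mathcal{D}_E$; by Lemma~\ref{required p} it is convex, monotone, vanishes on $Z_E\cap\mathcal{D}_E$, and satisfies $p\le\|\cdot\|_E$ there, and by Lemma~\ref{p extension} it extends to a convex monotone functional $p\colon E\to\mathbb{R}$ that vanishes on $Z_E$ and still coincides with the original $p$ on $\mathcal{D}_E$. Lemma~\ref{positive hahn-banach} applied with $x_0=x$ then yields a positive linear $\varphi\colon E\to\mathbb{R}$ with $\varphi\le p$ and $\varphi(x)=p(x)$. That $\varphi$ is symmetric is forced by the domination: if $0\le x_1,x_2\in E$ and $\mu(x_1)=\mu(x_2)$, then $\pm(x_1-x_2)\in Z_E$, so $\varphi(x_1-x_2)\le p(x_1-x_2)=0$ and $\varphi(x_2-x_1)\le 0$, hence $\varphi(x_1)=\varphi(x_2)$. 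Being symmetric on the semi-axis, $\varphi$ also obeys $m\varphi(z)=\varphi(\sigma_m z)$ for all $z\in E$, $m\in\mathbb{N}$.

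It remains to compute $\varphi(x)=p(x)$. As $x\ge0$, the extension formula of Lemma~\ref{p extension} gives $p(x)=p(\mu(x))=\limsup_{m\to\infty}\|M_m\mu(x)\|_E$, the positive part being redundant since $M_m\mu(x)\ge0$. For the upper estimate, $\sigma_m\mu(x)\in\mathcal{D}_E$, so
\[
m\varphi(x)=m\varphi(\mu(x))=\varphi(\sigma_m\mu(x))\le p(\sigma_m\mu(x))\le\|\sigma_m\mu(x)\|_E,
\]
whence $\varphi(x)\le\liminf_{m\to\infty}\frac1m\|\sigma_m\mu(x)\|_E$. For the lower estimate, Lemma~\ref{mn estimate} gives $m^{-1}\sigma_m\mu(x)\lhd M_m\mu(x)$, and Theorem~\ref{ks majorization theorem}, used exactly as in the proof of Corollary~\ref{mn contraction corollary}, turns this into $\frac1m\|\sigma_m\mu(x)\|_E\le\|M_m\mu(x)\|_E$; taking $\limsup$ gives $\limsup_{m\to\infty}\frac1m\|\sigma_m\mu(x)\|_E\le\varphi(x)$. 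Combining the two estimates pins $\varphi(x)$ between the $\limsup$ and the $\liminf$ of $\frac1m\|\sigma_m\mu(x)\|_E$, so the limit exists and equals $\varphi(x)$.

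I expect the main difficulty to be organisational rather than a single hard estimate: the real analytic content --- the Figiel--Kalton description of $Z_E$ through the Hardy operator (Theorem~\ref{fk theorem}) and, above all, the passage from uniform submajorization $\lhd$ to honest norm inequalities in an \emph{arbitrary}, not necessarily fully symmetric, symmetric space $E$ (Theorem~\ref{ks majorization theorem}) --- is already available, having been packaged into Lemmas~\ref{mn estimate}, \ref{mn contraction corollary}, \ref{p extension} and \ref{required p}. The delicate point is that the single Hahn--Banach functional $\varphi$ must do several things simultaneously: domination by $p$ makes it symmetric (via $Z_E$) and keeps $\varphi(\sigma_m\mu(x))\le\|\sigma_m\mu(x)\|_E$, exactness at $x$ makes $\varphi(x)=\limsup_m\|M_m\mu(x)\|_E$, and the dilation identity then pushes $\varphi(x)$ below $\frac1m\|\sigma_m\mu(x)\|_E$ for every $m$; one must verify that the extended $p$ really stays below $\|\cdot\|_E$ on $\mathcal{D}_E$ and that the resulting $\limsup$ and $\liminf$ estimates close up exactly around $\lim_m\frac1m\|\sigma_m\mu(x)\|_E$. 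If one would rather avoid the dilation identity in the upper bound, the representation $M_m\mu(x)=\frac1{\log m}\int_1^m\sigma_s\mu(x)\,s^{-2}\,ds$ exhibits $M_m\mu(x)$ as a convex average of the dilates $s^{-1}\sigma_s\mu(x)$ whose mass escapes to $s=\infty$, giving the same conclusion --- at the extra cost of Minkowski's integral inequality in $E$.
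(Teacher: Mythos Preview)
Your proposal is correct and follows essentially the same route as the paper: extend the functional $p$ of Lemma~\ref{required p} via Lemma~\ref{p extension}, apply the positive Hahn--Banach Lemma~\ref{positive hahn-banach} at $x$, read off symmetry from $p|_{Z_E}=0$, and sandwich $\varphi(x)$ using the dilation identity for the upper bound and $m^{-1}\sigma_m\mu(x)\lhd M_m\mu(x)$ (Lemma~\ref{mn estimate}) together with Theorem~\ref{ks majorization theorem} for the lower bound. The only cosmetic differences are that the paper first normalises to $x=\mu(x)$ and phrases the upper bound as $\|\varphi\|_{E^*}\le1$, whereas you invoke $p\le\|\cdot\|_E$ on $\mathcal{D}_E$ directly; your additional remark that the sandwich $\limsup\le\varphi(x)\le\liminf$ simultaneously proves the existence of $\lim_m m^{-1}\|\sigma_m\mu(x)\|_E$ is a welcome clarification that the paper leaves implicit.
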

\begin{proof} Without loss of generality, $x=\mu(x).$ Let $p$ be the convex monotone functional constructed in Lemma \ref{required p}. It follows from Lemma \ref{positive hahn-banach} that there exist a positive linear functional $\varphi$ on $E$ such that $\varphi\leq p$ and $\varphi(x)=p(x).$ Since $p(z)=0$ for every $z\in Z_E,$ it follows that $\varphi(z)=0$ for every $z\in Z_E.$ Therefore, $\varphi$ is a symmetric functional.

Since $\varphi(z)\leq p(z)\leq\|z\|_E$ for every $z=\mu(z)\in E,$ it follows that $\|\varphi\|_{E^*}\leq 1.$ Therefore,
$$\varphi(x)=\varphi(\frac1m\sigma_mx)\leq\frac1m\|\sigma_mx\|_E.$$
Passing $m\to\infty,$ we obtain
$$\varphi(x)\leq\lim_{m\to\infty}\frac1m\|\sigma_m\mu(x)\|_E.$$
On the other hand, It follows from Lemma \ref{mn estimate} that $m^{-1}\sigma_mx\lhd M_mx.$ Therefore,
$$p(x)=\limsup_{m\to\infty}\|M_mx\|_E\geq\lim_{m\to\infty}\frac1m\|\sigma_m\mu(x)\|_E.$$
The assertion follows immediately. 
\end{proof}

Consider the functional $\pi:E\to E$ (identical to the one defined in Section \ref{introd}).
\begin{equation}\label{pi deff}
\pi(x)=\lim_{m\to\infty}\frac1m\|x^{\oplus m}\|_E,\quad x\in E.
\end{equation}
Note that $\pi(-x)=\pi(x)$ for every $x\in E.$ If $p$ is a functionals defined in Lemma \ref{required p}, then $p(-x)=0$ for positive $x\in E.$ Therefore, $p\neq\pi.$ However, the assertion below follows from Theorem \ref{sim exists}.

\begin{lem}\label{pietsch request1} Let $E=E(0,\infty)$ be a symmetric Banach space on the semi-axis. Let $p$ and $\pi$ be the convex functionals on $E$ defined in Lemma \ref{required p} and \eqref{pi deff}, respectively. For every positive $x\in E,$ we have $p(x)=\pi(x).$
\end{lem}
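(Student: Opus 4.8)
The plan is to show the two inequalities $p(x)\leq\pi(x)$ and $\pi(x)\leq p(x)$ for positive $x\in E$. The inequality $\pi(x)\leq p(x)$ is essentially contained in the proof of Theorem \ref{sim exists}: there we saw that $p(x)=\limsup_{m\to\infty}\|M_mx\|_E\geq\lim_{m\to\infty}\frac1m\|\sigma_m\mu(x)\|_E$, using Lemma \ref{mn estimate} (which gives $m^{-1}\sigma_mx\lhd M_mx$ for $x=\mu(x)$) together with Theorem \ref{ks majorization theorem} to convert the submajorization into a norm estimate. Since for positive $x$ we have $\|x^{\oplus m}\|_E=\|\sigma_m\mu(x)\|_E$, this last quantity is exactly $\pi(x)$, so $p(x)\geq\pi(x)$.

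For the reverse inequality $p(x)\leq\pi(x)$, I would invoke Theorem \ref{sim exists} directly. Given positive $x\in E$, that theorem produces a symmetric linear functional $\varphi$ on $E$ with $\varphi(x)=\lim_{m\to\infty}\frac1m\|\sigma_m(\mu(x))\|_E=\pi(x)$. On the other hand, the functional $\varphi$ constructed there satisfies $\varphi\leq p$, so in particular $\varphi(x)\leq p(x)$; hence $\pi(x)\geq\varphi(x)$ would give the wrong direction. Instead the cleaner route is: from Theorem \ref{sim exists} and the displayed computation at the end of its proof, $p(x)=\limsup_m\|M_mx\|_E$ and $\pi(x)=\lim_m\frac1m\|\sigma_m\mu(x)\|_E$; Lemma \ref{mn estimate} also gives $M_mx\lhd x$ and hence by Corollary \ref{mn contraction corollary} that $\|M_mx\|_E\leq\|x\|_E$, but this only bounds $p(x)$ by $\|x\|_E$, not by $\pi(x)$. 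So the genuine content of the claim is that the two \emph{limits} agree, which is really the statement that the submajorizations $m^{-1}\sigma_m\mu(x)\lhd M_mx\lhd x$ are, asymptotically after dividing by nothing further, tight enough. The honest argument is: $p(x)=\limsup_m\|M_mx\|_E$ and, since $M_mx\lhd \mu(x)$ trivially gives nothing sharp, one compares $M_mx$ with $M_{m}(\,\cdot\,)$ iterated or with $\sigma$-operators. Concretely, $\sigma_{1/m}\mu(x)\leq \mu(x)$ pointwise is false; rather one uses that $M_mx(t)\le \frac{1}{\log m}\cdot\frac1t\int_0^t x \le \frac{1}{\log m}(Cx)(t)$ is too crude. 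The correct comparison: for $x=\mu(x)$, $M_m x \prec\prec C_m x$ where an averaging over windows of ratio $m$ is dominated in norm by a dilation; precisely $\|M_mx\|_E \le \frac{1}{m}\|\sigma_m \mu(x)\|_E (1+o(1))$? This is exactly what must be shown.

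Let me restate the plan more carefully. First I would establish $p(x)\geq \pi(x)$ exactly as in the last two displays of the proof of Theorem \ref{sim exists}: $m^{-1}\sigma_m\mu(x)\lhd M_mx$ by Lemma \ref{mn estimate}, so $\frac1m\|\sigma_m\mu(x)\|_E\le \|M_mx\|_E$ by Theorem \ref{ks majorization theorem}, and taking $\limsup$ gives $\pi(x)\le p(x)$. Second, for $p(x)\le\pi(x)$, I would show the pointwise/submajorization estimate $M_mx\lhd \frac{1}{m}\sigma_m\mu(x)$ up to a controllable error, or — more robustly — argue that $\|M_mx\|_E\le \frac{c}{m}\|\sigma_m\mu(x)\|_E$ with $c$ independent of $m$ would already force $p(x)\le c\,\pi(x)$, and then bootstrap $c$ down to $1$ by iterating $M_m$ (note $M_m\circ M_m$ averages over a window of ratio $m^2$ but with weight $1/(\log m)^2$, and $M_{m^2}$ has weight $1/\log(m^2)=1/(2\log m)$). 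Third, combining the two inequalities yields $p(x)=\pi(x)$ for positive $x$. The main obstacle I anticipate is the second inequality: one needs a genuine upper estimate for $\|M_mx\|_E$ in terms of $\frac1m\|\sigma_m\mu(x)\|_E$, and the naive submajorization $M_mx\lhd x$ from Lemma \ref{mn estimate} is not sharp enough; the key trick will be to split the logarithmic average $M_mx$ into dyadic-in-ratio pieces, each of which is an $\ell^1$-average of translates comparable to $\frac1m\sigma_m\mu(x)$, so that after summing the $\log_2 m$ pieces and dividing by $\log m$ the constant stays bounded, and then to let $m\to\infty$ so that the bounded constant, by a subadditivity/monotonicity argument on $m\mapsto \frac1m\|\sigma_m\mu(x)\|_E$ (which is non-increasing by Corollary \ref{mn contraction corollary}-type reasoning, or at least its limit equals the infimum), collapses to $1$. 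Here is the writeup.

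\begin{proof}
Fix a positive $x\in E$; without loss of generality $x=\mu(x)$. Recall from the proof of Theorem \ref{sim exists} that $p(x)=\limsup_{m\to\infty}\|M_mx\|_E$, while $\pi(x)=\lim_{m\to\infty}\frac1m\|\sigma_m\mu(x)\|_E$ (the latter limit exists and equals the infimum, since $m\mapsto\frac1m\|\sigma_m\mu(x)\|_E$ is submultiplicative by Lemma \ref{pietsch funk}-type reasoning).

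\emph{Step 1: $\pi(x)\leq p(x)$.} By Lemma \ref{mn estimate}, $m^{-1}\sigma_m x\lhd M_mx$. Hence, by Theorem \ref{ks majorization theorem}, for every $\varepsilon>0$ the function $(1-\varepsilon)m^{-1}\sigma_m x$ lies in the convex hull of $\{z:\mu(z)\leq\mu(M_mx)\}$, so $(1-\varepsilon)\frac1m\|\sigma_m x\|_E\leq\|M_mx\|_E$. Letting $\varepsilon\to0$ and then taking $\limsup_{m\to\infty}$ gives $\pi(x)\leq p(x)$.

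\emph{Step 2: $p(x)\leq\pi(x)$.} By Theorem \ref{sim exists}, there exists a symmetric linear functional $\varphi$ on $E$ with $\varphi(x)=\pi(x)$ and $\|\varphi\|_{E^*}\leq1$. Since $\varphi$ is symmetric and $\|\varphi\|_{E^*}\leq 1$, for each $m\geq 2$ and each $t>0$ we have, using $M_mx\leq Cx$-type monotonicity and symmetry of $\varphi$,
$$\varphi(M_mx)\leq\|M_mx\|_{E}.$$
This inequality is the wrong direction, so instead we argue as follows. Let $\psi$ be the positive symmetric functional produced by applying Lemma \ref{positive hahn-banach} to the convex monotone functional $q:y\mapsto\limsup_{m\to\infty}\|(M_my)_+\|_E$ of Lemma \ref{required p}, but now at the base point $x$: thus $\psi\leq q=p$ on $\mathcal{D}_E$, $\psi$ vanishes on $Z_E$, and $\psi(x)=p(x)$. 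Since $\psi$ is symmetric with $\psi\leq p\leq\|\cdot\|_E$ on positive elements, we have $\|\psi\|_{E^*}\leq1$, whence
$$p(x)=\psi(x)=\psi\bigl(\tfrac1m\sigma_m x\bigr)\leq\tfrac1m\|\sigma_m x\|_E.$$
Letting $m\to\infty$ gives $p(x)\leq\pi(x)$.

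Combining Steps 1 and 2 yields $p(x)=\pi(x)$.
\end{proof}
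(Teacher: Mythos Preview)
Your final proof is correct and is essentially the paper's own argument, just unpacked more explicitly. In the paper's proof of Theorem \ref{sim exists} the functional $\varphi$ is chosen via Lemma \ref{positive hahn-banach} so that $\varphi(x)=p(x)$, and the remainder of that proof shows $\varphi(x)=\pi(x)$; the paper's proof of Lemma \ref{pietsch request1} is then the one-liner ``by construction, $\varphi(x)=p(x)=\pi(x)$''. Your Step~1 reproduces the inequality $\pi(x)\le p(x)$ from the end of that proof, and your Step~2 reproduces the Hahn--Banach construction and the bound $\psi(x)\le\tfrac1m\|\sigma_m x\|_E$ from the middle of that proof. So the mathematics is the same.

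Two cosmetic points. First, the long exploratory ``plan'' paragraph preceding your proof contains several dead ends and self-corrections (the dyadic splitting, the iterated $M_m$, etc.) that never get used; none of that is needed, and it obscures the fact that you already had the right idea from the start. Second, in Step~2 you introduce a functional $q$ and then immediately write $q=p$; it would be cleaner to just call it $p$ throughout, matching Lemma \ref{required p}, since the paper reserves the letter $q$ for the different functional of Lemma \ref{required q}.
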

\begin{proof} For every $x\in E,$ consider the functional $\varphi$ constructed in Theorem \ref{sim exists}. By construction, we have $\varphi(x)=p(x)=\pi(x).$
\end{proof}

If $E\not\subset L_1(0,\infty),$ then the functional $\varphi$ constructed in Theorem \ref{sim exists} is necessarily singular. The case $E\subset L_1$ requires more detailed treatment.

\begin{lem}\label{weak limit lemma} Let $E$ be a symmetric (respectively, fully symmetric) Banach function space either on the interval $(0,1)$ or on the semi-axis. Let $\{\varphi_i\}_{i\in\mathbb{I}}\in E^*$ be a net and let $\varphi\in E^*$ be such that $\varphi_i\to\varphi$ $*-$weakly.
\begin{enumerate}
\item If every $\varphi_i$ is symmetric, then $\varphi$ is symmetric.
\item If every $\varphi_i$ is fully symmetric, then $\varphi$ is fully symmetric.
\end{enumerate}
\end{lem}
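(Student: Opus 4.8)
The statement to prove is that the $*$-weak limit of a net of symmetric (respectively fully symmetric) functionals on a symmetric (respectively fully symmetric) Banach function space $E$ is again symmetric (respectively fully symmetric). The plan is to unwind each of the two defining properties into a pointwise inequality or equality among scalars and then pass to the limit, using only that $*$-weak convergence means $\varphi_i(x)\to\varphi(x)$ for every $x\in E$.

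For part (1), fix positive $x,y\in E$ with $\mu(x)=\mu(y)$. By symmetry of each $\varphi_i$ we have $\varphi_i(x)=\varphi_i(y)$ for all $i\in\mathbb{I}$. Since $\varphi_i\to\varphi$ $*$-weakly, both sides converge: $\varphi_i(x)\to\varphi(x)$ and $\varphi_i(y)\to\varphi(y)$. Hence $\varphi(x)=\varphi(y)$, which is exactly symmetry of $\varphi$. (One should note that the net here is over an arbitrary directed index set $\mathbb{I}$, so ``convergence'' is net convergence in the scalar field; but the argument that the limits of two equal nets coincide is valid for nets just as for sequences, because equality is preserved under limits of nets — if $a_i=b_i$ for all $i$ and $a_i\to a$, $b_i\to b$, then $a=b$ by uniqueness of net limits in a Hausdorff space.)

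For part (2), fix positive $x,y\in E$ with $y\prec\prec x$; here one must first check that $y\in E$, which holds since $E$ is assumed fully symmetric. By full symmetry of each $\varphi_i$ we have $\varphi_i(y)\le\varphi_i(x)$, an inequality in $\mathbb{R}$ (the functionals being real-valued on the positive cone, or one may argue with real parts). Passing to the $*$-weak limit gives $\varphi(y)=\lim_i\varphi_i(y)\le\lim_i\varphi_i(x)=\varphi(x)$, since non-strict inequality between two convergent nets of reals is preserved in the limit. Thus $\varphi$ is fully symmetric.

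There is no serious obstacle here; the only points requiring a word of care are (a) that the relevant inequality/equality is among real scalars so that order-passage to the limit is legitimate — this is why the lemma is naturally stated for positive $A,B$ in the definitions of symmetric and fully symmetric functionals — and (b) that full symmetry of $E$ is what guarantees $y\in E$ when $y\prec\prec x$, so that $\varphi_i(y)$ and $\varphi(y)$ are even defined. Both are immediate from the definitions recalled in Section 2. One might additionally remark that this lemma is exactly the tool needed to produce symmetric functionals as weak-$*$ cluster points of averages, but that usage belongs to the subsequent development rather than the proof itself.
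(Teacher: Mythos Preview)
Your proof is correct. Part (1) is verbatim the paper's argument. For part (2) you pass the inequality $\varphi_i(y)\le\varphi_i(x)$ directly to the limit, which is the most economical route. The paper instead first reformulates full symmetry as the condition $\varphi_i(z)\le 0$ for every $z\in\mathcal{D}_E$ with $Cz\le 0$, passes that single-variable inequality to the limit, and only then recovers the majorization statement by setting $z=\mu(x_1)-\mu(x_2)$. Both arguments are equally valid; the paper's detour through $\mathcal{D}_E$ and the Hardy operator is unnecessary here but parallels how full symmetry is exploited in the constructions of Lemmas \ref{required q} and \ref{ph monotone} and Theorem \ref{fs exists}.
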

\begin{proof} Let each $\varphi_i$ be symmetric. If $0\leq x_1,x_2\in E$ are such that $\mu(x_1)=\mu(x_2),$ then
$$\varphi(x_1)=\lim_{i\in\mathbb{I}}\varphi_i(x_1)=\lim_{i\in\mathbb{I}}\varphi_i(x_2)=\varphi(x_2).$$
Hence, $\varphi$ is symmetric.

Let each $\varphi_i$ be fully symmetric. Thus, $\varphi_i(x)\leq0$ for every $x\in\mathcal{D}_E$ such that $Cx\leq0.$ Therefore, $\varphi(x)=\lim_{i\in\mathbb{I}}\varphi_i(x)\leq0$ for every $x\in\mathcal{D}_E$ such that $Cx\leq0.$

Let $x_1,x_2\in E$ be positive elements such that $x_1\prec\prec x_2.$ Therefore, $z=\mu(x_1)-\mu(x_2)\in\mathcal{D}_E$ and $Cz\leq0.$ It follows from above that $\varphi(z)\leq 0.$ Hence, $\varphi$ is a fully symmetric functional. 
\end{proof}

\begin{lem}\label{sing constr} Let $E$ be a symmetric (respectively, fully symmetric) Banach function space either on the interval $(0,1)$ or on the semi-axis and let $\varphi$ be a symmetric (respectively, fully symmetric) functional on $E.$ The formula
$$\varphi_{sing}(x)=\lim_{n\to\infty}\varphi(\mu(x)\chi_{(0,1/n)}),\quad 0\leq x\in E.$$
defines a singular symmetric (respectivley, fully symmetric) linear functional on $E.$
\end{lem}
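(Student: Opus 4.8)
The plan is to show that $\varphi_{sing}$ is well-defined, that it is additive and positively homogeneous on $E_+$, that it extends to a linear functional on $E$ with the inherited symmetry (or full symmetry), and finally that it vanishes on $(L_1\cap L_\infty)$. First I would address well-definedness: for $0\le x\in E$ the net $\{\mu(x)\chi_{(0,1/n)}\}_{n}$ is decreasing to $0$ pointwise, and each term is dominated by $\mu(x)$; since $\varphi$ is a positive symmetric (hence bounded) functional, the sequence $\varphi(\mu(x)\chi_{(0,1/n)})$ is nonincreasing and bounded below by $0$, so the limit exists. Positive homogeneity is immediate from $\mu(\lambda x)=\lambda\mu(x)$ for $\lambda\ge 0$.

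The main work is additivity on $E_+$. Here I would use the fact, already exploited repeatedly in the paper, that $\varphi$ kills the commutator space: by Lemma \ref{mu sum estimate} (commutative case) one has $\mu(x+y)\lhd\mu(x)+\mu(y)\lhd 2\sigma_{1/2}\mu(x+y)$, hence $\varphi(\mu(x+y))=\varphi(\mu(x)+\mu(y))$ by Lemma \ref{fi ks major}. The same estimates restricted to $(0,1/n)$ — that is, applied to $\mu(x)\chi_{(0,1/n)}$ and $\mu(y)\chi_{(0,1/n)}$ — should give
\[
\mu(x)\chi_{(0,1/n)}+\mu(y)\chi_{(0,1/n)}\ \text{and}\ \mu(x+y)\chi_{(0,1/n)}
\]
are $\lhd$-equivalent up to the usual dilation, so that $\varphi$ agrees on them; taking $n\to\infty$ yields $\varphi_{sing}(x+y)=\varphi_{sing}(x)+\varphi_{sing}(y)$. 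The one subtlety is that cutting off at $(0,1/n)$ does not commute with rearrangement of a sum, so I would instead argue via $\int_0^t$-estimates: the quantities $\int_0^t\mu(s,x)\chi_{(0,1/n)}(s)\,ds$, etc., satisfy the Hardy--Littlewood-type comparisons of Lemma \ref{mu sum estimate} because truncation to an initial interval of the decreasing rearrangement only affects the integrals for $t\ge 1/n$, and there the estimates are inherited from the global ones. This lets me conclude $\mu(x)\chi_{(0,1/n)}+\mu(y)\chi_{(0,1/n)}\prec\prec\mu((x+y)\chi_{(0,1/n)}) \prec\prec 2\sigma_{1/2}(\cdots)$ up to terms whose $C$-transform is bounded, which is enough to invoke the commutator-space vanishing. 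Having additivity and positive homogeneity on the cone, I extend $\varphi_{sing}$ to all of $E$ by $\varphi_{sing}(x)=\varphi_{sing}(x_+)-\varphi_{sing}(x_-)$ in the usual way; positivity is built in.

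Symmetry is clear since $\varphi_{sing}$ depends on $x$ only through $\mu(x)$. For full symmetry, if $0\le x_1\prec\prec x_2$ then $\mu(x_1)\chi_{(0,1/n)}\prec\prec\mu(x_2)\chi_{(0,1/n)}$ (truncation to an initial segment preserves $\prec\prec$), so $\varphi(\mu(x_1)\chi_{(0,1/n)})\le\varphi(\mu(x_2)\chi_{(0,1/n)})$ by full symmetry of $\varphi$, and passing to the limit gives $\varphi_{sing}(x_1)\le\varphi_{sing}(x_2)$. Finally, singularity: if $0\le x\in(L_1\cap L_\infty)$ then $\mu(x)\chi_{(0,1/n)}\to 0$ in $E$-norm when $E\ne L_1$ — indeed $\|\mu(x)\chi_{(0,1/n)}\|_E\le\|x\|_\infty\|\chi_{(0,1/n)}\|_E\to 0$ on the semi-axis, and on $(0,1)$ one uses absolute continuity of the norm on $L_1\cap L_\infty$ — so $\varphi_{sing}(x)=0$; the degenerate case $E=L_1$ (where $\mu(x)\chi_{(0,1/n)}$ need not norm-converge to $0$) is handled separately, noting that then $L_1\cap L_\infty = L_1$ and the only symmetric functional is the integral, whose singular part is $0$. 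The hard part is the bookkeeping in the additivity step, ensuring the truncated majorization inequalities are exactly of the form required by Lemmas \ref{mu sum estimate} and \ref{fi ks major}; everything else is routine.
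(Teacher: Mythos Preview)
Your plan is correct and follows essentially the same route as the paper: prove additivity on $E_+$ via the $\lhd$ chain and Lemma~\ref{fi ks major}, then extend by linearity. However, you overcomplicate the additivity step. The ``subtlety'' you flag is not an issue: all three functions $\mu(x+y)\chi_{(0,1/n)}$, $(\mu(x)+\mu(y))\chi_{(0,1/n)}$ and $2\sigma_{1/2}\mu(x+y)\chi_{(0,1/n)}$ are already nonincreasing, so no rearrangement of a sum is ever taken after truncation. The paper simply asserts
\[
\mu(x+y)\chi_{(0,1/n)}\lhd(\mu(x)+\mu(y))\chi_{(0,1/n)}\lhd 2\sigma_{1/2}\mu(x+y)\chi_{(0,1/n)},
\]
which follows from Lemma~\ref{mu sum estimate} by an elementary case split on whether the upper limit of integration lies below or above $1/n$; then Lemma~\ref{fi ks major} gives $\varphi(\mu(x+y)\chi_{(0,1/n)})=\varphi(\mu(x)\chi_{(0,1/n)})+\varphi(\mu(y)\chi_{(0,1/n)})$ for each $n$, and passing to the limit finishes. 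Your proposed detour through $\prec\prec$ and ``$C$-transform bounded'' terms is unnecessary and would in fact be harder to make precise.

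A small point on singularity: rather than arguing $\|\chi_{(0,1/n)}\|_E\to 0$ (which requires the fundamental function of $E$ to vanish at $0$), it is cleaner to use the dilation property $\varphi(\sigma_s z)=s\varphi(z)$ of symmetric functionals, giving $\varphi(\chi_{(0,1/n)})=n^{-1}\varphi(\chi_{(0,1)})\to 0$ directly; this is the argument the paper uses (in the proof of Lemma~\ref{normal construction}).
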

\begin{proof} If $x,y\in E$ are positive functions, then
$$\mu(x+y)\chi_{(0,1/n)}\lhd(\mu(x)+\mu(y))\chi_{(0,1/n)}\lhd 2\sigma_{1/2}\mu(x+y)\chi_{(0,1/n)}.$$
Taking the limit as $n\to\infty,$ we derive from Lemma \ref{fi ks major} that
$$\varphi_{sing}(\mu(x+y))=\varphi_{sing}(\mu(x)+\mu(y)).$$
Since $\varphi$ is symmetric, it follows that
$$\varphi_{sing}(x+y)=\varphi_{sing}(\mu(x+y))=\varphi_{sing}(\mu(x)+\mu(y))=\varphi_{sing}(x)+\varphi_{sing}(y).$$
Hence, $\varphi_{sing}$ is an additive functional on $E_+.$ Therefore, it extends to a linear functional on $E.$ Clearly, $\varphi_{sing}$ is symmetric. Second assertion is trivial. 
\end{proof}

In fact, the construction in Lemma \ref{sing constr} gives a singular part of the functional $\varphi$ as defined by Yosida-Hewitt theorem.

\begin{lem}\label{normal construction} Let $E=E(0,\infty)\subset L_1(0,\infty)$ be a symmetric Banach function space on the semi-axis and let $\varphi$ be a symmetric functional on $E.$ If $\varphi_{sing}$ is a functional constructed in Lemma \ref{sing constr}, then $\varphi-\varphi_{sing}$ is a normal functional (that is, an integral).
\end{lem}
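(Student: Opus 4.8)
The plan is to show that $\psi:=\varphi-\varphi_{sing}$ is $\sigma$-order continuous on $E$ and then to invoke the classical fact that a positive $\sigma$-order continuous functional on a symmetric Banach function space over a $\sigma$-finite measure space is necessarily an integral (equivalently, is the normal part in the Yosida--Hewitt decomposition). Writing $\varphi=\varphi_1-\varphi_2$ with $\varphi_1,\varphi_2$ positive symmetric functionals (the positive and negative parts of a symmetric functional on $E(0,\infty)$ are again symmetric) and noting that the assignment $\varphi\mapsto\varphi_{sing}$ of Lemma \ref{sing constr} is additive, one reduces immediately to the case $\varphi\geq0$. In that case $\varphi_{sing}\geq0$, and combining Lemma \ref{sing constr} with the linearity and symmetry of $\varphi$ (from $\mu(x)=\mu(x)\chi_{(0,1/n)}+\mu(x)\chi_{(1/n,\infty)}$ one gets $\varphi(\mu(x)\chi_{(1/n,\infty)})=\varphi(x)-\varphi(\mu(x)\chi_{(0,1/n)})$), one checks that $\psi$ is a positive symmetric functional with
$$\psi(x)=\lim_{n\to\infty}\varphi\bigl(\mu(x)\chi_{(1/n,\infty)}\bigr),\qquad 0\leq x\in E.$$

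The one analytic ingredient is a tail estimate. Since $E\subset L_1(0,\infty)$, the inclusion $(L_1\cap L_\infty)(0,\infty)\subset E$ is bounded; for a decreasing $h$ this can be seen at once by splitting into unit intervals,
$$\|h\|_E\leq\sum_{j\geq0}\|h\chi_{(j,j+1)}\|_E\leq\phi_E(1)\sum_{j\geq0}\mu(j,h)\leq\phi_E(1)\bigl(\|h\|_\infty+\|h\|_1\bigr).$$
Hence $\|\varepsilon\wedge g\|_E\to0$ as $\varepsilon\to0^+$ for every $0\leq g\in E$: indeed $\varepsilon\wedge g\downarrow0$ a.e.\ and is dominated by $g\in L_1$, so $\|\varepsilon\wedge g\|_1\to0$ by dominated convergence, while $\|\varepsilon\wedge g\|_\infty\leq\varepsilon$. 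Now let $0\leq x_k\downarrow0$ in $E$. Because $x_1\in E\subset L_1$ we have $d_{x_k}(s)\leq\|x_1\|_1/s<\infty$, so $d_{x_k}(s)\to0$ for each $s>0$, whence $\mu(x_k)\downarrow0$ a.e.\ and $\mu(1/n,x_k)\to0$ as $k\to\infty$ for each fixed $n$. Since $\mu(x_k)$ is decreasing,
$$\bigl\|\mu(x_k)\chi_{(1/n,\infty)}\bigr\|_E\leq\bigl\|\mu(1/n,x_k)\wedge\mu(x_1)\bigr\|_E\longrightarrow0\quad(k\to\infty),$$
and therefore, by boundedness of $\varphi$, $\varphi\bigl(\mu(x_k)\chi_{(1/n,\infty)}\bigr)\to0$ as $k\to\infty$, for each fixed $n$.

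To finish, I would run a double-limit argument. Fix $0\leq x_k\downarrow0$ in $E$; replacing $x_k$ by $\mu(x_k)$ (legitimate since $\psi$ is symmetric and $\mu(x_k)\downarrow0$ a.e.) we may assume $x_k=\mu(x_k)$. Put $b_{k,n}:=\varphi(x_k\chi_{(0,1/n)})\geq0$; this is non-increasing in $k$ and in $n$. By Lemma \ref{sing constr}, $\varphi_{sing}(x_k)=\inf_nb_{k,n}$, hence $\lim_k\varphi_{sing}(x_k)=\inf_{k,n}b_{k,n}$. On the other hand $\varphi(x_k)=b_{k,n}+\varphi(x_k\chi_{(1/n,\infty)})$, and the last summand tends to $0$ for each fixed $n$ by the previous paragraph, so $\lim_k\varphi(x_k)=\inf_kb_{k,n}$ for \emph{every} $n$; consequently $\lim_k\varphi(x_k)=\inf_n\inf_kb_{k,n}=\inf_{k,n}b_{k,n}=\lim_k\varphi_{sing}(x_k)$. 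Therefore $\psi(x_k)=\varphi(x_k)-\varphi_{sing}(x_k)\to0$. As this holds for every $x_k\downarrow0$ in $E_+$, the positive functional $\psi$ is $\sigma$-order continuous, hence an integral; that is, $\varphi-\varphi_{sing}$ is normal.

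The main obstacle is precisely the tail estimate $\|\mu(x_k)\chi_{(1/n,\infty)}\|_E\to0$: the truncated tails are only obviously small in $L_1\cap L_\infty$, and it is exactly the hypothesis $E\subset L_1$ — used through the bounded inclusion $L_1\cap L_\infty\subset E$ together with dominated convergence — that upgrades this to smallness in the $E$-norm. Everything else merely organizes monotone, hence convergent, sequences of reals. Finally, since $\varphi_{sing}$ is singular (Lemma \ref{sing constr}) — and, because $E\subset L_1$, singular in the Yosida--Hewitt sense by the same truncation — while $\varphi-\varphi_{sing}$ is normal, uniqueness of the Yosida--Hewitt decomposition identifies $\varphi_{sing}$ as the singular component of $\varphi$, which is the assertion announced after Lemma \ref{sing constr}.
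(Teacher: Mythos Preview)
Your proof is correct and takes a genuinely different route from the paper's.

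The paper argues directly: using that $\varphi(\chi_{(0,1/n)})\to0$ (a consequence of the dilation identity $\varphi(\sigma_2 x)=2\varphi(x)$), it shows $\varphi_{sing}=0$ on $L_1\cap L_\infty$; then, invoking \cite[Theorem 23]{SZ} (every $0\leq z\in L_1\cap L_\infty$ with $\|z\|_\infty=1$ lies in the $L_1\cap L_\infty$-closure of $\{u\geq0:\mu(u)=\chi_{(0,\|z\|_1)}\}$), it computes $(\varphi-\varphi_{sing})(z)=c\|z\|_1$ for a fixed constant $c$, and finally passes to the limit via \eqref{l1lin predel}. Your argument instead shows that $\psi=\varphi-\varphi_{sing}$ is $\sigma$-order continuous by a clean double-limit/monotone-array trick, the key analytic input being the tail estimate $\|\mu(x_k)\chi_{(1/n,\infty)}\|_E\leq\|\mu(1/n,x_k)\wedge\mu(x_1)\|_E\to0$, which you correctly extract from the bounded embedding $L_1\cap L_\infty\hookrightarrow E$ together with $E\subset L_1$. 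You then appeal to the standard representation of order-continuous positive functionals on Banach function spaces as integrals. Your approach avoids the external citation to \cite{SZ} and is more conceptual; the paper's approach is more self-contained at the level of this article and yields the explicit constant $c=(\varphi-\varphi_{sing})(\chi_{(0,1)})$ along the way.

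One small remark: your opening reduction ``$\varphi=\varphi_1-\varphi_2$ with $\varphi_i$ positive symmetric'' asserts without proof that the lattice positive/negative parts of a symmetric functional are again symmetric. This is true on non-atomic spaces (given $0\leq z\leq x$ and $\mu(y)=\mu(x)$ one can produce $0\leq z'\leq y$ with $\mu(z')=\mu(z)$), but it is not entirely trivial. In any case the reduction is unnecessary here: the paper works throughout with positive symmetric functionals (see the footnote in Section~\ref{introd}), and Lemma~\ref{sing constr} already presupposes $\varphi\geq0$ via Lemma~\ref{fi ks major}.
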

\begin{proof} It is clear that
$$0\leq\varphi_{sing}(z)\leq\|z\|_{\infty}\lim_{n\to\infty}\varphi(\chi_{(0,1/n)})=0$$
for every positive $z\in(L_1\cap L_{\infty})(0,\infty).$ It follows that $\varphi_{sing}(\mu(x)\chi_{(1/n,\infty)})=0$ for every $x\in E.$ Therefore,
\begin{equation}\label{l1lin predel}
(\varphi-\varphi_{sing})(x)=\lim_{n\to\infty}\varphi(\mu(x)\chi_{(1/n,\infty)})=\lim_{n\to\infty}(\varphi-\varphi_{sing})(\mu(x)\chi_{(1/n,\infty)}).
\end{equation}
On the other hand, for every positive $z\in(L_1\cap L_{\infty})(0,\infty)$ with $\|z\|_{\infty}=1,$ we have $z\prec\chi_{(0,\|z\|_1)}.$ It is proved in \cite[Theorem 23]{SZ} that $z$ belongs to the closure (in the topology of $L_1\cap L_{\infty}$) of the set $\{u\geq0:\ \mu(u)=\chi_{(0,\|z\|_1)}\}.$ Thus,
$$(\varphi-\varphi_{sing})(z)=(\varphi-\varphi_{sing})(\chi_{(0,\|z\|_1)})=\|z\|_1(\varphi-\varphi_{sing})(\chi_{(0,1)}).$$
By linearity,
\begin{equation}\label{l1lin case}
(\varphi-\varphi_{sing})(z)=(\varphi-\varphi_{sing})(\chi_{(0,1)})\cdot\int_0^{\infty}z(s)ds,\quad\forall z\in(L_1\cap L_{\infty})(0,\infty).
\end{equation}
It follows that from \eqref{l1lin predel} and \eqref{l1lin case} that
$$(\varphi-\varphi_{sing})(x)=\lim_{n\to\infty}\int_{1/n}^{\infty}\mu(s,x)ds\cdot(\varphi-\varphi_{sing})(0,1)=\int_0^1x(s)ds\cdot(\varphi-\varphi_{sing})(\chi_{(0,1)})$$
for every positive function $x\in E.$ The assertion follows immediately.
\end{proof}

\begin{thm}\label{sim sing exists inf} Let $E\subset L_1(0,\infty)$ be a symmetric Banach space on the semi-axis. For a given $0\leq x\in E,$ there exists a singular symmetric linear functional $\varphi_{sing}$ such that
$$\varphi_{sing}(x)=\lim_{m\to\infty}\frac1m\|\sigma_m(\mu(x))\chi_{(0,1)}\|_E.$$
\end{thm}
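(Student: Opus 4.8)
The plan is to repeat, almost verbatim, the Hahn--Banach argument from the proof of Theorem~\ref{sim exists}, with the single modification that the averaging operators $M_m$ are truncated to the interval $(0,1)$. This truncation is precisely what forces the resulting functional to vanish on $(L_1\cap L_\infty)(0,\infty)$, i.e.\ to be singular. Throughout we may assume $x=\mu(x)$, and we abbreviate $L:=\limsup_{m\to\infty}\frac1m\|\sigma_m x\cdot\chi_{(0,1)}\|_E$; the argument will in fact show that this $\limsup$ is a limit.

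First I would define $p(y):=\limsup_{m\to\infty}\|(M_m y)_+\cdot\chi_{(0,1)}\|_E$ for $y\in\mathcal{D}_E$. From $0\le (M_m y)_+\cdot\chi_{(0,1)}\le (M_m y)_+$ and Lemma~\ref{required p} one reads off that $p(y)\le\|y\|_E$ and that $p$ vanishes on $Z_E\cap\mathcal{D}_E$; moreover $p$ is convex and monotone because $M_m$ is linear and positive, $t\mapsto t_+$ and $\|\cdot\|_E$ are convex and monotone, and multiplication by $\chi_{(0,1)}$ is linear and positive. Lemma~\ref{p extension} extends $p$ to a convex monotone functional on $E$ vanishing on $Z_E$, and Lemma~\ref{positive hahn-banach} (with $x_0=x$) furnishes a positive linear $\varphi$ on $E$ with $\varphi\le p$ and $\varphi(x)=p(x)$. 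Exactly as in Theorem~\ref{sim exists}, $\varphi$ vanishes on $Z_E$, hence is symmetric, and $|\varphi(y)|\le\varphi(|y|)=\varphi(\mu(|y|))\le p(\mu(|y|))\le\|y\|_E$, so $\|\varphi\|_{E^*}\le1$.

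Next I would verify singularity. For $0\le z\in(L_1\cap L_\infty)(0,\infty)$ and $t<1$ one has $(M_m\mu(z))(t)=\frac1{t\log m}\int_{t/m}^t\mu(s,z)\,ds\le\frac{\|z\|_\infty}{\log m}$, whence $\|(M_m\mu(z))_+\cdot\chi_{(0,1)}\|_E\le\frac{\|z\|_\infty}{\log m}\|\chi_{(0,1)}\|_E\to0$ and so $p(\mu(z))=0$. Since $L_1\cap L_\infty\hookrightarrow E$ and $z-\mu(z)\in Z_E$, positivity of $\varphi$ gives $0\le\varphi(z)=\varphi(\mu(z))\le p(\mu(z))=0$; splitting a general element of $(L_1\cap L_\infty)(0,\infty)$ into positive and negative parts shows $\varphi\equiv0$ there. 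Hence $\varphi_{sing}:=\varphi$ is a singular symmetric functional on $E$.

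It remains to identify $\varphi(x)$. For $L\le\varphi(x)$ the crucial point is the truncated submajorization $m^{-1}\sigma_m x\cdot\chi_{(0,1)}\lhd M_m x\cdot\chi_{(0,1)}$, with the same parameter $m$ as in the relation $m^{-1}\sigma_m x\lhd M_m x$ of Lemma~\ref{mn estimate}: both sides are nonnegative and decreasing, so it suffices to check $\int_{ma}^b (m^{-1}\sigma_m x)\chi_{(0,1)}\le\int_a^b (M_m x)\chi_{(0,1)}$ for $ma\le b$, which is trivial when $ma\ge1$ and otherwise reduces, according as $b\le1$ or $b>1$, to the inequality $\int_a^{b'}m^{-1}\sigma_m x\le\int_a^{b'}M_m x$ of Lemma~\ref{mn estimate} with $b'=b$, respectively $b'=1$. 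Theorem~\ref{ks majorization theorem} then yields $\frac1m\|\sigma_m x\cdot\chi_{(0,1)}\|_E\le\|M_m x\cdot\chi_{(0,1)}\|_E$ for every $m$, so $L\le p(x)=\varphi(x)$. For the opposite inequality, fix $n$: since $\mu(x)\chi_{(1/n,\infty)}$ is bounded by $\mu(1/n,x)$ and dominated by $\mu(x)\in E\subset L_1$, it lies in $(L_1\cap L_\infty)(0,\infty)$, so singularity gives $\varphi(x)=\varphi(\mu(x)\chi_{(0,1/n)})$; combining $\sigma_n(\mu(x)\chi_{(0,1/n)})=\sigma_n(\mu(x))\chi_{(0,1)}$ with $\varphi(\sigma_n w)=n\varphi(w)$ and $\|\varphi\|_{E^*}\le1$ gives $\varphi(x)=\frac1n\varphi(\sigma_n(\mu(x))\chi_{(0,1)})\le\frac1n\|\sigma_n(\mu(x))\chi_{(0,1)}\|_E$, hence $\varphi(x)\le\liminf_n\frac1n\|\sigma_n(\mu(x))\chi_{(0,1)}\|_E$. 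Together these force the limit to exist and equal $\varphi(x)$. The only ingredient going beyond the proof of Theorem~\ref{sim exists} is the truncated submajorization above, and, elementary as it is, that is the step I would watch most carefully.
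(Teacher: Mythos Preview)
Your proof is correct and takes a genuinely different route from the paper. The paper does not modify the sublinear functional $p$; instead it applies Theorem~\ref{sim exists} as a black box to each truncation $\mu(x)\chi_{(0,1/n)}$, obtaining a sequence of symmetric functionals $\varphi_n$ with $\|\varphi_n\|_{E^*}\le1$, extracts a weak*-convergent subnet via Banach--Alaoglu, checks that the limit $\varphi$ is again symmetric (Lemma~\ref{weak limit lemma}), and finally replaces $\varphi$ by its singular part via the Yosida--Hewitt-type construction of Lemma~\ref{sing constr}. Your approach bypasses all three of these devices: by multiplying $M_m y$ by $\chi_{(0,1)}$ inside the definition of $p$, singularity falls out for free from the elementary estimate $\|(M_m\mu(z))\chi_{(0,1)}\|_E\le\|z\|_\infty\|\chi_{(0,1)}\|_E/\log m$ for bounded $z$, and the remainder is a direct replay of the proof of Theorem~\ref{sim exists}. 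What you gain is a shorter, self-contained argument that avoids any appeal to compactness or to Lemmas~\ref{weak limit lemma}--\ref{normal construction}; what the paper's argument buys is modularity, since it reuses Theorem~\ref{sim exists} verbatim and isolates the passage to the singular part in a lemma that is recycled later. Your verification of the truncated uniform submajorization $m^{-1}\sigma_m x\cdot\chi_{(0,1)}\lhd M_m x\cdot\chi_{(0,1)}$ is fine: after the substitution $u=s/m$ the left side becomes $\int_a^{b'/m}x$ with $b'=\min(b,1)$, and the bound then is exactly the first inequality of Lemma~\ref{mn estimate}.
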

\begin{proof} Apply Theorem \ref{sim exists} to the function $\mu(x)\chi_{(0,1/n)}.$ It follows that there exists a  symmetric linear functional $\varphi_n$ such that $\|\varphi_n\|_{E^*}\leq1$ and
$$\varphi_n(\mu(x)\chi_{(0,1/n)})=\lim_{m\to\infty}\frac1m\|\sigma_m(\mu(x)\chi_{(0,1/n)})\|_E\geq\lim_{m\to\infty}\frac1m\|\sigma_m(\mu(x))\chi_{(0,1)}\|_E.$$
Since the unit ball in $E^*$ is $*-$weakly compact (Banach-Alaoglu theorem), there exists a convergent subnet $\psi_i=\varphi_{F(i)},$ $i\in\mathbb{I},$ of the sequence $\varphi_n,$ $n\in\mathbb{N}.$ Let $\psi_i\to\varphi.$ It follows from Lemma \ref{weak limit lemma} that $\varphi$ is a symmetric functional.

By the definition of a subnet (see \cite[Section IV.2]{ReedSimon}), for every fixed $n\in\mathbb{N},$ there exists $i_n\in\mathbb{I}$ such that $F(i)>n$ for every $i>i_n.$ Thus, for every $i>i_n,$ we have
$$\psi_i(\mu(x)\chi_{(0,1/n)})\geq\varphi_{F(i)}(\mu(x)\chi_{(0,1/F(i))})\geq\lim_{m\to\infty}\frac1m\|\sigma_m(\mu(x))\chi_{(0,1)}\|_E.$$
The subnet $\psi_i,$ $i_n<i\in\mathbb{I}$ converges to the same limit $\varphi.$ Therefore,
$$\varphi(\mu(x)\chi_{(0,1/n)})\geq\lim_{m\to\infty}\frac1m\|\sigma_m(\mu(x))\chi_{(0,1)}\|_E.$$
Now, taking the limit as $n\to\infty,$ we obtain the inequality
$$\varphi_{sing}(x)\geq\lim_{m\to\infty}\frac1m\|\sigma_m(\mu(x))\chi_{(0,1)}\|_E,$$
where $\varphi_{sing}$ is a singular symmetric functional defined in Lemma \ref{sing constr}. The opposite inequality is trivial.
\end{proof}

\begin{thm}\label{sim sing exists fin} Let $E$ be a symmetric Banach space on the interval $(0,1).$ For a given $0\leq x\in E,$ there exists a singular symmetric linear functional $\varphi_{sing}$ such that
$$\varphi_{sing}(x)=\lim_{m\to\infty}\frac1m\|\sigma_m(\mu(x))\|_E.$$
\end{thm}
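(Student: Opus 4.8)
The plan is to follow the proof of Theorem~\ref{sim sing exists inf} as closely as possible, but to run the whole argument inside $E=E(0,1)$. Since $\varphi_{sing}$ will be symmetric, we may assume $x=\mu(x)$. The one point where the interval case differs from the semi-axis case is that on $(0,1)$ the dilation identity $s\varphi(y)=\varphi(\sigma_s y)$ holds only for \emph{singular} symmetric functionals, so a single application of (the interval analogue of) Theorem~\ref{sim exists} to $x$ cannot return the value $\lim_m\frac1m\|\sigma_m(x)\|_E$; instead one must build a sequence of symmetric functionals attached to the truncations $x\chi_{(0,1/n)}$ and pass to a subnet limit, which will automatically be singular. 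Concretely: for each $n$ let $p:E\to\mathbb{R}$ be the convex monotone functional obtained from Lemma~\ref{required p} and Lemma~\ref{p extension} (both valid on $(0,1)$), and apply Lemma~\ref{positive hahn-banach} with $x_0=x\chi_{(0,1/n)}\in E_+$ (note $x_0=\mu(x_0)$); this gives a positive linear $\varphi_n\le p$ with $\varphi_n(x_0)=p(x_0)$. Since $p=0$ on $Z_E$ and $p(z)\le\|z\|_E$ for $z\in\mathcal{D}_E$ (Lemma~\ref{required p}), $\varphi_n$ is symmetric and $\|\varphi_n\|_{E^*}\le1$; and from $m^{-1}\sigma_m x_0\lhd M_m x_0$ (Lemma~\ref{mn estimate}) and Theorem~\ref{ks majorization theorem}, exactly as in the proof of Theorem~\ref{sim exists},
$$\varphi_n\big(x\chi_{(0,1/n)}\big)=p(x_0)=\limsup_{m\to\infty}\|M_m x_0\|_E\ \ge\ \lim_{m\to\infty}\frac1m\|\sigma_m x_0\|_E.$$

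The key observation is that on $(0,1)$ one has $\sigma_m\big(x\chi_{(0,1/n)}\big)=\sigma_m(x)$ for all $m\ge n$: dilating by $m$ a function supported on $(0,1/n)$ produces a function supported on $(0,m/n)$, and the truncation at $1$ discards whatever lies beyond $1$. Consequently $\lim_m\frac1m\|\sigma_m x_0\|_E=\lim_m\frac1m\|\sigma_m(x)\|_E$, so the lower bound above does not depend on $n$. Now apply Banach--Alaoglu to extract a weak-$*$ convergent subnet $\psi_i=\varphi_{F(i)}\to\varphi$; by Lemma~\ref{weak limit lemma}, $\varphi$ is symmetric. For fixed $n$ choose $i_n$ with $F(i)>n$ for $i>i_n$; then, using positivity of $\varphi_{F(i)}$ and $x\chi_{(0,1/n)}\ge x\chi_{(0,1/F(i))}$,
$$\psi_i\big(x\chi_{(0,1/n)}\big)\ \ge\ \varphi_{F(i)}\big(x\chi_{(0,1/F(i))}\big)\ \ge\ \lim_{m\to\infty}\frac1m\|\sigma_m(x)\|_E,\qquad i>i_n,$$
so letting $i$ run along the subnet gives $\varphi\big(x\chi_{(0,1/n)}\big)\ge\lim_m\frac1m\|\sigma_m(x)\|_E$; letting $n\to\infty$ and recalling the definition of $\varphi_{sing}$ in Lemma~\ref{sing constr}, we obtain $\varphi_{sing}(x)\ge\lim_m\frac1m\|\sigma_m(x)\|_E$, with $\varphi_{sing}$ a singular symmetric functional on $E$.

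The reverse inequality is routine: since $\varphi_{sing}$ is singular and symmetric on $E(0,1)$, it satisfies $m\,\varphi_{sing}(y)=\varphi_{sing}(\sigma_m y)$ for every $y=\mu(y)\in E$; applying this with $y=x$ and using $\|\varphi_{sing}\|_{E^*}\le1$ gives $\varphi_{sing}(x)=\frac1m\varphi_{sing}(\sigma_m x)\le\frac1m\|\sigma_m(x)\|_E$ for all $m$. (The limit $\lim_m\frac1m\|\sigma_m(x)\|_E$ exists: $m\mapsto\|\sigma_m(x)\|_E$ is monotone in the subscript and $\|\sigma_{mk}(x)\|_E\le m\|\sigma_k(x)\|_E$ via $\sigma_{mk}=\sigma_m\sigma_k$ and $\|\sigma_m\|_{E\to E}\le m$, so $\frac1m\|\sigma_m(x)\|_E$ converges to its infimum, just as for the functional $\pi$ of Section~\ref{introd}.) The only real obstacle in the argument is the bookkeeping of the last two paragraphs --- ensuring, via the saturation identity $\sigma_m(x\chi_{(0,1/n)})=\sigma_m(x)$ for $m\ge n$, that the lower bounds $\varphi_n(x\chi_{(0,1/n)})\ge\lim_m\frac1m\|\sigma_m(x)\|_E$ are uniform in $n$, so that the subnet limit retains the lower bound; everything else is the scheme of Theorem~\ref{sim sing exists inf} transcribed to the interval.
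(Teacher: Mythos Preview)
Your argument is correct but follows a genuinely different route from the paper. The paper's proof is a one--line reduction: it introduces the auxiliary semi-axis space $F$ with $\|y\|_F=\|\mu(y)\chi_{(0,1)}\|_E+\|y\|_1$, applies Theorem~\ref{sim sing exists inf} to $x$ viewed in $F$ (noting $F\subset L_1(0,\infty)$), obtains a singular symmetric $\varphi$ on $F$ with $\varphi(x)=\lim_m\frac1m\|\sigma_m(\mu(x))\chi_{(0,1)}\|_F=\lim_m\frac1m\|\sigma_m(\mu(x))\|_E$ (the $L_1$-part contributes $\int_0^{1/m}\mu(s,x)\,ds\to0$), and then restricts $\varphi$ to $E(0,1)$. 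By contrast, you rerun the entire proof of Theorem~\ref{sim sing exists inf} \emph{intrinsically} on $(0,1)$, replacing the semi-axis truncation $\sigma_m(\mu(x))\chi_{(0,1)}$ by the interval saturation identity $\sigma_m(x\chi_{(0,1/n)})=\sigma_m(x)$ for $m\ge n$; this is exactly what is needed to make the lower bounds $\varphi_n(x\chi_{(0,1/n)})\ge\lim_m\frac1m\|\sigma_m(x)\|_E$ uniform in $n$, and the subnet/Lemma~\ref{sing constr} endgame then goes through verbatim.

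What each approach buys: the paper's embedding trick is shorter and avoids repeating the Hahn--Banach/subnet machinery, at the cost of introducing an auxiliary space and tacitly checking that the restricted functional is still singular and symmetric on $E(0,1)$. Your approach is more self-contained and makes explicit why the interval case works, via the pleasant observation that dilation on $(0,1)$ eventually forgets truncation near $0$; it also makes transparent why one needs the \emph{singular} part (the dilation identity $m\varphi_{sing}(x)=\varphi_{sing}(\sigma_m x)$ on $(0,1)$ requires singularity). Both proofs rely on the same circle of lemmas (\ref{positive hahn-banach}, \ref{mn estimate}, \ref{required p}, \ref{weak limit lemma}, \ref{sing constr}), so the difference is organizational rather than conceptual.
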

\begin{proof} Let $F$ be a symmetric Banach space on the semi-axis with a norm given by the formula
$$\|x\|_F=\|\mu(x)\chi_{(0,1)}\|_E+\|x\|_1,\quad \forall x\in F.$$
Clearly, $F\subset L_1(0,\infty).$ Applying Theorem \ref{sim sing exists inf}, we obtain a symmetric singular functional $\varphi$ on $F$ such that
$$\varphi(x)=\lim_{m\to\infty}\frac1m\|\sigma_m(\mu(x))\chi_{(0,1)}\|_F=\lim_{m\to\infty}\frac1m\|\sigma_m(\mu(x))\|_E.$$

\end{proof}

\section{Existence of fully symmetric functionals}\label{sushestvovanie2}

In this section, we present results concerning existence of fully symmetric functionals on fully symmetric function spaces. The main results of this section are Theorem \ref{fs exists}, Theorem \ref{fs sing exists inf} and Theorem \ref{fs sing exists fin}.

\begin{lem}\label{hmn monotone} Let $E$ be a symmetric Banach function space either on the interval $(0,1)$ or on the semi-axis. If $x,z\in\mathcal{D}_E$ are such that $Cx\leq Cz,$ then $CM_mx\leq CM_mz.$
\end{lem}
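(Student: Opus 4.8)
The plan is to prove Lemma~\ref{hmn monotone} by a direct computation using the explicit integral formula for $M_m$ and the Hardy operator $C$. First I would write out $(CM_mx)(t) = \frac1t\int_0^t (M_mx)(u)\,du$ and expand $(M_mx)(u) = \frac1{u\log(m)}\int_{u/m}^u x(s)\,ds$. The key observation is that $\int_{u/m}^u x(s)\,ds = \int_0^u x(s)\,ds - \int_0^{u/m}x(s)\,ds = u\,(Cx)(u) - \frac{u}{m}(Cx)(u/m)$, so that $(M_mx)(u) = \frac1{\log(m)}\bigl((Cx)(u) - \frac1m (Cx)(u/m)\bigr)$. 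Thus $M_m$ factors through $C$: writing $\Phi = \frac1{\log(m)}(I - \frac1m \sigma_m)$ acting on the function $Cx$, we have $M_mx = \Phi(Cx)$, and hence $CM_mx = C\Phi(Cx)$.

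Once this factorization is in hand, the statement becomes almost immediate. Both $C$ and the substitution operator $u\mapsto (Cx)(u/m)$ (that is, $\sigma_m$) are positive linear operators on functions, and $I$ is obviously positive; moreover $C$ commutes with $\sigma_m$ (this is the standard scaling identity $C\sigma_m = \sigma_m C$, easily checked from the definitions). Therefore the composite operator $x \mapsto CM_mx = C\Phi(Cx)$ is, as a map sending $Cx$ to $CM_mx$, a positive linear operator: namely $Cx \mapsto \frac1{\log(m)}\bigl(C(Cx) - \frac1m C\sigma_m(Cx)\bigr) = \frac1{\log(m)}\bigl(C - \frac1m \sigma_m C\bigr)(Cx)$. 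Since $C - \frac1m\sigma_m C = C(I - \frac1m\sigma_m) = \Phi' \circ C$ where $\Phi' = I - \frac1m\sigma_m$ is positive, the whole map $Cx\mapsto CM_mx$ is positive. Consequently, if $Cx \leq Cz$ pointwise, then applying this positive map yields $CM_mx \leq CM_mz$ pointwise, which is exactly the claim.

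I expect the only genuinely delicate point to be making sure all the interchanges of order of integration and the algebraic manipulations with $\int_0^{u/m}$ versus $\sigma_m$ are carried out with the correct normalization constants; the identity $(M_mx)(u) = \frac1{\log(m)}\bigl((Cx)(u) - \frac1m(Cx)(u/m)\bigr)$ is the crux and should be stated and verified carefully, perhaps by comparing with the computation already done in the proof of Lemma~\ref{mn estimate}. One small caveat in the interval case $(0,1)$: the dilation $\sigma_m$ is defined with a cutoff, and $(Cx)(u/m)$ is only relevant for $u$ with $u/m \in (0,1)$, i.e. $u \in (0,1)$, so no boundary issue arises; I would note this briefly. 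Everything else is formal positivity of $C$, $\sigma_m$, and the identity operator, and requires no further argument.

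\begin{proof}
From the proof of Lemma~\ref{mn estimate}, or by a direct computation, for $0<u$ (with $u<1$ in the interval case) one has
$$\int_{u/m}^u x(s)\,ds = \int_0^u x(s)\,ds - \int_0^{u/m} x(s)\,ds = u\,(Cx)(u) - \tfrac{u}{m}(Cx)(u/m),$$
so that
$$(M_mx)(u) = \frac1{\log(m)}\Bigl((Cx)(u) - \tfrac1m (Cx)(u/m)\Bigr) = \frac1{\log(m)}\bigl(I - \tfrac1m\sigma_m\bigr)(Cx)(u).$$
Applying the Hardy operator $C$ and using the identity $C\sigma_m = \sigma_m C$, we obtain
$$CM_mx = \frac1{\log(m)}\bigl(C - \tfrac1m C\sigma_m\bigr)(Cx) = \frac1{\log(m)}\bigl(I - \tfrac1m\sigma_m\bigr)C(Cx).$$
The operator $\frac1{\log(m)}\bigl(I - \tfrac1m\sigma_m\bigr)C$ is a positive linear operator on functions, being a composition and difference (with $I \geq \tfrac1m\sigma_m$ as operators on nonnegative functions, since $\sigma_m$ maps nonnegative functions to nonnegative functions and $\tfrac1m \leq 1$) of the positive operators $C$, $\sigma_m$ and $I$. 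Hence, if $x,z\in\mathcal{D}_E$ satisfy $Cx\leq Cz$, then
$$CM_mx = \frac1{\log(m)}\bigl(I - \tfrac1m\sigma_m\bigr)C(Cx) \leq \frac1{\log(m)}\bigl(I - \tfrac1m\sigma_m\bigr)C(Cz) = CM_mz.$$
\end{proof}
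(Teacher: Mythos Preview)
Your overall strategy—expressing $M_m$ in terms of $C$ and $\sigma_m$ and then reading off monotonicity—is sound and considerably more direct than the paper's argument. The paper instead decomposes $x=\mu(a)-\mu(b)$, $z=\mu(c)-\mu(d)$, rewrites $Cx\le Cz$ as the majorization $\mu(a)+\mu(d)\prec\prec\mu(b)+\mu(c)$, expresses $\int_0^t M_m f$ as an integral of $f$ against a decreasing kernel $h(\cdot,t)$, and invokes \cite[Equality~2.36]{KPS} to propagate the majorization through $M_m$. Your computation avoids both the $\mathcal{D}_E$ decomposition and the external reference.

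However, the justification you give for the positivity of $(I-\tfrac1m\sigma_m)C$ is wrong. You assert that $I\ge\tfrac1m\sigma_m$ ``as operators on nonnegative functions, since $\sigma_m$ maps nonnegative functions to nonnegative functions and $\tfrac1m\le 1$''. This inference is invalid: positivity of $\sigma_m$ and the scalar bound $\tfrac1m\le1$ do not yield a pointwise comparison. Concretely, for $f=\chi_{(0,1)}$ on the semi-axis and $t\in(1,m)$ one has $f(t)=0<\tfrac1m=\tfrac1m(\sigma_mf)(t)$, so $I-\tfrac1m\sigma_m$ is \emph{not} a positive operator by itself.

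The repair is immediate and already contained in your own formula. Your identity $M_m=\tfrac1{\log(m)}(I-\tfrac1m\sigma_m)C$ says exactly that $(I-\tfrac1m\sigma_m)C=\log(m)\,M_m$, and $M_m$ is manifestly positive from its definition as an average of $g$ over $(t/m,t)$. Combined with $C\sigma_m=\sigma_mC$ this gives the clean commutation $CM_m=M_mC$, from which the lemma follows at once: if $Cx\le Cz$ then $CM_mx=M_m(Cx)\le M_m(Cz)=CM_mz$. Replace the faulty parenthetical with this observation and the proof is complete.
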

\begin{proof} Let $x=\mu(a)-\mu(b)$ and $z=\mu(c)-\mu(d)$ with $a,b,c,d\in E.$ It follows from assumption $Cx\leq Cz$ that $C(\mu(a)+\mu(d))\leq C(\mu(b)+\mu(c))$ or, equivalently, $\mu(a)+\mu(d)\prec\prec\mu(b)+\mu(c).$

Arguing as in Lemma \ref{mn estimate}, we have
$$\int_0^t(M_mz)(s)ds=\int_0^tz(s)h(s,t)ds$$
with
$$h(s,t)=\left\{
\begin{aligned}
1,\quad 0\leq s\leq t/m\\
\frac{\log(t/s)}{\log(m)},\quad  t/m\leq s\leq t
\end{aligned}
\right.$$
It is now clear that
$$\int_0^tM_m(\mu(a)+\mu(d))(s)ds=\int_0^t(\mu(s,a)+\mu(s,d))h(s,t)ds,$$
$$\int_0^tM_m(\mu(b)+\mu(c))(s)ds=\int_0^t(\mu(s,b)+\mu(s,c))h(s,t)ds.$$
Clearly, $h$ is positive and decreasing with respect to $s.$ It follows from \cite[Equality 2.36]{KPS} that
$$M_m(\mu(a)+\mu(d))\prec\prec M_m(\mu(b)+\mu(c))$$
and the assertion follows.
\end{proof}

\begin{lem}\label{ph monotone} Let $E$ be a fully symmetric Banach function space either on the interval $(0,1)$ or on the semi-axis and let $x=\mu(x)\in E.$ If $z\in\mathcal{D}_E$ is such that $Cx\leq Cz,$ then $p(x)\leq p(z).$
\end{lem}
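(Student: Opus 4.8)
The plan is to reduce the claim $p(x)\le p(z)$ to the monotonicity of $\|(\cdot)_+\|_E$ along the chain of inequalities $Cx\le Cz$, after first upgrading this pointwise inequality to an inequality that survives the application of $M_m$. The key observation, already packaged as Lemma \ref{hmn monotone}, is that $Cx\le Cz$ implies $CM_mx\le CM_mz$ for every $m\ge 2$. So the first step is simply to invoke Lemma \ref{hmn monotone} to obtain $C(M_mx)\le C(M_mz)$, i.e. $M_mx\prec\prec M_mz$ (recalling that $Cy\le Cw$ is exactly the statement $y\prec\prec w$ when the two functions have the same integral, which here is automatic since $x,z\in\mathcal D_E$ and the relevant integrals agree — or more directly $\int_0^t M_mx\le\int_0^tM_mz$ for all $t$).

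The second step is to pass from the majorization $M_mx\prec\prec M_mz$ to an inequality between the positive parts, namely $(M_mx)_+\prec\prec (M_mz)_+$. This is where I expect the genuine content to sit, though it is a standard fact: if $u\prec\prec v$ then $u_+\prec\prec v_+$, because $\int_0^t\mu(s,u_+)ds=\sup_{m(S)\le t}\int_S u\,ds\le\sup_{m(S)\le t}\int_S v\,ds=\int_0^t\mu(s,v_+)ds$, using the variational description of $\int_0^t\mu(s,\cdot)$ for positive truncations. (In the semifinite-algebra set-up one cites the displayed formula right before Lemma \ref{fi ks major}; here on the function space it is classical, see \cite{KPS}.)

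With $(M_mx)_+\prec\prec (M_mz)_+$ in hand, the third step is to apply full symmetry of $E$ directly: since $E$ is a fully symmetric Banach function space, $u\prec\prec v$ with $v\in E$ forces $u\in E$ and $\|u\|_E\le\|v\|_E$. Hence
\[
\|(M_mx)_+\|_E\le\|(M_mz)_+\|_E,\qquad m\ge 2.
\]
Finally, take $\limsup_{m\to\infty}$ of both sides; by the definition of $p$ in Lemma \ref{required p} this gives exactly $p(x)\le p(z)$, which is the assertion.

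The main obstacle — really the only non-bookkeeping point — is justifying the step $u\prec\prec v\Rightarrow u_+\prec\prec v_+$ cleanly in the function-space language, and making sure the hypothesis $Cx\le Cz$ together with $x,z\in\mathcal D_E$ indeed yields $M_mx\prec\prec M_mz$ (not merely the one-sided Hardy inequality) so that Lemma \ref{hmn monotone} applies verbatim; everything after that is a direct application of the full symmetry of $E$ and the definition of $p$. I would also remark that the hypothesis $x=\mu(x)$ on the first argument is not actually needed for the argument beyond ensuring $x\in\mathcal D_E$, so the statement could be slightly strengthened, but I will keep it as stated.
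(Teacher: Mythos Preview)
Your overall plan is right --- invoke Lemma~\ref{hmn monotone}, pass to positive parts, use full symmetry, take $\limsup$ --- but step~2 has a genuine gap. The implication ``$u\prec\prec v\Rightarrow u_+\prec\prec v_+$'' is \emph{false} in general: take $u=\chi_{(0,1)}$ and $v=-\chi_{(0,1)}$, so that $\mu(u)=\mu(v)$ and hence $u\prec\prec v$, yet $v_+=0$. Your variational justification is also incorrect as written: from $\int_0^t u\le\int_0^t v$ for all $t$ you \emph{cannot} conclude $\int_S u\le\int_S v$ for an arbitrary measurable $S$ with $m(S)\le t$; that would force $u\le v$ pointwise. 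Even granting $u\ge 0$, the step still fails unless $u$ is decreasing: with $u=2\chi_{(1,2)}$ and $v=\chi_{(0,2)}$ one has $\int_0^t u\le\int_0^t v$ for every $t$, but $\int_0^1\mu(s,u_+)\,ds=2>1=\int_0^1\mu(s,v_+)\,ds$.

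The fix is precisely the hypothesis you dismissed in your last paragraph: $x=\mu(x)$ is not decorative. Writing $(M_mx)(t)=\frac{1}{\log m}\int_{1/m}^1 x(tu)\,du$, one sees that $M_mx$ is decreasing and nonnegative whenever $x$ is; hence $(M_mx)_+=M_mx=\mu(M_mx)$. From $CM_mx\le CM_mz$ (Lemma~\ref{hmn monotone}) it then follows directly that
\[
\int_0^t\mu(s,M_mx)\,ds=\int_0^t (M_mx)(s)\,ds\le\int_0^t (M_mz)(s)\,ds\le\int_0^t (M_mz)_+(s)\,ds\le\int_0^t\mu(s,(M_mz)_+)\,ds,
\]
i.e.\ $(M_mx)_+\prec\prec(M_mz)_+$, after which full symmetry of $E$ and the definition of $p$ finish the proof exactly as you outlined. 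This is the paper's route; your proposed strengthening (dropping $x=\mu(x)$) does not go through by this method.
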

\begin{proof} Since $M_mx$ is decreasing, it follows from Lemma \ref{hmn monotone} that
$$\int_0^t\mu(s,M_mx)ds=\int_0^t(M_mx)(s)ds\leq\int_0^t(M_mz)_+(s)ds\leq\int_0^t\mu(s,(M_mz)_+)ds.$$
Therefore, $(M_mx)_+=M_mx\prec\prec(M_mz)_+.$ The assertion follows now from the definition of the functional $p.$ 
\end{proof}

\begin{lem}\label{required q} Let $E$ be a fully symmetric Banach function space either on the interval $(0,1)$ or on the semi-axis. Let $p$ be the functional constructed in Lemma \ref{required p}. The functional
$$q(x)=\inf\{p(z):\ z\in\mathcal{D}_E,\ Cx\leq Cz\},\quad x\in\mathcal{D}_E$$
satisfies the assumptions of Lemma \ref{p extension}.
\end{lem}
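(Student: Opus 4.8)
The plan is to verify directly the three requirements in Lemma~\ref{p extension}: that $q$ is a (finite) real-valued functional on the linear subspace $\mathcal{D}_E$, that it is monotone and convex, and that it vanishes on $Z_E\cap\mathcal{D}_E$. The organising observation is that for every $x\in\mathcal{D}_E$ the element $z=x$ is itself admissible in the infimum defining $q(x)$: it lies in $\mathcal{D}_E$ and $Cx\le Cx$ trivially. Since $p\ge0$ (it is a $\limsup$ of norms) and $p(x)\le\|x\|_E$ by Lemma~\ref{required p}, this already yields $0\le q(x)\le p(x)<\infty$, so $q$ is well defined and nonnegative. The same competitor settles the vanishing on $Z_E\cap\mathcal{D}_E$ at once: for such $x$ we get $q(x)\le p(x)$, and $p(x)=0$ by Lemma~\ref{required p}, hence $q(x)=0$.

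For monotonicity I would use only the positivity of the Hardy operator: if $x\le y$ in $\mathcal{D}_E$ then $\int_0^t x\le\int_0^t y$, i.e. $Cx\le Cy$, so every $z\in\mathcal{D}_E$ with $Cy\le Cz$ also satisfies $Cx\le Cz$; the infimum defining $q(x)$ thus runs over a set containing the one for $q(y)$, and $q(x)\le q(y)$ follows. For convexity I would run the usual near-optimal-competitor argument: given $x_1,x_2\in\mathcal{D}_E$, $\lambda\in[0,1]$ and $\varepsilon>0$, choose $z_i\in\mathcal{D}_E$ with $Cx_i\le Cz_i$ and $p(z_i)\le q(x_i)+\varepsilon$; then $z:=\lambda z_1+(1-\lambda)z_2$ lies in $\mathcal{D}_E$ (a linear subspace), and since $C$ is linear and positive, $C(\lambda x_1+(1-\lambda)x_2)=\lambda Cx_1+(1-\lambda)Cx_2\le\lambda Cz_1+(1-\lambda)Cz_2=Cz$. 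Hence, by the convexity of $p$ (Lemma~\ref{required p}), $q(\lambda x_1+(1-\lambda)x_2)\le p(z)\le\lambda p(z_1)+(1-\lambda)p(z_2)\le\lambda q(x_1)+(1-\lambda)q(x_2)+\varepsilon$, and letting $\varepsilon\to0$ gives convexity. The same scheme also shows $q$ is positively homogeneous and subadditive, should the sublinear form be wanted later.

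I do not expect a genuine obstacle here: everything reduces to the linearity and positivity of $C$, the convexity, positivity and $Z_E\cap\mathcal{D}_E$-vanishing of $p$ established in Lemma~\ref{required p}, and the fact that $\mathcal{D}_E$ is a linear subspace stable under the operations used. The one point that repays attention is that the infimum defining $q$ must be over a nonempty set and finite, which is exactly what the trivial competitor $z=x$ guarantees; the rest is formal. It is also worth noting that this particular lemma does not actually use the full symmetry of $E$; that hypothesis is carried along only because $q$ will be combined with Lemma~\ref{ph monotone} in the sequel.
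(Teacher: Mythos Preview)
Your proposal is correct and follows essentially the same route as the paper: the trivial competitor $z=x$ gives $0\le q\le p$ and hence the vanishing on $Z_E\cap\mathcal{D}_E$, while convexity and monotonicity are obtained via near-optimal competitors and the linearity/positivity of $C$ exactly as in the paper (the paper phrases monotonicity with an $\varepsilon$-competitor rather than your set-inclusion argument, but these are equivalent). Your additional remarks that the infimum is nonempty/finite and that full symmetry of $E$ is not actually used in this lemma are accurate and worth keeping.
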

\begin{proof} It is clear from the definition of $q$ that $q\leq p$ and that $q$ is a positive functional.

We claim that $q$ is convex on $\mathcal{D}_E.$ Let $x_1,x_2\in\mathcal{D}_E.$ Fix $\varepsilon>0$ and select $z_1,z_2\in\mathcal{D}_E$ such that $Cx_i\leq Cz_i$ and $p(z_i)\leq q(x_i)+\varepsilon$ for $i=1,2.$ Thus, $C(x_1+x_2)\leq C(z_1+z_2)$ and
$$q(x_1+x_2)\leq p(z_1+z_2)\leq p(z_1)+p(z_2)\leq q(x_1)+q(x_2)+2\varepsilon.$$
Since $\varepsilon$ is arbitrarily small, the claim follows.

We claim that $q$ is monotone on $\mathcal{D}_E.$ Let $x_1,x_2\in\mathcal{D}_E$ be such that $x_1\leq x_2.$ Fix $\varepsilon>0$ and select $z\in\mathcal{D}_E$ such that $Cx_2\leq Cz$ and $p(z)\leq q(x_2)+\varepsilon.$ Thus, $Cx_1\leq Cx_2\leq Cz$ and $q(x_1)\leq p(z)\leq q(x_2)+\varepsilon.$ Since $\varepsilon$ is arbitrarily small, the claim follows.

For $x\in Z_E\cap\mathcal{D}_E,$ we have $0\leq q(x)\leq p(x)=0$ and, therefore, $q(x)=0.$ s
\end{proof}

The following theorem is the first main result of this section.

\begin{thm}\label{fs exists} Let $E=E(0,\infty)$ be a fully symmetric Banach space on the semi-axis. For a given $0\leq x\in E,$ there exists a fully symmetric linear functional $\varphi:E\to\mathbb{R}$ such that
$$\varphi(x)=\lim_{m\to\infty}\frac1m\|\sigma_m(\mu(x))\|_E.$$
\end{thm}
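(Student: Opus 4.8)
The plan is to mirror the proof of Theorem~\ref{sim exists}, but with the functional $q$ from Lemma~\ref{required q} playing the role that $p$ played there. Without loss of generality assume $x=\mu(x)$. Since $q$ is convex and monotone on $\mathcal{D}_E$ and vanishes on $Z_E\cap\mathcal{D}_E$, Lemma~\ref{p extension} extends $q$ to a convex monotone functional on all of $E$ which still vanishes on $Z_E$. By Lemma~\ref{positive hahn-banach} there is a positive linear functional $\varphi$ on $E$ with $\varphi\le q$ and $\varphi(x)=q(x)$. Because $q$ vanishes on $Z_E$, so does $\varphi$, hence $\varphi$ is symmetric.

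The next step is to upgrade symmetry to full symmetry. For this I would use the characterization that a symmetric functional $\varphi$ is fully symmetric iff $\varphi(z)\le 0$ for every $z\in\mathcal{D}_E$ with $Cz\le 0$ (this is exactly the reformulation used in Lemma~\ref{weak limit lemma}). So take $z\in\mathcal{D}_E$ with $Cz\le 0$; then $C(0)\le Cz$ trivially fails in the needed direction, so instead note $Cz\le 0 = C0$ and apply the monotonicity of $q$ under the $C$-ordering: from $Cz\le C0$ we get... — here one must be careful about signs. The clean way: $\varphi(z)\le q(z)=\inf\{p(w): w\in\mathcal{D}_E,\ Cz\le Cw\}$, and one may take $w=0$ since $Cz\le 0=C0$, giving $q(z)\le p(0)=0$. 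Hence $\varphi(z)\le 0$, so $\varphi$ is fully symmetric.

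It remains to identify the value $\varphi(x)=q(x)$ with $\lim_{m\to\infty}\frac1m\|\sigma_m(\mu(x))\|_E$. For the upper bound, since $\varphi\le q\le p$ and $p(z)\le\|z\|_E$ on $\mathcal{D}_E$ (Lemma~\ref{required p}), we have $\|\varphi\|_{E^*}\le 1$, so $\varphi(x)=\frac1m\varphi(\sigma_m x)\le\frac1m\|\sigma_m\mu(x)\|_E$ for each $m$, whence $\varphi(x)\le\lim_m\frac1m\|\sigma_m\mu(x)\|_E$. For the lower bound I would show $q(x)\ge p(x)$ for positive $x$: given any $z\in\mathcal{D}_E$ with $Cx\le Cz$, Lemma~\ref{ph monotone} gives $p(x)\le p(z)$, so taking the infimum over such $z$ yields $p(x)\le q(x)$; combined with $q\le p$ this gives $q(x)=p(x)$ for $x=\mu(x)$. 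Then as in the proof of Theorem~\ref{sim exists}, Lemma~\ref{mn estimate} gives $m^{-1}\sigma_m x\lhd M_m x$, so by Lemma~\ref{fi ks major}-style estimates (or directly, using that $p(x)=\limsup_m\|M_m x\|_E$ and $\|\sigma_m x\|_E\le m\|M_m x\|_E$ up to the $\lhd$-relation handled by Theorem~\ref{ks majorization theorem}) we get $p(x)\ge\lim_m\frac1m\|\sigma_m\mu(x)\|_E$. Assembling the two inequalities gives $\varphi(x)=q(x)=p(x)=\lim_m\frac1m\|\sigma_m\mu(x)\|_E$.

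The main obstacle I anticipate is the full-symmetry verification: one must be sure that $q$'s definition via the $C$-preorder interacts correctly with Lemma~\ref{p extension}'s extension formula $q(x)=q(\mu(x_+)-\mu(x_-))$, i.e. that extending $q$ by that formula preserves the property "$Cx\le Cz\implies q(x)\le q(z)$" on all of $E$, not merely on $\mathcal{D}_E$. Once the extended $q$ is known to be monotone under the Hardy–Littlewood preorder (which follows since $Cx\le Cz$ for $x,z\in E$ forces $\mu(x_+)-\mu(x_-)$ and $\mu(z_+)-\mu(z_-)$ into the same relation, by the same trick as in Lemma~\ref{hmn monotone}), the rest is routine bookkeeping parallel to Section~\ref{sushestvovanie1}.
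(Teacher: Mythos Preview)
Your proposal is correct and follows essentially the same argument as the paper's proof. The obstacle you anticipate in the final paragraph does not arise: since $\varphi$ is linear and symmetric, full symmetry reduces (as you already observed) to showing $\varphi(z)\le 0$ for $z=\mu(x_1)-\mu(x_2)\in\mathcal{D}_E$ with $Cz\le 0$, so only the $C$-monotonicity of $q$ on $\mathcal{D}_E$---which is built into its definition---is required, not any extension of that property to all of $E$.
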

\begin{proof} Without loss of generality, $x=\mu(x).$ Let $q$ be the convex monotone functional constructed in Lemma \ref{required q}. It follows from Lemma \ref{positive hahn-banach} that there exist a positive linear functional $\varphi$ on $E$ such that $\varphi\leq q$ and $\varphi(x)=q(x).$

It is clear that $\varphi\leq q\leq p.$ Since $p(z)=0$ for every $z\in Z_E,$ it follows that $\varphi(z)=0$ for every $z\in Z_E.$ Therefore, $\varphi$ is a symmetric functional. For every $z\in\mathcal{D}_E$ with $Cz\leq0,$ we have $\varphi(z)\leq q(z)\leq p(0)=0.$

Let $x_1,x_2\in E$ be positive elements such that $x_1\prec\prec x_2.$ Therefore, $z=\mu(x_1)-\mu(x_2)\in\mathcal{D}_E$ and $Cz\leq0.$ It follows from above that $\varphi(z)\leq 0.$ Hence, $\varphi$ is a fully symmetric functional.

Since $\varphi(z)\leq q(z)\leq p(z)\leq\|z\|_E$ for every $z=\mu(z)\in E,$ it follows that $\|\varphi\|_{E^*}\leq 1.$ Therefore,
$$\varphi(x)=\varphi(\frac1m\sigma_mx)\leq\frac1m\|\sigma_mx\|_E.$$
Passing $m\to\infty,$ we obtain
$$\varphi(x)\leq\lim_{m\to\infty}\frac1m\|\sigma_m\mu(x)\|_E.$$
On the other hand, $q(x)=p(x)$ by Lemma \ref{ph monotone}. By Lemma \ref{mn estimate}, we have $m^{-1}\sigma_mx\lhd M_mx.$ Therefore,
$$\varphi(x)=q(x)=p(x)=\limsup_{m\to\infty}\|M_mx\|_E\geq\lim_{m\to\infty}\frac1m\|\sigma_m\mu(x)\|_E.$$
The assertion follows immediately. 
\end{proof}

If $\pi:E\to E$ is a convex functional defined in \eqref{pi deff}, then $\pi(-x)=\pi(x)$ for every $x\in E.$ If $q$ is a functional defined in Lemma \ref{required q}, then $q(-x)=0$ for positive $x\in E.$ Therefore, $q\neq\pi.$ However, the assertion below follows from Theorem \ref{fs exists}.

\begin{lem}\label{pietsch request2} Let $E=E(0,\infty)$ be a fully symmetric Banach space on the semi-axis. Let $q$ and $\pi$ be the convex functionals on $E$ defined in Lemma \ref{required q} and \eqref{pi deff}, respectively. For every positive $x\in E,$ we have $q(x)=\pi(x).$
\end{lem}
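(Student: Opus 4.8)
The plan is to mirror the proof of Lemma~\ref{pietsch request1} almost verbatim, now using Theorem~\ref{fs exists} in place of Theorem~\ref{sim exists}. The key point is that Theorem~\ref{fs exists} produces, for each positive $x\in E$, a fully symmetric (in particular, positive and symmetric) functional $\varphi$ on $E$ whose value at $x$ is exactly $\lim_{m\to\infty}\frac1m\|\sigma_m(\mu(x))\|_E$, and inspection of that proof shows $\varphi$ is constructed via Lemma~\ref{positive hahn-banach} so that $\varphi\leq q$ and $\varphi(x)=q(x)$. Hence $q(x)=\varphi(x)=\lim_{m\to\infty}\frac1m\|\sigma_m(\mu(x))\|_E$.

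Next I would identify $\lim_{m\to\infty}\frac1m\|\sigma_m(\mu(x))\|_E$ with $\pi(x)$ for positive $x$. This is the elementary computation $\|x^{\oplus m}\|_E=\|\sigma_m(\mu(x))\|_E$ already noted in the proof of Lemma~\ref{pietsch funk}: for a positive element $x$, the singular value function of $x^{\oplus m}$ is $\sigma_m(\mu(x))$, so by definition \eqref{pi deff} we get $\pi(x)=\lim_{m\to\infty}\frac1m\|x^{\oplus m}\|_E=\lim_{m\to\infty}\frac1m\|\sigma_m(\mu(x))\|_E$. Combining the two displayed equalities yields $q(x)=\pi(x)$ for every positive $x\in E$, which is the claim.

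There is essentially no obstacle here: the lemma is a packaging statement recording that the value extracted in Theorem~\ref{fs exists} coincides with the Pietsch functional $\pi$ on the positive cone, exactly as Lemma~\ref{pietsch request1} did for $p$. The only mild subtlety worth a sentence in the write-up is that the equality $q(x)=\pi(x)$ holds only on positive elements — on general $x$ one has $q(-x)=0$ while $\pi(-x)=\pi(x)$, so $q\neq\pi$ as functionals on all of $E$, a point the paragraph preceding the lemma already flags. A one-line proof suffices:

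\begin{proof}
Fix a positive $x\in E$ and let $\varphi$ be the fully symmetric functional constructed in the proof of Theorem~\ref{fs exists}. By that construction, $\varphi\leq q$ and $\varphi(x)=q(x)$, while Theorem~\ref{fs exists} gives $\varphi(x)=\lim_{m\to\infty}\frac1m\|\sigma_m(\mu(x))\|_E$. Since $x$ is positive, $\mu(x^{\oplus m})=\sigma_m(\mu(x))$, so $\pi(x)=\lim_{m\to\infty}\frac1m\|x^{\oplus m}\|_E=\lim_{m\to\infty}\frac1m\|\sigma_m(\mu(x))\|_E$. Combining these equalities, $q(x)=\varphi(x)=\pi(x)$.
\end{proof}
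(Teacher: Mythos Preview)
Your proof is correct and follows essentially the same route as the paper: both invoke the functional $\varphi$ built in the proof of Theorem~\ref{fs exists}, use that $\varphi(x)=q(x)$ by construction, and identify $\varphi(x)$ with $\lim_{m\to\infty}\frac1m\|\sigma_m(\mu(x))\|_E=\pi(x)$. The paper's version additionally records the intermediate equality $q(x)=p(x)$ (via Lemma~\ref{ph monotone}), but this is not needed for the conclusion and your streamlined chain $q(x)=\varphi(x)=\pi(x)$ is entirely adequate.
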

\begin{proof} For every $x\in E,$ consider the functional $\varphi$ constructed in Theorem \ref{fs exists}. By construction, we have $\varphi(x)=q(x)=p(x)=\pi(x).$
\end{proof}

The proofs of the two following theorems are very similar to that of Theorem \ref{sim sing exists inf} (respectively, Theorem \ref{sim sing exists fin}) and are, therefore, omitted. The only difference is that the reference to Theorem \ref{sim exists} (respectively, Theorem \ref{sim sing exists inf}) has to be replaced with the reference to Theorem \ref{fs exists} (respectively, Theorem \ref{fs sing exists inf}).

\begin{thm}\label{fs sing exists inf} Let $E\subset L_1(0,\infty)$ be a fully symmetric Banach space on the semi-axis. For a given $0\leq x\in E,$ there exists a singular fully symmetric linear functional $\varphi_{sing}$ such that
$$\varphi_{sing}(x)=\lim_{m\to\infty}\frac1m\|\sigma_m(\mu(x))\chi_{(0,1)}\|_E.$$
\end{thm}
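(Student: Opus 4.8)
The plan is to carry out the proof that the authors omitted, imitating the proof of Theorem~\ref{sim sing exists inf} line by line and replacing the appeal to Theorem~\ref{sim exists} by an appeal to Theorem~\ref{fs exists}, the appeal to part~(1) of Lemma~\ref{weak limit lemma} by part~(2), and the symmetric clause of Lemma~\ref{sing constr} by its fully symmetric clause.

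First I would fix $0\le x\in E$ and, for each $n\in\mathbb{N}$, apply Theorem~\ref{fs exists} to the truncation $\mu(x)\chi_{(0,1/n)}\in E$. This produces a fully symmetric functional $\varphi_n$ on $E$ with $\|\varphi_n\|_{E^*}\le 1$ and
$$\varphi_n\bigl(\mu(x)\chi_{(0,1/n)}\bigr)=\lim_{m\to\infty}\frac1m\bigl\|\sigma_m\bigl(\mu(x)\chi_{(0,1/n)}\bigr)\bigr\|_E.$$
Since $\sigma_m(\mu(x)\chi_{(0,1/n)})\ge\sigma_m(\mu(x))\chi_{(0,1)}$ pointwise as soon as $m\ge n$ (the stretched truncation coincides with $\sigma_m(\mu(x))$ on $(0,1)$ once $m/n\ge 1$), the ideal property of $E$ gives
$$\varphi_n\bigl(\mu(x)\chi_{(0,1/n)}\bigr)\ge\lim_{m\to\infty}\frac1m\bigl\|\sigma_m(\mu(x))\chi_{(0,1)}\bigr\|_E\qquad\text{for every }n.$$

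Next I would invoke the Banach--Alaoglu theorem to extract a $*$-weakly convergent subnet $\psi_i=\varphi_{F(i)}$, $i\in\mathbb{I}$, of $\{\varphi_n\}$ with limit $\varphi\in E^*$; by Lemma~\ref{weak limit lemma}(2) the functional $\varphi$ is fully symmetric. Exactly as in Theorem~\ref{sim sing exists inf}, for each fixed $n$ the definition of a subnet yields $i_n\in\mathbb{I}$ with $F(i)>n$ whenever $i>i_n$, and then positivity of the $\psi_i$ together with $\mu(x)\chi_{(0,1/n)}\ge\mu(x)\chi_{(0,1/F(i))}$ gives $\psi_i(\mu(x)\chi_{(0,1/n)})\ge\lim_{m\to\infty}\frac1m\|\sigma_m(\mu(x))\chi_{(0,1)}\|_E$ for all $i>i_n$; passing to the limit along this subnet (which converges to the same $\varphi$) produces $\varphi(\mu(x)\chi_{(0,1/n)})\ge\lim_{m\to\infty}\frac1m\|\sigma_m(\mu(x))\chi_{(0,1)}\|_E$ for every $n$. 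Letting $n\to\infty$ and forming the singular fully symmetric functional $\varphi_{sing}$ associated with $\varphi$ by Lemma~\ref{sing constr}, I obtain $\varphi_{sing}(x)\ge\lim_{m\to\infty}\frac1m\|\sigma_m(\mu(x))\chi_{(0,1)}\|_E$.

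The reverse inequality is elementary: since $E\subset L_1(0,\infty)$, the function $\sigma_m(\mu(x))\chi_{(1,\infty)}$ belongs to $(L_1\cap L_\infty)(0,\infty)$, so the singular functional $\varphi_{sing}$ annihilates it; hence, using $\varphi_{sing}(x)=\varphi_{sing}(\mu(x))$, the scaling identity $m\varphi_{sing}(\mu(x))=\varphi_{sing}(\sigma_m\mu(x))$ for symmetric functionals on the semi-axis, and $\|\varphi_{sing}\|_{E^*}\le\|\varphi\|_{E^*}\le 1$, we get $\varphi_{sing}(x)=\frac1m\varphi_{sing}(\sigma_m(\mu(x))\chi_{(0,1)})\le\frac1m\|\sigma_m(\mu(x))\chi_{(0,1)}\|_E$ for every $m$, and $m\to\infty$ finishes the proof. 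The only step requiring genuine attention — and the reason Lemma~\ref{weak limit lemma} was isolated beforehand — is that full symmetry of the $\varphi_n$ survives the $*$-weak limit; the rest is the same bookkeeping as in Theorem~\ref{sim sing exists inf}.
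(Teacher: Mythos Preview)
Your proposal is correct and follows exactly the approach the paper indicates: the authors state that the proof is identical to that of Theorem~\ref{sim sing exists inf} with Theorem~\ref{sim exists} replaced by Theorem~\ref{fs exists}, which is precisely what you do (together with the corresponding fully symmetric clauses of Lemma~\ref{weak limit lemma} and Lemma~\ref{sing constr}). Your elaboration of the ``trivial'' reverse inequality is also correct and more explicit than the paper's one-line dismissal.
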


\begin{thm}\label{fs sing exists fin} Let $E\subset L_1(0,1)$ be a fully symmetric Banach space on the interval $(0,1).$ For a given $0\leq x\in E,$ there exists a singular fully symmetric linear functional $\varphi_{sing}$ such that
$$\varphi_{sing}(x)=\lim_{m\to\infty}\frac1m\|\sigma_m(\mu(x))\|_E.$$
\end{thm}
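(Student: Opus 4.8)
The plan is to reduce Theorem \ref{fs sing exists fin} to the semi-axis case already established in Theorem \ref{fs sing exists inf}, exactly mirroring the way Theorem \ref{sim sing exists fin} was deduced from Theorem \ref{sim sing exists inf}. First I would introduce the auxiliary fully symmetric Banach function space $F$ on the semi-axis whose norm is
$$\|y\|_F=\|\mu(y)\chi_{(0,1)}\|_E+\|y\|_1,\quad y\in F,$$
and check the three routine points: that this is indeed a norm making $F$ a Banach space, that $F$ is fully symmetric (here one uses that $E$ is fully symmetric together with the fact that both $\|\mu(\cdot)\chi_{(0,1)}\|_E$ and $\|\cdot\|_1$ are monotone under Hardy--Littlewood majorization, so their sum is too), and that $F\subset L_1(0,\infty)$ by construction.

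Next I would apply Theorem \ref{fs sing exists inf} to the space $F$ and the given element $0\le x\in E\subset F$. This produces a singular fully symmetric functional $\varphi$ on $F$ with
$$\varphi(x)=\lim_{m\to\infty}\frac1m\|\sigma_m(\mu(x))\chi_{(0,1)}\|_F.$$
The key computation is then that $\|\sigma_m(\mu(x))\chi_{(0,1)}\|_F=\|\sigma_m(\mu(x))\chi_{(0,1)}\|_E+\|\sigma_m(\mu(x))\chi_{(0,1)}\|_1$, and after dividing by $m$ the $L_1$-term is $\frac1m\|\mu(x)\chi_{(0,1/m)}\|_1\to 0$ since $x\in E\subset L_1(0,1)$; hence the limit on the right equals $\lim_{m\to\infty}\frac1m\|\sigma_m(\mu(x))\|_E$ (using that $\sigma_m(\mu(x))\chi_{(0,1)}=\sigma_m(\mu(x)\chi_{(0,1)})$ in the interval-dilation sense agrees with the semi-axis dilation truncated to $(0,1)$, which is precisely how $\|\cdot\|_E$ on $(0,1)$ sees it). Finally I would restrict $\varphi$ to $E$, viewed as the subspace $\{y\in F:\operatorname{supp}y\subseteq(0,1)\}$ of $F$; the restriction of a singular fully symmetric functional is again singular and fully symmetric, giving the desired $\varphi_{sing}$ on $E$.

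The main obstacle I anticipate is the identification of the dilated truncations: one must be careful that the ``interval'' dilation operator $\sigma_m$ on $E(0,1)$ (which zeroes out the tail past $1$) applied to $\mu(x)$ produces the same element of $E$ as the restriction to $(0,1)$ of the ``semi-axis'' dilation $\sigma_m(\mu(x)\chi_{(0,1)})$ computed inside $F$. This is a matching of definitions rather than a genuine difficulty, but it is the one place where the two ambient structures must be reconciled, and it is exactly the point that was left implicit in the proof of Theorem \ref{sim sing exists fin}. Everything else --- verifying $F$ is a fully symmetric Banach space, the vanishing of the $L_1$-contribution, and the heredity of singularity and full symmetry under restriction to a band --- is routine and parallels the symmetric case verbatim; accordingly this proof can be kept very short, simply citing the proof of Theorem \ref{sim sing exists fin} with Theorem \ref{sim sing exists inf} replaced by Theorem \ref{fs sing exists inf}.
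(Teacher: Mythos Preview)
Your proposal is correct and follows exactly the approach indicated in the paper: the authors state that the proof of Theorem \ref{fs sing exists fin} is identical to that of Theorem \ref{sim sing exists fin} with Theorem \ref{sim sing exists inf} replaced by Theorem \ref{fs sing exists inf}, which is precisely your reduction via the auxiliary space $F$ with norm $\|y\|_F=\|\mu(y)\chi_{(0,1)}\|_E+\|y\|_1$. Your additional remarks on verifying that $F$ is fully symmetric, on the vanishing of the $L_1$-contribution, and on the restriction step simply make explicit what the paper leaves implicit.
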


\section{The sets of symmetric and fully symmetric functionals are different}\label{ne vpol sim}

In this section, we demonstrate that the sets of symmetric and fully symmetric functionals on a given fully symmetric space $E$ are distinct (provided that one of these sets is non-empty). The main results are Theorem \ref{nonintegrable theorem} and Theorem \ref{integrable theorem}.

Let $x=\mu(x)\in (L_1+L_{\infty})(0,\infty)$ (or $x=\mu(x)\in L_1(0,1)$) and let $X(t)=\int_0^tx(s)ds.$ For every $\theta>0,$ let $a_n(\theta)$ be such that
$$X(a_n(\theta))=(3/2)^n\theta$$
for every $n\in\mathbb{Z}$ such that $a_n(\theta)$ does exist. Given a sequence $\kappa=\{\kappa_n\}_{n\in\mathbb{Z}}\in(\mathbb{N}\cup\{\infty\})^{\mathbb{Z}},$ let
$$\mathcal{B}_{\kappa,\theta}=\{\kappa_na_{3n}(\theta),\mbox{ where }n\in\mathbb{Z}\mbox{ is such that } \kappa_n^2a_{3n}(\theta)<a_{3n+1}(\theta)\}.$$
If $\kappa_n=m$ for all $n\in\mathbb{N},$ we write $\mathcal{B}_{m,\theta}$ instead of $\mathcal{B}_{\kappa,\theta}.$ Also, set
$$\mathcal{A}_m=\{ma_n(1):\quad m^2a_n(1)<a_{n+1}(1),\ n\in\mathbb{Z}\}.$$

\begin{lem}\label{union lemma} If $x=\mu(x)\in L_1+L_{\infty}$ and if $\mathcal{C}_i,$ $1\leq i\leq k,$ are discrete sets, then
$$\mathbf{E}(x|\cup_{i=1}^k\mathcal{C}_i)\prec\prec\sum_{i=1}^k\mathbf{E}(x|\mathcal{C}_i).$$
\end{lem}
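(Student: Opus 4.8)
The plan is to reduce the Hardy--Littlewood majorization to a pointwise comparison of two functions, one piecewise affine and one concave, and to exploit the elementary fact that averaging over a partition preserves the integral over any union of its cells.

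Write $\mathcal{D}=\cup_{i=1}^k\mathcal{C}_i$; being a finite union of discrete sets, $\mathcal{D}$ is again a discrete subset of the semi-axis, inducing a partition of $(0,\infty)$. Since $x=\mu(x)$, the averages of $x$ over consecutive cells of this partition are non-increasing (on an infinite tail cell, if present, the averaged value is $0$ by the definition of $\mathbf{E}(\cdot|\cdot)$, which does not violate monotonicity), so $\mathbf{E}(x|\mathcal{D})$ is a non-increasing step function. Hence $\mu(\cdot,\mathbf{E}(x|\mathcal{D}))=\mathbf{E}(x|\mathcal{D})$, and the assertion is equivalent to
$$A(t):=\int_0^t\mathbf{E}(x|\mathcal{D})(s)\,ds\leq\int_0^t\mu\Big(s,\sum_{i=1}^k\mathbf{E}(x|\mathcal{C}_i)\Big)\,ds=:B(t),\quad t>0.$$
Here $A$ is continuous, non-decreasing, and affine on the closure of every cell of the partition induced by $\mathcal{D}$, so its only possible corners lie at the nodes of $\mathcal{D}$; and $B$ is concave, being the integral of a non-increasing function, with $A(0)=B(0)=0$.

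First I would verify $A\le B$ at every node $d\in\mathcal{D}$. Since $[0,d]$ is a union of cells of the partition induced by $\mathcal{D}$ and $\mathbf{E}(\cdot|\mathcal{D})$ preserves the integral over each such cell, one has $A(d)=\int_0^d x(s)\,ds$. Because $d\in\mathcal{C}_j$ for some $j\in\{1,\dots,k\}$, the same reasoning gives $\int_0^d\mathbf{E}(x|\mathcal{C}_j)(s)\,ds=\int_0^d x(s)\,ds$. As every $\mathbf{E}(x|\mathcal{C}_i)$ is non-negative and $\int_0^d z(s)\,ds\leq\int_0^d\mu(s,z)\,ds$ for any $0\le z\in L_1+L_\infty$ (take the set $(0,d)$ in the maximal-integral characterization of $\int_0^d\mu(s,z)\,ds$), we obtain
$$A(d)=\int_0^d\mathbf{E}(x|\mathcal{C}_j)(s)\,ds\leq\int_0^d\sum_{i=1}^k\mathbf{E}(x|\mathcal{C}_i)(s)\,ds\leq B(d).$$

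Finally I would upgrade this to all of $(0,\infty)$ using concavity of $B$. On the closure $[d',d'']$ of an arbitrary cell of the $\mathcal{D}$-partition, $A$ coincides with the chord joining $(d',A(d'))$ and $(d'',A(d''))$; since $A(d')\leq B(d')$, $A(d'')\leq B(d'')$ and the concave function $B$ lies above its own chords, it follows that $A\leq B$ on $[d',d'']$. The cell to the left of the smallest node of $\mathcal{D}$ is treated identically using $A(0)=B(0)=0$, and on a (possibly infinite) tail cell $A$ is constant while $B$ is non-decreasing, so $A\le B$ there as well. Hence $A\leq B$ on $(0,\infty)$, which is the claim. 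I expect the only delicate point to be the bookkeeping of cells of infinite measure and of nodes accumulating at $0$; the argument above is designed to be insensitive to both, the genuine content being the identification $A(d)=\int_0^d x(s)\,ds$ at the nodes, from which monotonicity, concavity of $B$, and non-negativity do the rest.
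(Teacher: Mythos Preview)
Your proof is correct and follows essentially the same route as the paper's: the key observation is that at every node $d\in\mathcal{D}$ one has $\int_0^d\mathbf{E}(x|\mathcal{D})(s)\,ds=X(d)=\int_0^d\mathbf{E}(x|\mathcal{C}_j)(s)\,ds$ for the $\mathcal{C}_j$ containing $d$, after which the majorization follows from a piecewise-affine versus concave comparison. The paper simply asserts that ``it is sufficient to verify \dots\ only at the nodes'' and records this identity, while you spell out the concavity-of-$B$ versus affinity-of-$A$ reason and handle the boundary cells explicitly; the substance is the same.
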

\begin{proof} It is sufficient to verify
$$\int_0^t\mathbf{E}(x|\cup_{i=1}^k\mathcal{C}_i)(s)ds\leq\sum_{i=1}^k\int_0^t\mathbf{E}(x|\mathcal{C}_i)(s)ds$$
only at the nodes of $\mathbf{E}(x|\cup_{i=1}^k\mathcal{C}_i),$ that is at the nodes of
$\mathbf{E}(x|\mathcal{C}_i)$ for every $i.$ However, if $t\in\mathcal{C}_i$ for some $i,$ then
$$\int_0^t\mathbf{E}(x|\cup_{i=1}^k\mathcal{C}_i)(s)ds=X(t)=\int_0^t\mathbf{E}(x|\mathcal{C}_i)(s)ds$$
and we are done. 
\end{proof}

We will need the following lemma.

\begin{lem}\label{majorant lemma} If $x=\mu(x)\in L_1+L_{\infty}$ and if $\kappa\geq\kappa'$ (that is $\kappa_n\geq\kappa_n'$ for every $n$), then
\begin{equation}\label{monotone}
\mathbf{E}(x|\mathcal{B}_{\kappa,\theta})\prec\prec\frac32\mathbf{E}(x|\mathcal{B}_{\kappa',\theta}).
\end{equation}
\end{lem}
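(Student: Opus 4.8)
The plan is to reduce the majorization \eqref{monotone} to a nodewise comparison of the integrals of the two expectation operators, exactly as in Lemma \ref{union lemma}. Recall that $\mathbf{E}(x|\mathcal{C})$ agrees with $X$ at the nodes of $\mathcal{C}$, i.e. $\int_0^t\mathbf{E}(x|\mathcal{C})(s)\,ds=X(t)$ for every $t\in\mathcal{C}\cup\{0\}$, and that between consecutive nodes $\mathbf{E}(x|\mathcal{C})$ is constant (equal to the average of $x$ over the corresponding interval), so $\int_0^t\mathbf{E}(x|\mathcal{C})(s)\,ds$ is the piecewise-linear interpolant of $X$ through the nodes of $\mathcal{C}$. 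Since $x=\mu(x)$ is decreasing, $X$ is concave, so this piecewise-linear interpolant is $\le X$ everywhere and is pointwise monotone in the node set: if $\mathcal{C}'\subseteq\mathcal{C}$ then $\int_0^t\mathbf{E}(x|\mathcal{C}')(s)\,ds\le\int_0^t\mathbf{E}(x|\mathcal{C})(s)\,ds$ for all $t$. (Both $\mathbf{E}(x|\mathcal{B}_{\kappa,\theta})$ and $\mathbf{E}(x|\mathcal{B}_{\kappa',\theta})$ are already decreasing, since their values on successive intervals are averages of the decreasing function $x$ over intervals moving to the right; hence each equals its own decreasing rearrangement and the Hardy--Littlewood majorization $\prec\prec$ is just the nodewise inequality between the primitives.)

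Next I would unravel the structure of the node sets. Both $\mathcal{B}_{\kappa,\theta}$ and $\mathcal{B}_{\kappa',\theta}$ are subsets of $\{a_{3n}(\theta):n\in\mathbb{Z}\}$ dilated by the $\kappa_n$'s: the node contributed by level $n$ is $\kappa_n a_{3n}(\theta)$, present only when the ``room'' condition $\kappa_n^2 a_{3n}(\theta)<a_{3n+1}(\theta)$ holds (and similarly with $\kappa'$). The key point is that the intervals $[a_{3n}(\theta),a_{3n+1}(\theta))$ for distinct $n$ are disjoint and, by construction of $a_n(\theta)$ via $X(a_n(\theta))=(3/2)^n\theta$, we have $X(a_{3n+1}(\theta))=(3/2)X(a_{3(n+1)}(\theta))$ controlling the jump of $X$ across the gap between level-$n$ and level-$(n+1)$ nodes. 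Because $\kappa\ge\kappa'$, whenever the level-$n$ node of $\mathcal{B}_{\kappa,\theta}$ exists the level-$n$ node of $\mathcal{B}_{\kappa',\theta}$ also exists (the room condition is easier to satisfy with a smaller multiplier), and the former node $\kappa_n a_{3n}(\theta)$ lies to the right of the latter $\kappa'_n a_{3n}(\theta)$; but both lie inside $[a_{3n}(\theta),a_{3n+1}(\theta))$. So on each such interval the primitive of $\mathbf{E}(x|\mathcal{B}_{\kappa,\theta})$ is the chord of $X$ from $(a_{3n}(\theta),\cdot)$ or from the previous node up to $\kappa_n a_{3n}(\theta)$, and we must bound it by $\tfrac32$ times the corresponding primitive of $\mathbf{E}(x|\mathcal{B}_{\kappa',\theta})$.

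The estimate itself is then a concavity computation. For $t$ in the gap between the level-$n$ node of one of the sets and the level-$(n+1)$ node, the primitive $\int_0^t\mathbf{E}(x|\mathcal{B}_{\kappa,\theta})$ is trapped between the value $X$ at the left endpoint of that gap and $X$ at its right endpoint, and the ratio $X(a_{3n+3}(\theta))/X(a_{3n}(\theta))=(3/2)^3$ over three $X$-levels, together with the fact that consecutive nodes of $\mathcal{B}_{\kappa',\theta}$ can differ by at most those same three levels, yields that the primitive of $\mathbf{E}(x|\mathcal{B}_{\kappa,\theta})$ at $t$ never exceeds $\tfrac32$ times that of $\mathbf{E}(x|\mathcal{B}_{\kappa',\theta})$ at $t$ --- the worst case being when a node of $\mathcal{B}_{\kappa',\theta}$ is missing (its room condition failed) so that its primitive lags behind by at most one $X$-level, i.e. by a factor $3/2$. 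I would verify this inequality first at the nodes of $\mathbf{E}(x|\mathcal{B}_{\kappa,\theta})$ together with the nodes of $\mathbf{E}(x|\mathcal{B}_{\kappa',\theta})$ (where both primitives are determined by values of $X$), and then extend to all $t$ by the usual argument that the difference of two concave piecewise-linear functions attains its extrema at the breakpoints.

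The main obstacle I anticipate is the bookkeeping around the room conditions $\kappa_n^2 a_{3n}(\theta)<a_{3n+1}(\theta)$: a level-$n$ node may be present for $\mathcal{B}_{\kappa',\theta}$ but absent for $\mathcal{B}_{\kappa,\theta}$ (since $\kappa\ge\kappa'$ makes the condition harder, not easier, for $\kappa$), so one must check that whenever $\mathcal{B}_{\kappa,\theta}$ ``skips'' a level the primitive of $\mathbf{E}(x|\mathcal{B}_{\kappa,\theta})$ has not yet advanced past $\tfrac32$ times that of $\mathbf{E}(x|\mathcal{B}_{\kappa',\theta})$, using that the extra skipped level costs exactly one factor $3/2$ in $X$. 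Carefully matching which node of one set is ``responsible'' for bounding the primitive at a given $t$ in terms of the nodes of the other set — and keeping track of the edge case $\kappa_n=\infty$, where the level-$n$ node is simply absent from $\mathcal{B}_{\kappa,\theta}$ — is the delicate part; once the correspondence is set up the inequalities are immediate from concavity of $X$ and the geometric spacing of the $X$-levels.
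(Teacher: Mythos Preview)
Your framework --- reduce $\prec\prec$ to a pointwise inequality between the primitives and check it at the nodes --- is exactly the paper's, but the execution misses the clean one-line estimate and replaces it with reasoning that is partly wrong.

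The paper checks the inequality \emph{only} at the nodes of $\mathcal{B}_{\kappa,\theta}$. If $t=\kappa_n a_{3n}(\theta)$ is such a node, then (since $\kappa_n'\le\kappa_n$) the room condition also holds for $\kappa'$ at level $n$, so $\kappa_n' a_{3n}(\theta)\in\mathcal{B}_{\kappa',\theta}$, and one simply writes
\[
\int_0^{t}\mathbf{E}(x|\mathcal{B}_{\kappa,\theta})
= X(\kappa_n a_{3n}(\theta))
\le X(a_{3n+1}(\theta))
= \tfrac32 X(a_{3n}(\theta))
\le \tfrac32 X(\kappa_n' a_{3n}(\theta))
= \tfrac32\int_0^{\kappa_n' a_{3n}(\theta)}\mathbf{E}(x|\mathcal{B}_{\kappa',\theta})
\le \tfrac32\int_0^{t}\mathbf{E}(x|\mathcal{B}_{\kappa',\theta}),
\]
using only $\kappa_n a_{3n}(\theta)<a_{3n+1}(\theta)$ and $\kappa_n'\ge 1$. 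That is the whole computation; the factor $3/2$ is exactly the ratio $X(a_{3n+1})/X(a_{3n})$, nothing more. Extension to all $t$ is then the convexity argument you already describe.

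Your version goes astray in two places. First, your account of the origin of $3/2$ --- ``the worst case being when a node of $\mathcal{B}_{\kappa',\theta}$ is missing'' --- is backwards: at every level where $\mathcal{B}_{\kappa,\theta}$ has a node, $\mathcal{B}_{\kappa',\theta}$ \emph{also} has one (you noted this yourself), so no such lag occurs at the relevant points. Second, the claim that ``consecutive nodes of $\mathcal{B}_{\kappa',\theta}$ can differ by at most those same three levels'' is false (arbitrarily many room conditions may fail consecutively) and in any case unnecessary. The ``main obstacle'' you anticipate --- skipped levels in $\mathcal{B}_{\kappa,\theta}$ --- is a non-issue for the same reason: you never need to say anything at those levels, only at the nodes of $\mathcal{B}_{\kappa,\theta}$, and each of those has a matching node of $\mathcal{B}_{\kappa',\theta}$ at the same level.
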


\begin{proof} Let $n\in\mathbb{Z}$ be such that $\kappa_n^2a_{3n}(\theta)<a_{3n+1}(\theta).$ It follows that $\kappa_n'^2a_{3n}(\theta)<a_{3n+1}(\theta).$ Therefore,
$$\int_0^{\kappa_na_{3n}(\theta)}\mathbf{E}(x|\mathcal{B}_{\kappa,\theta})(s)ds\leq\int_0^{a_{3n+1}(\theta)}x(s)ds=3/2\int_0^{a_{3n}(\theta)}x(s)ds\leq$$
$$\leq3/2\int_0^{\kappa_n'a_{3n}(\theta)}x(s)ds=3/2\int_0^{\kappa_n'a_{3n}(\theta)}\mathbf{E}(x|\mathcal{B}_{\kappa',\theta})(s)ds.$$
Hence, we have
\begin{equation}\label{qss11}
\int_0^t\mathbf{E}(x|\mathcal{B}_{\kappa,\theta})(s)ds\leq3/2\int_0^t\mathbf{E}(x|\mathcal{B}_{\kappa',\theta})(s)ds
\end{equation}
for every $t$ being a node of the partition $\mathcal{B}_{\kappa,\theta}.$ Thus, \eqref{qss11} holds for every $t>0.$ 
\end{proof}

\begin{rem}\label{majorant remark} The inequality \eqref{monotone} holds if $\kappa_n\geq\kappa_n'$ only for such $n\in\mathbb{Z}$ that satisfy the inequality $\kappa_n^2a_{3n}(\theta)<a_{3n+1}(\theta).$
\end{rem}

\begin{lem}\label{characterization lemma} Let $E$ be a fully symmetric Banach function space either on the interval $(0,1)$ or on the semi-axis. Let $x=\mu(x)\in E$ and $y=\mu(y)\in E$ be such that $\varphi(y)\leq\varphi(x)$ for every positive symmetric functional $\varphi\in E^*.$ There exists $0\leq u_m\in E$ such that $u_m\to 0$ in $E$ and
$$\int_{ma}^by(s)ds\leq\int_a^{mb}(x+u_m)(s)ds,\quad \forall ma\leq b.$$
\end{lem}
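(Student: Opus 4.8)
The plan is to argue by contradiction using a separation (Hahn–Banach) argument, exactly in the spirit of the duality between symmetric functionals and the ordering $\lhd$ that underlies Theorem \ref{ks majorization theorem} and Lemma \ref{fi ks major}. First, I would reformulate the desired conclusion. For each fixed $m$, consider the closed convex set $K_m \subset E$ generated (under $E$-closure) by all functions of the form $x + w$ where $w\geq 0$ and $\int_{ma}^b y(s)\,ds \leq \int_a^{mb}(x+w)(s)\,ds$ fails to be achievable is the wrong side; instead, more profitably, set $D_m = \{x + u : 0\le u\in E,\ \|u\|_E \le 1/m\}$ and consider whether a translate/dilate of $y$ can be squeezed under an element of the solid/convex hull of $D_m$. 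The cleanest route is: suppose the conclusion fails. Then for some $m_0$, for every $0\le u\in E$ with $\|u\|_E<\varepsilon$ (all $\varepsilon$), there exist $a\le b$ with $m_0 a\le b$ and $\int_{m_0 a}^{b} y > \int_a^{m_0 b}(x+u)$. I would package this as: $y$ does not lie (up to the relevant truncation on $[m_0a,b]$) in the closed convex solid hull of the dilated translates of $x+u$; then apply the Hahn–Banach separation theorem in $E$ to produce a norm-bounded linear functional $\psi$ separating $y$ from this convex set, and finally symmetrize $\psi$ (by averaging over the measure-preserving transformations / using the construction from Lemma \ref{required p} and Lemma \ref{p extension}, or directly Theorem \ref{sim exists}) to obtain a positive symmetric functional $\varphi$ with $\varphi(y) > \varphi(x)$, contradicting the hypothesis.

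Concretely, the key steps in order are: (1) Reduce to showing that for each $m$ there is $u_m\ge 0$ with $\|u_m\|_E\to 0$ such that the integral inequality holds; the inequality $\int_{ma}^b y \le \int_a^{mb}(x+u_m)$ is precisely the statement that (a truncation of) $\sigma_{?}$-type rearrangement of $y$ is dominated by $x+u_m$ in the sense tied to $\lhd$. (2) Negate: fix $m$; assume no such $u$ with small norm works; form the convex set $\mathcal{C}=\{z\in E : z = x+u,\ 0\le u,\ z \text{ satisfies the integral bound against } y\}$ is empty near $x$, i.e. the "ball" $\{x+u: \|u\|_E\le\varepsilon\}$ misses $\mathcal{C}$. (3) Use that the set of $z$ satisfying the integral bound against $y$ is convex and closed and order-hereditary upward, so by Hahn–Banach (Lemma \ref{positive hahn-banach} or its classical form) there is a positive linear functional $\psi$, $\|\psi\|_{E^*}\le 1$, with $\psi(y)$ (suitably interpreted via $\liminf$ over truncations) strictly exceeding $\sup\psi$ over $\{x+u:\|u\|\le\varepsilon\}$, hence $\psi(y)>\psi(x)$. (4) Symmetrize: replace $\psi$ by $\varphi(z):=\psi(\mu(z_+))-\psi(\mu(z_-))$ — or invoke the machinery already developed (Lemmas \ref{required p}, \ref{p extension}, Theorem \ref{sim exists}) — to get a genuine positive \emph{symmetric} functional still satisfying $\varphi(y)>\varphi(x)$. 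This contradicts the hypothesis $\varphi(y)\le\varphi(x)$ for all positive symmetric $\varphi$.

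The main obstacle, and where care is needed, is the passage between the "weak" data (inequalities of symmetric functionals $\varphi(y)\le\varphi(x)$) and the "concrete" data (a pointwise-in-$t$ integral inequality with an explicit small error $u_m$). The functional-analytic separation only produces *some* separating functional; the work is to ensure it can be taken symmetric and to translate the separation statement back into the asymmetric integral inequality $\int_{ma}^b y\le\int_a^{mb}(x+u_m)$ with the two *different* limits of integration ($ma$ vs $a$, and $b$ vs $mb$) — this asymmetry in the bounds is exactly what makes $u_m$ small rather than order-one, and it mirrors the definition of $\lhd$ in \eqref{ks ordering} and the estimate in Lemma \ref{mn estimate}. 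I expect the proof will in fact run the contradiction through the operators $M_m$ (or $\sigma_m$, $\mathbf{E}(\cdot|\mathcal{A}_m)$): if no good $u_m$ existed, one builds, for each $m$, a symmetric functional $\varphi_m$ with $\varphi_m(y)\ge\varphi_m(M_m x)+c_m$ for some $c_m$ not tending to $0$ relative to the norms, then takes a $\ast$-weak cluster point (Banach–Alaoglu, Lemma \ref{weak limit lemma}) to get a single symmetric $\varphi$ with $\varphi(y)>\varphi(x)$, using $M_m x\lhd x$ and Lemma \ref{fi ks major}. Assembling the quantitative bookkeeping so that the error function $u_m$ genuinely has $\|u_m\|_E\to 0$ is the part requiring the most attention; everything else is a routine application of the already-established lemmas.
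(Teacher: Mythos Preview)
Your instinct to use Hahn--Banach together with the machinery of Lemma~\ref{required p} is correct, but you have framed the argument by contradiction and as a separation of convex sets, which makes it far more elaborate than necessary and leaves a real gap at the symmetrization step. Specifically, your step~(4) proposes replacing a separating functional $\psi$ by $\varphi(z):=\psi(\mu(z_+))-\psi(\mu(z_-))$; this map is \emph{not} linear in general, so you have not actually produced a symmetric linear functional with $\varphi(y)>\varphi(x)$. The alternative you sketch (build $\varphi_m$ for each $m$ and pass to a weak-$*$ cluster point) does not obviously preserve a strict inequality in the limit either.

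The paper's argument is direct and avoids all of this. Apply Lemma~\ref{positive hahn-banach} once, to the sublinear functional $p$ of Lemma~\ref{required p} at the single point $y-x\in\mathcal{D}_E$: this yields a positive linear $\varphi\le p$ with $\varphi(y-x)=p(y-x)$. Since $p$ vanishes on $Z_E$, so does $\varphi$; hence $\varphi$ is automatically symmetric, with no symmetrization needed. The hypothesis then gives $\varphi(y-x)\le 0$, while $p\ge 0$ always, so $p(y-x)=0$. By definition of $p$ this means $u_m:=(M_m(y-x))_+\to 0$ in $E$, and the $u_m$ you were seeking is now explicit. From $M_m y\le M_m x+u_m$ and the two inequalities of Lemma~\ref{mn estimate} one reads off
\[
\int_{ma}^b y(s)\,ds\le\int_{ma}^{mb}(M_m y)(s)\,ds\le\int_{ma}^{mb}(M_m x+u_m)(s)\,ds\le\int_a^{mb}(x+u_m)(s)\,ds.
\]
The point you were circling---that the asymmetry in the integration limits is exactly what $M_m$ mediates via Lemma~\ref{mn estimate}---is right; what you missed is that applying Hahn--Banach to $p$ at $y-x$ (rather than separating sets) gives the symmetric functional for free and hands you $u_m$ explicitly.
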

\begin{proof} Let $p$ be a convex positive functional considered in Lemma \ref{required p}. By Lemma \ref{positive hahn-banach}, there exists a positive functional $\varphi\in E^*$ such that $\varphi\leq p$ and $\varphi(y-x)=p(y-x).$ We have $p(z)=0$ for every $z\in Z_E$ and, therefore, $\varphi(z)=0$ for every $z\in Z_E.$ Therefore, $\varphi$ is a positive symmetric linear functional on $E.$

By the assumption, $\varphi(y-x)\leq0$ and, therefore, $p(y-x)=0.$ Hence, by the definition of $p,$ we have $u_m=(M_m(y-x))_+\to0$ in $E.$ Clearly, $M_my\leq M_mx+u_m.$ It follows from Lemma \ref{mn estimate} that
$$\int_{ma}^by(s)ds\leq\int_{ma}^{mb}(M_my)(s)ds\leq\int_{ma}^{mb}(M_mx+u_m)(s)ds\leq\int_{a}^{mb}(x+u_m)(s)ds.$$

\end{proof}

For each sequence $\kappa$ and $\lambda>0,$ we define the sequence $\kappa^{\lambda}$ by setting
$$\kappa^{\lambda}_n=
\begin{cases}
\kappa_n,\qquad \kappa_n\ge\lambda\\
\infty,\qquad \kappa_n<\lambda.
\end{cases}$$

\begin{lem}\label{main technical estimate} If $m\in\mathbb{N},$ $x=\mu(x)\in L_1+L_{\infty}$ and $0\leq u\in L_1+L_{\infty}$ are such that
$$\int_{ma}^b\mathbf{E}(x|\mathcal{B}_{\kappa,\theta})(s)ds\leq\int_a^{mb}(x+u)(s)ds,\quad \forall ma\leq b\in\mathbb{R},$$
then
\begin{equation}\label{kappa estimate}
m^{-1}\sigma_m\mathbf{E}(x|\mathcal{B}_{\kappa^{100m},\theta})\prec\prec 30\mu(u).
\end{equation}
\end{lem}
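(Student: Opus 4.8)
The plan is to exploit the geometry of the partition $\mathcal{B}_{\kappa,\theta}$ together with the hypothesis to compare the averaged function with the dilation of $u.$ First I would fix a node $t=\kappa_na_{3n}(\theta)$ of the partition $\mathcal{B}_{\kappa,\theta}$ that belongs to the sub-partition $\mathcal{B}_{\kappa^{100m},\theta}$; by definition of $\kappa^{100m},$ this forces $\kappa_n\geq 100m$ (if $\kappa_n<100m$ then $\kappa_n^{100m}=\infty$ and the corresponding node is absent from $\mathcal{B}_{\kappa^{100m},\theta}$, hence also the scale $a_{3n}(\theta)$ contributes nothing). Using $\kappa_n\geq 100m$ I can choose $a=\kappa_na_{3n}(\theta)/m$ (roughly; more precisely, an interval endpoint just below) and $b=\kappa_na_{3n}(\theta),$ so that $ma\leq b$ and the hypothesis gives
$$\int_{a}^{b}\mathbf{E}(x|\mathcal{B}_{\kappa,\theta})(s)ds=\int_{ma'}^{b}\mathbf{E}(x|\mathcal{B}_{\kappa,\theta})(s)ds\leq\int_{a'}^{mb}(x+u)(s)ds$$
for a suitable $a'.$ The point is that on the interval between consecutive nodes $\kappa_na_{3n}(\theta)$ and the previous node, the function $\mathbf{E}(x|\mathcal{B}_{\kappa,\theta})$ is constant and equals $X(\kappa_na_{3n}(\theta))/(\text{gap length})\geq X(a_{3n}(\theta))/(\kappa_na_{3n}(\theta))$ roughly, while $X$ grows geometrically across the big scales.

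Second I would estimate $\int_0^{mb}x(s)ds$ from above in terms of the preceding node. Since the nodes $a_{3k}(\theta)$ mark geometric growth of $X$ by factor $(3/2)^3,$ and $mb=m\kappa_na_{3n}(\theta)\leq\kappa_n^2a_{3n}(\theta)<a_{3n+1}(\theta),$ we get $mb<a_{3n+1}(\theta),$ so $X(mb)\leq X(a_{3n+1}(\theta))=(3/2)\,X(a_{3n}(\theta)).$ Consequently the contribution of $x$ on the right-hand side is comparable to $X(a_{3n}(\theta)),$ which in turn is comparable (up to the factor $3/2$ built into the scales) to the value that $\mathbf{E}(x|\mathcal{B}_{\kappa,\theta})$ takes at the node. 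Subtracting, the part of the left-hand side that is not absorbed by $x$ must be absorbed by $\int u,$ and after dividing by $m$ and using $\kappa_n\geq 100m,$ the dilation $\sigma_m$ only stretches each plateau by $m,$ which is dwarfed by the factor $\kappa_n,$ so the masses still match up to the universal constant $30.$ Throughout I would verify the majorization $\prec\prec$ by checking it only at nodes, as in Lemmas~\ref{union lemma} and~\ref{majorant lemma}, since both sides are decreasing step functions constant between nodes.

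The main obstacle I anticipate is bookkeeping the constants and the choice of the correct interval endpoints: the averaging operator $\mathbf{E}(x|\mathcal{B}_{\kappa,\theta})$ has plateaus whose lengths are exactly the gaps $\kappa_na_{3n}(\theta)-(\text{previous node}),$ and after applying $\sigma_m$ these become $m$ times longer and must be tested against $\mu(u),$ whose own ``plateaus'' are governed by the hypothesis inequality only in the integrated (Hardy-Littlewood) sense. The subtlety is that the hypothesis is a one-parameter family of inequalities $\int_{ma}^b(\cdots)\leq\int_a^{mb}(x+u)$ rather than a pointwise bound, so I must carefully select, for each relevant $n,$ a pair $(a,b)$ that extracts from it precisely the mass of $u$ lying in the stretched plateau; the factor $100$ in $\kappa^{100m}$ is exactly what creates the slack needed to do this (so that $mb$ stays below $a_{3n+1}(\theta)$ and the $\sigma_m$-stretched plateau still fits inside a region where the hypothesis is applicable). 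Once the right endpoints are pinned down, the remaining computation is the geometric-series estimate $X(a_{3n+1}(\theta))=\tfrac32 X(a_{3n}(\theta))$ together with $\|\sigma_m\|\leq m$ and the node-wise verification, and the constant $30$ comes out of combining the $3/2$ from the scale ratio, a factor accounting for the plateau immediately preceding the node, and a small safety margin.
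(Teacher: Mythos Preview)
Your overall strategy matches the paper's: fix a node $\kappa_n a_{3n}(\theta)$ of $\mathcal{B}_{\kappa^{100m},\theta}$ (so $\kappa_n\geq100m$), apply the hypothesis to a well-chosen pair $(a,b)$, bound the $x$-contribution on the right via $m\kappa_na_{3n}(\theta)<a_{3n+1}(\theta)$ and $X(a_{3n+1}(\theta))=\tfrac32X(a_{3n}(\theta))$, estimate the plateau value of $\mathbf{E}(x|\mathcal{B}_{\kappa,\theta})$ using that the previous node lies below $a_{3n-2}(\theta)$, and then verify $\prec\prec$ at nodes only. All of this is exactly what the paper does.

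The gap is in your choice of endpoints. You propose $a=\kappa_na_{3n}(\theta)/m$ and $b=\kappa_na_{3n}(\theta)$; but then $ma=b$ and the left-hand integral $\int_{ma}^b\mathbf{E}(x|\mathcal{B}_{\kappa,\theta})$ vanishes, so the hypothesis yields nothing. Your fallback $a'=a/m=\kappa_na_{3n}(\theta)/m^2$ does not rescue this: on the right you would need $a'\geq a_{3n}(\theta)$ to control $\int_{a'}^{mb}x$ by $\tfrac12X(a_{3n}(\theta))$, i.e.\ you would need $\kappa_n\geq m^2$, whereas you only know $\kappa_n\geq100m$. The paper's choice is $a=a_{3n}(\theta)$ and $b=\kappa_na_{3n}(\theta)$. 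Then the right-hand $x$-part is exactly
\[
\int_{a_{3n}(\theta)}^{m\kappa_na_{3n}(\theta)}x(s)\,ds\leq X(a_{3n+1}(\theta))-X(a_{3n}(\theta))=\tfrac12X(a_{3n}(\theta)),
\]
while the left-hand side is $\geq\frac{\kappa_n-m}{\kappa_n}\cdot\frac{5}{9}X(a_{3n}(\theta))\geq\frac{99}{100}\cdot\frac{5}{9}X(a_{3n}(\theta))$, producing the positive gap $\tfrac1{20}X(a_{3n}(\theta))$ that must be absorbed by $\int_0^{m\kappa_na_{3n}(\theta)}u$. Converting $\tfrac1{20}X(a_{3n}(\theta))=\tfrac1{30}X(a_{3n+1}(\theta))\geq\tfrac1{30}X(\kappa_na_{3n}(\theta))$ gives the constant $30$. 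So your intuition about where the constants and the factor $100$ enter is correct; only the pair $(a,b)$ needs to be $\bigl(a_{3n}(\theta),\kappa_na_{3n}(\theta)\bigr)$ rather than a point near the node divided by $m$.
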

\begin{proof} If $\kappa^{100m}_n=\infty$ for every $n\in\mathbb{Z},$ then $\mathbf{E}(x|\mathcal{B}_{\kappa^{100m},\theta})=0$ and the assertion is trivial.

Let $n\in\mathbb{Z}$ be such that $\kappa_n^2a_{3n}(\theta)<a_{3n+1}(\theta)$ and $\kappa_n\geq 100m.$ It follows that
\begin{equation}\label{qss12}
\int_0^{m\kappa_na_{3n}(\theta)}u(s)ds\geq\int_{a_{3n}(\theta)}^{m\kappa_na_{3n}(\theta)}(x+u)(s)ds-\int_{a_{3n}(\theta)}^{m\kappa_na_{3n}(\theta)}x(s)ds.
\end{equation}
By the assumption, we have
\begin{equation}\label{qss13}
\int_{a_{3n}(\theta)}^{m\kappa_na_{3n}(\theta)}(x+u)(s)ds\geq\int_{ma_{3n}(\theta)}^{\kappa_na_{3n}(\theta)}\mathbf{E}(x|\mathcal{B}_{\kappa,\theta})(s)ds.
\end{equation}
Note that $m\kappa_na_{3n}(\theta)<a_{3n+1}(\theta).$ It follows from \eqref{qss12} and \eqref{qss13} that
\begin{equation}\label{qss14}
\int_0^{m\kappa_na_{3n}(\theta)}u(s)ds\geq\int_{ma_{3n}(\theta)}^{\kappa_na_{3n}(\theta)}\mathbf{E}(x|\mathcal{B}_{\kappa,\theta})(s)ds-\int_{a_{3n}(\theta)}^{a_{3n+1}(\theta)}x(s)ds.
\end{equation}
Let $n'$ be the maximal integer number such that $n'<n$ and $\kappa_{n'}^2a_{3n'}(\theta)<a_{3n'+1}(\theta).$ It is clear that
$$\kappa_{n'}^2a_{3n'}(\theta)<a_{3n'+1}(\theta)\leq a_{3n-2}(\theta)<ma_{3n}(\theta)$$
and
\begin{equation}\label{qss15} \mathbf{E}(x|\mathcal{B}_{\kappa,\theta})=\frac{X(\kappa_na_{3n}(\theta))-X(\kappa_{n'}a_{3n'}(\theta))}{\kappa_na_{3n}(\theta)-\kappa_{n'}a_{3n'}(\theta)}\geq\frac{X(a_{3n}(\theta))-X(a_{3n-2}(\theta))}{\kappa_na_{3n}(\theta)}
\end{equation}
on the interval $(ma_{3n}(\theta),\kappa_na_{3n}(\theta)).$

\noindent If $\kappa_{n'}^2a_{3n'}(\theta)\geq a_{3n+1}(\theta)$ for every $n'<n,$ then
\begin{equation}\label{qss151}
\mathbf{E}(x|\mathcal{B}_{\kappa,\theta})=\frac{X(\kappa_na_{3n}(\theta))}{\kappa_na_{3n}(\theta)}\geq\frac{X(a_{3n}(\theta))}{\kappa_na_{3n}(\theta)}
\end{equation}
on the interval $(ma_{3n}(\theta),\kappa_na_{3n}(\theta)).$

It follows from \eqref{qss14} and \eqref{qss15} (or \eqref{qss151}) that
$$\int_0^{m\kappa_na_{3n}(\theta)}u(s)ds\geq\frac{\kappa_n-m}{\kappa_n}\cdot(1-\frac49)X(a_{3n}(\theta))-\frac12 X(a_{3n}(\theta)).$$
Since $\kappa_n\geq 100m,$ it follows that
$$\int_0^{m\kappa_na_{3n}(\theta)}u(s)ds\geq((1-\frac1{100})(1-\frac49)-\frac12)X(a_{3n}(\theta))=\frac1{20}X(a_{3n}(\theta))=$$
$$=\frac1{30}X(a_{3n+1}(\theta))\geq\frac1{30}X(\kappa_na_{3n}(\theta))=\frac1{30}\int_0^{\kappa_na_{3n}(\theta)}\mathbf{E}(x|\mathcal{B}_{\kappa^{100m},\theta})(s)ds.$$
It follows immediately that
\begin{equation}\label{qss16}
\int_0^t\mathbf{E}(x|\mathcal{B}_{\kappa^{100m},\theta})(s)ds\leq 30\int_0^{mt}u(s)ds\leq 30\int_0^{mt}\mu(s,u)ds
\end{equation}
for every $t$ being a node of the partition $\mathcal{B}_{\kappa^{100m},\theta}.$ Therefore,
$$\int_0^t\mathbf{E}(x|\mathcal{B}_{\kappa^{100m},\theta})(s)ds\leq30\int_0^{mt}\mu(s,u)ds,\quad t>0$$
or, equivalently,
$$\int_0^{t/m}\mathbf{E}(x|\mathcal{B}_{\kappa^{100m},\theta})(s)ds\leq30\int_0^t\mu(s,u)ds,\quad t>0.$$
The assertion follows immediately. 
\end{proof}

\begin{lem}\label{kn convergence} Let $E$ be a fully symmetric Banach function space either on the interval $(0,1)$ or on the semi-axis. If $x=\mu(x)\in E$ is such that $\varphi(y)\leq\varphi(x)$ for every positive symmetric functional $\varphi$ on $E$ and every $0\leq y\prec\prec x,$ then $\lambda^{-1}\sigma_{\lambda}\mathbf{E}(x|\mathcal{B}_{\kappa^{\lambda},\theta})\to0$ as $\lambda\to\infty.$
\end{lem}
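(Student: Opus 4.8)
The plan is to combine the characterization of symmetric-functional domination from Lemma \ref{characterization lemma} with the technical majorization estimate of Lemma \ref{main technical estimate}, and then push the parameter $\lambda$ to infinity. First I would invoke Lemma \ref{characterization lemma}: since $\varphi(y)\le\varphi(x)$ for every positive symmetric functional $\varphi$ and every $0\le y\prec\prec x$, I may apply this with $y=\mathbf{E}(x|\mathcal{B}_{\kappa,\theta})$, which is legitimate because every expectation operator satisfies $\mathbf{E}(x|\mathcal{A})\prec\prec x$ and $\mathbf{E}(x|\mathcal{B}_{\kappa,\theta})$ is already its own decreasing rearrangement (being a decreasing step function when $x=\mu(x)$; more precisely one uses that $\mathbf{E}(x|\mathcal{B}_{\kappa,\theta})\prec\prec x$ so the hypothesis of Lemma \ref{characterization lemma} is met). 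This produces, for each $m\in\mathbb{N}$, a function $0\le u_m\in E$ with $u_m\to0$ in $E$ and
$$\int_{ma}^b\mathbf{E}(x|\mathcal{B}_{\kappa,\theta})(s)\,ds\le\int_a^{mb}(x+u_m)(s)\,ds,\qquad\forall\,ma\le b.$$

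Next I would feed this inequality into Lemma \ref{main technical estimate} with $u=u_m$, obtaining
$$m^{-1}\sigma_m\mathbf{E}(x|\mathcal{B}_{\kappa^{100m},\theta})\prec\prec 30\,\mu(u_m).$$
Since $E$ is fully symmetric, this majorization yields the norm bound $\|m^{-1}\sigma_m\mathbf{E}(x|\mathcal{B}_{\kappa^{100m},\theta})\|_E\le 30\|u_m\|_E\to0$ as $m\to\infty$. To pass from the discrete sequence $\lambda=100m$ to arbitrary $\lambda\to\infty$, I would use the monotonicity furnished by Lemma \ref{majorant lemma} and Remark \ref{majorant remark}: for $100m\le\lambda\le 100(m+1)$ one has $\kappa^{\lambda}_n\ge\kappa^{100(m+1)}_n$ on the relevant indices, hence $\mathbf{E}(x|\mathcal{B}_{\kappa^{\lambda},\theta})\prec\prec\frac32\mathbf{E}(x|\mathcal{B}_{\kappa^{100(m+1)},\theta})$, while the dilation $\sigma_\lambda$ is controlled by $\sigma_{100(m+1)}$ up to the bounded factor $\|\sigma_{100(m+1)/\lambda}\|_{E\to E}\le 2$. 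Combining these, $\|\lambda^{-1}\sigma_\lambda\mathbf{E}(x|\mathcal{B}_{\kappa^{\lambda},\theta})\|_E$ is bounded by a fixed constant times $\|(m+1)^{-1}\sigma_{m+1}\mathbf{E}(x|\mathcal{B}_{\kappa^{100(m+1)},\theta})\|_E\le C\|u_{m+1}\|_E$, which tends to $0$.

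The main obstacle I anticipate is the careful bookkeeping at the interface between the two lemmas: Lemma \ref{characterization lemma} is stated for a general $y$ dominated by $x$ in the symmetric-functional order, but one must check that $\mathbf{E}(x|\mathcal{B}_{\kappa,\theta})$ genuinely satisfies $\varphi(\mathbf{E}(x|\mathcal{B}_{\kappa,\theta}))\le\varphi(x)$ for all positive symmetric $\varphi$ — this follows from $\mathbf{E}(x|\mathcal{B}_{\kappa,\theta})\prec\prec x$ together with the hypothesis of the present lemma, but the logical order matters. The secondary subtlety is that the $u_m$ produced by Lemma \ref{characterization lemma} depends on $m$, so the final limit genuinely requires $\|u_m\|_E\to0$ rather than a single fixed $u$; this is exactly what Lemma \ref{characterization lemma} delivers via $p(y-x)=0$ and the definition of $p$, so no extra work is needed there. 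The interpolation step over $\lambda\in[100m,100(m+1)]$ is routine once the constants from Lemma \ref{majorant lemma} ($\le\frac32$) and from $\|\sigma_s\|_{E\to E}\le\max\{1,s\}$ are in hand.
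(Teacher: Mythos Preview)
Your proposal follows the paper's proof almost exactly: apply Lemma~\ref{characterization lemma} to $y=\mathbf{E}(x\mid\mathcal{B}_{\kappa,\theta})\prec\prec x$ to obtain $u_m\to 0$, feed the resulting inequality into Lemma~\ref{main technical estimate}, and then pass from the discrete parameter $100m$ to continuous $\lambda$ via Lemma~\ref{majorant lemma}. The overall strategy is correct and identical to the paper's.

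There is, however, a direction error in your interpolation step. For $\lambda\le 100(m+1)$ one has $\kappa^{\lambda}\le\kappa^{100(m+1)}$ componentwise (larger threshold means more entries set to $\infty$), not $\kappa^{\lambda}\ge\kappa^{100(m+1)}$ as you wrote. Consequently Lemma~\ref{majorant lemma} does \emph{not} give $\mathbf{E}(x\mid\mathcal{B}_{\kappa^{\lambda},\theta})\prec\prec\tfrac32\mathbf{E}(x\mid\mathcal{B}_{\kappa^{100(m+1)},\theta})$. The fix is to compare to the \emph{lower} endpoint: for any $\lambda\ge 100m$ one has $\kappa^{\lambda}\ge\kappa^{100m}$, hence
\[
\mathbf{E}(x\mid\mathcal{B}_{\kappa^{\lambda},\theta})\prec\prec\tfrac32\,\mathbf{E}(x\mid\mathcal{B}_{\kappa^{100m},\theta}),
\]
and combined with the elementary fact $\lambda^{-1}\sigma_{\lambda}z\prec\prec m^{-1}\sigma_m z$ for $\lambda\ge m$ and $z\ge 0$ decreasing (just compare $\int_0^{t/\lambda}z\le\int_0^{t/m}z$), one obtains
\[
\tfrac{1}{\lambda}\sigma_{\lambda}\mathbf{E}(x\mid\mathcal{B}_{\kappa^{\lambda},\theta})
\;\prec\prec\;\tfrac{1}{m}\sigma_{m}\mathbf{E}(x\mid\mathcal{B}_{\kappa^{\lambda},\theta})
\;\prec\prec\;\tfrac{3}{2m}\sigma_{m}\mathbf{E}(x\mid\mathcal{B}_{\kappa^{100m},\theta})
\;\prec\prec\;45\,\mu(u_m).
\]
This is exactly the paper's chain, and it avoids the separate bookkeeping with $\|\sigma_{100(m+1)/\lambda}\|_{E\to E}$ that you introduced.
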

\begin{proof} Since $\mathbf{E}(x|\mathcal{B}_{\kappa,\theta})\prec\prec x,$ it follows from the assumption and Lemma \ref{characterization lemma} that there exists $0\leq u_m\to0$ such that
$$\int_{ma}^b\mathbf{E}(x|\mathcal{B}_{\kappa,\theta})(s)ds\leq\int_a^{mb}(x+u_m)(s)ds,\quad\forall ma\leq b\in\mathbb{R}.$$
For every $\lambda\geq 100m,$ we have $\kappa^{100m}\leq\kappa^{\lambda}.$ It follows from Lemma \ref{main technical estimate} that
$$\frac1{\lambda}\sigma_{\lambda}\mathbf{E}(x|\mathcal{B}_{\kappa^{\lambda},\theta})\prec\prec\frac1m\sigma_m\mathbf{E}(x|\mathcal{B}_{\kappa^{\lambda},\theta})\stackrel{Lemma \ref{majorant lemma}}{\prec\prec}\frac3{2m}\sigma_m\mathbf{E}(x|\mathcal{B}_{\kappa^{100m},\theta})\stackrel{\eqref{kappa estimate}}{\prec\prec}45\mu(u_m).$$
The assertion now follows immediately. 
\end{proof}

\begin{prop}\label{pbm convergence} Let $E$ be a fully symmetric Banach function space either on the interval $(0,1)$ or on the semi-axis equipped with a Fatou norm. If $x=\mu(x)\in E$ is such that $\varphi(y)\leq\varphi(x)$ for every positive symmetric functional $\varphi$ on $E$ and every $0\leq y\prec\prec x,$ then $m^{-1}\sigma_m\mathbf{E}(x|\mathcal{B}_{m,\theta})\to0$ as $m\to\infty.$
\end{prop}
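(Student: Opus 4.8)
The plan is to argue by contradiction and to reduce the statement to Lemma~\ref{kn convergence} via a single, diagonally constructed sequence $\kappa.$ Write $S_m:=\{n\in\mathbb{Z}:\ m^2a_{3n}(\theta)<a_{3n+1}(\theta)\},$ so that the nodes of $\mathcal{B}_{m,\theta}$ are the points $m\,a_{3n}(\theta),$ $n\in S_m.$ First observe that $m^{-1}\sigma_m\mathbf{E}(x|\mathcal{B}_{m,\theta})\prec\prec x$ for every $m\geq2$: indeed $\mathbf{E}(x|\mathcal{B}_{m,\theta})\prec\prec x,$ this function is decreasing because $x=\mu(x),$ and applying $m^{-1}\sigma_m$ to a decreasing function does not increase $\prec\prec.$ In particular the family $\{m^{-1}\sigma_m\mathbf{E}(x|\mathcal{B}_{m,\theta})\}_{m\geq2}$ is bounded in $E.$ Suppose the assertion of the proposition fails. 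Then there are $\varepsilon>0$ and a sequence $m_j\uparrow\infty$ with $\|m_j^{-1}\sigma_{m_j}\mathbf{E}(x|\mathcal{B}_{m_j,\theta})\|_E\geq\varepsilon$ for all $j.$ Since $E$ has a Fatou norm, $\|f\|_E=\sup_{0<\delta<T}\|f\chi_{(\delta,T)}\|_E$ for $0\leq f\in E;$ hence for every $j$ we may choose $0<\delta_j<T_j,$ with both $\delta_j/m_j$ and $T_j/m_j$ equal to nodes of $\mathcal{B}_{m_j,\theta},$ such that
$$\Big\|\Big(\tfrac1{m_j}\sigma_{m_j}\mathbf{E}(x|\mathcal{B}_{m_j,\theta})\Big)\chi_{(\delta_j,T_j)}\Big\|_E\geq\tfrac\varepsilon2.$$
Let $I_j:=\{n\in S_{m_j}:\ \delta_j/m_j^2\leq a_{3n}(\theta)\leq T_j/m_j^2\};$ this is the finite set of indices of the nodes of $\mathcal{B}_{m_j,\theta}$ lying in $[\delta_j/m_j,T_j/m_j],$ and by construction $\delta_j/m_j=m_j a_{3\min I_j}(\theta)$ and $T_j/m_j=m_j a_{3\max I_j}(\theta).$

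The next step, which is the heart of the argument, is to pass to a subsequence of $\{m_j\}$ and to enlarge the truncation parameters so that the finite sets $I_j$ become pairwise disjoint with $\min I_{j+1}>\max\bigl(I_1\cup\cdots\cup I_j\bigr)$ for all $j.$ This is possible because $S_m$ becomes sparse as $m\to\infty$ — an index $n$ belongs to $S_m$ only if $a_{3n+1}(\theta)/a_{3n}(\theta)>m^2$ — so that for a fixed finite range of indices all sufficiently large $m$ remove that range from $S_m;$ together with the freedom to increase $\delta_j$ this forces $I_{j+1}$ to lie strictly to the right of $I_1\cup\cdots\cup I_j.$ Once this is arranged, set
$$\kappa_n:=m_j\ \ \text{for}\ n\in I_j,\qquad \kappa_n:=\infty\ \ \text{for}\ n\notin\textstyle\bigcup_jI_j,$$
which defines a single sequence $\kappa\in(\mathbb{N}\cup\{\infty\})^{\mathbb{Z}}.$

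Fix $j$ and consider $\kappa^{m_j}.$ On $I_j$ one has $\kappa_n=m_j\geq m_j,$ so $\kappa^{m_j}_n=m_j;$ on $I_{j'}$ with $j'<j$ one has $\kappa_n=m_{j'}<m_j,$ so $\kappa^{m_j}_n=\infty;$ and on the remaining indices $\kappa^{m_j}_n$ is either $\infty$ or equals some $m_{j'}$ with $j'>j.$ Since $m_j^2a_{3n}(\theta)<a_{3n+1}(\theta)$ for $n\in I_j\subseteq S_{m_j},$ the partition $\mathcal{B}_{\kappa^{m_j},\theta}$ has precisely the nodes $m_j a_{3n}(\theta),$ $n\in I_j,$ plus, possibly, further nodes coming from the blocks $I_{j'}$ with $j'>j;$ by the consecutive ordering of the $I_j$ all of the latter lie to the right of $m_j a_{3\max I_j}(\theta)=T_j/m_j.$ Hence $\mathcal{B}_{m_j,\theta}$ and $\mathcal{B}_{\kappa^{m_j},\theta}$ induce the same partition of the interval $(\delta_j/m_j,T_j/m_j]$ (both its endpoints being nodes of both partitions), so the two conditional expectations coincide there, and therefore
$$\Big(\tfrac1{m_j}\sigma_{m_j}\mathbf{E}(x|\mathcal{B}_{m_j,\theta})\Big)\chi_{(\delta_j,T_j)}=\tfrac1{m_j}\sigma_{m_j}\Bigl(\mathbf{E}(x|\mathcal{B}_{\kappa^{m_j},\theta})\chi_{(\delta_j/m_j,T_j/m_j)}\Bigr)\leq\tfrac1{m_j}\sigma_{m_j}\mathbf{E}(x|\mathcal{B}_{\kappa^{m_j},\theta}).$$
But $\kappa$ is now fixed and $m_j\to\infty,$ so Lemma~\ref{kn convergence} (applied with $\lambda=m_j$) gives $m_j^{-1}\sigma_{m_j}\mathbf{E}(x|\mathcal{B}_{\kappa^{m_j},\theta})\to0$ in $E.$ Together with the displayed inequality and the ideal property of $\|\cdot\|_E$ this yields $\|(m_j^{-1}\sigma_{m_j}\mathbf{E}(x|\mathcal{B}_{m_j,\theta}))\chi_{(\delta_j,T_j)}\|_E\to0,$ contradicting the lower bound $\varepsilon/2.$

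I expect the genuine obstacle to be the combinatorial bookkeeping in the second paragraph: passing to a subsequence along which the finite index blocks $I_j$ can be made pairwise disjoint and consecutively ordered, so that the diagonal sequence $\kappa$ is well defined and, for each $j,$ reproduces $\mathcal{B}_{m_j,\theta}$ exactly on the window on which the Fatou norm has concentrated the mass. This is delicate because enlarging $\delta_j$ in order to push $I_{j+1}$ to the right must not destroy the lower bound $\varepsilon/2,$ i.e. one must simultaneously show that the part of $m_j^{-1}\sigma_{m_j}\mathbf{E}(x|\mathcal{B}_{m_j,\theta})$ supported near the origin is negligible in $E;$ this requires an extra argument (again exploiting the Fatou property and the fact that the conditional expectation over the large blocks of $\mathcal{B}_{m,\theta}$ is far smaller than the naive pointwise majorant $Cx$) precisely in the exceptional case where $S_m$ fails to be bounded below. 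The remaining ingredients — the majorization $m^{-1}\sigma_m\mathbf{E}(x|\mathcal{B}_{m,\theta})\prec\prec x,$ the localization afforded by the Fatou property, the exact matching of the two partitions on the window, and the appeal to Lemma~\ref{kn convergence} — are routine once this arrangement is in place.
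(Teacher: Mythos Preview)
Your high-level strategy --- build one diagonal sequence $\kappa$ and invoke Lemma~\ref{kn convergence} --- is exactly the paper's. The difference is in how the diagonal is constructed and how the resulting expectations are compared, and this difference is precisely what dissolves the obstacle you flag.

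The paper does \emph{not} try to match $\mathcal{B}_{m,\theta}$ and $\mathcal{B}_{\kappa^{m},\theta}$ exactly on a window; it compares them only up to Hardy--Littlewood majorization, via Lemma~\ref{majorant lemma} (in the relaxed form of Remark~\ref{majorant remark}). Concretely, the paper uses the Fatou norm once, to replace $\mathbf{E}(x|\mathcal{B}_{m,\theta})$ by a symmetrically truncated version $\mathbf{E}(x|\mathcal{B}_{\kappa^{m,r_m},\theta})$ with $\kappa^{m,r}_n=m$ for $|n|<r$ and $\infty$ otherwise, choosing $r_m$ so that at most a factor $2$ is lost. It then sets $\kappa_n:=\inf\{m:\ r_m>|n|\}$, which is always defined, requires no disjoint blocks, and automatically satisfies $\kappa_n\to\infty$. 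The key step is that, for $m$ large (depending on $\lambda$), Remark~\ref{majorant remark} yields $\mathbf{E}(x|\mathcal{B}_{\kappa^{m,r_m},\theta})\prec\prec\tfrac32\mathbf{E}(x|\mathcal{B}_{\kappa^{\lambda},\theta})$; combined with $m\geq\lambda$ this gives $m^{-1}\sigma_m\mathbf{E}(x|\mathcal{B}_{\kappa^{m,r_m},\theta})\prec\prec\tfrac32\lambda^{-1}\sigma_\lambda\mathbf{E}(x|\mathcal{B}_{\kappa^{\lambda},\theta})$, and Lemma~\ref{kn convergence} finishes.

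Your version instead demands that the two partitions \emph{coincide} on a window, and this forces you into the combinatorics of disjoint, consecutively ordered index blocks $I_j$. The obstacle you identify --- that enlarging $\delta_{j+1}$ to push $I_{j+1}$ to the right may destroy the lower bound $\varepsilon/2$ --- is genuine and you have not closed it: there is no reason the $E$-norm of $m^{-1}\sigma_m\mathbf{E}(x|\mathcal{B}_{m,\theta})\chi_{(0,\delta)}$ should be small for small~$\delta$ uniformly in $m$ (Fatou gives $\|f\chi_{(\delta,T)}\|\uparrow\|f\|$, not $\|f\chi_{(0,\delta)}\|\to0$), and when $S_m$ is unbounded below the mass may well concentrate there. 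The paper's use of $\prec\prec$ with the constant $3/2$ in Lemma~\ref{majorant lemma} is exactly the device that makes this bookkeeping unnecessary: overlapping or out-of-order blocks are harmless once one is content with majorization rather than equality, and the infimum definition of $\kappa$ absorbs all the overlaps automatically.
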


\begin{proof} For every $m,r\in\mathbb{N},$ set
$$
\kappa^{m,r}_n=
\begin{cases}
m \qquad 0\le |n|<r\\
\infty \qquad r \leq|n|
\end{cases}
$$
and $\kappa^{m,r}=\{\kappa^{m,r}_n\}_{n\in\mathbb{Z}}.$ Clearly, $\mathbf{E}(x|\mathcal{B}_{\kappa^{m,r},\theta})\to \mathbf{E}(x|\mathcal{B}_{m,\theta})$ almost everywhere when $r\to\infty.$ It follows from the definition of Fatou norm that
$$\lim_{r\to\infty}\|\sigma_m\mathbf{E}(x|\mathcal{B}_{\kappa^{m,r},\theta})\|_E=\|\sigma_m\mathbf{E}(x|\mathcal{B}_{m,\theta})\|_E.$$
Select $r_m$ so large that
\begin{equation}\label{qss17}
\frac1m\|\sigma_m\mathbf{E}(x|\mathcal{B}_{\kappa^{m,r_m},\theta})\|_E>\frac1{2m}\|\sigma_m\mathbf{E}(x|\mathcal{B}_{m,\theta})\|_E.
\end{equation}
Now define the sequence $\kappa=\{\kappa_n\}_{n\in\mathbb{Z}}$ by setting
$$\kappa_n=\inf_{m\geq1}\kappa^{m,r_m}_n=\inf_{r_m>|n|}m,\quad n\in\mathbb{Z}.$$
Clearly, $r_{\kappa_n}\geq|n|$ and, therefore, $\kappa_n\rightarrow\infty$ as $|n|\rightarrow\infty.$ In particular, the set $\{n:\kappa_n<\lambda\}$ is finite for every $\lambda\in\mathbb{N}.$ Set
$$M(\lambda)=\max\{\lambda,\max_{\kappa_n<\lambda}(\frac{a_{3n+1}(\theta)}{a_{3n}(\theta)})^{1/2}\}.$$
If $m>M(\lambda),$ then $m^2a_{3n}(\theta)\geq a_{3n+1}(\theta)$ whenever $\kappa_n<\lambda.$ Thus, $\kappa_n\geq\lambda$ whenever $m^2a_{3n}(\theta)<a_{3n+1}(\theta).$ Hence, $\kappa_n^{\lambda}=\kappa_n$ whenever $(\kappa_n^{m,r_m})^2a_{3n}(\theta)<a_{3n+1}(\theta).$ Therefore, $\kappa_n^{\lambda}\leq\kappa_n^{m,r_m}$ for every $n\in\mathbb{Z}$ such that $(\kappa_n^{m,r_m})^2a_{3n}(\theta)<a_{3n+1}(\theta).$ According to Remark \ref{majorant remark}, it follows that
$$\mathbf{E}(x|\mathcal{B}_{\kappa^{m,r_m},\theta})\prec\prec\frac32\mathbf{E}(x|\mathcal{B}_{\kappa^{\lambda},\theta}).$$
Since $m\geq\lambda,$ it follows that
\begin{equation}\label{qss18}
\frac1m\sigma_m\mathbf{E}(x|\mathcal{B}_{\kappa^{m,r_m},\theta})\prec\prec\frac3{2\lambda}\sigma_{\lambda}\mathbf{E}(x|\mathcal{B}_{\kappa^{\lambda},\theta}).
\end{equation}
By Lemma \ref{kn convergence}, for every $\varepsilon>0,$ there exists $\lambda$ such that
\begin{equation}\label{qss19}
\frac1{\lambda}\|\sigma_{\lambda}\mathbf{E}(x|\mathcal{B}_{\kappa^{\lambda},\theta})\|_E<\frac13\varepsilon.
\end{equation}
It follows that
$$\frac1m\|\sigma_m\mathbf{E}(x|\mathcal{B}_{m,\theta})\|_E\stackrel{\eqref{qss17}}{\leq}\frac2m\|\sigma_m\mathbf{E}(x|\mathcal{B}_{\kappa^{m,r_m},\theta})\|_E\stackrel{\eqref{qss18}}{\leq}\frac3{\lambda}\|\sigma_{\lambda}\mathbf{E}(x|\mathcal{B}_{\kappa^{\lambda},\theta})\|_E\stackrel{\eqref{qss19}}{<}\varepsilon$$
for every $m>M(\lambda).$ Since $\varepsilon>0$ is arbitrarily small, the assertion follows. 
\end{proof}

\begin{lem}\label{pam convergence} Let $E$ be a fully symmetric Banach space either on the interval $(0,1)$ or on the semi-axis equipped with a Fatou norm. If $x=\mu(x)\in E$ is such that $\varphi(y)\leq\varphi(x)$ for every positive symmetric functional $\varphi$ on $E$ and every $0\leq y\prec\prec x,$ then $m^{-1}\sigma_m\mathbf{E}(x|\mathcal{A}_m)\to0$ as $m\to\infty.$
\end{lem}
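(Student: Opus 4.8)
The plan is to deduce Lemma \ref{pam convergence} from Proposition \ref{pbm convergence} by exhibiting the partition $\mathcal{A}_m$ as a \emph{finite union} of partitions of the form $\mathcal{B}_{m,\theta}$. The point is that $\mathcal{A}_m$ is built from \emph{all} the nodes $a_n(1)$, whereas $\mathcal{B}_{m,\theta}$ is built from every third node $a_{3n}(\theta)$; so three suitably translated copies should recover $\mathcal{A}_m$. Concretely, I would set $\theta_j=(3/2)^j$ for $j=0,1,2$. From $X(a_n(\theta_j))=(3/2)^{n+j}=X(a_{n+j}(1))$ one reads off $a_n(\theta_j)=a_{n+j}(1)$ (whenever these exist), hence, writing an arbitrary index $k\in\mathbb{Z}$ as $k=3n+j$ with $j\in\{0,1,2\}$, the node $ma_k(1)=ma_{3n}(\theta_j)$ and the defining inequality $m^2a_k(1)<a_{k+1}(1)$ coincides with $m^2a_{3n}(\theta_j)<a_{3n+1}(\theta_j)$. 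This gives $\mathcal{A}_m=\bigcup_{j=0}^{2}\mathcal{B}_{m,\theta_j}$.

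Granting this identity, the rest is a short chain of already-available facts. By Lemma \ref{union lemma}, $\mathbf{E}(x|\mathcal{A}_m)\prec\prec\sum_{j=0}^{2}\mathbf{E}(x|\mathcal{B}_{m,\theta_j})$. Applying $m^{-1}\sigma_m$, which preserves Hardy--Littlewood majorization, yields $m^{-1}\sigma_m\mathbf{E}(x|\mathcal{A}_m)\prec\prec\sum_{j=0}^{2}m^{-1}\sigma_m\mathbf{E}(x|\mathcal{B}_{m,\theta_j})$. Since $E$ carries a Fatou norm it is fully symmetric, so its norm is monotone with respect to $\prec\prec$; combining this with the triangle inequality gives $\|m^{-1}\sigma_m\mathbf{E}(x|\mathcal{A}_m)\|_E\le\sum_{j=0}^{2}\|m^{-1}\sigma_m\mathbf{E}(x|\mathcal{B}_{m,\theta_j})\|_E$. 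Each of the three terms on the right tends to $0$ as $m\to\infty$ by Proposition \ref{pbm convergence} (which applies, since the hypothesis on $x$ is exactly the one there), and the lemma follows.

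The only delicate point — and the place where I expect the bulk of the (admittedly modest) work to lie — is the bookkeeping behind $\mathcal{A}_m=\bigcup_{j=0}^{2}\mathcal{B}_{m,\theta_j}$: one must check that the existence of the relevant $a_n(1)$'s, the existence of $a_{3n}(\theta_j)$ and $a_{3n+1}(\theta_j)$, and the strict inequalities all correspond correctly under the reindexing $k=3n+j$, and one must separately handle the interval case $E=E(0,1)$, where only finitely many nodes $a_n(1)$ exist on the right (so the partitions there are eventually trivial near $1$) exactly as in Proposition \ref{pbm convergence}. Once this combinatorial identification is in place, no further estimates are needed.
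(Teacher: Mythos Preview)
Your proposal is correct and follows essentially the same route as the paper: the paper observes $a_k(3/2)=a_{k+1}(1)$ and $a_k((3/2)^2)=a_{k+2}(1)$, deduces $\mathcal{A}_m=\mathcal{B}_{m,1}\cup\mathcal{B}_{m,3/2}\cup\mathcal{B}_{m,(3/2)^2}$, applies Lemma~\ref{union lemma}, and then invokes Proposition~\ref{pbm convergence}. Your reindexing $k=3n+j$ with $\theta_j=(3/2)^j$ is exactly this decomposition, and the remaining steps (applying $m^{-1}\sigma_m$, using monotonicity of $\|\cdot\|_E$ under $\prec\prec$) are the ones the paper leaves implicit.
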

\begin{proof} It is clear that $a_k(3/2)=a_{k+1}(1)$ and $a_k((3/2)^2)=a_{k+2}(1)$ for every $k\in\mathbb{N}.$ It follows that
$$\mathcal{B}_{m,1}\cup\mathcal{B}_{m,3/2}\cup\mathcal{B}_{m,(3/2)^2}=\mathcal{A}_m.$$
Therefore, by Lemma \ref{union lemma}, we have
\begin{equation}\label{pam estimate}
\mathbf{E}(x|\mathcal{A}_m)\prec\prec \mathbf{E}(x|\mathcal{B}_{m,1})+\mathbf{E}(x|\mathcal{B}_{m,3/2})+\mathbf{E}(x|\mathcal{B}_{m,(3/2)^2}).
\end{equation}
The assertion follows now from Proposition \ref{pbm convergence}. 
\end{proof}

\begin{lem}\label{prec estimate} Let $x=\mu(x)\in L_1+L_{\infty}(0,\infty)$ be a function on the semi-axis. If $x\notin L_1(0,\infty),$ then, for every $t>0$ and every $m\in\mathbb{N},$ we have
\begin{equation}
X(t)\leq\frac23X(m^4t)+\frac32\int_0^{m^4t}\mathbf{E}(x|\mathcal{A}_m)(s)ds.
\end{equation}
\end{lem}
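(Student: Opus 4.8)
The plan is to fix $t>0$ and $m\in\mathbb{N}$ and reduce the inequality to locating $t$ inside the geometric grid $\{a_k(1)\}_{k\in\mathbb{Z}}$ determined by $X$. Since $x=\mu(x)\notin L_1(0,\infty)$ is decreasing and lies in $L_1+L_{\infty}$, it is locally integrable and, being decreasing and non‑integrable, strictly positive a.e.; hence $X(t)=\int_0^tx$ is a strictly increasing continuous bijection of $(0,\infty)$ onto itself. Consequently $a_k:=a_k(1)$ is well defined and strictly increasing with $X(a_k)=(3/2)^k$, and there is a unique $n\in\mathbb{Z}$ with $a_n\le t<a_{n+1}$. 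From $t<a_{n+1}$ I record the two facts $X(t)<(3/2)^{n+1}$ and $X(a_n)=(3/2)^n>\frac23 X(t)$, which is all the information about $t$ that the argument uses.

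The only structural input needed is that, exactly as in the proof of Lemma~\ref{union lemma}, $\int_0^{\tau}\mathbf{E}(x|\mathcal{A}_m)(s)\,ds=X(\tau)$ whenever $\tau$ is a node of $\mathcal{A}_m$; since $\mathbf{E}(x|\mathcal{A}_m)\ge 0$ this gives $\int_0^{m^4t}\mathbf{E}(x|\mathcal{A}_m)(s)\,ds\ge X(\tau)$ for every node $\tau\le m^4t$. I would then run a three‑way case split according to which index near $n$ produces a node (the node condition at index $k$ being $m^2a_k<a_{k+1}$, and $a_n\le t$ giving $m^ja_k\le m^jt$ throughout). Case~1: $m^2a_n<a_{n+1}$; then $\tau=ma_n\le m^4t$ is a node and $\int_0^{m^4t}\mathbf{E}(x|\mathcal{A}_m)\ge X(ma_n)\ge X(a_n)>\frac23 X(t)$. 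Case~2: $m^2a_n\ge a_{n+1}$ but $m^2a_{n+1}<a_{n+2}$; then $a_{n+1}\le m^2a_n\le m^2t$, so $\tau=ma_{n+1}\le m^3t\le m^4t$ is a node and $\int_0^{m^4t}\mathbf{E}(x|\mathcal{A}_m)\ge X(a_{n+1})=(3/2)^{n+1}>X(t)$. Case~3: $m^2a_n\ge a_{n+1}$ and $m^2a_{n+1}\ge a_{n+2}$; then $a_{n+2}\le m^2a_{n+1}\le m^4a_n\le m^4t$, so $X(m^4t)\ge X(a_{n+2})=(3/2)^{n+2}>\frac32 X(t)$. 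In Cases~1 and~2 the summand $\frac32\int_0^{m^4t}\mathbf{E}(x|\mathcal{A}_m)$ alone exceeds $X(t)$, and in Case~3 the summand $\frac23 X(m^4t)$ alone does; since both summands are nonnegative, the asserted inequality follows in every case.

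I do not expect a genuine analytic obstacle here: once the grid $\{a_k(1)\}$ and the identity $\int_0^{\tau}\mathbf{E}(x|\mathcal{A}_m)=X(\tau)$ at nodes are in place, everything is elementary. The delicate point is purely organisational — arranging the case split so that the geometric ratio $3/2$ and the dilation exponent $4$ in $m^4$ are exactly matched; Case~3 is the tight one and is precisely what forces the exponent $4$. Two minor points are worth spelling out: the degenerate situation $\mathcal{A}_m=\emptyset$ (e.g. when $m^2a_k\ge a_{k+1}$ for all $k$) is simply Case~3 for every $n$, where $\mathbf{E}(x|\emptyset)=0$ and the claim reduces to $X(t)<\frac23 X(m^4t)$; and the strict positivity of $x$ a.e.\ — the only place the hypothesis $x\notin L_1$ enters — is what guarantees that $X$ is strictly increasing and hence that the nodes $a_k(1)$ are well defined and pairwise distinct.
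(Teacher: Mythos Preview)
Your argument is correct and follows essentially the same three-case split as the paper's proof: locate $t\in[a_n,a_{n+1})$, and according to whether $m^2a_n<a_{n+1}$, or $m^2a_n\ge a_{n+1}$ but $m^2a_{n+1}<a_{n+2}$, or both inequalities fail, show that one of the two summands alone already dominates $X(t)$. Your write-up is in fact more careful than the paper's, since you explicitly justify why the grid $\{a_k(1)\}_{k\in\mathbb{Z}}$ is well defined (via $x\notin L_1$ and strict monotonicity of $X$) and handle the degenerate situation $\mathcal{A}_m=\emptyset$.
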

\begin{proof} For a given $t>0,$ there exists $n\in\mathbb{Z}$ such that $t\in[a_n(1),a_{n+1}(1)].$ If $a_{n+1}(1)> m^2a_n(1),$ then
$$\int_0^{m^4t}\mathbf{E}(x|\mathcal{A}_m)(s)ds\geq\int_0^{ma_n(1)}\mathbf{E}(x|\mathcal{A}_m)(s)ds=X(ma_n(1))\geq\frac23X(t).$$
If $a_{n+1}(1)\leq m^2a_n(1)$ and $a_{n+2}(1)>m^2a_{n+1}(1),$ then
$$\int_0^{m^4t}\mathbf{E}(x|\mathcal{A}_m)(s)ds\geq\int_0^{ma_{n+1}(1)}\mathbf{E}(x|\mathcal{A}_m)(s)ds=X(ma_{n+1}(1))\geq X(t).$$
If $a_{n+2}(1)\leq m^2a_{n+1}(1)$ and $a_{n+1}(1)\leq m^2a_n(1),$ then
$$X(m^4t)\geq X(a_{n+2}(1))=\frac32 X(a_{n+1}(1))\geq\frac32 X(t)$$
and the assertion follows. 
\end{proof}

The situation in the case that $x\in L_1$ is slightly more complicated.

\begin{lem}\label{prec estimate integrable} If $x=\mu(x)\in L_1(0,1)$ or $x\in L_1(0,\infty),$ then there exists constant $C$ such that for every $t>0$
\begin{equation}
X(t)\leq\frac23X(m^4t)+\frac32\int_0^{m^4t}\mathbf{E}(x|\mathcal{A}_m)(s)ds+C\int_0^{m^4t}\chi_{[0,1]}(s)ds.
\end{equation}
\end{lem}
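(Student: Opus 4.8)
The plan is to deduce this from Lemma \ref{prec estimate}, using the extra term $C\int_0^{m^4t}\chi_{[0,1]}(s)\,ds$ only to absorb the top of $X$ --- precisely the place where the argument of Lemma \ref{prec estimate} fails in the integrable case, since there the chain $a_n(1)$ is bounded above and for large $t$ there is no interval $[a_n(1),a_{n+1}(1)]$ containing $t$. We may assume $\|x\|_1>0$. As $X$ is continuous, non-decreasing, $X(0)=0$ and $X(s)\to\|x\|_1<\infty$, the node $a_n(1)$ exists exactly when $(3/2)^n<\|x\|_1$; let $N$ be the largest such integer, so $a_n(1)$ is defined for all $n\le N$ and $a_{N-1}(1)$ is a fixed positive number depending only on $x$. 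Put $C=\|x\|_1/\min\{1,a_{N-1}(1)\}$. If $m^4t\ge1$, then $\int_0^{m^4t}\chi_{[0,1]}(s)\,ds=1$ and $X(t)\le\|x\|_1\le C$, so the inequality holds; hence from now on I assume $m^4t<1$.

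Suppose first $0<t<a_{N-1}(1)$. Discarding the trivial case $X(t)=0$, the point $t$ lies in some interval $[a_n(1),a_{n+1}(1)]$, and since $t<a_{N-1}(1)$ one necessarily has $n\le N-2$, so $a_{n+2}(1)$ also exists. In this situation the proof of Lemma \ref{prec estimate} applies verbatim: it only refers to $a_n(1)$, $a_{n+1}(1)$, $a_{n+2}(1)$ and to the points $ma_n(1)$, $ma_{n+1}(1)$ (which are genuine nodes of $\mathcal{A}_m$ precisely under the gap conditions defining its three exhaustive cases); the inequalities $ma_n(1)\le m^4t$, $ma_{n+1}(1)\le m^4t$, $a_{n+2}(1)\le m^4t$ needed there all follow from $t\ge a_n(1)$; and non-integrability of $x$ is nowhere used. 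One thus obtains $X(t)\le\frac23X(m^4t)+\frac32\int_0^{m^4t}\mathbf{E}(x|\mathcal{A}_m)(s)\,ds$, which is stronger than required.

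Suppose finally $m^4t<1$ and $t\ge a_{N-1}(1)$ (this covers $t\ge a_N(1)$ and the case where $t$ lies in no interval $[a_n(1),a_{n+1}(1)]$). Then $\int_0^{m^4t}\chi_{[0,1]}(s)\,ds=m^4t\ge a_{N-1}(1)$, so $C\int_0^{m^4t}\chi_{[0,1]}(s)\,ds\ge\|x\|_1\ge X(t)$, and the inequality holds. The interval case $x\in L_1(0,1)$ is identical, using $X(t)=\|x\|_1$ for $t\ge1$ and the observation that $ma_n(1)<1$ whenever $m^2a_n(1)<a_{n+1}(1)$, so that all nodes involved stay in $(0,1)$. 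The only delicate point --- the main, if mild, obstacle --- is the verbatim transfer in the middle case: one must revisit the three cases of Lemma \ref{prec estimate} and check that every quantity invoked there is well defined and $\le m^4t$ under the present hypotheses, which is a routine bookkeeping check once $a_{n+2}(1)$ is known to exist.
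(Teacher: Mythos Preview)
Your proof is correct and follows essentially the same approach as the paper's: both pick a threshold index below which the chain $a_n(1),a_{n+1}(1),a_{n+2}(1)$ is guaranteed to exist so that the case analysis of Lemma \ref{prec estimate} carries over verbatim, and for $t$ above the threshold absorb $X(t)\le\|x\|_1$ into the $C\int_0^{m^4t}\chi_{[0,1]}$ term via $C=\|x\|_1/\min\{1,a_{\text{threshold}}\}$. The only cosmetic differences are your choice of threshold $a_{N-1}(1)$ (the paper uses $a_{n_0}$ with $X(a_{n_0})\le\tfrac49X(\infty)$, which amounts to $n_0\le N-2$) and your preliminary disposal of the case $m^4t\ge1$, which the paper handles implicitly through the $\min\{m^4t,1\}$ in its bound.
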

\begin{proof} Consider first the case of the semi-axis. Fix $n_0$ such that $X(a_{n_0})\leq 4/9 X(\infty).$ For a givne $t\in[a,a_{n_0}],$ there exists $n\in\mathbb{Z}$ such that $n<n_0$ and $t\in[a_n,a_{n+1}].$ Then, the argument in Lemma \ref{prec estimate} applies {\it mutatis mutandi}. For every $t\geq a_{n_0}$ we have
$$X(t)\leq\frac{X(\infty)}{\min\{a_{n_0},1\}}\min\{m^4t,1\}=\frac{X(\infty)}{\min\{a_{n_0},1\}}\int_0^{m^4t}\chi_{[0,1]}(s)ds.$$
Setting $C=X(\infty)/\min\{a_{n_0},1\},$ we obtain the assertion.

The same argument applies in the case of the interval $(0,1)$ by replacing $X(\infty)$ by $X(1).$ 
\end{proof}

The following two theorems are crucial for the proof of the implication $(3)\Leftrightarrow(4)$ in Theorem \ref{main theorem of the paper}.

\begin{thm}\label{nonintegrable theorem} Let $E$ be a fully symmetric Banach space either on the interval $(0,1)$ or on the semi-axis  and let $x\in E.$ Suppose that the norm on $E$ is a Fatou norm. If $\varphi(y)\leq\varphi(x)$ for every positive symmetric functional on $E$ and every $0\leq y\prec\prec x,$ then
\begin{equation}\label{main condition from orbits}
\lim_{m\to\infty}\frac1m\|\sigma_m(\mu(x))\|_E=0
\end{equation}
provided that one of the following conditions is satisfied
\begin{enumerate}
\item $E=E(0,1)$ is a space on the interval $(0,1).$
\item $E=E(0,\infty)$ is a space on the semi-axis and $E(0,\infty)\not\subset L_1(0,\infty).$
\end{enumerate}
\end{thm}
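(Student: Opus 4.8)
The plan is to deduce \eqref{main condition from orbits} from the averaging estimates established in Lemmas \ref{pam convergence}, \ref{prec estimate} and \ref{prec estimate integrable}. The key observation is that $\frac1m\|\sigma_m(\mu(x))\|_E$ can be controlled by $\frac1m\|\sigma_m\mathbf{E}(x|\mathcal{A}_m)\|_E$ together with an error term that vanishes in the relevant spaces. Indeed, writing $X(t)=\int_0^t\mu(s,x)ds$, Lemma \ref{prec estimate} (in case (2), when $x\notin L_1$) gives
$$X(t)\leq\frac23 X(m^4t)+\frac32\int_0^{m^4t}\mathbf{E}(x|\mathcal{A}_m)(s)ds,\qquad t>0.$$
Replacing $t$ by $t/m^4$ and comparing primitives, this says precisely that $\sigma_{1/m^4}\mu(x)$ is submajorized by $\frac23\mu(x)+\frac32\,\sigma_{1/m^4}\mathbf{E}(x|\mathcal{A}_m)$ (up to rearrangement), hence by the triangle inequality for the fully symmetric norm combined with $\mathbf{E}(x|\mathcal{A}_m)\prec\prec x$,
$$\|\sigma_{1/m^4}\mu(x)\|_E\leq\frac23\|\sigma_{1/m^4}\mu(x)\|_E'+\cdots$$

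First I would make this precise. Starting from $m^{-1}\sigma_m\mu(x)\lhd M_m\mu(x)$ type comparisons is not needed here; instead I apply $\sigma_m$ to the inequality of Lemma \ref{prec estimate}. After the substitution $t\mapsto t/m^4$, the inequality $X(t)\leq\frac23 X(m^4t)+\frac32\int_0^{m^4t}\mathbf{E}(x|\mathcal{A}_m)$ becomes a statement of Hardy--Littlewood majorization between $\sigma_{m^4}^{-1}$-type dilates. Keeping track of the dilation carefully, one obtains
$$\frac1{m^4}\sigma_{m^4}\mu(x)\prec\prec \frac23\mu(x)+\frac32\cdot\frac1{m^4}\sigma_{m^4}\mathbf{E}(x|\mathcal{A}_m)\prec\prec\frac23\mu(x)+\frac32\cdot\frac1{m^4}\sigma_{m^4}\mathbf{E}(x|\mathcal{A}_m).$$
Since $E$ is fully symmetric, this yields
$$\frac1{m^4}\|\sigma_{m^4}\mu(x)\|_E\leq\frac23\|x\|_E+\frac3{2m^4}\|\sigma_{m^4}\mathbf{E}(x|\mathcal{A}_m)\|_E.$$
This alone is not quite enough (the $\frac23\|x\|_E$ term does not vanish), so the right move is to work with the limit $L:=\lim_{m\to\infty}\frac1m\|\sigma_m\mu(x)\|_E$, which exists by superadditivity/Fekete as in Lemma \ref{pietsch funk}. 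Along the subsequence $m^4$ the left side tends to $L$; on the right, $\frac1{m^4}\|\sigma_{m^4}\mathbf{E}(x|\mathcal{A}_{?})\|_E$ should be compared using Lemma \ref{pam convergence}. Here I must align indices: Lemma \ref{pam convergence} gives $\frac1m\|\sigma_m\mathbf{E}(x|\mathcal{A}_m)\|_E\to0$, so I need the partition index to match the dilation index. Choosing in Lemma \ref{prec estimate} the free parameter (there called $m$) equal to the fourth root of the dilation parameter fixes this: replacing "$m$" in Lemma \ref{prec estimate} by a new parameter $k$ and then dilating by $\sigma_k$ rather than $\sigma_{k^4}$, I get $\frac1k\sigma_k\mu(x)\prec\prec\frac23\cdot\frac1k\sigma_k\sigma_{k^3}\mu(x)+\cdots$; iterating and rescaling, the factor $\frac23$ compounds and the limit $L$ satisfies $L\leq\frac23 L+0$, forcing $L=0$.

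The cleanest way to run the final step: let $L=\lim_m\frac1m\|\sigma_m\mu(x)\|_E$. From Lemma \ref{prec estimate} with parameter $m$, after dilating by $\sigma_m$ and using full symmetry plus $\mathbf{E}(x|\mathcal{A}_m)\prec\prec x$,
$$\frac1m\|\sigma_m\mu(x)\|_E\leq\frac23\cdot\frac1m\|\sigma_m\sigma_{m^3}\mu(x)\|_E+\frac3{2m}\|\sigma_m\mathbf{E}(x|\mathcal{A}_m)\|_E=\frac23\cdot\frac{m^3}{m^4}\|\sigma_{m^4}\mu(x)\|_E+\frac3{2m}\|\sigma_m\mathbf{E}(x|\mathcal{A}_m)\|_E.$$
As $m\to\infty$ the left side $\to L$, the first term on the right $\to\frac23 L$ (since $\frac1{m^4}\|\sigma_{m^4}\mu(x)\|_E\to L$), and the last term $\to0$ by Lemma \ref{pam convergence}. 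Hence $L\leq\frac23 L$, so $L=0$, which is exactly \eqref{main condition from orbits}. For case (1), $E=E(0,1)$, the same argument applies with Lemma \ref{prec estimate integrable} in place of Lemma \ref{prec estimate}: the extra term $C\int_0^{m^4t}\chi_{[0,1]}$ contributes at most $\frac{C}{m}\|\sigma_m\chi_{[0,1]}\|_E$ on the interval $(0,1)$; but $\sigma_m\chi_{[0,1]}=\chi_{[0,1]}$ there, so this term is $O(1/m)\to0$. (On $(0,1)$ the relevant integrable case is automatic since $E(0,1)\subset L_1(0,1)$ always.)

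The main obstacle I anticipate is bookkeeping of the dilation operators and of the partition index versus the dilation index, together with verifying that the submajorization $\frac1m\sigma_m\mu(x)\prec\prec\frac23\cdot\frac1m\sigma_m\sigma_{m^3}\mu(x)+\frac32\cdot\frac1m\sigma_m\mathbf{E}(x|\mathcal{A}_m)$ is a genuine consequence of the pointwise primitive inequality in Lemma \ref{prec estimate} after the change of variable $t\mapsto t/m^4$ (one must check the inequality at all $t$, i.e.\ compare $\int_0^t$ of both sides, which is immediate since Lemma \ref{prec estimate} already compares primitives). A secondary subtlety is justifying $\frac1{m^4}\|\sigma_{m^4}\mu(x)\|_E\to L$: this is the statement that the limit defining $L$ exists along the full net of dilations and in particular along the subsequence $m^4$, which follows from the Fekete-type subadditivity argument underlying Lemma \ref{pietsch funk} (the sequence $m\mapsto\frac1m\|\sigma_m\mu(x)\|_E=\frac1m\|(\mu(x))^{\oplus m}\|_E$ is subadditive, hence its limit exists and equals its infimum, so every subsequence converges to the same $L$). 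Finally, one should note that the hypothesis "$E\not\subset L_1$" in case (2) is exactly what is needed to invoke Lemma \ref{prec estimate} rather than Lemma \ref{prec estimate integrable}, and that conversely the integrable semi-axis case (excluded here) is handled separately in Theorem \ref{integrable theorem}.
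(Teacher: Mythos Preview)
Your overall plan coincides with the paper's: turn Lemma \ref{prec estimate} (or Lemma \ref{prec estimate integrable}) into a Hardy--Littlewood submajorization, combine with Lemma \ref{pam convergence}, and conclude $L\le\frac23 L$ for $L=\lim_m\frac1m\|\sigma_m\mu(x)\|_E$. However, your ``cleanest way'' contains a genuine bookkeeping error that invalidates the final step.

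After the substitution $t\mapsto t/m^4$ in Lemma \ref{prec estimate} one obtains
\[
\frac1{m^4}\sigma_{m^4}\mu(x)\ \prec\prec\ \frac23\,\mu(x)+\frac32\,\mathbf{E}(x|\mathcal{A}_m)
\]
(the $\sigma_{m^4}$ does \emph{not} appear on the averaging term). The correct move---and this is what the paper does---is to apply $\frac1m\sigma_m$ to both sides, giving
\[
\frac1{m^5}\sigma_{m^5}\mu(x)\ \prec\prec\ \frac23\cdot\frac1m\sigma_m\mu(x)+\frac32\cdot\frac1m\sigma_m\mathbf{E}(x|\mathcal{A}_m).
\]
Now the left-hand norm tends to $L$, the first right-hand term to $\frac23 L$, and the last to $0$ by Lemma \ref{pam convergence}, forcing $L=0$. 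Your displayed inequality instead places $\frac1m\sigma_m\mu(x)$ on the \emph{left} and $\frac23\cdot\frac1m\sigma_{m^4}\mu(x)$ on the right. Even if that inequality held, your limit claim fails: $\frac{m^3}{m^4}\|\sigma_{m^4}\mu(x)\|_E=\frac1m\|\sigma_{m^4}\mu(x)\|_E=m^3\cdot\frac1{m^4}\|\sigma_{m^4}\mu(x)\|_E$, and this tends to $+\infty$ when $L>0$, not to $L$. So the inequality you wrote yields only $L\le\infty$, which is vacuous. The fix is exactly the extra $\frac1m\sigma_m$ applied to the already-substituted submajorization, so that the \emph{higher} power of $\sigma$ sits on the left.

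A second, smaller gap: in case (2) the dichotomy governing which of Lemmas \ref{prec estimate}/\ref{prec estimate integrable} applies is whether the particular element $x$ lies in $L_1$, not whether $E\subset L_1$. Even when $E\not\subset L_1(0,\infty)$ the given $x$ may be integrable, and then Lemma \ref{prec estimate integrable} must be used; the additional term $\frac{C}{m}\|\sigma_m\chi_{(0,1)}\|_E$ still vanishes because $E\not\subset L_1$ forces $\|\chi_{(0,m)}\|_E=o(m)$. Theorem \ref{integrable theorem} handles the genuinely separate situation $E\subset L_1(0,\infty)$, not this subcase.
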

\begin{proof} Without loss of generality, $x=\mu(x).$ If $x\notin L_1,$ then by Lemma \ref{prec estimate},
$$\int_0^{t/m^4}x(s)ds\leq\frac23\int_0^tx(s)ds+\frac32\int_0^t\mathbf{E}(x|\mathcal{A}_m)(s)ds,\quad\forall t>0$$
or, equivalently,
$$\frac1{m^4}\sigma_{m^4}x\prec\prec\frac23x+\frac32\mathbf{E}(x|\mathcal{A}_m).$$
Applying $m^{-1}\sigma_m$ to the both parts, we obtain
$$\frac1{m^5}\sigma_{m^5}x\prec\prec\frac23\frac1m\sigma_mx+\frac32 \frac1m\sigma_m\mathbf{E}(x|\mathcal{A}_m).$$
Take norms and let $m\to\infty.$ It follows from Lemma \ref{pam convergence} that
$$\lim_{m\to\infty}\frac1m\|\sigma_mx\|_E\leq\frac23\lim_{m\to\infty}\frac1m\|\sigma_mx\|_E.$$
This proves \eqref{main condition from orbits}.

If $x\in L_1$ and $C$ are as in Lemma \ref{prec estimate integrable}, then it follows from Lemma \ref{prec estimate integrable} that
$$\int_0^{t/m^4}x(s)ds\leq\frac23\int_0^tx(s)ds+\frac32\int_0^t\mathbf{E}(x|\mathcal{A}_m)(s)ds+C\int_0^t\chi_{[0,1]}(s)ds,\quad\forall t>0$$
or, equivalently,
$$\frac1{m^4}\sigma_{m^4}x\prec\prec\frac23x+\frac32\mathbf{E}(x|\mathcal{A}_m)+C\chi_{(0,1)}.$$
Applying $m^{-1}\sigma_m$ to the both parts, we obtain
$$\frac1{m^5}\sigma_{m^5}x\prec\prec\frac23\frac1m\sigma_mx+\frac32\frac1m\sigma_m\mathbf{E}(x|\mathcal{A}_m)+C\frac1m\sigma_m\chi_{(0,1)}.$$
Take norms and let $m\to\infty.$ For every symmetric space $E$ on the interval $(0,1)$ and for every symmetric space $E$ on the semi-axis such that $E\not\subset L_1(0,\infty)$ we have $m^{-1}\|\sigma_m\chi_{(0,1)}\|_E\to0.$ It follows from Lemma \ref{pam convergence} that
$$\lim_{m\to\infty}\frac1m\|\sigma_mx\|_E\leq\frac23\lim_{m\to\infty}\frac1m\|\sigma_mx\|_E$$
and again \eqref{main condition from orbits} follows. 
\end{proof}

\begin{thm}\label{integrable theorem} Let $E=E(0,\infty)$ be a fully symmetric Banach space on the semi-axis equipped with a Fatou norm such that $E(0,\infty)\subset L_1(0,\infty).$ If $\varphi(y)\leq\varphi(x)$ for every positive symmetric functional on $E$ and every $0\leq y\prec\prec x,$ then
\begin{equation}\label{main condition from orbits integrable}
\lim_{m\to\infty}\frac1m\|\sigma_m(\mu(x))\chi_{(0,1)}\|_E=0.
\end{equation}
\end{thm}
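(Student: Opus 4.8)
As in the proof of Theorem \ref{nonintegrable theorem} we may assume $x=\mu(x)$. Since $E\subset L_1(0,\infty)$, I would run the first part of the argument exactly as there: Lemma \ref{prec estimate integrable} furnishes a constant $C$, \emph{independent of $m$}, with
$$\frac1{m^4}\sigma_{m^4}x\prec\prec\frac23x+\frac32\mathbf{E}(x|\mathcal{A}_m)+C\chi_{(0,1)},$$
and applying $m^{-1}\sigma_m$ to both sides gives
$$\frac1{m^5}\sigma_{m^5}x\prec\prec\frac2{3m}\sigma_mx+\frac3{2m}\sigma_m\mathbf{E}(x|\mathcal{A}_m)+\frac Cm\sigma_m\chi_{(0,1)}.$$
The new point is that one cannot simply take $\|\cdot\|_E$ at this stage (the term $m^{-1}\|\sigma_m\chi_{(0,1)}\|_E$ does not vanish when $E\subset L_1(0,\infty)$), so instead I would first multiply through by $\chi_{(0,1)}$. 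All six functions above are positive and \emph{decreasing} on $(0,\infty)$ (in particular $\mathbf{E}(x|\mathcal{A}_m)$ is decreasing, being the conditional expectation of a decreasing function over a partition into consecutive intervals), and for decreasing functions Hardy--Littlewood majorization is preserved by multiplication with $\chi_{(0,1)}$; since $(\sigma_m\chi_{(0,1)})\chi_{(0,1)}=\chi_{(0,1)}$ for $m\ge1$, we obtain
$$\frac1{m^5}(\sigma_{m^5}x)\chi_{(0,1)}\prec\prec\frac2{3m}(\sigma_mx)\chi_{(0,1)}+\frac3{2m}(\sigma_m\mathbf{E}(x|\mathcal{A}_m))\chi_{(0,1)}+\frac Cm\chi_{(0,1)}.$$

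Now take $\|\cdot\|_E$ and use that a Fatou norm is fully symmetric, together with $0\le(\sigma_m\mathbf{E}(x|\mathcal{A}_m))\chi_{(0,1)}\le\sigma_m\mathbf{E}(x|\mathcal{A}_m)$. Writing $g(m):=m^{-1}\|(\sigma_mx)\chi_{(0,1)}\|_E$ this reads
$$g(m^5)\le\frac23g(m)+\frac3{2m}\|\sigma_m\mathbf{E}(x|\mathcal{A}_m)\|_E+\frac Cm\|\chi_{(0,1)}\|_E.$$
The second summand tends to $0$ by Lemma \ref{pam convergence} (whose hypotheses are precisely those of the present theorem) and the third tends to $0$ trivially, so $\limsup_{m}g(m^5)\le\frac23\limsup_{m}g(m)$. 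To upgrade this to $\lim_m g(m)=0$ I would use two elementary facts: $g(m)\le\|x\|_E$ for every $m$ (since $\|\sigma_m\|_{E\to E}\le m$), so that $\ell:=\limsup_m g(m)<\infty$; and $t\mapsto\|(\sigma_tx)\chi_{(0,1)}\|_E$ is non-decreasing, because $(\sigma_tx)(s)=x(s/t)$ increases with $t$ as $x$ is decreasing and $E$ is an ideal. Picking $m_j\uparrow\infty$ with $g(m_j)\to\ell$ and setting $p_j=\lceil m_j^{1/5}\rceil$ we have $p_j\to\infty$ and $p_j^5\ge m_j$, whence monotonicity gives $p_j^5g(p_j^5)=\|(\sigma_{p_j^5}x)\chi_{(0,1)}\|_E\ge\|(\sigma_{m_j}x)\chi_{(0,1)}\|_E=m_jg(m_j)$, i.e. $g(p_j^5)\ge(m_j/p_j^5)g(m_j)\ge(1+m_j^{-1/5})^{-5}g(m_j)\to\ell$. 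Therefore $\limsup_m g(m^5)\ge\ell$, and combined with the displayed inequality $\ell\le\frac23\ell$, forcing $\ell=0$. Since $g\ge0$, this is exactly \eqref{main condition from orbits integrable}.

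I expect the only delicate point to be the passage to $(0,1)$ in the two majorizations: Hardy--Littlewood majorization is \emph{not} in general preserved by multiplication with an indicator, and the step rests entirely on the observation that every function appearing there is monotone. The remainder is a routine modification of the proof of Theorem \ref{nonintegrable theorem}, with the monotonicity of $t\mapsto\|(\sigma_tx)\chi_{(0,1)}\|_E$ taking over the role played there by the convergence of $m^{-1}\|\sigma_m\mu(x)\|_E$.
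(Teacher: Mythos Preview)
Your argument is correct, and it is genuinely different from the paper's.

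The paper proceeds indirectly: it restricts to the subspace $F\subset E$ of functions supported on $(0,1)$, views $F$ as a fully symmetric space on the interval, and argues by contradiction. If the limit in \eqref{main condition from orbits integrable} were positive, Theorem \ref{nonintegrable theorem} (case (1), for the interval) would produce $0\le y_1\prec\prec x_1:=\mu(x)\chi_{(0,1)}$ and a symmetric $\varphi\in F^*$ with $\varphi(y_1)>\varphi(x_1)$. One then passes to the singular part $\varphi_{sing}$ via Lemma \ref{sing constr}, checks using Lemma \ref{normal construction} that the inequality survives, and extends $\varphi_{sing}$ from $F$ to $E$ by the formula $\varphi_{sing}(z)=\lim_n\varphi_{sing}(\mu(z)\chi_{(0,1/n)})$. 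The extended functional then violates the hypothesis.

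You instead rerun the proof of Theorem \ref{nonintegrable theorem} on the semi-axis and handle the offending term $Cm^{-1}\sigma_m\chi_{(0,1)}$ by first multiplying the majorization through by $\chi_{(0,1)}$. The two observations that make this work are exactly the ones you flag: (i) every function appearing in the chain is non-increasing, so truncation by $\chi_{(0,1)}$ preserves $\prec\prec$; and (ii) since $m\mapsto m^{-1}\|(\sigma_mx)\chi_{(0,1)}\|_E$ is not a priori monotone, the conclusion $\ell\le\tfrac23\ell$ requires the auxiliary $p_j=\lceil m_j^{1/5}\rceil$ subsequence together with the monotonicity of $t\mapsto\|(\sigma_tx)\chi_{(0,1)}\|_E$. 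Both are sound.

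Your route is more self-contained: it avoids the Yosida--Hewitt-type splitting (Lemmas \ref{sing constr} and \ref{normal construction}) and the extension step, at the price of the truncation observation and the slightly more delicate $\limsup$ bookkeeping. The paper's route is more modular, since it simply invokes the already-proved interval case of Theorem \ref{nonintegrable theorem} as a black box and then transports the resulting functional back to $E$.
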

\begin{proof} Fully symmetric Banach space $F$ on the interval $(0,1)$ consists of those $z\in E$ supported on the interval $(0,1).$  Let $x_1=\mu(x)\chi_{(0,1)}\in F.$ Suppose that
$$\lim_{m\to\infty}\frac1m\|\sigma_m(\mu(x))\chi_{(0,1)}\|_E>0.$$
It clearly follows that
$$\lim_{m\to\infty}\frac1m\|\sigma_m(\mu(x_1))\|_F>0.$$
By Theorem \ref{nonintegrable theorem}, there exists $0\leq y_1\prec\prec x_1$ and a positive symmetric functional $\varphi\in F^*$ such that $\varphi(y_1)>\varphi(x_1).$ Let $\varphi_{sing}$ be a singular part of the functional $\varphi$ constructed in Lemma \ref{sing constr}. It follows from Lemma \ref{sing constr} that $\varphi_{sing}$ is symmetric. By Lemma \ref{normal construction}, the difference $\varphi-\varphi_{sing}$ is a symmetric normal functional on $F$ (that is, an integral). Therefore, $\varphi_{sing}(y_1)>\varphi_{sing}(x_1).$

Now we show that the functional $\varphi_{sing}$ can be extended from $F$ to $E$ by setting
$$\varphi_{sing}(z)=\lim_{n\to\infty}\varphi_{sing}(\mu(z)\chi_{(0,1/n)}),\quad 0\leq z\in E.$$
Repeating the argument in Lemma \ref{sing constr}, we prove that the extension above is additive on $E_+.$ Thus, the functional $\varphi_{sing}\in E^*$ is positive and symmetric. Since $y_1\prec\prec x$ and $\varphi_{sing}(y_1)>\varphi_{sing}(x_1)=\varphi_{sing}(x),$ the assertion follows. 
\end{proof}

\section{Proof of Theorem \ref{main theorem of the paper}}\label{prmain}

In this section, we prove an assertion more general then that of Theorem \ref{main theorem of the paper}. The assertion of Theorem \ref{main theorem of the paper} follows from that of Theorem \ref{obshiy variant} by setting $\mathcal{M}=B(H).$

In what follows, the semifinite von Neumann algebra $\mathcal{M}$ is either atomless or atomic so that the trace of every atom is $1.$

\begin{thm}\label{obshiy variant} Let $E(\mathcal{M},\tau)$ be a symmetric operator space. Consider the following conditions.
\begin{enumerate}
\item\label{first main condition} There exist nontrivial positive singular symmetric functionals on $E(\mathcal{M},\tau).$
\item\label{second main condition} There exist nontrivial singular fully symmetric functionals on $E(\mathcal{M},\tau).$
\item\label{third main condition} There exist positive symmetric symmetric functional on $E(\mathcal{M},\tau)$ which are not fully symmetric.
\item\label{fourth main condition} If $E(\mathcal{M},\tau)\not\subset L_1(\mathcal{M},\tau),$ then there exists an operator $A\in E(\mathcal{M},\tau)$ such that
\begin{equation}\label{nonint sigma}
\lim_{m\to\infty}\frac1m\|\sigma_m\mu(A)\|_E>0.
\end{equation}
If $E(\mathcal{M},\tau)\subset L_1(\mathcal{M},\tau),$  then there exists an operator $A\in E(\mathcal{M},\tau)$ such that
\begin{equation}\label{int sigma}
\lim_{m\to\infty}\frac1m\|(\sigma_m\mu(A))\chi_{(0,1)}\|_E>0.
\end{equation}
\end{enumerate}
\begin{enumerate}[(i)]
\item  The conditions \eqref{first main condition} and \eqref{fourth main condition} are equivalent for every symmetric operator space $E(\mathcal{M},\tau).$
\item  The conditions \eqref{first main condition}, \eqref{second main condition} and \eqref{fourth main condition} are equivalent for every fully symmetric operator space $E(\mathcal{M},\tau).$
\item  The conditions \eqref{first main condition}-\eqref{fourth main condition} are equivalent for every fully symmetric operator space $E(\mathcal{M},\tau)$ equipped with a Fatou norm.
\end{enumerate}
\end{thm}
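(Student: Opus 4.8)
\emph{Strategy and reductions.} The plan is to push everything down to the underlying symmetric function (or sequence) space and then invoke the existence results of Sections~\ref{sushestvovanie1}--\ref{sushestvovanie2} and the non-existence results of Section~\ref{ne vpol sim}. By Theorem~\ref{lifting theorem} and the corollary following it, $\varphi\mapsto\mathcal L(\varphi)$ is a bijection between the positive symmetric (resp.\ fully symmetric) functionals on the underlying space $E$ and those on $E(\mathcal M,\tau)$; since $\mathcal L(\varphi)$ and $\varphi$ agree on $\mu$-functions this bijection preserves non-triviality, and it preserves singularity because a positive symmetric functional vanishes on $(L_1\cap L_\infty)(\mathcal M)$ precisely when its counterpart vanishes on $L_1\cap L_\infty$. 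When $\mathcal M$ is atomic, $E=E(\mathbb N)$ is a sequence space, and Theorem~\ref{second lifting theorem} together with Proposition~\ref{func-seq} identifies its (fully) symmetric functionals with those of the associated function space $F$ on the semi-axis, which inherits full symmetry and the Fatou property from $E(\mathbb N)$; a direct computation gives $\|\sigma_m(\iota a)\|_F=\|a\|_\infty+\|\sigma_m a\|_E$ for a sequence $a$ (with $\iota a$ the corresponding step function), so condition~\eqref{fourth main condition} is unaffected by this passage, and $E=\ell_1\Leftrightarrow F=(L_1\cap L_\infty)(0,\infty)\Leftrightarrow E(\mathcal M,\tau)\subset L_1(\mathcal M,\tau)$. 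The Fatou property also passes between $E$ and $E(\mathcal M,\tau)$. Hence it suffices to prove the three assertions with $E(\mathcal M,\tau)$ replaced by a symmetric Banach function space $E$ on $(0,1)$ or on $(0,\infty)$.

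\emph{Assertions (i) and (ii).} Suppose \eqref{fourth main condition} holds and pick $x=\mu(x)\in E$ attaining the relevant limit. If $E=E(0,\infty)\not\subset L_1$, Theorem~\ref{sim exists} (resp.\ Theorem~\ref{fs exists}, in the fully symmetric case) produces a symmetric (resp.\ fully symmetric) functional $\varphi$ with $\varphi(x)=\lim_m\frac1m\|\sigma_m\mu(x)\|_E>0$; it is automatically singular and non-trivial. If $E\subset L_1$, use Theorem~\ref{sim sing exists inf} (resp.\ \ref{fs sing exists inf}) on the semi-axis and Theorem~\ref{sim sing exists fin} (resp.\ \ref{fs sing exists fin}) on $(0,1)$. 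This yields \eqref{fourth main condition}$\Rightarrow$\eqref{first main condition} and, in the fully symmetric case, \eqref{fourth main condition}$\Rightarrow$\eqref{second main condition}. Conversely, a non-trivial positive singular symmetric functional $\varphi$ on $E$ is bounded (positive functionals on a Banach lattice are), and $\varphi(x)>0$ for some $0\le x=\mu(x)\in E$. On the semi-axis $\varphi(\sigma_m x)=m\varphi(x)$, so $\frac1m\|\sigma_m\mu(x)\|_E\ge\varphi(x)/\|\varphi\|>0$, which is \eqref{fourth main condition} when $E\not\subset L_1$; when $E\subset L_1$ one additionally uses singularity to write $\varphi(x)=\varphi(x\chi_{(0,1/n)})$ and $\sigma_n(x\chi_{(0,1/n)})=(\sigma_n\mu(x))\chi_{(0,1)}$, which gives $\frac1n\|(\sigma_n\mu(x))\chi_{(0,1)}\|_E\ge\varphi(x)/\|\varphi\|>0$; on $(0,1)$ the identity $\varphi(\sigma_m x)=m\varphi(x)$ again holds because $\varphi$ is singular. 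Together with the trivial implication \eqref{second main condition}$\Rightarrow$\eqref{first main condition}, this proves (i) and (ii).

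\emph{Assertion (iii).} Assume in addition that the norm on $E$ is a Fatou norm, so $E$ is fully symmetric. The implication \eqref{third main condition}$\Rightarrow$\eqref{first main condition} is immediate. For \eqref{fourth main condition}$\Rightarrow$\eqref{third main condition} we argue by contraposition: if every positive symmetric functional on $E$ is fully symmetric, then for \emph{every} $x\in E$ the hypotheses of Theorem~\ref{nonintegrable theorem} (when $E=E(0,1)$ or $E=E(0,\infty)\not\subset L_1$) or of Theorem~\ref{integrable theorem} (when $E=E(0,\infty)\subset L_1$) are satisfied, so the relevant limit in \eqref{fourth main condition} vanishes for every $x$, i.e.\ \eqref{fourth main condition} fails. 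Hence \eqref{fourth main condition} produces a positive symmetric functional $\psi$ on $E$ that is not fully symmetric. If $E\not\subset L_1$ then $\psi$ is automatically singular and we are done; if $E\subset L_1$, replace $\psi$ by its singular part $\psi_{sing}$ of Lemma~\ref{sing constr}, which is still symmetric, and, since $\psi-\psi_{sing}$ is a scalar multiple of the integral (Lemma~\ref{normal construction} and its evident analogue on $(0,1)$) and hence fully symmetric, is itself not fully symmetric. Combining with (i) and (ii) gives the equivalence of \eqref{first main condition}--\eqref{fourth main condition}.

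\emph{On the main difficulty.} The genuine mathematical content lies entirely in the earlier results: the Hahn--Banach/fragmentation construction behind Theorems~\ref{sim exists}--\ref{fs sing exists fin}, and the averaging estimates of Section~\ref{ne vpol sim} behind Theorems~\ref{nonintegrable theorem}--\ref{integrable theorem}. At the level of Theorem~\ref{obshiy variant} the work is organizational: keeping the four cases apart (the interval, the integrable and the non-integrable semi-axes, and the atomic/sequence case, the last handled through $F$), and checking that the liftings preserve non-triviality, singularity and full symmetry. The one subtle point is that in the integrable case condition~\eqref{fourth main condition} must be read with the truncation $\chi_{(0,1)}$ --- without it the condition is vacuously true on $L_1$-type spaces --- which is exactly why Theorems~\ref{sim sing exists inf}, \ref{fs sing exists inf} and \ref{integrable theorem} are phrased the way they are.
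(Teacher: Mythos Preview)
Your argument is correct and follows the paper's route: reduce to the underlying function (or sequence) space via Theorems~\ref{lifting theorem} and~\ref{second lifting theorem}, then invoke the existence results of Sections~\ref{sushestvovanie1}--\ref{sushestvovanie2} for \eqref{fourth main condition}$\Rightarrow$\eqref{first main condition},\eqref{second main condition} and the results of Section~\ref{ne vpol sim} for \eqref{fourth main condition}$\Rightarrow$\eqref{third main condition}. The paper proves \eqref{first main condition}$\Rightarrow$\eqref{fourth main condition} directly at the operator level rather than reducing first, but your version is equivalent; your contrapositive for \eqref{fourth main condition}$\Rightarrow$\eqref{third main condition} is likewise the same argument read backwards.

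Two minor remarks. First, the closing step in your treatment of~(iii) --- passing from $\psi$ to $\psi_{sing}$ --- is unnecessary: condition~\eqref{third main condition} does not ask for singularity, so once Theorems~\ref{nonintegrable theorem}/\ref{integrable theorem} produce a positive symmetric $\psi$ that is not fully symmetric you are done, and the Corollary to Theorem~\ref{lifting theorem} ensures $\mathcal L(\psi)$ on $E(\mathcal M,\tau)$ is again symmetric but not fully symmetric. Second, the claims you assert about $F$ (that it inherits full symmetry and the Fatou property from $E(\mathbb N)$, and that the $E(\mathbb N)\leftrightarrow F$ bijection preserves full symmetry) are used equally tacitly in the paper's own proof; they are routine to verify from the definition $\|x\|_F=\|x\|_\infty+\|\mathbf E(\mu(x)|\mathcal A)\|_E$ together with the fact that $y\prec\prec x$ in $F$ forces $\mathbf E(\mu(y)|\mathcal A)\prec\prec\mathbf E(\mu(x)|\mathcal A)$ in $E(\mathbb N)$.
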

\begin{proof} Implications $\eqref{second main condition}\Rightarrow\eqref{first main condition}$ and $\eqref{third main condition}\Rightarrow\eqref{first main condition}$ are trivial.

$\eqref{first main condition}\Rightarrow\eqref{fourth main condition}$ Let $E(\mathcal{M},\tau)$ be a symmetric operator space with a singular symmetric functional $\varphi.$ Let $A\in E(\mathcal{M},\tau)$ be an operator such that $\varphi(A)\neq0.$ Without loss of generality, $A\geq 0.$

If $E(\mathcal{M},\tau)\not\subset L_1(\mathcal{M},\tau),$ then
$$|\varphi(A)|=\frac1m|\varphi(\underbrace{A\oplus\cdots\oplus A}_{\mbox{$m$ times}})|\leq\|\varphi\|_{E^*(\mathcal{M},\tau)}\cdot\frac1m\|\sigma_m\mu(A)\|_E.$$
Passing $m\to\infty,$ we obtain the required inequality \eqref{nonint sigma}.

Let now $E(\mathcal{M},\tau)\subset L_1(\mathcal{M},\tau).$ If $\mathcal{M}$ is atomic, then $E(\mathcal{M},\tau)= L_1(\mathcal{M},\tau)$ and the assertion is trivial. Let $\mathcal{M}$ be atomless. Since $\varphi$ is a singular functional and $$A-AE_A(\mu(\frac1m,A),\infty)\in(L_1\cap L_{\infty})(\mathcal{M},\tau),\quad\forall m\in\mathbb{N},$$
we infer that
$$|\varphi(A)|=|\varphi(AE_A(\mu(\frac1m,A),\infty))|=$$
$$=\frac1m|\varphi(\underbrace{AE_A(\mu(\frac1m,A),\infty)\oplus\cdots\oplus AE_A(\mu(\frac1m,A),\infty)}_{\mbox{$m$ times}})|\leq$$
$$\leq\|\varphi\|_{E^*(\mathcal{M},\tau)}\cdot\frac1m\|\sigma_m\mu(AE_A(\mu(\frac1m,A),\infty))\|_E\leq\|\varphi\|_{E^*(\mathcal{M},\tau)}\cdot\frac1m\|(\sigma_m\mu(A))\chi_{(0,1)}\|_E.$$
Passing $m\to\infty,$ we obtain the required inequality \eqref{int sigma}.

$\eqref{fourth main condition}\Rightarrow\eqref{first main condition}$ Firstly, we assume that the algebra $\mathcal{M}$ is finite. Without loss of generality, $\tau(1)=1.$ Let $E(\mathcal{M},\tau)$ be a symmetric operator space and let $E(0,1)$ be the corresponding symmetric function space. By the assumption, there exists an element $x=\mu(A)\in E(0,1)$ such that $m^{-1}\sigma_mx\not\to0$ in $E(0,1).$ By Theorem \ref{sim sing exists fin}, there exists a positive singular symmetric functional $0\neq\varphi\in E(0,1)^*.$ Let $\mathcal{L}(\varphi)$ be a functional on $E(\mathcal{M},\tau)$ defined in Theorem \ref{lifting theorem}. Clearly, $\mathcal{L}(\varphi)$ is a nontrivial positive symmetric functional on $E(\mathcal{M},\tau).$

The case when $\mathcal{M}$ is an infinite atomless von Neumann algebra can be treated in a similar manner. The only difference is that the reference to Theorem \ref{sim sing exists fin} has to be replaced with the reference to either Theorem \ref{sim sing exists inf} or Theorem \ref{sim exists}.

Let $E(\mathcal{M},\tau)$ be a symmetric operator space on a atomic von Neumann algebra $\mathcal{M}$ and let $E(\mathbb{N})$ be the corresponding symmetric sequence space. It follows from the assumption that $E(\mathcal{M},\tau)\neq L_1(\mathcal{M},\tau)$ or, equivalently, $E(\mathbb{N})\neq l_1.$ By the assumption, there exists an element $x=\mu(A)\in E$ such that $m^{-1}\sigma_mx\not\to0$ in $E.$ Let $F(0,\infty)$ be a symmetric function space constructed in Proposition \ref{func-seq}. Since $E(\mathbb{N})\neq l_1,$ it follows that $F(0,\infty)\not\subset L_1(0,\infty).$ Recall that the space $E(\mathbb{N})$ is naturally embedded into the space $F(0,\infty)$ and that the norms $\|\cdot\|_E$ and $\|\cdot\|_F$ are equivalent on $E(\mathbb{N}).$ We have $x\in F$ and $m^{-1}\sigma_mx\not\to 0$ in $F(0,\infty).$ By Theorem \ref{sim exists}, there exists a positive symmetric functional $0\leq\varphi\in F(0,\infty)^*.$ The restriction of the functional $\varphi$ to $E(\mathbb{N})$ is a nontrivial positive symmetric functional on $E(\mathbb{N}).$ Let $\mathcal{L}(\varphi)$ be a functional on $E(\mathcal{M},\tau)$ defined in Theorem \ref{lifting theorem}. Clearly, $\mathcal{L}(\varphi)$ is a nontrivial positive symmetric functional on $E(\mathcal{M},\tau).$

$\eqref{fourth main condition}\Rightarrow\eqref{second main condition}$ The proof is very similar to that of the implication $\eqref{fourth main condition}\Rightarrow\eqref{first main condition}$ and is, therefore, omitted. The only difference is that references to Theorem \ref{sim sing exists fin}, Theorem \ref{sim sing exists inf} or Theorem \ref{sim exists} have to be replaced with references to Theorem \ref{fs sing exists fin}, Theorem \ref{fs sing exists inf} or Theorem \ref{fs exists}, respectively.

$\eqref{fourth main condition}\Rightarrow\eqref{third main condition}$ Firstly, we assume that the algebra $\mathcal{M}$ is finite. Without loss of generality, $\tau(1)=1.$ Let $E(\mathcal{M},\tau)$ be a symmetric operator space and let $E(0,1)$ be the corresponding symmetric function space. By the assumption, there exists an element $x=\mu(A)\in E(0,1)$ such that $m^{-1}\sigma_mx\not\to0$ in $E(0,1).$ By Theorem \ref{nonintegrable theorem}, there exists a positive symmetric but not fully symmetric functional $\varphi\in E(0,1)^*.$ Let $\mathcal{L}(\varphi)$ be a functional on $E(\mathcal{M},\tau)$ defined in Theorem \ref{lifting theorem}. Clearly, $\mathcal{L}(\varphi)$ is a symmetric but not fully symmetric functional on $E(\mathcal{M},\tau).$

The case when $\mathcal{M}$ is an infinite atomless von Neumann algebra can be treated in a similar manner. The only difference is that the reference to Theorem \ref{nonintegrable theorem} has to be replaced with the reference to either Theorem \ref{nonintegrable theorem} or Theorem \ref{integrable theorem}.

Let $E(\mathcal{M},\tau)$ be a symmetric operator space on a atomic von Neumann algebra $\mathcal{M}$ and let $E(\mathbb{N})$ be the corresponding symmetric sequence space. It follows from the assumption that $E(\mathcal{M},\tau)\neq L_1(\mathcal{M},\tau)$ or, equivalently, $E(\mathbb{N})\neq l_1.$ By the assumption, there exists an element $x=\mu(A)\in E$ such that $m^{-1}\sigma_mx\not\to0$ in $E.$ Let $F(0,\infty)$ be a symmetric function space constructed in Proposition \ref{func-seq}. Since $E(\mathbb{N})\neq l_1,$ it follows that $F(0,\infty)\not\subset L_1(0,\infty).$ Recall that the space $E(\mathbb{N})$ is naturally embedded into the space $F(0,\infty)$ and that the norms $\|\cdot\|_E$ and $\|\cdot\|_F$ are equivalent on $E(\mathbb{N}).$ We have $x\in F$ and $m^{-1}\sigma_mx\not\to 0$ in $F(0,\infty).$ By Theorem \ref{nonintegrable theorem}, there exists a positive symmetric functional $\varphi\in F(0,\infty)^*$ and a function $0\leq y\prec\prec x$ such that $\varphi(y)>\varphi(x).$ Set $z=\mathbf{E}(\mu(y)|\{(n-1,n)\}_{n\in\mathbb{N}}).$ Clearly, $z\in E(\mathbb{N})$ and $\varphi(z)=\varphi(y)>\varphi(x).$ Hence, the restriction of the functional $\varphi$ to $E(\mathbb{N})$ is a positive symmetric but not fully symmetric functional on $E(\mathbb{N}).$ Let $\mathcal{L}(\varphi)$ be a functional on $E(\mathcal{M},\tau)$ defined in Theorem \ref{lifting theorem}. Clearly, $\mathcal{L}(\varphi)$ is a positive symmetric but not fully symmetric functional on $E(\mathcal{M},\tau).$ 
\end{proof}

\section{Appendix}\label{app}

In this appendix, we set $\mathcal{A}=\{(n-1,n)\}_{n\in\mathbb{N}}.$

\begin{lem}\label{pave maj} If $x,y\in(L_1+L_{\infty})(0,\infty)$ are positive functions, then
$$\mathbf{E}(\mu(x+y)|\mathcal{A})\lhd \mathbf{E}(\mu(x)|\mathcal{A})+\mathbf{E}(\mu(y)|\mathcal{A})\lhd 2\sigma_{1/2}\mathbf{E}(\mu(x+y)|\mathcal{A}).$$
\end{lem}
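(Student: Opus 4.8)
The plan is to reduce both submajorizations to elementary facts about primitives. Write $f=\mu(x+y)$ and $g=\mu(x)+\mu(y)$; both are decreasing, and by linearity of the averaging operator $\mathbf{E}(\mu(x)|\mathcal{A})+\mathbf{E}(\mu(y)|\mathcal{A})=\mathbf{E}(g|\mathcal{A})$. Set $F_1=\mathbf{E}(f|\mathcal{A})$ and $F_2=\mathbf{E}(g|\mathcal{A})$. Since $f,g$ are decreasing, averaging over consecutive unit intervals yields decreasing step functions, so $F_1$, $F_2$ and $2\sigma_{1/2}F_1$ all coincide with their own decreasing rearrangements; hence, by the definition \eqref{ks ordering}, the claims $F_1\lhd F_2$ and $F_2\lhd 2\sigma_{1/2}F_1$ are just integral inequalities of the form $\int_{ma}^b(\cdot)\le\int_a^b(\cdot)$ between decreasing functions.

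I would set up two tools first. (a) For decreasing $h$, $\mathbf{E}(h|\mathcal{A})$ is constant on each interval $(n-1,n)$ with $\int_0^n\mathbf{E}(h|\mathcal{A})=\int_0^nh$ for every integer $n$, so the primitive $t\mapsto\int_0^t\mathbf{E}(h|\mathcal{A})$ is affine on each $[n-1,n]$ and agrees at the integers with the concave function $t\mapsto\int_0^th$. (b) The elementary fact that if $\ell$ is affine on each $[n-1,n]$ and $\phi$ is concave with $\ell(n)\le\phi(n)$ for all integers $n$, then $\ell\le\phi$ everywhere (expand $t$ as a convex combination of its integer neighbours and use affinity of $\ell$ and concavity of $\phi$). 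Tool (b) is the device that carries integer-node inequalities to all of $(0,\infty)$, and the passage from integer to arbitrary arguments is the only genuine obstacle in the proof.

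From \eqref{rear sum estimate} we have $f\prec\prec g\prec\prec 2\sigma_{1/2}f$, i.e. $\int_0^tf\le\int_0^tg$ and $\int_0^tg\le\int_0^{2t}f$ for all $t$. Evaluating at integers and using (a), then (b), gives for every $t\ge0$
$$\int_0^tF_1\le\int_0^tF_2,\qquad \int_0^tF_2\le\int_0^{2t}F_1;$$
in the first inequality (b) is applied with the concave function $\int_0^{\cdot}F_2$ (since $F_2$ is decreasing), and in the second with the concave function $t\mapsto\int_0^{2t}F_1$ (since $F_1$ is decreasing). Finally I would invoke, with $m=2$, the trivial implication: if $u,v$ are decreasing with $\int_0^tu\le\int_0^tv$ and $\int_0^av\le\int_0^{2a}u$ for all $t,a\ge0$, then for every $2a\le b$
$$\int_{2a}^bu=\int_0^bu-\int_0^{2a}u\le\int_0^bv-\int_0^av=\int_a^bv,$$
so $u\lhd v$. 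Taking $u=F_1$, $v=F_2$ gives $\mathbf{E}(\mu(x+y)|\mathcal{A})\lhd\mathbf{E}(\mu(x)|\mathcal{A})+\mathbf{E}(\mu(y)|\mathcal{A})$, the two hypotheses being exactly the displayed inequalities. Taking $u=F_2$, $v=2\sigma_{1/2}F_1$ gives the second relation: here $\int_0^tv=\int_0^{2t}F_1$, so the first hypothesis reads $\int_0^tF_2\le\int_0^{2t}F_1$ and the second reads $\int_0^{2a}F_1\le\int_0^{2a}F_2$, and both have already been established.

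Apart from tool (b), everything is bookkeeping. The one trap to avoid is smuggling in the pointwise bound $\mu(x+y)\le\mu(x)+\mu(y)$, which is false; only the integrated inequalities in \eqref{rear sum estimate} are used, and one must keep track of which function plays the concave role in each application of (b).
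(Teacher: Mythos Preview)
Your argument is correct and follows the same core idea as the paper: use the two Hardy--Littlewood inequalities $\mu(x+y)\prec\prec\mu(x)+\mu(y)\prec\prec2\sigma_{1/2}\mu(x+y)$, pass to the averaged functions via the identity $\int_0^n\mathbf{E}(h|\mathcal{A})=\int_0^n h$ at integer nodes, and subtract.

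The only difference is your ``tool~(b).'' The paper treats $\mathbf{E}(\cdot|\mathcal{A})$ as mapping into \emph{sequences} (this is stated just before Proposition~\ref{func-seq}), so the relevant submajorization is the discrete one~\eqref{ks ordering0}, where $a,b$ range over integers only. Accordingly, the paper simply takes $a,b$ to be positive integers, notes that then $\int_{2a}^b\mathbf{E}(\mu(x+y)|\mathcal{A})=\int_{2a}^b\mu(x+y)$ and $\int_a^b\mathbf{E}(\mu(x)+\mu(y)|\mathcal{A})=\int_a^b(\mu(x)+\mu(y))$, and the subtraction is immediate. Your concavity device proves the stronger function-space version of $\lhd$ (all real $a,b$); this is not wrong, and the argument is clean, but it is extra work that the intended sequence interpretation avoids. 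If you want to match the paper's economy, simply observe that the step functions are sequences and quote~\eqref{ks ordering0}.
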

\begin{proof} Recall that
$$\mu(x+y)\prec\prec\mu(x)+\mu(y)\prec\prec2\sigma_{1/2}\mu(x+y).$$
It follows that
$$\int_0^b\mu(s,x+y)ds\leq\int_0^b(\mu(s,x)+\mu(s,y))ds,$$
$$\int_0^{2a}\mu(s,x+y)ds\geq\int_0^a(\mu(s,x)+\mu(s,y))ds.$$
Let now $a,b$ be positive integers. Subtracting the above inequalities, we obtain
$$\int_{2a}^b\mathbf{E}(\mu(x+y)|\mathcal{A})(s)ds=\int_{2a}^b\mu(s,x+y)ds\leq$$
$$\leq\int_a^b(\mu(s,x)+\mu(s,y))ds=\int_a^b\mathbf{E}(\mu(x)+\mu(y)|\mathcal{A})(s)ds.$$
Similarly, we have
$$\int_{2a}^b\mathbf{E}(\mu(x)+\mu(y)|\mathcal{A})(s)ds\leq\int_{2a}^{2b}\mathbf{E}(\mu(x+y)|\mathcal{A})(s)ds.$$

\end{proof}

\begin{cor} The quasi-norm in Construction \ref{func-seq} is a norm.
\end{cor}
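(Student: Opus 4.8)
The plan is to reduce the assertion to the triangle inequality. Indeed, absolute homogeneity of $\|\cdot\|_F$ is immediate from $\mu(\lambda x)=|\lambda|\,\mu(x)$ together with the linearity of $\mathbf{E}(\cdot|\mathcal{A})$, and $\|x\|_F=0$ forces $\|x\|_{\infty}=0$, hence $x=0$; so the only content is the triangle inequality. Since $\|x+y\|_{\infty}\le\|x\|_{\infty}+\|y\|_{\infty}$, everything comes down to proving
\[
\|\mathbf{E}(\mu(x+y)|\mathcal{A})\|_E\le\|\mathbf{E}(\mu(x)|\mathcal{A})\|_E+\|\mathbf{E}(\mu(y)|\mathcal{A})\|_E,\qquad x,y\in F .
\]

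First I would pass to the positive functions $|x|,|y|$. From $|x+y|\le|x|+|y|$ and the monotonicity (on nonnegative functions) of both the decreasing rearrangement and the order-preserving operator $\mathbf{E}(\cdot|\mathcal{A})$ one gets $\mathbf{E}(\mu(x+y)|\mathcal{A})\le\mathbf{E}(\mu(|x|+|y|)|\mathcal{A})$ pointwise. Applying Lemma \ref{pave maj} to $|x|,|y|$ and using that a pointwise-smaller function is automatically $\lhd$-smaller, this yields
\[
\mathbf{E}(\mu(x+y)|\mathcal{A})\lhd\mathbf{E}(\mu(x)|\mathcal{A})+\mathbf{E}(\mu(y)|\mathcal{A}).
\]
Both functions on the two sides are decreasing step functions — a conditional expectation of a decreasing function with respect to $\mathcal{A}$ is decreasing, and a sum of decreasing functions is decreasing — hence each equals its own decreasing rearrangement.

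The final step is to convert this uniform submajorization into a norm estimate via Theorem \ref{ks majorization theorem}: for each $\varepsilon>0$ the function $(1-\varepsilon)\mathbf{E}(\mu(x+y)|\mathcal{A})$ lies in the convex hull of $\{z:\mu(z)\le\mathbf{E}(\mu(x)|\mathcal{A})+\mathbf{E}(\mu(y)|\mathcal{A})\}$; every such $z$ obeys $\|z\|_E=\|\mu(z)\|_E\le\|\mathbf{E}(\mu(x)|\mathcal{A})+\mathbf{E}(\mu(y)|\mathcal{A})\|_E$ by the two defining axioms of a symmetric Banach function space, so convexity of $\|\cdot\|_E$ gives $(1-\varepsilon)\|\mathbf{E}(\mu(x+y)|\mathcal{A})\|_E\le\|\mathbf{E}(\mu(x)|\mathcal{A})+\mathbf{E}(\mu(y)|\mathcal{A})\|_E$; letting $\varepsilon\downarrow 0$ and then using the triangle inequality in the Banach space $E$ finishes the proof. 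The main obstacle is exactly the nonlinearity of $x\mapsto\mathbf{E}(\mu(x)|\mathcal{A})$: the cleaner relation with Hardy--Littlewood majorization $\prec\prec$ in place of $\lhd$ fails in general (this is precisely why Construction \ref{func-seq} carries the extra $\|\cdot\|_{\infty}$ term), so one is forced to work with the weaker order $\lhd$, which is only accessible through the convex-hull description in Theorem \ref{ks majorization theorem} and hence the $(1-\varepsilon)$ device; a secondary point to watch is verifying that all the functions involved are their own decreasing rearrangements, so that the symmetric-space axioms apply without a further rearrangement argument.
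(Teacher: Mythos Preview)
Your proof is correct and follows essentially the same approach as the paper's: both reduce to the triangle inequality, invoke Lemma~\ref{pave maj} to obtain the uniform submajorization $\mathbf{E}(\mu(x+y)|\mathcal{A})\lhd\mathbf{E}(\mu(x)|\mathcal{A})+\mathbf{E}(\mu(y)|\mathcal{A})$, and then apply Theorem~\ref{ks majorization theorem} to convert this into the desired norm inequality. Your version is simply more explicit---you spell out the reduction from arbitrary $x,y$ to $|x|,|y|$, the $(1-\varepsilon)$ convex-hull argument, and the verification that the relevant functions equal their own rearrangements---whereas the paper compresses all of this into two lines.
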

\begin{proof} It follows from Lemma \ref{pave maj} that
$$\mathbf{E}(\mu(x+y)|\mathcal{A})\lhd \mathbf{E}(\mu(x)+\mu(y)|\mathcal{A})$$
provided that $x,y$ are positive functions. By Theorem \ref{ks majorization theorem},
$$\|\mathbf{E}(\mu(x+y)|\mathcal{A})\|_E\leq\|\mathbf{E}(\mu(x)|\mathcal{A})\|_E+\|\mathbf{E}(\mu(y)|\mathcal{A})\|_E.$$

\end{proof}

\begin{lem}\label{stupid estimate} Let $y=\mu(y)\in(L_1+L_{\infty})(0,\infty).$ It follows that
$$\int_{2^{-k}\lambda a}^by(s)ds\leq\frac{\lambda}{\lambda-1}\int_a^b(\sigma_{2^k}y)(s)ds$$
provided that $b\geq\lambda a.$
\end{lem}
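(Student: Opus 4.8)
The plan is to exploit the fact that on the relevant range the dilated function $\sigma_{2^k}y$ dominates a scaled copy of $y$, and then sum a geometric-type series. First I would record the elementary pointwise fact that $(\sigma_{2^k}y)(s)=y(s/2^k)\geq y(s)$ whenever $y$ is decreasing, but this alone is too crude; the correct device is a change of variables. Writing $c=2^{-k}\lambda a$, I would split the interval $[c,b]$ into the dyadic-type pieces $[2^{-j}\lambda a,\,2^{-j+1}\lambda a]$ for $j=1,\dots,k$ together with the tail $[\lambda a, b]$ (using $b\geq\lambda a$), and on each such piece compare the integral of $y$ against the integral of $\sigma_{2^k}y$ over a suitably translated interval.

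The cleaner route, which I would actually carry out, is the substitution $s\mapsto 2^k s$ inside $\int_a^b(\sigma_{2^k}y)(s)\,ds = 2^{-k}\int_{2^k a}^{2^k b} y(u)\,du$. Thus it suffices to prove
$$\int_{2^{-k}\lambda a}^{b} y(s)\,ds \leq \frac{\lambda}{\lambda-1}\cdot 2^{-k}\int_{2^k a}^{2^k b} y(u)\,du,$$
and since $y=\mu(y)$ is decreasing, $2^{-k}\int_{2^k a}^{2^k b}y\geq 2^{-k}(2^k b-2^k a)y(2^k b)$ — again too lossy. Instead I would keep the dilation structure: the key monotonicity is that for a decreasing $y$ and any $\alpha\geq 1$ one has $\int_{\alpha^{-1}p}^{q} y \leq \frac{1}{1-\alpha^{-1}}\int_{p}^{q}y$ when $q\geq p$, because $\int_{\alpha^{-1}p}^{p}y \leq (p-\alpha^{-1}p)y(\alpha^{-1}p)\leq (1-\alpha^{-1})\int_{\alpha^{-1}p}^{p}\cdots$ — the honest inequality is $\int_{\alpha^{-1}p}^{p}y(s)\,ds\leq \frac{\alpha-1}{\alpha}\cdot\frac{1}{?}$; so I would instead iterate: set $b_j = 2^{-j}\lambda a$ for $j=0,1,\dots,k$ and note $\int_{b_j}^{b_{j-1}}y \leq (b_{j-1}-b_j)y(b_j)$ while, because $y$ is decreasing and $b_{j-1}\le \lambda a$, the mass of $\sigma_{2^k}y$ on a fixed reference window controls this. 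The accounting that makes the constant $\lambda/(\lambda-1)$ appear is the geometric sum $\sum_{j\ge 1}2^{-j}\cdot(\text{something})$, together with $\sigma_{2^k}y$ restoring one dilation level per step.

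I expect the main obstacle to be organizing this telescoping so that the single constant $\lambda/(\lambda-1)$ (independent of $k$) emerges: one must be careful that the ``loss'' at each of the $k$ levels is summable and that the comparison function $\sigma_{2^k}y$ is strong enough to absorb all of them simultaneously rather than level by level. Once the correct pairing of intervals is set up — matching $[2^{-j}\lambda a, 2^{-j+1}\lambda a]$ under $y$ with $[2^{-j}\cdot 2^k \cdot(\lambda a/2^k),\ \ldots]$ under $\sigma_{2^k}y$ — the remaining estimates are routine monotonicity of $y=\mu(y)$ and a geometric series bounded by $\lambda/(\lambda-1)$.
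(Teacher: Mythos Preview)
Your proposal does not converge to a proof, and the approach you sketch is both over-complicated and contains an error. First, the change of variables is wrong: since $(\sigma_{2^k}y)(s)=y(s/2^k)$, one has $\int_a^b(\sigma_{2^k}y)(s)\,ds=2^k\int_{2^{-k}a}^{2^{-k}b}y(u)\,du$, not $2^{-k}\int_{2^ka}^{2^kb}y$. More importantly, the dyadic splitting $[2^{-j}\lambda a,2^{-j+1}\lambda a]$ you propose does not naturally produce the constant $\lambda/(\lambda-1)$ independently of $k$; you yourself flag this as ``the main obstacle'' and never resolve it. The repeated false starts (``too crude'', ``too lossy'', ``the honest inequality is $\ldots$?'') indicate that no workable pairing of intervals has actually been found.

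The paper's argument is a single averaging step, with no iteration or geometric series. Let $\alpha$ be the average of $y$ over $[2^{-k}\lambda a,\,2^{-k}b]$. Because $y$ is decreasing, extending this interval to the right (to $[2^{-k}\lambda a,b]$) only lowers the average, so $\int_{2^{-k}\lambda a}^{b}y\leq(b-2^{-k}\lambda a)\alpha$; extending it to the left (to $[2^{-k}a,2^{-k}b]$) only raises the average, so $\int_a^b\sigma_{2^k}y=2^k\int_{2^{-k}a}^{2^{-k}b}y\geq(b-a)\alpha$. The constant then drops out of the elementary length comparison $(b-2^{-k}\lambda a)\leq\frac{\lambda}{\lambda-1}(b-a)$, which follows from $b\geq\lambda a$. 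The whole point is that one well-chosen constant $\alpha$ mediates between the two integrals; no telescoping is needed.
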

\begin{proof} Let $\alpha$ be the average value of $y$ on the interval $[2^{-k}\lambda a,2^{-k}b].$ Clearly, $y\leq\alpha$ on the interval $[2^{-k}\lambda a,b]$ and $y\geq\alpha$ on the interval $[2^{-k}a,2^{-k}b].$ Thus, $\sigma_{2^k}y\geq\alpha$ on the interval $[a,b].$ Therefore,
$$\int_{2^{-k}\lambda a}^by(s)ds\leq(b-2^{-k}\lambda a)\alpha\leq\frac{\lambda}{\lambda-1}(b-a)\alpha\leq \frac{\lambda}{\lambda-1}\int_a^b(\sigma_{2^k}y)(s)ds.$$

\end{proof}

\begin{thm} If $\{x_n\}_{n\in\mathbb{N}}$ be a Cauchy sequence in $F,$ then there exists $x\in F$ such that $x_n\to x$ in $F.$
\end{thm}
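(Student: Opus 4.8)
The plan is the usual completeness argument, adapted to the fact that the functional defining $F$ is not linear: first extract the limit in the ambient space $L_\infty$, then verify separately that it lies in $F$ and that convergence takes place in $\|\cdot\|_F$.

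Since $\|x\|_\infty\le\|x\|_F$, the sequence $\{x_n\}$ is Cauchy in $L_\infty$, hence $x_n\to x$ uniformly for some $x\in L_\infty$. The decreasing rearrangement is nonexpansive for $\|\cdot\|_\infty$ and $\mathbf{E}(\cdot|\mathcal{A})$ is a contraction of $L_\infty$, so $\mathbf{E}(\mu(x_n)|\mathcal{A})\to\mathbf{E}(\mu(x)|\mathcal{A})$ uniformly; it thus remains to show $\mathbf{E}(\mu(x)|\mathcal{A})\in E$ and $\|\mathbf{E}(\mu(x_n-x)|\mathcal{A})\|_E\to0$. For this I first record the triangle-type inequality $\|\mathbf{E}(\mu(f+g)|\mathcal{A})\|_E\le\|\mathbf{E}(\mu(f)|\mathcal{A})\|_E+\|\mathbf{E}(\mu(g)|\mathcal{A})\|_E$, valid for all $f,g\in L_\infty$: indeed $|f+g|\le|f|+|g|$ gives $\mu(f+g)\le\mu(|f|+|g|)$ pointwise, whence $\mathbf{E}(\mu(f+g)|\mathcal{A})\lhd\mathbf{E}(\mu(f)|\mathcal{A})+\mathbf{E}(\mu(g)|\mathcal{A})$ by Lemma \ref{pave maj} and transitivity of $\lhd$, and Theorem \ref{ks majorization theorem} converts this into the norm inequality (this is the Corollary following Lemma \ref{pave maj}). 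Consequently it is enough to produce, given $\varepsilon>0$, a single index $n$ with $x-x_n\in F$ and $\|\mathbf{E}(\mu(x-x_n)|\mathcal{A})\|_E<\varepsilon$: the inclusion $x\in F$ then follows from $x=x_n+(x-x_n)$, and $\|\mathbf{E}(\mu(x_n-x)|\mathcal{A})\|_E\to0$ follows from the above subadditivity together with the Cauchy condition.

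To produce such an $n$, pass to a subsequence $\{x_{n_k}\}$ with $\sum_k\|x_{n_{k+1}}-x_{n_k}\|_F$ as small as desired, put $y_k=x_{n_{k+1}}-x_{n_k}$, and note $x-x_{n_1}=\sum_ky_k$ with uniform convergence. Then $\mu(x-x_{n_1})\le\mu\big(\sum_k|y_k|\big)$ pointwise, and subadditivity of the $K$-functional gives $\mu\big(\sum_k|y_k|\big)\prec\prec\sum_k\mu(y_k)$; since all functions involved are decreasing and $\mathbf{E}(\cdot|\mathcal{A})$ reproduces the relevant integrals at the integer nodes, this yields $\mathbf{E}(\mu(x-x_{n_1})|\mathcal{A})\prec\prec w$, where $w:=\sum_k\mathbf{E}(\mu(y_k)|\mathcal{A})\in E$ and $\|w\|_E\le\sum_k\|y_k\|_F$ is small.

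The remaining, and decisive, point is to pass from the Hardy--Littlewood submajorization $\mathbf{E}(\mu(x-x_{n_1})|\mathcal{A})\prec\prec w$ to a genuine norm estimate $\|\mathbf{E}(\mu(x-x_{n_1})|\mathcal{A})\|_E\le C\|w\|_E$. Were the norm of $E$ a Fatou norm this would be automatic; for an arbitrary symmetric $E$ it is not, and one must instead manufacture a \emph{uniform} submajorization, which is precisely the role of Lemma \ref{stupid estimate}: it lets one trade $\prec\prec$ for $\lhd$ at the cost of a dilation of $w$ and an arbitrarily small multiplicative constant, after which Theorem \ref{ks majorization theorem} applies; the dilation factor is then controlled through $\|\sigma_n\|_{E\to E}\le n$ and is absorbed by choosing the tail $\sum_k\|y_k\|_F$ to decay fast enough. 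I expect this balancing of the dilation parameter against the decay of the telescoping series to be the main obstacle — it is exactly what prevents the naive iteration of Lemma \ref{pave maj} along the series (whose constant blows up geometrically) from working, and what forces the use of Lemma \ref{stupid estimate}. With it in hand, taking the subsequence tail sufficiently far gives $x-x_{n_k}\in F$ with arbitrarily small $E$-part, so $x\in F$ and $x_{n_k}\to x$ in $F$; since $\{x_n\}$ is Cauchy and has a convergent subsequence, $x_n\to x$ in $F$.
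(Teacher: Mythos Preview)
Your overall architecture matches the paper's: pass to $L_\infty$, telescope along a fast subsequence $y_k=x_{n_{k+1}}-x_{n_k}$, and show that the tail $\sum_{k\ge n}y_k$ is small in $F$ by producing a uniform submajorant and invoking Theorem~\ref{ks majorization theorem}. Your identification of the decisive obstacle --- that $\prec\prec$ is not enough in a merely symmetric $E$ and must be upgraded to $\lhd$ --- is exactly right.

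The gap is in how you propose to perform that upgrade. You first assemble $w=\sum_k\mathbf{E}(\mu(y_k)|\mathcal{A})$, obtain $\mathbf{E}(\mu(x-x_{n_1})|\mathcal{A})\prec\prec w$, and then claim Lemma~\ref{stupid estimate} ``lets one trade $\prec\prec$ for $\lhd$ at the cost of a dilation of $w$''. This is not what Lemma~\ref{stupid estimate} does, and in fact no single dilation of a fixed majorant can convert an arbitrary $\prec\prec$ relation into a $\lhd$ relation: Hardy--Littlewood submajorization only controls integrals from $0$, while $\lhd$ requires control on every window $[ma,b]$. Lemma~\ref{stupid estimate} is a \emph{term-by-term} device: it takes a single decreasing $y$ and, for each $k$, bounds $\int_{2^{-k}\lambda a}^b y$ by $\frac{\lambda}{\lambda-1}\int_a^b\sigma_{2^k}y$. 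The paper applies it inside the series, combining it with the refined sum estimate (Lemma~8.5 of \cite{KS})
\[
\int_{\lambda a}^b\mu\Bigl(s,\sum_{k\ge n}y_k\Bigr)\,ds\le\sum_{k\ge n}\int_{2^{-k}\lambda a}^b\mu(s,y_k)\,ds,
\]
to obtain $\sum_{k\ge n}y_k\lhd\frac{\lambda}{\lambda-1}z_n$ with $z_n=\sum_{k\ge n}\sigma_{2^k}\mu(y_k)$. The majorant is thus not a dilation of your $w$ but a sum of \emph{increasingly} dilated terms; the choice $\|y_k\|_F\le4^{-k}$ ensures $\|z_n\|_F\lesssim\sum_{k\ge n}2^k\cdot4^{-k}\to0$, which is precisely the ``balancing'' you anticipated. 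So your intuition in the final paragraph is correct, but it must be implemented before summing, not after: build $z_n$ with $k$-dependent dilations, not a single dilation of $w$.
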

\begin{proof} For every $k>0,$ there exists $m_k$ such that $\|x_m-x_{m_k}\|_F\leq 4^{-k}$ for $m\geq m_k.$ Set $y_k=x_{m_{k+1}}-x_{m_k}.$ Clearly, $\|y_k\|_F\leq 4^{-k}$ for every $k\in\mathbb{N}.$ In particular, the series $\sum_{k=1}^{\infty}y_k$ converges in $L_{\infty}(0,\infty).$

Set $z_n=\sum_{k=n}^{\infty}\sigma_{2^k}\mu(y_k).$ We claim that $z_n\in F$ and $z_n\to0$ in $F.$ Indeed,
$$\mu(y_k)\leq\|y_k\|_{\infty}\chi_{(0,1)}+T\mathbf{E}(\mu(y_k)|\mathcal{A}).$$
Here, $T$ is a shift to the right. It follows that
$$\mathbf{E}(\mu(z_n)|\mathcal{A})\leq\sum_{k=n}^{\infty}\sigma_{2^k}(\|y_k\|_{\infty}\chi_{(0,1)}+T\mathbf{E}(\mu(y_k)|\mathcal{A})).$$
Therefore,
$$\|z_n\|_F\leq\|z_n\|_{\infty}+\sum_{k=n}^{\infty}2^k\|\|y_k\|_{\infty}\chi_{(0,1)}+T\mathbf{E}(\mu(y_k)|\mathcal{A})\|_E\leq$$
$$\leq\|z_n\|_{\infty}+\sum_{k=n}^{\infty}2^{k+1}\|y_k\|_F\leq\frac13\cdot 4^{1-n}+2^{2-n}=o(1).$$
It follows from Lemma 8.5 of \cite{KS} that
$$\int_{\lambda a}^b\mu(s,\sum_{k=n}^{\infty}y_k)ds\leq\sum_{k=n}^{\infty}\int_{2^{-k}\lambda a}^b\mu(s,y_k)ds.$$
It follows from Lemma \ref{stupid estimate} that
$$\int_{2^{-k}\lambda a}^b\mu(s,y_k)ds\leq\frac{\lambda}{\lambda-1}\int_a^b(\sigma_{2^k}\mu(y_k))(s)ds.$$
Therefore,
$$\int_{\lambda a}^b\mu(s,\sum_{k=n}^{\infty}y_k)ds\leq\frac{\lambda}{\lambda-1}\int_a^bz_n(s)ds.$$
Hence,
$$\sum_{k=n}^{\infty}y_k\lhd\frac{\lambda}{\lambda-1}z_n.$$
Since $\lambda>1$ is arbitrarily large, it follows from Theorem \ref{ks majorization theorem} that
$$\|\sum_{k=n}^{\infty}y_k\|_F\leq\|z_n\|_F\to0.$$
Thus, the series $\sum_{k=1}^{\infty}y_k$ does converge in $F.$ The assertion follows immediately.

\end{proof}

\section{Proof of Figiel-Kalton theorem}\label{fkt}

The proof of Theorem \ref{fk theorem} follows from the combinations of Lemmas below.

\begin{lem} Let $E$ be a symmetric Banach space either on the interval $(0,1)$ or on the semi-axis. If $x\in Z_E,$ then $C(\mu(x_+)-\mu(x_-))\in E.$\
\end{lem}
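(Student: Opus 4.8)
The plan is to argue directly with a representation of $x$, since the map $x\mapsto\mu(x_+)-\mu(x_-)$ is not additive and a reduction to a single generator is not available. Write $x=\sum_{j=1}^{n}(a_j-b_j)$ with $0\le a_j,b_j\in E$ and $\mu(a_j)=\mu(b_j)$ for each $j$ (absorbing the scalar coefficients from the definition of $Z_E$), and set $A=\sum_j a_j$, $B=\sum_j b_j$, $h=\mu(x_+)-\mu(x_-)$. From $x\le A$ and $-x\le B$ one gets $x_+\le A$, $x_-\le B$, and since $x_+-x_-=x=A-B$ the positive function $P:=x_++B=x_-+A$ satisfies $P\in E$ and $\|\mu(P)\|_E=\|P\|_E\le\|x\|_E+\|B\|_E<\infty$. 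The target is to trap $Ch$ pointwise between $\mu(P)$ and mild dilates of $\mu(A)$ and $\mu(B)$.

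The quantitative core is a rescaled form of the reverse estimate in \eqref{rear sum estimate}: iterating $\mu(f)+\mu(g)\prec\prec 2\sigma_{1/2}\mu(f+g)$ produces a constant $m=m(n)$ (one may take $m=2^{n-1}$) with $\sum_{j=1}^{n}\mu(a_j)\prec\prec m\,\sigma_{1/m}\mu(A)$, and likewise for the $b_j$. Using in addition $\mu(a_j)=\mu(b_j)$ and $\mu(B)\prec\prec\sum_j\mu(b_j)$, this gives
\[
\int_0^{t/m}\mu(s,B)\,ds\le\int_0^{t}\mu(s,A)\,ds,\qquad\int_0^{t/m}\mu(s,A)\,ds\le\int_0^{t}\mu(s,B)\,ds,\qquad t>0.
\]
I expect this to be the main difficulty: naively bounding via subadditivity of $s\mapsto\int_0^s\mu$ yields error terms of the form $C\mu(A)$, which need not belong to $E$, whereas routing through the $\sigma_{1/m}$-refinement converts them into $\sigma_m$-dilates of $\mu(A)$ and $\mu(B)$, which do.

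Granting this, I would combine two one-sided estimates. From $P=x_++B$ and subadditivity, $\int_0^t\mu(s,P)\,ds\le\int_0^t\mu(s,x_+)\,ds+\int_0^t\mu(s,B)\,ds$. From $P=x_-+A$ and $\mu(x_-)+\mu(A)\prec\prec 2\sigma_{1/2}\mu(P)$, $\int_0^t\mu(s,x_-)\,ds+\int_0^t\mu(s,A)\,ds\le\int_0^{2t}\mu(s,P)\,ds$. Subtracting the first from the second, then invoking $\int_0^t\mu(s,A)\,ds\ge\int_0^{t/m}\mu(s,B)\,ds$ and $\int_t^{2t}\mu(s,P)\,ds\le t\,\mu(t,P)$ (monotonicity of $\mu(P)$), I obtain
\[
-\int_0^t h(s)\,ds\le t\,\mu(t,P)+\int_{t/m}^t\mu(s,B)\,ds\le t\bigl(\mu(t,P)+\mu(t/m,B)\bigr).
\]
Running the same computation with $-x$ in place of $x$ — which leaves $P$ unchanged and interchanges $A$ and $B$ — bounds $\int_0^t h(s)\,ds$ from above symmetrically, so that after dividing by $t$,
\[
|(Ch)(t)|\le\mu(t,P)+(\sigma_m\mu(A))(t)+(\sigma_m\mu(B))(t),\qquad t>0.
\]
The right-hand side is a decreasing function of $E$-norm at most $\|\mu(P)\|_E+m\|\mu(A)\|_E+m\|\mu(B)\|_E<\infty$ (using $\|\sigma_m\|_{E\to E}\le m$) that dominates $|Ch|$, hence $\mu(Ch)$, pointwise; therefore $Ch=C(\mu(x_+)-\mu(x_-))\in E$. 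The interval case $(0,1)$ is identical, using the interval versions of $\sigma_{1/m}$, $\sigma_m$ and of \eqref{rear sum estimate}.
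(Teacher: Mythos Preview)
Your proof is correct and follows essentially the same approach as the paper's: both introduce the auxiliary function $P=x_++\sum_j b_j=x_-+\sum_j a_j$ (the paper calls it $z$), apply the two Hardy--Littlewood inequalities from \eqref{rear sum estimate} to each representation, exploit $\mu(a_j)=\mu(b_j)$ to pass between them, and conclude by trapping $|Ch|$ pointwise under a decreasing element of $E$. The only difference is bookkeeping: you first compress to $A=\sum_j a_j$, $B=\sum_j b_j$ and derive the dilation comparison $\int_0^{t/m}\mu(B)\le\int_0^t\mu(A)$, whereas the paper keeps the sums $\sum_k\mu(x_k)=\sum_k\mu(y_k)$ throughout and cancels $C\mu(z)$ directly, obtaining the cleaner bound $|Ch|\le n\,\mu(z)$ (with the linear constant $n$ rather than your $m=2^{n-1}$). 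Your detour is a little longer but equally valid.
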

\begin{proof} Let $x=\sum_{k=1}^n(x_k-y_k)$ with $x_k,y_k\in E_+$ and $\mu(x_k)=\mu(y_k),$ $1\leq k\leq n.$ Set
$$z=x_++\sum_{k=1}^ny_k=x_-+\sum_{k=1}^nx_k.$$
It follows from the definition of $C$ and \eqref{rear sum estimate} that
$$C\mu(z)\leq C(x_+)+\sum_{k=1}^nC\mu(y_k)=C(\mu(x_+)-\mu(x_-))+C\mu(x_-)+\sum_{k=1}^nC\mu(x_k).$$
Using the second inequality in \eqref{rear sum estimate}, we obtain
$$\int_0^t(\mu(s,x_-)+\sum_{k=1}^n\mu(s,x_k))ds\leq\int_0^{(n+1)t}\mu(s,z)ds\leq\int_0^t\mu(s,z)ds+nt\mu(t,z).$$
Therefore,
$$C\mu(z)\leq C\mu(z)+C(\mu(x_+)-\mu(x_-))+n\mu(z).$$
It follows that $C(\mu(x_-)-\mu(x_+))\leq n\mu(z).$ Similarly, $C(\mu(x_+)-\mu(x_-))\leq n\mu(z)$ and the assertion follows. 
\end{proof}

\begin{lem} Let $E$ be a symmetric Banach space either on the interval $(0,1)$ or on the semi-axis. If $x\in\mathcal{D}_E,$ then $C(\mu(x_+)-\mu(x_-))\in Cx+E.$
\end{lem}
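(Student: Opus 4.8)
The plan is to dominate $C(\mu(x_+)-\mu(x_-))-Cx$ pointwise by the decreasing rearrangement of a single auxiliary function lying in $E$, and then invoke the order-ideal property of $E$. The naive route --- writing the difference as $\bigl(C\mu(x_+)-Cx_+\bigr)-\bigl(C\mu(x_-)-Cx_-\bigr)$ and placing each bracket in $E$ separately --- is hopeless, since $f\mapsto C\mu(f)-Cf$ need not map $E$ into $E$ (consider $E=L_1$ on the semi-axis). The trick is to keep $x_+$ and $x_-$ coupled.

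First I would write $x=\mu(a)-\mu(b)$ with $a,b\in E$ (the definition of $\mathcal{D}_E$) and note that $0\le x_+\le\mu(a)$ and $0\le x_-\le\mu(b)$ pointwise, so $x_\pm$ and $\mu(x_\pm)$ all lie in $E$. The decisive choice is $z:=x_++\mu(b)$, which also equals $x_-+\mu(a)$ (because $x_+-x_-=x=\mu(a)-\mu(b)$) and satisfies $0\le z\le\mu(a)+\mu(b)\in E$; hence $z\in E$ and $\mu(z)\in E$. Applying the two-sided rearrangement estimate \eqref{rear sum estimate} to the pairs $(x_+,\mu(b))$ and $(x_-,\mu(a))$ gives
$$\mu(z)\prec\prec\mu(x_+)+\mu(b)\prec\prec 2\sigma_{1/2}\mu(z)\quad\text{and}\quad\mu(z)\prec\prec\mu(x_-)+\mu(a)\prec\prec 2\sigma_{1/2}\mu(z).$$

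All functions occurring here are decreasing, and for decreasing functions $v\prec\prec w$ entails $Cv\le Cw$ pointwise; applying this turns the above into the sandwiches $C\mu(z)\le C\mu(x_+)+C\mu(b)\le g$ and $C\mu(z)\le C\mu(x_-)+C\mu(a)\le g$, where $g:=C(2\sigma_{1/2}\mu(z))$, i.e. $g(t)=t^{-1}\int_0^{\min(2t,\ell)}\mu(s,z)\,ds$ with $\ell\in\{1,\infty\}$ the length of the interval (this truncation is the sole point where the two cases differ, and is absorbed by $(\min(2t,\ell)-t)/t\le 1$). Since $C$ is linear and $x=\mu(a)-\mu(b)$, one has $C(\mu(x_+)-\mu(x_-))-Cx=\bigl(C\mu(x_+)+C\mu(b)\bigr)-\bigl(C\mu(x_-)+C\mu(a)\bigr)$, a difference of two functions both lying in the order interval $[C\mu(z),g]$, so its modulus is $\le g-C\mu(z)$. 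Finally $g(t)-C\mu(z)(t)=t^{-1}\int_t^{\min(2t,\ell)}\mu(s,z)\,ds\le\mu(t,z)$ because $\mu(\cdot,z)$ is decreasing, whence $|C(\mu(x_+)-\mu(x_-))-Cx|\le\mu(\cdot,z)$ pointwise with $\mu(z)\in E$; the ideal property of $E$ then yields $C(\mu(x_+)-\mu(x_-))-Cx\in E$, that is $C(\mu(x_+)-\mu(x_-))\in Cx+E$. The only genuine difficulty is identifying the right dominating function $z$ and noticing the resulting telescoping; after that the steps are routine manipulations of $\prec\prec$ and of the behaviour of $C$ under dilation.
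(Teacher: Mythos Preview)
Your argument is correct and follows essentially the same approach as the paper's own proof: write $x=\mu(a)-\mu(b)$, apply the two-sided estimate \eqref{rear sum estimate} to a well-chosen disjoint decomposition, and control the resulting $C(2\sigma_{1/2}\,\cdot\,)-C(\cdot)$ term by a single decreasing function in $E$. The only cosmetic difference is the auxiliary function: the paper works with $u:=\mu(a)-x_+$ (so $\mu(a)=u+x_+$, $\mu(b)=u+x_-$) and derives the two one-sided bounds $-\mu(b)\le C(\mu(x_+)-\mu(x_-))-Cx\le\mu(a)$ separately, whereas your choice $z:=x_++\mu(b)=x_-+\mu(a)$ yields the symmetric sandwich and the single bound $|C(\mu(x_+)-\mu(x_-))-Cx|\le\mu(z)$; the underlying mechanism is identical.
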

\begin{proof} Since $x\in\mathcal{D}_E,$ it follows that $x=\mu(a)-\mu(b)$ with $a,b\in E.$ Set $u=\mu(a)-x_+\geq0.$ Clearly, $\mu(a)=u+x_+$ and $\mu(b)=u+x_-.$ It follows from the definition of $C$ and \eqref{rear sum estimate} that
$$C\mu(a)\leq C\mu(u)+C\mu(x_+)=C(\mu(x_+)-\mu(x_-))+C\mu(u)+C\mu(x_-).$$
Using the second inequality in \eqref{rear sum estimate}, we obtain
$$C\mu(x_-)+C\mu(u)\leq C\mu(b)+\mu(b).$$
It follows that
$$Cx\leq C(\mu(x_+)-\mu(x_-))+\mu(b).$$
Similarly,
$$Cx\geq C(\mu(x_+)-\mu(x_-))-\mu(a)$$
and the assertion follows. \end{proof}

\begin{lem} Let $E=E(0,\infty)$ be a symmetric space on the semi-axis. If $x\in\mathcal{D}_E$ is such that $Cx\in E,$ then $x\in Z_E.$
\end{lem}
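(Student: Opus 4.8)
The plan is to derive this from the two preceding lemmas together with an explicit decomposition of $x$ into symmetric pairs. First I would normalize the problem. Since $x\in\mathcal{D}_E$ we have $x_+,x_-\in E$, hence $x_+-\mu(x_+)$ and $x_--\mu(x_-)$ both lie in $Z_E$ (each is $x_1-x_2$ with $x_1,x_2\ge0$ and $\mu(x_1)=\mu(x_2)$); consequently $x-(\mu(x_+)-\mu(x_-))=(x_+-\mu(x_+))-(x_--\mu(x_-))\in Z_E$, so it suffices to prove $\mu(x_+)-\mu(x_-)\in Z_E$. By the preceding lemma, $C(\mu(x_+)-\mu(x_-))\in Cx+E$, and since $Cx\in E$ by hypothesis this gives $C(\mu(x_+)-\mu(x_-))\in E$. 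Writing $p=\mu(x_+)$ and $q=\mu(x_-)$, the problem is thus reduced to the following: given decreasing $0\le p=\mu(p),\ q=\mu(q)\in E$ with $Cp-Cq\in E$, show that $p-q\in Z_E$.

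For the core step I would use the layer-cake representations $p=\int_0^\infty\chi_{(0,d_p(\lambda))}\,d\lambda$ and $q=\int_0^\infty\chi_{(0,d_q(\lambda))}\,d\lambda$ (with $d_p,d_q$ the distribution functions; each slice $\chi_{(0,d_p(\lambda))}\le\lambda^{-1}p$ lies in $E$), together with the elementary congruence
$$\chi_{(b,a)}\equiv\chi_{(0,a-b)}\pmod{Z_E},\qquad 0\le b\le a,$$
which holds because $\mu(\chi_{(b,a)})=\chi_{(0,a-b)}=\mu(\chi_{(0,a-b)})$. Slice by slice one rewrites $\chi_{(0,d_p(\lambda))}-\chi_{(0,d_q(\lambda))}$ as $\mathrm{sgn}(d_p(\lambda)-d_q(\lambda))\,\chi_{(0,|d_p(\lambda)-d_q(\lambda)|)}$ modulo an element of $Z_E$ whose two (positive) components are dominated by the slice itself, and then reassembles the pieces. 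The role of the hypothesis $Cp-Cq\in E$ — invoked through the pointwise inequality $\sum_{j\ge0}2^{-j}\sigma_{2^j}f\le 2Cf$, valid for every decreasing $f$ and used to control dilated tails — is precisely to guarantee that the reassembled residual and principal functions lie in $E$, so that the entire manipulation stays inside $E$.

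The main obstacle, and the point at which the semi-axis hypothesis is genuinely needed, is to arrange this reassembly so that the outcome is an actual element of $Z_E$ rather than merely of its norm-closure, since $Z_E$ need not be closed and one cannot simply integrate or sum the slice-wise congruences. I would handle this by placing the residual blocks in pairwise disjoint locations and as a telescoping family of positive functions whose successive differences are exactly symmetric pairs and whose tails tend to $0$ in $E$ (the bound $\sum_{j\ge0}2^{-j}\sigma_{2^j}f\le2Cf$ makes the tails summable): then the disjoint sums $u=\sum u_n$ and $v=\sum v_n$ satisfy $\mu(u)=\mu(v)$ because the combined multisets of values coincide, so the infinite combination is realized as a single genuine difference $u-v$ with $u,v\ge0$, $\mu(u)=\mu(v)$, hence lies in $Z_E$. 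This step exploits that on the semi-axis excess mass can be transported towards arbitrarily large scales (on $L_\infty$ even literally, $\chi_{(0,1)}=\chi_{(0,\infty)}-\chi_{(1,\infty)}$), a device unavailable on $(0,1)$ and the reason the interval case of Theorem~\ref{fk theorem} requires $\int_0^1x=0$. Once $p-q\in Z_E$ is established, the reduction of the first paragraph yields $x\in Z_E$.
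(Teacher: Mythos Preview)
Your reduction in the first paragraph is clean and correct, and using the preceding lemma to pass from $Cx\in E$ to $C(\mu(x_+)-\mu(x_-))\in E$ is exactly right. The difficulty is that from that point on the argument is only a sketch, and the crucial step --- turning the slice-wise congruences $\chi_{(0,d_p(\lambda))}-\chi_{(0,d_q(\lambda))}\equiv\pm\chi_{(0,|d_p(\lambda)-d_q(\lambda)|)}\pmod{Z_E}$ into an honest membership $p-q\in Z_E$ --- is not carried out. You correctly identify that $Z_E$ need not be closed, so integrating a family of $Z_E$-elements over $\lambda$ is not enough; but your proposed remedy (``place the residual blocks in pairwise disjoint locations'', telescope, invoke $\sum_{j\ge0}2^{-j}\sigma_{2^j}f\le 2Cf$) never specifies what the blocks $u_n,v_n$ actually are, how they arise from the layer-cake slices, or why the disjoint sums land in $E$. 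The inequality you quote bounds a dyadic sum of dilates of a \emph{single} decreasing function by its Ces\`aro transform; it is not clear how it interacts with the continuum of indicator slices, nor how it controls any specific telescoping remainder you have in mind. As written, this is a plausible heuristic rather than a proof.

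For comparison, the paper's argument is quite different and avoids the layer-cake entirely. It conditions $x$ on the dyadic partition $\mathcal{A}=\{(2^n,2^{n+1})\}_{n\in\mathbb{Z}}$ to obtain a step function $x_1=\mathbf{E}(x|\mathcal{A})$; one checks $Cx_1\in E$ and then observes the algebraic identity $x_1=2z-\sigma_2 z$, where $z$ is the dyadic step function with value $(Cx_1)(2^{n+1})$ on $(2^n,2^{n+1})$. Since $z\in E$, the element $2z-\sigma_2 z$ lies in $Z_E$ (for $z\ge0$ split each dyadic interval in half to write $\sigma_2 z=w_1+w_2$ with $\mu(w_1)=\mu(w_2)=\mu(z)$). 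The remainder $x-x_1$ has mean zero on each dyadic interval, and Kwapie\'n's theorem on each finite interval yields equimeasurable $y_{1n},y_{2n}$ with $x-x_1=y_{1n}-y_{2n}$ there and uniform $L_\infty$ control; summing over $n$ gives $x-x_1\in Z_E$ as a \emph{single} difference of equimeasurable functions. If you want to salvage your approach, you would need to produce an equally concrete countable decomposition with disjoint supports; the layer-cake by itself does not hand you one.
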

\begin{proof} Define a partition $\mathcal{A}=\{(2^n,2^{n+1})\}_{n\in\mathbb{Z}}$ and set $x_1=\mathbf{E}(x|\mathcal{A}).$ If $x=\mu(a)-\mu(b)$ with $a,b\in E,$ then $x_1=\mathbf{E}(\mu(a)|\mathcal{A})-\mathbf{E}(\mu(b)|\mathcal{A}).$ Clearly,
$$\mathbf{E}(\mu(a)|\mathcal{A})\leq\sigma_2\mu(a)\in E,\quad \mathbf{E}(\mu(b)|\mathcal{A})\leq\sigma_2\mu(b)\in E$$
are decreasing functions. It follows that $x_1\in\mathcal{D}_E.$ It is easy to see that
$$|Cx_1-Cx|\leq2\sigma_2(\mu(a)+\mu(b)).$$
Therefore, $Cx_1\in E.$ Define a function $z\in E$ by setting
$$z(t)=(Cx_1)(2^{n+1}),\quad t\in(2^n,2^{n+1}).$$
Clearly, $x_1=2z-\sigma_2z\in Z_E.$

Consider the function $x-x_1$ on the interval $(2^n,2^{n+1}).$ By Kwapien theorem \cite{Kwapien}, there exist positive equimeasurable functions $y_{1n},y_{2n}$ supported on $(2^n,2^{n+1})$ such that
$$\mu(y_{1n})=\mu(y_{2n}),\quad\|y_{1n}\|_{\infty},\|y_{2n}\|_{\infty}\leq 6\|(x-x_1)\chi_{(2^n,2^{n+1})}\|_{\infty}.$$
Set $y_1=\sum_{n\in\mathbb{N}}y_{1n}$ and $y_{2n}=\sum_{n\in\mathbb{N}}y_{2n}.$ It follows that $y_1,y_2\in E_+.$ Since $x-x_1=y_1-y_2$ and $\mu(y_1)=\mu(y_2),$ it follows that $x-x_1\in Z_E.$ The assertion follows immediately. 
\end{proof}

\end{document}